\numberwithin{equation}{section}
\newtheorem{Proposition}[equation]{Proposition}
\newtheorem{Lemma}[equation]{Lemma}
\newtheorem{Theorem}[equation]{Theorem}
\newtheorem{Corollary}[equation]{Corollary}
\theoremstyle{definition}  
\newtheorem{Definition}[equation]{Definition}
\newtheorem{Remark}[equation]{Remark}
\newtheorem{Conjecture}[equation]{Conjecture}
\newcommand\Comment[2][\relax]{\space\par\medskip\noindent%
   \fbox{\begin{minipage}{\textwidth}\textbf{Comment\ifx\relax#1\else---#1\fi}\newline%
        #2\end{minipage}}\medskip
}
\def\ocirc#1{\stackrel{_{\,\circ}}{#1}}
\def\b1{\text{\boldmath$1$}}
\def\ba{\text{\boldmath$a$}}
\newcommand{\Hom}{\operatorname{Hom}}
\newcommand{\Ext}{\operatorname{Ext}}
\newcommand{\EXT}{\operatorname{Ext}}
\newcommand{\ext}{\operatorname{ext}}
\newcommand{\End}{\operatorname{End}}
\newcommand{\im}{\operatorname{im}}
\newcommand{\id}{\operatorname{id}}
\newcommand{\soc}{\operatorname{soc}\,}
\newcommand{\head}{\operatorname{head}}
\newcommand{\Z}{\mathbb{Z}}
\def\eps{{\varepsilon}}
\def\phi{{\varphi}}
\newcommand{\catC}{{\mathbf C}}
\newcommand{\catD}{{\mathbf D}}
\newcommand\gldim{\operatorname{gl.dim\,}}
\newcommand\pd{\operatorname{proj.dim\,}}
\newcommand{\funF}{{\mathcal F}}
\newcommand{\funQ}{{\mathcal Q}}
\newcommand{\funR}{{\mathcal R}}
\newcommand{\funE}{{\mathcal E}}
\newcommand{\funO}{{\mathcal O}}
\newcommand{\Fil}{{\tt Fil}}
\newcommand{\Ga}{\Gamma}
\newcommand{\la}{\lambda}
\newcommand{\La}{\Lambda}
\newcommand{\al}{\alpha}
\newcommand{\be}{\beta}
\newcommand{\Si}{\Sigma}
\newcommand{\si}{\sigma}
\newcommand{\de}{\delta}
\newcommand{\De}{\Delta}
\def\id{\mathop{\mathrm {id}}\nolimits}
\renewcommand{\Im}{{\mathrm {Im}}}
\newcommand{\rad}{{\mathrm {rad}\,}}
\newcommand{\C}{{\mathbb C}}
\newcommand{\A}{{\mathscr A}}
\renewcommand{\mod}{\bmod \,}
\newcommand{\tw}{{\tt s}}
\def\h{{\mathfrak h}}
\def\g{{\mathfrak g}}
\def\b{\mathfrak{b}}
\def\k{\Bbbk}
\def\x{x}
\def\spa{\operatorname{span}}
\def\op{{\mathrm{op}}}
\def\im{{\mathrm{im}\,}}
\def\into{{\hookrightarrow}}
\def\Mod#1{#1\!\operatorname{-Mod}}
\def\mod#1{#1\!\operatorname{-mod}}
\renewcommand\O{\mathcal O}
\def\iso{\stackrel{\sim}{\longrightarrow}}
\def\B{{\mathscr B}}
\def\Pol{{\mathscr P}}
\def\lan{\langle}
\def\ran{\rangle}
\def\HOM{\operatorname{Hom}}
\def\END{\operatorname{End}}
\def\DIM{{\operatorname{dim}_q\,}}
\def\RANK{{\operatorname{rank}_q\,}}
\def\Car{{\tt C}}
  \gdef\set#1{\mathinner{\lbrace\,{\mathcode`\|"8000%
  \let|\midvert #1}\,\rbrace}}
\def\midvert{\egroup\mid\bgroup}
\colorlet{darkgreen}{green!50!black}
\tikzset{dots/.style={very thick,loosely dotted},
         greendot/.style={fill,circle,color=darkgreen,inner sep=1.5pt,outer sep=0}
}
\def\greendot(#1,#2){\node[greendot] at(#1,#2){}}
\newenvironment{braid}{
  \begin{tikzpicture}[baseline=6mm,blue,line width=1pt, scale=0.4,
                      draw/.append style={rounded corners},
                      every node/.append style={font=\fontsize{5}{5}\selectfont}]%
  }{\end{tikzpicture}
}
\def\Grid(#1,#2){
  \draw[very thin,gray,step=2mm] (0,0)grid(#1,#2);
  \draw[very thin,darkgreen,step=10mm] (0,0)grid(#1,#2);
}
\newcommand\Tableau[2][\relax]{
  \begin{tikzpicture}[scale=0.5,draw/.append style={thick,black}]
    \ifx\relax#1\relax%
    \else 
      \foreach\box in {#1} { \filldraw[blue!30]\box+(-.5,-.5)rectangle++(.5,.5); }
    \fi
    \newcount\row\newcount\col
    \row=0
    \foreach \Row in {#2} {
       \col=1
       \foreach\k in \Row {
          \draw(\the\col,\the\row)+(-.5,-.5)rectangle++(.5,.5);
          \draw(\the\col,\the\row)node{\k};
          \global\advance\col by 1
       }
       \global\advance\row by -1
    }
  \end{tikzpicture}
}
\newcommand\YoungDiagram[2][\relax]{
  \begin{tikzpicture}[scale=0.5,draw/.append style={thick,black}]
    \ifx\relax#1\relax%
    \else 
    \foreach\box in {#1} {
      \filldraw[blue!30]\box rectangle ++(1,1);
    }
    \fi
    \newcount\row
    \row=0
    \foreach \col in {#2} {
       \draw(1,\the\row)grid ++(\col,1);
       \global\advance\row by -1
    }
  \end{tikzpicture}
}
\begin{document}


\title[Affine highest weight categories]{{\bf Affine highest weight categories and affine quasihereditary algebras}}

\author{\sc Alexander S. Kleshchev}
\address{Department of Mathematics\\ University of Oregon\\
Eugene\\ OR~97403, USA}
\email{klesh@uoregon.edu}

\subjclass[2010]{16G99}

\thanks{Research supported by the NSF grant DMS-1161094 and the Humboldt Foundation.}

\begin{abstract}
Koenig and Xi introduced {\em affine cellular algebras}. Kleshchev and Loubert showed that an important class of {\em infinite dimensional} algebras, the KLR algebras $R(\Ga)$ of finite Lie type $\Ga$, are (graded) affine cellular;  in fact, the corresponding affine cell ideals are idempotent. This additional property is reminiscent of the properties of {\em quasihereditary algebras} of Cline-Parshall-Scott in a {\em finite dimensional} situation. A  fundamental result of Cline-Parshall-Scott says that a finite dimensional algebra $A$ is quasihereditary if and only if the category of finite dimensional $A$-modules is a {\em highest weight category}. On the other hand, S. Kato and Brundan-Kleshchev-McNamara proved that the category of {\em finitely generated graded} $R(\Ga)$-modules has many features reminiscent of those of a highest weight category. The goal of this paper is to axiomatize and study the notions of an {\em affine quasihereditary algebra} and an {\em affine highest weight category}. In particular, we prove an affine analogue of the Cline-Parshall-Scott Theorem. We also develop {\em stratified} versions of these notions. 
\end{abstract}

\maketitle

\section{Introduction}

Koenig and Xi \cite{KX} have introduced a notion of an {\em affine cellular algebra}. It is shown in \cite{KLoub} that an important class of {\em infinite dimensional graded} algebras, the so-called Khovanov-Lauda-Rouquier (KLR) algebras $R(\Ga)$ of finite Lie type $\Ga$, satisfy the graded version of the Koenig-Xi definition (see also \cite{KLM} for type $\Ga={\tt A_\infty}$). In fact, a stronger property of $R(\Ga)$ is established in \cite{KLoub}, namely that the affine cell ideals are idempotent. This additional property is reminiscent of the properties of {\em quasihereditary algebras} of Cline, Parshall and Scott in a {\em finite dimensional} situation, see \cite{CPShw}. 

A fundamental result of Cline, Parshall and Scott \cite[Theorem 3.6]{CPShw} says that a finite dimensional algebra $A$ is quasihereditary if and only if the category of finite dimensional $A$-modules is a {\em highest weight category}. On the other hand,  in \cite{KatoPBW} and \cite{BKM} it is proved, still under the assumption that $\Ga$ is a finite Lie type, that the category of {\em finitely generated graded} $R(\Ga)$-modules has many features reminiscent of those of a highest weight category. 

The goal of this paper is to axiomatize and study the notions of an {\em affine quasihereditary algebra} and an {\em affine highest weight category}. In particular, we prove an affine analogue of the Cline-Parshall-Scott Theorem. We also develop {\em stratified} versions of these notions.

In fact, we work in a larger generality. Let $\B$ be a class of Noetherian positively graded connected algebras. We introduce the notion of a {\em $\B$-quasihereditary algebra} $H$ and a {\em $\B$-highest weight category} $\catC$. Note that a $\B$-quasihereditary algebra $H$ does not have to be positive graded;  for example, KLR algebras are not. Note also that we never assume that algebras in $\B$ are commutative. 

If $\B$ consists only of the ground field $F$, then $\B$-quasihereditary boils down to essentially (a graded version of) the usual quasihereditary and $\B$-highest weight to essentially (a graded version of) the usual highest weight. 
If $\B$ is the class of all affine algebras (i.e. finitely generated positively graded commutative algebras), then we write {\em affine quasihereditary} instead of $\B$-quasihereditary and {\em affine highest weight} instead of $\B$-highest weight. Similarly, if $\B$ is the class of positively graded polynomial algebras, we write {\em polynomial quasihereditary} instead of $\B$-quasihereditary and {\em polynomial highest weight} instead of $\B$-highest weight, and so on. 

Most known interesting examples of $\B$-quasihereditary algebras, including the finite Lie type KLR algebras $R(\Gamma)$, are affine and even polynomial quasihereditary. But 
for more general Lie types $\Gamma$ the class $\B$ would have to be extended. 
We prove in this paper that $\B$-quasiheredity implies finiteness of the global dimension of the corresponding algebra, provided the algebras in the class $\B$ have finite global dimension. It has been proved by various methods that the finite Lie type KLR algebras have finite global dimension \cite{Kato},\cite{KatoPBW},  \cite{McN}, \cite[Appendix]{BKM}, \cite{KLoub}. On the other hand,  already for affine Lie types this is false.

We also develop {\em weak}\, versions of the notions of $\B$-quasihereditary and $\B$-highest weight. In the standard  version, certain $\B$-modules are required to be free finite rank, while in the weak version, we only require that they are finitely generated. 
Moreover, we study more general notions of {\em $\B$-stratified, standardly $\B$-stratified}, and {\em properly $\B$-stratified} algebras and categories. These are defined for more general classes of algebras $\B$ and partial {\em preorders}\, instead of partial orders. If $\B$ is a class of connected algebras, then $\B$-quasihereditary is essentially the same as $\B$-properly stratified and weakly $\B$-quasihereditary is essentially the same as $\B$-stratified which is automatically $\B$-standardly stratified. 


Module categories over KLR algebras are not the only examples of affine highest weight categories. The category of finitely generated graded modules over current algebras studied by Chari, Ion, Loktev, Pressley and many others, is another example, see $\S\ref{SSCurrent}$ and references therein.  Conjecturally, many similar categories for important positively graded Lie algebras (in characteristic $0$ or $p$) are $\B$-quasihereditary for appropriate classes $\B$. Other examples include S. Kato's geometric extension algebras, S. Kato's categories related to Kostka systems, and Khovanov-Sazdanovich categorification of Hermite polynomials, see Section~\ref{SEx}.  

We now describe the contents of the paper in more detail. The preliminary Section~\ref{SPrel} reviews the necessary basic material on graded algebras, modules, and categories. We introduce the notion of a {\em Laurentian} algebra---the graded algebra whose graded dimension is a Laurent series. We show that Laurentian algebras are graded semiperfect, have finite dimensional irreducible modules, and have only finitely many irreducible modules up to isomorphism and degree shift. 
In \S\ref{SGrCat}, we review some facts about graded categories. We use $\cong$ to denote an isomorphism in a graded category and $\simeq$ to denote an isomorphism up to a degree shift. 

In Section~\ref{SNLCat}, we introduce Noetherian Laurentian graded abelian 
categories. 
A key example of such a category is the category $\mod{H}$ of finitely generated graded modules over a left Noetherian Laurentian algebra $H$. In fact, a Noetherian Laurentian category, which has a finite set of simple objects up to isomorphism and degree shift, is always  graded equivalent to $\mod{H}$ for some left Noetherian Laurentian algebra $H$, see Theorem~\ref{TEquiv}. 

Let $\catC$ be a Noetherian Laurentian category and $\{L(\pi)\mid \pi\in\Pi\}$ be a complete set of simple objects in $\catC$ up to isomorphism and degree shift. For a subset $\Si\subseteq \Pi$, we define a truncation functor $\funQ^\Si:\catC\to \catC(\Si)$, where $\catC(\Si)$ is the Serre subcategory consisting of all objects that belong to $\Si$, see \S\ref{SSFunQ}, where standard properties of the truncation factor are studied.

In Section~\ref{SStandMod}, we fix a partial order `$\leq$' on $\Pi$. We define the {\em standard objects} $\De(\pi):=\funQ^{\Pi_{\leq \pi}}(P(\pi))$, where $P(\pi)$ is the projective cover of $L(\pi)$ for every $\pi\in\Pi$. We also define the {\em proper standard objects} $\bar\De(\pi)$, see (\ref{EStand}) and compare e.g. to \cite{Dlab}. 
In 
\S\ref{SSStandFunctor}, we relate standard and proper standard modules to standardization functors following ideas of Losev and Webster \cite{WebLos}. 

Section~\ref{SHW} is devoted to the definition and fundamental properties of $\B$-highest  weight (and $\B$-stratified) categories. 
A {\em $\B$-highest weight category} is defined in Definition~\ref{DHWC} as a Noetherian Laurentian category such that $P(\pi)$ has filtration $P(\pi)\supset K_0\supset K_1\supset\dots$ with $P(\pi)/K_0\simeq \De(\pi)$ and $K_i/K_{i+1}\simeq \De(\si)$ with $\si>\pi$ for all $i\geq 0$; for every $\pi\in\Pi$, the algebra $B_\pi:=\END(\De(\pi))^\op$ belongs to $\B$; and the right $B_\pi$-modules $\HOM(P(\si),\De(\pi))$ are free finite rank for all $\pi,\si\in\Pi$. 

{\em In this introduction, we state the main results for $\B$-highest weight categories and $\B$-quasihereditary algebras only, ignoring the fact that many of them also hold for weakly $\B$-highest weight categories and weakly $\B$-quasihereditary algebras as well as various  $\B$-stratified versions}. The reader is referred to the body of the paper for more general results. So from now until the end of the introduction, we assume  that $\catC$ is a $\B$-highest weight category with respect to a partial order `$\leq$' on the set $\Pi$ where  $\{L(\pi)\mid \pi\in \Pi\}$ is a complete set of simples as above. 

\vspace{2mm}
\noindent
{\bf Theorem A.}
{\em 
Let $\pi,\si\in \Pi$. 
\begin{enumerate}
\item[{\rm (i)}] $\END(L(\pi))\cong \END(\bar\De(\pi))\cong F$.
\item[{\rm (ii)}] $[\bar\De(\pi):L(\pi)]_q=1$ and $[\De(\pi):L(\pi)]_q=\DIM B_\pi$.

\item[{\rm (iii)}] $\DIM\HOM(P(\si),\bar\De(\pi))= \RANK \HOM(P(\si),\De(\pi))_{B_\pi}$. In particular, the multiplicity $[\bar\De(\pi):L(\si)]_q$ is finite. 

\item[{\rm (iv)}] $\bar\De(\pi)\cong \De(\pi)/\De(\pi)N_\pi$, where $N_\pi$ is the graded Jacobson radical of $B_\pi$. More generally, $\De(\pi)N_\pi^n/ \De(\pi)N_\pi^{n+1}\cong (\DIM N_\pi^n/N_\pi^{n+1})\bar\De(\pi)$. In particular, $\De(\pi)$ has an exhaustive filtration 
$\De(\pi)\supset V_1\supset V_2\supset\dots$
such that $\De(\pi)/V_1\cong \bar\De(\pi)$ and each $V_i/V_{i+1}\simeq \bar\De(\pi)$, $i=1,2,\dots$, and in the graded Grothendieck group $[\catC]_q$, we have 
$
[\De(\pi)]=(\DIM B_\pi)[ \bar\De(\pi)].
$
\end{enumerate}
}
\vspace{2mm}

For the proof (and strengthening of) Theorem A see  Propositions~\ref{PBarDeNew} and~\ref{PDeBarDeNew}. 

Important homological properties of $\catC$ are contained in the following theorem. For $\pi\in \Pi$ we denote $d_\pi:=\gldim B_\pi$ and $d_\Pi:=\max\{d_\pi\mid \pi\in \Pi\}$. Moreover, 
for $\Si\subseteq\Pi$ we denote 
\begin{equation}\label{EL(Si)}
l(\Si):=\max\{l\mid\text{there exist elements $\si_0<\si_1<\dots<\si_l$ in $\Si$}
\}.
\end{equation}

\vspace{2mm}
\noindent
{\bf Theorem B.} 
{\em Let $\pi\in\Pi$ and $X$ be an object in $\catC$. 
\begin{enumerate}
\item[{\rm (i)}]
If\, $\EXT^1(\De(\pi),X)\neq 0$ then $[X: L(\si)]_q\neq 0$ for some $\si>\pi$.  In particular, 
$\EXT^1(\De(\pi),\De(\si))\neq 0$ implies $\pi<\si$. 

\item[{\rm (ii)}] 
If\, $\EXT^i(\bar\De(\pi),X)\neq 0$ then $[X: L(\si)]_q\neq 0$ for some $\si\geq\pi$.  In particular, $\EXT^1(\bar\De(\pi),\bar\De(\si))\neq 0$ implies $\pi\leq \si$.

\item[{\rm (iii)}] $\pd  \De(\pi)\leq l(\Pi_{\geq \pi}).$

\item[{\rm (iv)}] The global dimension of $\catC$ is at most $2l(\Pi)+d_\Pi$.
\end{enumerate}
}
\vspace{2mm}

For the proof (and strengthening of) Theorem B see
Lemmas~\ref{LEXTNonZero}, \ref{LPDDe}, \ref{LEXTNonZeroNew} and Corollary~\ref{CUppBoundGlobDim}. 

Section~\ref{SBQHACPST} is devoted to $\B$-quasihereditary algerbas. Let $H$ be a left Noetherian Laurentian algebra. A (two-sided)  ideal $J\subseteq H$ can be considered as a left $H$-module. The ideal $J$ is called {\em $\B$-heredity} if $\HOM_H(J,H/J)=0$, and as left $H$-modules we have  
$
J\cong m(q) P(\pi),
$
for some graded multiplicity $m(q)\in\Z[q,q^{-1}]$ and some $\pi\in\Pi$, such that $B_\pi:=\END_H(P(\pi))^\op$ is an algebra in $\B$, and $P(\pi)$  is free finite rank as a right $B_\pi$-module with respect to the natural action of $B_\pi$ as the endomorphism algebra.  The algebra $H$ is called {\em $\B$-quasihereditary} if there exists a finite chain of ideals 
$
H=J_0\supsetneq J_1\supsetneq\dots\supsetneq J_n=(0)
$
with $J_i/J_{i+1}$ a $\B$-heredity ideal in $H/J_{i+1}$ for all $0\leq i<n$. 

\vspace{2mm}
\noindent
{\bf Theorem C.} 
{\em 
Let $H$ be a left Noetherian Laurentian algebra. Then the category $\mod{H}$ of finitely generated graded $H$-modules is a  $\B$-highest weight category if and only if $H$ is a $\B$-quasihereditary algebra. 
}
\vspace{2mm}

We refer the reader to Theorem~\ref{TCPS} for a  strengthening and refinement of Theorem C. Theorem C implies that, up to a graded equivalence, $\B$-highest weight categories with finite sets $\Pi$ are exactly the categories of finitely generated graded modules over $\B$-quasihereditary algebras. This is of course an analogue of the Cline-Parshall-Scott Theorem mentioned above. 

In Section~\ref{SPropCost}, we study proper {\em costandard modules} $\bar\nabla(\pi)$ and $\De$-filtrations, under the  additional assumption that $\Pi_{\leq \pi}$ is finite for every $\pi\in \Pi$ (which holds in all interesting examples we know). 
A {\em $\De$-filtration} of an object $V\in\catC$ is an exhaustive filtration
$
V=V_0\supseteq V_1\supseteq V_2\supseteq \dots
$ such that each $V_n/V_{n+1}$ is of the form  $q^m\De(\pi)$. 
By Lemma~\ref{LFinDe}(iii) for every $\pi\in\Pi$ there are only finitely many $n$ with $V_n/V_{n+1}\simeq\De(\pi)$, and the  multiplicity $(V:\De(\pi))_q$ is a well-defined Laurent polynomial. 
We skip the precise definition of $\bar\nabla(\pi)$ referring the reader to $\S\ref{SCost}$. 
A version of Theorem~\ref{T6214} yields key properties of the proper costandard modules:

\vspace{2mm}
\noindent
{\bf Theorem D.} 
{\em Assume that $\Pi_{\leq \pi}$ is finite for every $\pi\in \Pi$. Fix $\pi,\si\in\Pi$. Then:
\begin{enumerate}
 \item[{\rm (i)}] The object $\bar\nabla(\pi)\in\catC$ has finite length, $\soc \bar\nabla(\pi)\cong L(\pi)$, and all composition factors of $\bar\nabla(\pi)/\big(\soc \bar\nabla(\pi)\big)$ are of the form $L(\kappa)$ for $\kappa<\pi$.
 \item[{\rm (ii)}] We have
 $$\DIM\HOM(\De(\si),\bar\nabla(\pi))=\de_{\si,\pi}$$
 and
 $$
 \EXT^1(\De(\si),\bar\nabla(\pi))=0.
 $$
 \item[{\rm (iii)}] If $\si<\pi$, then
  $$
 \EXT^1(L(\si),\bar\nabla(\pi))=0.
 $$

 \item[{\rm (iv)}] If $V\in\catC$ has a $\De$-filtration, then 
 $$(V:\De(\pi))_q=\dim_{q^{-1}}\HOM(V,\bar\nabla(\pi)).$$

 \item[{\rm (v)}] {\rm (Genralized BGG Reciprocity)} 
 $(P(\pi):\De(\si))_q=[\bar\nabla(\si):L(\pi)]_{q^{-1}}.$ 
 \end{enumerate}
 }
\vspace{2mm}

We have the (usual) useful properties of $\De$- and $\bar\nabla$-filtrations:

\vspace{2mm}
\noindent
{\bf Theorem E.} 
{\em Assume that $\Pi$ is countable and $\Pi_{\leq \pi}$ is finite for every $\pi\in \Pi$. Let $V$ be an object of $\catC$, $W$ be a direct summand of $V$, and $0\to V'\to V\to V''\to 0$ be a short exact sequence in $\catC$. 
Then:
\begin{enumerate}
\item[{\rm (i)}] $V$ has a  $\Delta$-filtration if and only if $\EXT^1(V,\bar\nabla(\pi))=0$ for all $\pi\in\Pi$. 
\item[{\rm (ii)}] If $V$ has a  $\Delta$-filtration then $\EXT^i(V,\bar\nabla(\pi))=0$ for all $\pi\in\Pi$ and $i>0$. 
\item[{\rm (iii)}] Suppose that  $V\in\catC$ has finite length. Then  $V$ has a  $\bar\nabla$-filtration if and only if $\EXT^1(\De(\pi),V)=0$ for all $\pi\in\Pi$. 
\item[{\rm (iv)}] If $V$ and $V''$ have $\De$-filtrations, then so does $V'$.  
\item[{\rm (v)}] If $V$ and $V'$ have finite $\bar\nabla$-filtrations, then so does $V''$.  

\item[{\rm (vi)}] If $V$ has a $\De$-filtration, then so does $W$.  
\item[{\rm (vii)}] If $V$ has a finite $\bar\nabla$-filtration, then so does $W$.  
\end{enumerate} 
 }
\vspace{2mm}

These results are strengthened and proved in Theorem~\ref{TGoodFilCCount}, Lemma~\ref{LNaCritInf}, Corollaries~\ref{CSESDeltaInf} and \ref{C6214Inf}. 

Section~\ref{SDLStand} is devoted to a $\B$-analogue of {\em Dlab-Ringel Standardization Theorem}\, \cite[Theorem 2]{DRStand}. We refer the reader to Theorem~\ref{TDLStand} for the precise statement. The idea is to axiomatize the properties of the standard modules in a graded abelian $F$-linear category $\catC$. In this way, one gets the notion of a $\B$-standardizing family.  Standardization Theorem then claims that given a $\B$-standardizing  family $\Theta$, there exists a $\B$-quasihereditary algebra $H$, unique up to a graded Morita equivalence, such that the full subcategory category $\Fil(\Theta)$ of objects in $\catC$ with finite $\Theta$-filtrations and the category $\Fil(\De)$ of graded $H$-modules with finite $\De$-filtrations are graded equivalent. 

In Section~\ref{SInv}, we connect the notions  of affine quasihereditary as defined in this paper and affine cellular as defined in \cite{KX}. Affine cellularity assumes the existence of a `nice' antiinvolution on an algebra, while no such assumption is made in our definition of affine quasiheredity. The main result of the section is Proposition~\ref{PQHAffCell}, which says that an affine quasihereditary algebra with a `nice' antiinvolution $\tau$ is  affine cellular. 

In Section~\ref{SEx}, we discuss examples.

\subsection*{Comments on the existing literature} 
The theory built in this paper is similar in spirit to the one developed by Mazorchuk in \cite{Maz}. However, there are several  crucial distinctions. Only positively graded algebras are treated in  \cite{Maz}, which excludes our first motivating example---the KLR algebras. More general affine highest weight categories with infinite sets of simple objects are not considered in \cite{Maz}, which seems to exclude our other motivating example---representation theory of current algebras. The analogues of the results of Sections~\ref{SNLCat}, \ref{SBQHACPST}, \ref{SDLStand}, \ref{SInv} are not considered in \cite{Maz}. On the other hand, in this paper we do not address questions related to Koszulity, which are studied in \cite{Maz}. 

As we were preparing this article, the preprints  \cite{Kh} and \cite{Mori} have been released. While the definitions in \cite{Kh} seem to differ from ours, the theory developed there also covers one of our motivating examples, namely representation theory of current algebras. The approach of \cite{Mori} is rather general but it does not seem to provide nice homological properties that we need. Another big difference is that in our picture gradings are built in and play a crucial role. 


\subsection*{Acknowledgement} I am grateful to Steffen Koenig for many useful discussions and to Volodymyr Mazorchuk for drawing my attention to \cite{Maz}. 

\section{Preliminaries}\label{SPrel}

\subsection{Graded algebras}

By a grading we always mean a $\Z$-grading. Fix a ground filed  $F$, and let $H$ be a graded $F$-algebra. All idempotents are assumed to be degree zero. {\em All modules, ideals, etc. are assumed to be graded, unless otherwise stated.} In particular, for a (graded) $H$-module $V$, $\rad V$ 
is the intersection of all maximal (graded) submodules, and  $\soc V$ is the sum of all irreducible (graded) submodules. 
We denote by $N(H)$ the (graded) Jacobson radical of $H$.

We write $q$ for both a formal variable and the upwards degree shift functor:
if $V = \bigoplus_{n \in \Z} V_n$ 
then 
$qV$ 
has $(qV)_n := V_{n-1}$.
More generally, given a formal Laurent series $f(q) = \sum_{n \in \Z} f_n
q^n$ with non-negative coefficients, 
$f(q) V$ denotes 
\begin{equation}\label{E151113}
f(q) V:=\bigoplus_{n \in \Z} q^n V^{\oplus f_n}.
\end{equation}
A graded vector space $V$ is called {\em locally finite dimensional}  if the dimension of each graded component $V_n$ is finite. Then we define 
the {\em graded dimension} of $V$ to be the formal series 
$$
\DIM V := \sum_{n \in \Z} (\dim V_n) q^n.
$$
A graded vector space $V$ is called {\em bounded below}  if $V_n=0$ for all $n\ll0$. A graded vector space $V$ is called {\em Laurentian} if it is locally finite dimensional and bounded below. In this case $\DIM V$ is a formal Laurent series, hence the name. 

For $H$-modules $U$ and $V$,
we write
$\hom_H(U, V)$
for degree preserving $H$-module homomorphisms, and set 
$\HOM_H(U, V):=\bigoplus_{n \in \Z} \HOM_H(U, V)_n$, 
where
\begin{equation*}\label{EHOM}
\HOM_H(U, V)_n := \hom_H(q^n U, V) = \hom_H(U, q^{-n}V).
\end{equation*}
We define $\ext^d_H(U,V)$ and
$\EXT^d_H(U,V)$ similarly.
If $U$ is finitely generated, then $\HOM_H(U,V)$ coincides with the set of $H$-homomorphisms from $U$ to $V$ in the ungraded category. 
Similar argument applies to $\EXT^d_H$ provided $U$ has a resolution by finitely generated projective modules, in particular if $U$ is finitely generated and $H$ is left Noetherian. 

We denote by $\mod{H}$ the category of finitely generated (graded)  $H$-modules with morphisms given by $\hom_H$. We write $\cong$ for an isomorphism in this category. For $M,N\in\mod{H}$, we write $M\simeq N$ to indicate that $M\cong q^n N$ for some $n\in\Z$.

\subsection{Semiperfect and Laurentian algebras}\label{SSSPA}
Now we assume that $H$ is (graded) {\em semiperfect}, i.e. every finitely generated (graded) $H$-module has a (graded) projective cover.

\begin{Lemma} \label{LDas} 
{\rm \cite{Das}} 
The following are equivalent:
\begin{enumerate}
\item[{\rm (i)}] $H$ is (graded) semiperfect;
\item[{\rm (ii)}] $H_0$ is semiperfect;
\item[{\rm (iii)}] $H/N(H)$ is (graded) semisimple Artinian, and idempotents lift from $H/N(H)$ to $H$
\end{enumerate}
\end{Lemma}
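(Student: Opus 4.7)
The plan is to reduce the graded statement to the classical Bass--Müller characterization of (ungraded) semiperfect rings. The key facts I will exploit are: (a) idempotents of $H$ lie in $H_0$ by the standing hypothesis; (b) the graded Jacobson radical satisfies $N(H) \cap H_0 = N(H_0)$, which can be proved by analyzing irreducible graded $H$-modules and noting that each nonzero graded component of such a module is irreducible over $H_0$; and (c) consequently $(H/N(H))_0 = H_0/N(H_0)$. Together, (a) and (c) imply that lifting idempotents from $H/N(H)$ to $H$ is the same problem as lifting them from $H_0/N(H_0)$ to $H_0$.

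For the equivalence (ii) $\Leftrightarrow$ (iii), I would apply the classical Bass--Müller theorem to $H_0$ and then show that $H/N(H)$ is graded semisimple Artinian if and only if $H_0/N(H_0)$ is (ungraded) semisimple Artinian. One direction is immediate by taking degree-zero parts; for the other direction, given a primitive idempotent decomposition $1 = \bar e_1 + \cdots + \bar e_n$ of $H_0/N(H_0)$ (which exists when the latter is semisimple Artinian), the same decomposition is a decomposition of $1$ in $H/N(H)$ into orthogonal primitive idempotents, and each $(H/N(H))\bar e_i$ is an indecomposable graded ideal with graded-local endomorphism ring.

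For (iii) $\Rightarrow$ (i), I would construct projective covers directly. Given a graded simple $L$, the quotient $H/N(H)$ acts on it, so $L \simeq (H/N(H))\bar e$ for some primitive idempotent $\bar e \in H/N(H)$. Lift $\bar e$ to an idempotent $e \in H$ (necessarily in $H_0$) using the hypothesis. Then $He$ is a graded projective $H$-module with $He/N(H)e \cong L$; since $N(H)e \subseteq \rad(He)$ is a small submodule (this is where one needs the graded Nakayama lemma, valid once we remember that the modules in $\mod{H}$ relevant here are Laurentian), the projection $He \twoheadrightarrow L$ is a projective cover. A standard argument using orthogonal idempotents then produces projective covers of arbitrary finitely generated graded modules. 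Conversely, (i) $\Rightarrow$ (iii) is obtained by decomposing ${}_H H = \bigoplus H e_i$ via the projective covers of its graded simple quotients; the resulting primitive decomposition modulo $N(H)$ exhibits $H/N(H)$ as graded semisimple Artinian and realises the lifting.

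The main obstacle, as usual for results of this flavour in the graded setting, is ensuring that the graded analogues of Nakayama's lemma, small submodules, and lifting of orthogonal idempotents behave correctly when modules are unbounded in either direction. Since our standing setup has idempotents in degree zero and we only need projective covers of finitely generated graded modules (which become Laurentian over the Laurentian algebras considered later), these graded refinements go through by the same arguments as in Das \cite{Das}, so the proof can largely be completed by pointing to that reference after verifying that the graded hypotheses place us in its framework.
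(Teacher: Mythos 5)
The paper gives no proof of this lemma; it is quoted directly from \cite{Das}, and your sketch also ends by deferring to that reference, so the two proofs are the same in the only sense that matters here. Your reduction to the ungraded Bass--M\"uller theorem via the three facts you isolate --- $N(H)\cap H_0 = N(H_0)$, idempotents are concentrated in degree zero, and hence $(H/N(H))_0 = H_0/N(H_0)$ --- is indeed the right framework, and the construction of projective covers in (iii) $\Rightarrow$ (i) is sound.

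Two remarks. First, the parenthetical appeal to the Laurentian condition is out of place: Lemma~\ref{LDas} is stated for an arbitrary graded $F$-algebra $H$, before the Laurentian hypothesis enters the paper. The graded Nakayama fact you need (that $N(H)e$ is small in $He$) only requires that nonzero finitely generated graded modules have graded simple quotients annihilated by $N(H)$; no boundedness of the grading is used. Second, the assertion that $H_0/N(H_0)$ semisimple Artinian forces $H/N(H)$ to be \emph{graded} semisimple Artinian is where the genuine content of Dascalescu's theorem sits: decomposing $1$ into orthogonal primitive degree-zero idempotents gives $H/N(H)=\bigoplus (H/N(H))\bar e_i$ with each summand an indecomposable graded projective, but passing from ``indecomposable projective with trivial graded radical'' to ``graded simple'' needs an argument that your sketch omits. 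Since you defer to \cite{Das} in any case, this is an expository gap rather than a wrong step.
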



We fix a complete irredundant set of irreducible $H$-modules up to isomorphism and degree shift:
\begin{equation}
\label{EL(pi)}
\{L(\pi)\mid \pi\in\Pi\}. 
\end{equation} 
By the semiperfectness of $H$, the set $\Pi$ is finite. 
For each $\pi\in\Pi$, we also fix a projective cover $P(\pi)$ of $L(\pi)$. 
Let 
\begin{equation}\label{EM}
M(\pi)=\rad P(\pi) \qquad(\pi\in \Pi), 
\end{equation}
so that $P(\pi)/M(\pi)\cong L(\pi)$ for all $\pi\in\Pi.$

If $\END_H(L(\pi))$ is finite dimensional over $F$ 
then by the graded version of the Wedderburn-Artin Theorem \cite[2.10.10]{NvO} the irreducible module $L(\pi)$ is finite dimensional. Finally, if $\END_H(L(\pi))=F$ for all $\pi\in \Pi$, then $H/N(H)$ is a finite direct product of (graded) matrix algebras over $F$ and we have 
\begin{equation}\label{ERegModDec}
{}_HH=\bigoplus_{\pi\in \Pi} (\DIM L(\pi))P(\pi). 
\end{equation}

A graded algebra $H$ is called {\em Laurentian} if it is so as a graded vector space, i.e. locally finite dimensional and bounded below. In this case $\DIM H$ as well as $\DIM V$ for any finitely generated $H$-module are  Laurent series. 

\begin{Lemma} \label{CLaurent} {\rm \cite[Lemma 2.2]{Ksing}} 
Let $H$ be a Laurentian algebra. Then:
\begin{enumerate}
\item[{\rm (i)}] All irreducible $H$-modules are finite dimensional. 
\item[{\rm (ii)}] $H$ is semiperfect; in particular, there are only finitely many irreducible $H$-modules up to isomorphism and degree shift.
\end{enumerate}  
\end{Lemma}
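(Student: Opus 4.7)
The plan is to reduce both statements to the structure of $H/N(H)$, using the Laurentian hypothesis at two key points: once to invoke semiperfectness, and again to cut the graded Wedderburn decomposition down to a finite dimensional algebra. The starting observation is that $H_0$, being a single graded component of a locally finite dimensional space, is finite dimensional and hence Artinian; consequently $H_0$ is semiperfect, and Lemma~\ref{LDas} promotes this to graded semiperfectness of $H$. This yields the first assertion of (ii) and shows moreover that $H/N(H)$ is graded semisimple Artinian.

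I would then apply the graded Wedderburn--Artin theorem \cite[2.10.10]{NvO} to write
$$
H/N(H)\;\cong\;\prod_{i}\END_{D_i^{\op}}(V_i),
$$
with each $D_i$ a graded division ring and each $V_i$ a nonzero graded $D_i$-module. The central technical point, and what I expect to be the main obstacle, is proving that this entire product is finite dimensional over $F$. First, the product itself is finite because $(H/N(H))_0$ is finite dimensional and each factor contributes a nonzero summand to it. Second, each $D_i$ inherits boundedness below from the quotient $H/N(H)$, but in a graded division ring the degrees of nonzero homogeneous elements form a subgroup of $\Z$ (the inverse of an element of degree $n$ has degree $-n$), and the only subgroup of $\Z$ which is bounded below is $\{0\}$; thus $D_i$ is concentrated in degree zero and, sitting inside the finite dimensional $(H/N(H))_0$, is finite dimensional over $F$. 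Third, a parallel boundedness and local finite dimensionality argument applied to $\END_{D_i^{\op}}(V_i)$ forces $V_i$ to involve only finitely many shifts, each with finite multiplicity: infinitely many distinct shifts would produce arbitrarily negative degrees in the endomorphism algebra, while infinite multiplicity at a single shift would produce an infinite dimensional graded component, either of which contradicts Laurentian-ness.

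With $H/N(H)$ now known to be finite dimensional semisimple, the remaining claims are immediate. Any irreducible graded $H$-module is annihilated by $N(H)$, hence is a simple module over the finite dimensional semisimple algebra $H/N(H)$, and is therefore finite dimensional; this establishes (i). The number of simples of $H/N(H)$, up to isomorphism and degree shift, coincides with the number of Wedderburn factors, which we have shown is finite, completing (ii).
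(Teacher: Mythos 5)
Your proof is correct, and it is consistent with the picture the paper itself sketches in the paragraph following the lemma (that $H/N(H)$ is a finite direct sum of matrix algebras over division rings finite over $F$). The paper cites \cite[Lemma 2.2]{Ksing} rather than giving a proof in-text, so a direct comparison is not possible, but your argument is the natural one given the tools in \S\ref{SSSPA}: $H_0$ is finite dimensional hence semiperfect, Lemma~\ref{LDas} upgrades this to graded semiperfectness of $H$, and the graded Wedderburn--Artin decomposition of the graded semisimple Artinian quotient $H/N(H)$ reduces everything to controlling the graded division rings $D_i$.

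The one place you should be a bit more careful is the phrase ``each $D_i$ inherits boundedness below from the quotient $H/N(H)$.'' As written this is an assertion rather than an argument: $D_i$ is not literally a graded subalgebra of $H/N(H)$ in an obvious way. Two clean ways to fill this in: (a) note that $D_i\cong e_{11}\,\Mat_{n_i}(D_i)\,e_{11}$ is a corner of a Wedderburn factor by the degree-zero matrix unit $e_{11}$, and corners of Laurentian algebras are Laurentian (they are graded subspaces); or (b) bypass the Wedderburn decomposition at this point and argue directly that $D_i\cong\END_H(L_i)$ is non-negatively graded, since $L_i$ is bounded below (it is cyclic, hence a quotient of some $q^nH$) and a nonzero graded endomorphism of negative degree would be injective by the graded Schur lemma yet would kill the bottom graded piece of $L_i$. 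Either way the group-of-degrees argument then forces $D_i=(D_i)_0$ as you say. The rest of your proof---finiteness of the number of Wedderburn factors from finiteness of $\dim(H/N(H))_0$, finite rank of each $V_i$, and finite dimensionality of simples as quotients of the finite dimensional semisimple $H/N(H)$---is sound.
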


{\em From now on}, we work with a Laurentian algebra $H$. Since $H$  is semiperfect by the lemma, we can  adopt the notation (\ref{EL(pi)}).  We have that $H/N(H)$ is a finite direct sum of matrix algebras over division rings which are finite over $F$, in particular, $H/N(H)$ is finite dimensional.

\begin{Lemma} \label{LJacLargeDeg}
Let $N$ be the Jacobson radical of $H$. Then for every $m\in\Z$ there exists $k=k(m)$ such that $N^k\subseteq H_{\geq m}$.  
\end{Lemma}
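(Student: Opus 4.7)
The plan is to reduce to proving $\bigcap_{k \geq 1} N^k = 0$, and then to establish this vanishing via nilpotency in finite-dimensional subalgebras of $H$. For the reduction: fix $m$; for each $n$ in the finite range $n_0 \leq n < m$ (where $n_0$ is the lower bound of $H$), the decreasing chain $N^k \cap H_n$ of subspaces of the finite-dimensional space $H_n$ stabilizes, necessarily at $\bigl(\bigcap_k N^k\bigr) \cap H_n = 0$. Hence $N^k \cap H_n = 0$ for $k$ large, and taking the maximum over this finite range of $n$'s yields $k(m)$.

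To prove $\bigcap_k N^k = 0$: first note that $H_0$ is finite dimensional by Laurentianity, and $N_0 := N \cap H_0$ coincides with the Jacobson radical $N(H_0)$ (by the standard graded argument suggested in the commented-out corollary preceding this lemma). Hence $N_0^\ell = 0$ for some $\ell$. More generally, $C := H_{\leq 0}$ is a finite-dimensional subalgebra, and I would show $N \cap C \subseteq N(C)$ by checking that for homogeneous $x \in C$ and $y \in N \cap C$, the inverse of $1 - xy$ in $H$ (which exists since $xy \in N$) lies in $C$: the Neumann series $\sum_{j \geq 0}(xy)^j$ terminates in each graded degree, since boundedness of $H$ below kills low negative-degree contributions after finitely many iterations, and $N_0^\ell = 0$ kills the degree-zero component. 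This gives $y \in N(C)$, hence $(N \cap C)^T = 0$ for some $T$ by finite-dimensionality of $C$.

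Finally, suppose for contradiction that $0 \neq w \in \bigcap_k N^k$ is homogeneous of degree $d$. Then for each $k$, $w$ can be written as a sum of products $y_1 \cdots y_k$ of homogeneous elements of $N$ with $\sum_i \deg y_i = d$. Split the indices into \emph{low} ($y_i \in N \cap C$, so $\deg y_i \leq 0$) and \emph{high} ($\deg y_i > 0$). By $(N \cap C)^T = 0$, runs of consecutive low factors in any nonzero summand have length $< T$; combining with the finite-dimensionality of $H/N$ (so $H_n \subseteq N$ for $n$ large) and a careful degree-counting, the total degree $\sum_i \deg y_i$ must grow linearly with $k$, contradicting the fixed value $d$. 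The main obstacle is making this degree-counting rigorous when $H$ has negative-degree components ($n_0 < 0$), since bounding run lengths alone allows many low factors with cumulative negative contribution; one must additionally bound the total number of low factors in a nonzero product, for instance by analyzing the two-sided ideal $H (N \cap C) H$ or by inducting on the range of degrees.
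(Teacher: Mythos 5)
Your reduction to showing $\bigcap_{k\geq1}N^k=0$ is valid (since $N$ is a graded ideal, $(\bigcap_kN^k)\cap H_n$ is the stable value of the descending chain of subspaces of the finite-dimensional $H_n$), and the nilpotence of $N\cap C$ for $C=H_{\leq0}$ goes through as you sketched. But the gap you flag at the end is genuine, not a formality. Bounding runs of consecutive low factors by $T$ does nothing to bound their total number: a product $y_1\cdots y_k$ may alternate low and high homogeneous factors of degrees $-1,+1,-1,+1,\dots$ indefinitely, with all partial degrees staying in a bounded window, so $\sum_i\deg y_i$ need not grow with $k$. Your proposed patch via the two-sided ideal $H(N\cap C)H$ does not close the gap either: $(N\cap C)^T=0$ gives no control on $\big(H(N\cap C)H\big)^T$, because the copies of $H$ sandwiched between the $N\cap C$ factors defeat the vanishing.

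The paper's argument is shorter and avoids this trap. Since the simples are finite dimensional and finitely many up to shift, there is $n>0$ with $H_{\geq n}$ annihilating every simple, hence $H_{\geq n}\subseteq N$; enlarge $n$ if necessary so that $n>M$, where $M\geq0$ is such that $H$ is supported in degrees $\geq-M$. The two-sided ideal $J:=HH_{\geq n}H$ satisfies $H_{\geq n}\subseteq J\subseteq N$, so $H/J$ is supported in degrees $[-M,n)$ and is therefore finite dimensional, with Jacobson radical $N/J$. Nilpotence of $N/J$ gives $N^i\subseteq J$ for some $i$. Finally, $J^j$ is spanned by products $h_0a_1h_1\cdots a_jh_j$ with $a_l\in H_{\geq n}$ and $h_l\in H$, of degree at least $jn-(j+1)M=j(n-M)-M$, which tends to $\infty$; hence $N^{ij}\subseteq J^j\subseteq H_{\geq m}$ once $j$ is large enough. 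The structural input you are missing is precisely the inclusion $N^i\subseteq J$: it forces factors from $H_{\geq n}$ to appear with positive density in any long product of elements of $N$, which is exactly what your low/high dichotomy cannot extract on its own.
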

\begin{proof}
Since there are only finitely many irreducible $H$-modules and they are all finite-dimensional, there is $n>0$ such that $H_{\geq n}$ annihilates all irreducibles, and hence $H_{\geq n}\subseteq N$, and so the two-sided ideal $HH_{\geq n}H$ generated by $H_{\geq n}$ is also contained in $N$. 
Since $N/HH_{\geq n}H$ is the Jacobson radical of the finite dimensional algebra $H/HH_{\geq n}H$, it is nilpotent. It follows that $N^i\subseteq HH_{\geq n}H$ for some $i$. Since $H$ is Laurentian, the result follows. 
\end{proof}

Laurentian algebras inherit the following pleasant property from semiprimary algebras, cf. \cite[Statement 6]{DR}: 

\begin{Lemma} \label{LIdIdeal}
Let $J$ be an ideal in $H$. Then $J^2=J$ if and only if $J=HeH$ for some idempotent $e\in H$. 
\end{Lemma}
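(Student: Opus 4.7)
Proof proposal:

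The easy direction $(\Leftarrow)$ follows from $HeH\cdot HeH\supseteq He^{2}H=HeH$, together with the trivial $J^{2}\subseteq J$.

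For the converse, my plan is to produce the idempotent $e$ from the semisimple quotient $\bar H:=H/N$, and then promote the modular equality $HeH\equiv J\pmod{N}$ to an actual equality by combining $J^{2}=J$ with Lemma~\ref{LJacLargeDeg}. By Lemma~\ref{LDas}, $\bar H$ is graded semisimple Artinian with idempotents lifting from $\bar H$ to $H$. In $\bar H$ every ideal is generated by a central idempotent, so $\bar J:=(J+N)/N=\bar c\bar H$ for a central idempotent $\bar c$, which I lift to an idempotent $e\in H$. Centrality of $\bar c$ ensures $(HeH+N)/N=\bar H\bar c\bar H=\bar c\bar H=\bar J$, so $HeH+N=J+N$.

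To upgrade this to actual equality, I would bootstrap using $J=J^{2}$ twice. For $HeH\subseteq J$ it suffices to show $e\in J$: starting from $e=j_{0}+n_{0}$ with $j_{0}\in J$, $n_{0}\in N$, expanding $e=e^{2}$ and using $J^{2},JN,NJ\subseteq J$ gives $e\in J+N^{2}$, and iterating yields $e\in J+N^{2^{k}}$ for every $k$. Similarly, from $J\subseteq HeH+N$ one gets $J=J\cdot J\subseteq J(HeH+N)\subseteq HeH+JN$, since $J\cdot HeH\subseteq HeH$; absorbing $HeH\cdot N\subseteq HeH$ at each step, an induction delivers $J\subseteq HeH+JN^{k}$ for every $k$.

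The main step, and the expected obstacle, is closing the gap by a degree-wise comparison; this is where the Laurentian hypothesis is essential. By Lemma~\ref{LJacLargeDeg} both error terms $N^{2^{k}}$ and $JN^{k}$ can be pushed into $H_{\geq m}$ for arbitrarily large $m$. Since $e\in H_{0}$ and $J,HeH$ are graded ideals, writing $e=j+h$ with $j\in J$, $h\in H_{\geq 1}$ and reading off the degree-$0$ component forces $e\in J_{0}\subseteq J$. For the reverse inclusion, a homogeneous $x\in J_{n}$ must coincide with its degree-$n$ component in $HeH+JN^{k}$; choosing $k$ so that $JN^{k}$ sits strictly above degree $n$ (this uses that $H$, hence $J$, is bounded below, so $J\cdot H_{\geq m}\subseteq H_{\geq d+m}$ for the bottom degree $d$ of $H$) forces $x\in(HeH)_{n}\subseteq HeH$. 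The two inclusions together give $J=HeH$.
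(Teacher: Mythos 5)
Your argument is correct and is essentially the paper's proof: pass to $\bar H=H/N$, produce a generating idempotent there, lift it to $H$ via semiperfectness, and then upgrade $J+N=HeH+N$ to $J=HeH$ by pushing the error into $N^i\subseteq H_{\geq m}$ (Lemma~\ref{LJacLargeDeg}) and comparing homogeneous components. The only cosmetic difference is that the paper packages the bootstrap in the single identity $J+N^i=(J+N)^i=(HeH+N)^i=HeH+N^i$, whereas you track the two inclusions $e\in J+N^{2^k}$ and $J\subseteq HeH+JN^k$ separately; both rest on the same use of $J^2=J$ (resp.\ $e=e^2$) and the ideal property.
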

\begin{proof}
The `if-part' is clear. Conversely, suppose that $J^2=J$. Working in the finite dimensional algebra $\bar H:=H/N$, where $N$ is the Jacobson radical of $H$, we have $((J+N)/N)^2=(J+N)/N$. By the finite dimensional result \cite[Statement 6]{DR}, there is an idempotent $\bar e\in H/N$ such that $(J+N)/N=\bar H \bar e \bar H$. Since $H$ is semiperfect by Lemma~\ref{CLaurent}, we have $\bar e =e+N$ for some idempotent $e\in H$, thanks to Lemma~\ref{LDas}.  Thus $J+N=HeH+N$. Then we also have for any $i$ that 
$$
J+N^i=(J+N)^i=
(HeH+N)^i=HeH+N^i.
$$
Making $i$ very large, looking at the degrees, and using Lemma~\ref{LJacLargeDeg}, we now deduce that $J=HeH$. 
\end{proof}

If $V\in\mod{H}$, then, picking homogeneous generators $v_1,\dots,v_s$, 
we have an exhaustive decreasing filtration with finite dimensional subquotients: 
$$
V=H(v_1,\dots,v_n)\supseteq HH_{\geq 1}(v_1,\dots,v_n)\supseteq HH_{\geq 2}(v_1,\dots,v_n)\supseteq\dots
$$
Now take any exhaustive filtration 
$
V=V_0\supseteq V_1\supseteq V_2\supseteq\dots 
$
with finite dimensional subquotients. 
Since $V$ is bounded below, for each $m$ there exists $M$ such that $(V_k/V_{k+1})_m=0$ for all $k>M$. 
So for each $n$ there exists $N$ such that $q^n L(\pi)$ is not a composition of $V_k/V_{k+1}$ for all $k>N$ and all $\pi\in\Pi$. This shows that the {\em graded multiplicity} 
$$
[V:L(\pi)]_q:=\sum_{k\geq 0}[V_k/V_{k+1}:L(\pi)]_q
$$
is a Laurent series. 
The multiplicity does not depend on the choice of an exhaustive decreasing filtration with finite dimensional subquotients, since it can be described in invariant terms as follows:
\begin{equation}\label{EMult}
[V:L(\pi)]_q=\DIM\HOM_H(P(\pi),V)/\dim\operatorname{end}_H(L(\pi)),
\end{equation}
In particular, we can speak of composition factors of $V$. 
We can now embed the Grothendieck group $[\mod{H}]$ into the free $\Z((q))$-module on the basis $\{[L(\pi)]\mid \pi\in\Pi\}$ of classes irreducible modules, where  $\Z((q))$ is the ring of formal Laurent series.

\subsection{Graded categories}\label{SGrCat}
 Following \cite{BLW}, we define a {\em  graded category} as an additive category $\catC$ 
equipped with an adjoint pair $(q,q^{-1})$ of self-equivalences,  called {\em degree shift functors}.  
For example, $\mod{H}$ is a graded  category. 
Just like $\mod{H}$, for objects $V,W$ in a graded category $\catC$, we write $V\simeq W$ to indicate that $V\cong q^n M$ for some $n\in\Z$. 
Given a Laurent polynomial $f(q) = \sum_{n \in \Z} f_n
q^n$ with non-negative coefficients and an object $V$ in $\catC$,  denote 
\begin{equation}\label{E151113Cat}
f(q) V:=\bigoplus_{n \in \Z} q^n V^{\oplus f_n}.
\end{equation}

We denote by $\hom_\catC(C,C')$ the abelian group of morphisms from an object $C$ to an object $C'$ in $\catC$, and set:
$$
\HOM_\catC(C,C'):=\bigoplus_{n\in\Z}\HOM_\catC(C,C')_n,
$$
where 
$$
\HOM_\catC(C,C')_n:=\hom_\catC(q^nC,C') \cong 
\hom_\catC(C,q^{-n}C')
\qquad(n\in\Z).
$$
In fact, this allows one to define an {\em extended category}\, $\hat \catC$, enriched in graded abelian groups, with the same objects as $\catC$ and morphisms given by $\HOM$. We note that $\END_\catC(C):=\HOM_\catC(C,C)$ is a graded ring for any object $C$. If $\catC$ is abelian, we define $\EXT^i_\catC(C,C')$ similarly: 
$$
\EXT^i_\catC(C,C'):=\bigoplus_{n\in\Z}\EXT^i_\catC(C,C')_n,
$$
where $\EXT^i_\catC(C,C')_n:=\ext^i_\catC(q^nC,C')$ for any $n\in\Z$. 

Let $(\catC,q_\catC),(\catD,q_\catD)$ be graded categories. A {\em  graded functor} from $\catC$ to $\catD$ is an additive functor $\funF:\catC\to\catD$ equipped with a natural isomorphism  
$
q_\catD\circ \funF\iso \funF\circ q_\catC.
$
Using adjunctions one gets from this canonical isomorphisms $q_\catD^n\circ \funF\iso \funF\circ q_\catC^n$ for all $n\in \Z$. 
A graded functor induces a functor $\hat \funF:\hat\catC\to\hat\catD$ on extended categories, which is equal to $\funF$ on objects, and on morphisms it is given by compositions
$$
\hat\funF f:=
q_\catD^n\funF C\iso \funF q_\catC^n C\stackrel{\funF f}{\longrightarrow}\funF C'\qquad(f\in\HOM_\catC(C,C')_n).
$$

Graded categories $\catC$ and $\catD$ are {\em graded equivalent}, if there exists a {\em graded equivalence} between them, i.e. a graded functor, which is an equivalence of categories  in the usual sense.

\section{Noetherian Laurentian categories}\label{SNLCat}
\subsection{First properties}
\label{SSGradedNoethCat}
From now on $\catC$ will be a graded abelian $F$-linear category. If there is no confusion, we drop the index and write $\hom$ instead of $\hom_\catC$, $\EXT^i$ instead of $\EXT^i_\catC$, etc. 
We assume that $\catC$ has a (not necessarily finite) set 
$$
\{L(\pi)\mid \pi\in\Pi\}
$$
of simple objects, which is complete and irredundant up to isomorphism and degree shift, i.e. each simple object in $\catC$ is isomorphic to $q^nL(\pi)$ for exactly one pair $(n,\pi)\in\Z\times \Pi$. In particular, 
$$L(\pi)\not\cong q^n L(\pi)$$ for all $\pi\in\Pi$ and $n\neq 0$. 
Then $\catC$ is called a {\em Noetherian Laurentian category} if, in addition, the following properties hold:
\begin{enumerate}
\item[{\tt (NLC1)}] Every object $C$ in $\catC$ is Noetherian and has a chain of subobjects $C\supseteq C_1\supseteq C_2\supseteq \dots$ such that $C/C_m$ is finite length and $\bigcap_{m\geq 0} C_m=0$;
  
\item[{\tt (NLC2)}] For every $\pi\in\Pi$, the simple object $L(\pi)$ has a projective cover $\phi_\pi:P(\pi)\to L(\pi)$;

\item[{\tt (NLC3)}] For all $\pi,\si\in\Pi$, the graded vector space $\HOM(P(\pi),P(\si))$ is Laurentian.  
\end{enumerate} 

We denote  
\begin{equation}\label{EM(pi)}
M(\pi):=\ker  \phi_\pi \qquad\qquad(\pi\in\Pi). 
\end{equation}

If $C$ is an object of $\catC$, and $X$ is any family of morphisms from objects of $\catC$ to $C$, we denote 
\begin{equation}\label{ETrace}
CX:=\sum_{f\in X}\Im f \subseteq C,
\end{equation}
which is well-defined since $\De(\pi)$ is Noetherian.

The category $\mod{H}$ of finitely generated (graded) modules over a left Noetherian Laurentian algebra $H$ is an example of a Noetherian Laurentian category.

\begin{Lemma} \label{L171013}
Let $\catC$ be a Noetherian Laurentian category, $\pi\in\Pi$, and $C$ be an object of $\catC$. Then: 
\begin{enumerate}
\item[{\rm (i)}] There is an epimorphism $P\to C$, where $P$ is a finite direct sum of modules of the form $q^nP(\si)$. 
\item[{\rm (ii)}] The graded vector space $\HOM(P(\pi),C)$ is Laurentian. 
\item[{\rm (iii)}] Let $C\supseteq C_1\supseteq C_2\supseteq \dots$ be a filtration as in {\tt (NLC1)}. For every $m\in\Z$ there exists $N=N(m)$ such that $\hom(q^mP(\pi),C_n) =0$ for all $n\geq N$. 
\item[{\rm (iv)}] We have that $\END(L(\pi))=\operatorname{end}(L(\pi))$, and $\dim\operatorname{end}(L(\pi))<\infty$. 
\end{enumerate}
\end{Lemma}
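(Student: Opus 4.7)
The plan is to prove the four parts in the order (iv), (i), (ii), (iii), since (iv) stands largely on its own while each of the remaining parts feeds the next. For (iv), the axiom that $L(\pi)\not\cong q^n L(\pi)$ for $n\neq 0$ forces $\END(L(\pi))_n=\hom(q^nL(\pi),L(\pi))=0$ whenever $n\neq 0$, because a nonzero morphism between two simple objects is an isomorphism; hence $\END(L(\pi))=\operatorname{end}(L(\pi))$. For finite dimensionality, precomposition with the projective cover $\phi_\pi:P(\pi)\twoheadrightarrow L(\pi)$ from {\tt (NLC2)} embeds $\operatorname{end}(L(\pi))$ into $\hom(P(\pi),L(\pi))$, and the latter is a quotient of $\hom(P(\pi),P(\pi))=\HOM(P(\pi),P(\pi))_0$ by the projectivity of $P(\pi)$. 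This is finite dimensional since $\HOM(P(\pi),P(\pi))$ is Laurentian by {\tt (NLC3)}.

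For (i), I would argue by maximality. Let $\mathcal S$ denote the collection of subobjects $D\subseteq C$ admitting an epimorphism onto them from a finite direct sum of modules of the form $q^nP(\si)$. The family $\mathcal S$ contains $0$, and by the Noetherianness of $C$ built into {\tt (NLC1)}, the ACC on subobjects yields a maximal element $D_0\in\mathcal S$. If $D_0\neq C$, then the nonzero Noetherian quotient $C/D_0$ has a maximal proper subobject $E/D_0$, so the simple cokernel $C/E$ has the form $q^k L(\si')$ for some $\si'\in\Pi$ and $k\in\Z$. Using projectivity of $q^kP(\si')$ from {\tt (NLC2)}, I lift the surjection $C\twoheadrightarrow C/E\cong q^kL(\si')$ to a morphism $f:q^kP(\si')\to C$ whose composite with $C\twoheadrightarrow q^kL(\si')$ is the (shifted) projective cover map. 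Then $\Im f\not\subseteq E$ (otherwise the composite would vanish), and in particular $\Im f\not\subseteq D_0$; adjoining $f$ to an epimorphism witnessing $D_0\in\mathcal S$ produces an element of $\mathcal S$ strictly larger than $D_0$, contradicting maximality.

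Part (ii) now follows readily: applying the exact functor $\HOM(P(\pi),-)$ to an epi $P\twoheadrightarrow C$ as in (i), where $P=\bigoplus_{j} q^{n_j}P(\si_j)$ is a finite direct sum, exhibits $\HOM(P(\pi),C)$ as a graded quotient of
$$
\HOM(P(\pi),P)\;=\;\bigoplus_{j} q^{n_j}\HOM(P(\pi),P(\si_j)).
$$
Each summand is Laurentian by {\tt (NLC3)}, a finite direct sum of Laurentian graded vector spaces is Laurentian, and a graded quotient of a Laurentian space is Laurentian. For (iii), fix $m\in\Z$ and consider the descending chain $V_n:=\hom(q^mP(\pi),C_n)\subseteq V_0:=\HOM(P(\pi),C)_m$ of subspaces of $V_0$. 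Part (ii) guarantees that $V_0$ is finite dimensional, so the chain stabilizes at some $V_N$; on the other hand, any morphism in $\bigcap_n V_n$ has image in $\bigcap_n C_n=0$, so $\bigcap_n V_n=0$, which forces $V_N=0$. I expect the main obstacle to be part (i): one must ensure not only that a maximal $D_0\in\mathcal S$ exists, but also that a nonzero Noetherian object always admits a simple quotient (itself a consequence of ACC, via Zorn applied to proper subobjects), and then combine this with the projectivity afforded by {\tt (NLC2)} to produce the strictly enlarging morphism.
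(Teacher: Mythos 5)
Your proof is correct and follows essentially the same approach as the paper: Noetherianness to terminate the construction of the covering epimorphism in (i), {\tt (NLC3)} to pass Laurentian bounds through finite direct sums and quotients for (ii), and the exhaustiveness $\bigcap_n C_n=0$ together with finite dimensionality for (iii). The only organisational difference is that you prove (iv) first, deducing finiteness of $\operatorname{end}(L(\pi))$ directly from {\tt (NLC3)} via $\hom(P(\pi),P(\pi))\twoheadrightarrow\hom(P(\pi),L(\pi))$, whereas the paper proves (iv) last by citing (ii); both routes are valid, and your versions of (i) and (iii) are a touch more explicit about the maximality and stabilisation steps than the paper's terse statements, but the underlying ideas are identical.
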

\begin{proof}
(i) 
It is easy to see that there is a nonzero morphism $f_\tau:q^nP(\tau)\to C$ for some $n\in\Z$ and $\tau\in\Pi$. If it is not an epimorphism, then there is a non-zero morphism $\bar f_\si:q^mP(\si)\to C/\im f_\tau$, which lifts to a morphism $f_\si:q^mP(\si)\to C$ with $\im f_\tau+\im f_\si\supsetneq \im f_\tau$. Continuing this way and using the assumption that $C$ is Noetherian, we get a finite family of morphisms $f_\tau:q^nP(\tau)\to C,f_\si:q^mP(\si)\to C,\dots,f_\kappa:q^kP(\kappa) \to C$ such that $\im f_\tau+\im f_\si+\dots+\im f_\kappa=C$. 

(ii) follows from (i) and {\tt (NLC3)}. 

(iii) By (ii), we have $\dim\hom(q^mP(\pi),C)<\infty$, and the result follows from the fact that the filtration is exhaustive. 

(iv) the first statement follows from the assumption $L(\pi)\not\cong q^nL(\pi)$ for all $n\neq 0$. For the second statement, every endomorphism $L(\pi)\to L(\pi)$ lifts to a homomorphism $P(\pi)\to L(\pi)$, but $\dim \operatorname{hom}(P(\pi),L(\pi))$ is finite by (ii). 
\end{proof}

\begin{Corollary} \label{CRes}
Let $\catC$ be a Noetherian Laurentian category and $C,C'$ be  object of $\catC$. Then:
\begin{enumerate}
\item[{\rm (i)}] There a projective resolution\,
$\dots\to P_2\to P_1\to P_0\to C$, 
where each $P_i$ is a finite direct sum of modules of the form $q^nP(\pi)$. 
\item[{\rm (ii)}] The graded vector space $\EXT^i(C,C')$ is Laurentian for all $i\geq 0$. 
\item[{\rm (iii)}] Fix $i\geq 0$. Then the vector spaces $\EXT^i(C,L(\pi))$ are finite dimensional for all $\pi\in\Pi$, and the set $\{\pi\in\Pi\mid \EXT^i(C,L(\pi))\neq0\}$ is finite.
\end{enumerate} \end{Corollary}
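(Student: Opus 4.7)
The plan is to build a projective resolution as in (i) by iterating Lemma~\ref{L171013}(i), and then to deduce (ii) and (iii) by computing $\EXT^i(C,-)$ via this resolution. For (i), apply Lemma~\ref{L171013}(i) to obtain an epimorphism $P_0\twoheadrightarrow C$ with $P_0$ a finite direct sum of shifts of indecomposable projectives, and let $K_0$ denote its kernel. By {\tt (NLC1)} every object of $\catC$ is Noetherian; in particular so is $P_0$, being a finite direct sum of Noetherian objects, and hence so is the subobject $K_0$. Therefore Lemma~\ref{L171013}(i) applies again to $K_0$, yielding $P_1\twoheadrightarrow K_0$, and iterating produces the desired resolution.

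For (ii), the projectivity of the $P_i$ lets us identify $\EXT^i(C,C')$ as the $i$-th cohomology of $\HOM(P_\bullet,C')$, hence as a subquotient of $\HOM(P_i,C')$. Writing $P_i\cong\bigoplus_{k=1}^{r}q^{m_k}P(\si_k)$ as a finite direct sum gives
\[\HOM(P_i,C')\cong\bigoplus_{k=1}^{r}q^{-m_k}\HOM(P(\si_k),C'),\]
and each summand is Laurentian by Lemma~\ref{L171013}(ii). Finite direct sums, degree shifts, and subquotients all preserve the Laurentian property (both local finite-dimensionality and the bounded-below condition pass to subobjects and quotients), so $\EXT^i(C,C')$ is Laurentian.

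For (iii), the key observation is that $\HOM(P(\si),L(\pi))$ is finite dimensional for all $\si,\pi\in\Pi$ and vanishes unless $\si=\pi$. This rests on the standard fact that for the projective cover $\phi_\si\colon P(\si)\twoheadrightarrow L(\si)$ the kernel $M(\si)$ is superfluous and is itself a maximal subobject (since $L(\si)$ is simple), which forces $M(\si)=\rad P(\si)$; thus $L(\si)$ is the unique simple quotient of $P(\si)$. Any nonzero morphism $q^nP(\si)\to L(\pi)$ must then identify $L(\pi)$ with $q^nL(\si)$, which by the irredundancy of $\{L(\pi)\}_{\pi\in\Pi}$ up to shift forces $\si=\pi$ and $n=0$; the resulting space of morphisms equals $\operatorname{end}(L(\pi))$, finite dimensional by Lemma~\ref{L171013}(iv). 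Hence $\HOM(P_i,L(\pi))$ is finite dimensional and nonzero only when $\pi$ appears among the labels $\si_k$ of the summands of $P_i$, and both conclusions in (iii) follow because $\EXT^i(C,L(\pi))$ is a subquotient of $\HOM(P_i,L(\pi))$. The main subtlety---though hardly a serious obstacle---is this head-of-projective-cover argument; everything else is routine bookkeeping with the resolution built in (i).
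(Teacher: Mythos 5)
Your proof is correct and follows essentially the same route as the paper: part (i) by iterating Lemma~\ref{L171013}(i) on kernels (which are Noetherian, being subobjects of Noetherian objects), and parts (ii)--(iii) by computing $\EXT^i$ from the resolution and invoking the finiteness results of Lemma~\ref{L171013}. The paper's own proof of (iii) is terse --- it simply cites (i) and Lemma~\ref{L171013}(ii) --- while you make explicit the head-of-projective-cover argument showing that $\HOM(P(\si),L(\pi))$ is concentrated in degree zero and vanishes unless $\si=\pi$; this is exactly the observation the paper leaves implicit (note that Lemma~\ref{L171013}(iv), which you invoke, is itself proved there from \ref{L171013}(ii)), so this is detail-filling rather than a different method.
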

\begin{proof}
(i) follows from Lemma~\ref{L171013}(i), while (ii) and (iii) follow from (i) and Lemma~\ref{L171013}(ii)
\end{proof}

By Lemma~\ref{L171013}(ii), for any object $C$ in $\catC$, the {\em graded dimension} 
$$
\DIM\HOM(P(\pi),C)=\sum_{n\in\Z}(\dim \hom(q^nP(\pi),M))q^n
$$
is a Laurent series. We define the {\em graded multiplicity} of $L(\pi)$  in $C$ to be the Laurent series
\begin{equation}\label{EMultHom}
[C:L(\pi)]_q:=(\DIM\HOM(P(\pi),C))/(\dim \operatorname{end}(L)),
\end{equation}
cf. Lemma~\ref{L171013}(iv). We say that the multiplicity $[C:L(\pi)]_q$ is finite if it is a Laurent polynomial. 
In general, $\dim \operatorname{end}(L(\pi))=\dim \hom(P(\pi),L(\pi))$, so $[C:L(\pi)]_q\in\Z((q))$ (Laurent series with integral coefficients). In particular, it makes sense to speak of the {\em composition factors} of objects in $\catC$. 

Consider the $\Z((q))$-module which is a direct product $
G(\Pi):=\prod_{\pi\in\Pi}\Z((q)) 
$ of rank one free $\Z((q))$-modules. 
Write $\sum_{\pi\in\Pi}m_\pi[L(\pi)]$ for $(m_\pi)_{\pi\in\Pi}\in G(\Pi)$. 
For any object $C\in\catC$, define the element 
\begin{equation}\label{E080414_1}
[C]:=\sum_{\pi\in\Pi}[C:L(\pi)]_q[L(\pi)]\in G(\pi).
\end{equation}
Denote by $[\catC]_q\subseteq G(\Pi)$ the $\Z[q,q^{-1}]$-submodule which consists of all such $[C]$ with $C\in\catC$. We
refer to $[\catC]_q$ as the {\em graded Grothendieck group of $\catC$}. 


\begin{Lemma} \label{LExCrit}
Let $\dots \to C_{n+1}\to C_n\to C_{n-1}\to \dots$ be a complex in $\catC$. This complex is exact if and only if the induced complexes of vector spaces 
$$
\dots \to \HOM(P(\pi),C_{n+1})\to \HOM(P(\pi),C_n)\to \HOM(P(\pi),C_{n-1})\to \dots
$$
are exact for all $\pi\in\Pi$. 
\end{Lemma}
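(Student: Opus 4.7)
The plan is to reduce the statement to the standard fact that for a Noetherian Laurentian category $\catC$, the family $\{P(\pi)\mid\pi\in\Pi\}$ is a family of projective generators in the sense that it detects zero objects. That is, I would first establish the auxiliary fact

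\textbf{Claim.} For any $X$ in $\catC$, we have $X=0$ if and only if $\HOM(P(\pi),X)=0$ for all $\pi\in\Pi$.

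Indeed, if $X\neq 0$, then by Lemma~\ref{L171013}(i) there is an epimorphism $\bigoplus_{j=1}^k q^{n_j}P(\si_j)\twoheadrightarrow X$, which is in particular nonzero, so at least one of its restrictions $q^{n_j}P(\si_j)\to X$ is nonzero, giving $\HOM(P(\si_j),X)_{n_j}\neq 0$.

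For the ``only if'' direction, an exact complex $\dots\to C_{n+1}\to C_n\to C_{n-1}\to\dots$ decomposes into short exact sequences $0\to K_n\to C_n\to I_{n-1}\to 0$, where $K_n=\ker(C_n\to C_{n-1})$ and $I_{n-1}=\im(C_n\to C_{n-1})$, together with the identifications $K_{n-1}=I_{n-1}$. Since each $P(\pi)$ is projective, $\HOM(P(\pi),-)$ is exact on short exact sequences; splicing these back yields exactness of the induced $\HOM$ complex.

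For the converse, write $K_n:=\ker(d_n)$ and $I_n:=\im(d_{n+1})$, so $I_n\subseteq K_n$ by the complex property, and the task is to show $K_n/I_n=0$. Left exactness of $\HOM(P(\pi),-)$ gives $\HOM(P(\pi),K_n)=\ker\HOM(P(\pi),d_n)$. Factoring $d_{n+1}$ as $C_{n+1}\twoheadrightarrow I_n\hookrightarrow C_n$ and applying $\HOM(P(\pi),-)$ to the epimorphism (using projectivity) identifies the image of $\HOM(P(\pi),d_{n+1})$ with $\HOM(P(\pi),I_n)$. Thus the assumed exactness of the induced complex at $\HOM(P(\pi),C_n)$ reads $\HOM(P(\pi),I_n)=\HOM(P(\pi),K_n)$. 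Applying $\HOM(P(\pi),-)$ to $0\to I_n\to K_n\to K_n/I_n\to 0$ and using projectivity once more yields $\HOM(P(\pi),K_n/I_n)=0$ for every $\pi$. The Claim then forces $K_n/I_n=0$, completing the argument.

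There is no real obstacle here: everything is a straightforward consequence of projectivity together with the generation property of the $P(\pi)$. The only point that genuinely uses the Noetherian/Laurentian hypotheses is the Claim, which rests on Lemma~\ref{L171013}(i); every other step is formal homological algebra valid in any abelian category with enough projectives.
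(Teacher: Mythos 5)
Your proof is correct and takes essentially the same approach as the paper: both rest on projectivity of the $P(\pi)$ together with the fact that they jointly detect nonzero objects, which is what lets one lift the nonvanishing of $\ker d_n/\im d_{n+1}$ to a nonvanishing homology class of the $\HOM$ complex. The paper phrases the detection step by extracting a composition factor $q^m L(\pi)$ of $\ker g/\im f$ and lifting via projectivity, whereas you package it as a separate ``Claim'' proved from Lemma~\ref{L171013}(i); these are equivalent incarnations of the same idea.
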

\begin{proof}
The `only-if' direction comes from the projectivity of $P(\pi)$. Conversely, assume that $C_{n+1}\stackrel{f}{\to} C_n\stackrel{g}{\to} C_{n-1}$ with $\im f\subsetneq \ker g$. Let $q^mL(\pi)$ be a composition factor of the quotient $\ker g/\im f$. Then there is a homomorphism $\phi\in \hom(q^mP(\pi),C_n)$ which is in the kernel of the induced map 
$$g_*: \hom(q^mP(\pi),C_n)\to \hom(q^mP(\pi),C_{n-1})$$ but not in the image of  $f_*: \hom(q^mP(\pi),C_{n+1})\to \hom(q^mP(\pi),C_{n})$. 
\end{proof}

\begin{Lemma}\label{mittagleffler} 
Let $U,V$ be objects of a Noetheiran Laurentian graded category $\catC$, $i\geq 0$ and $\ext^i(U, L) = 0$ for all composition factors $L$ of $V$. Then
$\ext^d_{\catC}(U,V)=0$. 
\end{Lemma}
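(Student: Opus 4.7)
The plan is to prove the lemma in two stages: first handle the case when $V$ has finite length by a standard induction, then deduce the general case by a Mittag--Leffler style argument (which is precisely why the lemma is so named). The technical heart will be Lemma~\ref{L171013}(iii), which lets us control morphisms from a finitely generated projective into the deep terms of an exhaustive filtration.

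For the finite length case I would induct on the composition length of $V$. If $V=0$ there is nothing to prove. Otherwise choose any short exact sequence $0\to V'\to V\to V''\to 0$ with $V',V''$ of strictly smaller length; the composition factors of $V'$ and $V''$ are composition factors of $V$, so by induction $\ext^i(U,V')=\ext^i(U,V'')=0$. The long exact sequence of $\EXT$ then gives $\ext^i(U,V)=0$. Note this argument works for every $i\geq 0$ simultaneously.

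For general $V$, fix a filtration $V=V_0\supseteq V_1\supseteq V_2\supseteq\cdots$ as in {\tt (NLC1)} (so each $V/V_m$ is finite length and $\bigcap_m V_m=0$), and fix by Corollary~\ref{CRes}(i) a projective resolution $\cdots\to P_1\xrightarrow{d_1}P_0\to U\to 0$ in which every $P_k$ is a finite direct sum of shifted $P(\pi)$'s. Represent a class in $\ext^i(U,V)$ by a cocycle $f\colon P_i\to V$. Composition factors of the quotient $V/V_m$ are composition factors of $V$, so the finite length case yields $\ext^i(U,V/V_m)=0$; hence the composite $\bar f_m\colon P_i\to V\to V/V_m$ is a coboundary $\bar f_m=g_m d_i$ for some $g_m\colon P_{i-1}\to V/V_m$ (with the convention $g_m=0$ when $i=0$). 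Since $P_{i-1}$ is projective, lift $g_m$ to $\tilde g_m\colon P_{i-1}\to V$. Then $f-\tilde g_m d_i$ projects to zero in $V/V_m$, so it factors through the inclusion $V_m\hookrightarrow V$, giving a morphism $h_m\colon P_i\to V_m$. Writing $P_i=\bigoplus_j q^{n_j}P(\pi_j)$ (a finite direct sum), Lemma~\ref{L171013}(iii) produces for each $j$ an integer $N_j$ with $\hom(q^{n_j}P(\pi_j),V_m)=0$ for $m\geq N_j$; taking $N=\max_j N_j$ gives $\hom(P_i,V_m)=0$ for $m\geq N$, forcing $h_m=0$. Therefore $f=\tilde g_m d_i$ (or $f=0$ when $i=0$), so $f$ represents the zero class.

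The main obstacle is Step 3: although the finite length argument is automatic, passing to the inverse limit in the usual way ($V\neq\varprojlim V/V_m$ in general) is not available, and the naive lifts $\tilde g_m$ need not stabilize. The key that sidesteps this is the Laurentian hypothesis: because $P_i$ is finitely generated (from finite direct sums of $P(\pi)$'s) and each $\hom(q^nP(\pi),V_m)$ is eventually zero, the obstruction morphisms $h_m$ literally vanish for large $m$, so no genuine inverse limit needs to be computed.
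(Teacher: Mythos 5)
Your proof is correct and is precisely the Mittag--Leffler style argument that the paper invokes by citing \cite[Lemma~1.1]{BKM}: reduce to the finite length case by the filtration from {\tt (NLC1)} and a projective resolution by finitely generated projectives from Corollary~\ref{CRes}(i), then kill the obstruction morphism $h_m\colon P_i\to V_m$ for $m\gg 0$ using Lemma~\ref{L171013}(iii). The paper gives no independent details, but this is the standard argument it is referring to.
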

\begin{proof}
The result is proved in exactly the same way as \cite[Lemma 1.1]{BKM}. 
\end{proof}

\begin{Theorem} \label{TEquiv}
Assume that $\Pi$ is finite, and set $P:=\bigoplus_{\pi\in\Pi} P(\pi)$. Then $H:=\END(P)^\op$ is a  left Noetherian Laurentian algebra, and the functor $\HOM(P,-)$ is a graded equivalence of categories between $\catC$ and $\mod{H}$. 
\end{Theorem}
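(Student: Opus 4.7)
The plan is to verify the three claims---that $H$ is Laurentian, left Noetherian, and that $F:=\HOM(P,-)$ is a graded equivalence $\catC\iso\mod{H}$---by a standard Morita-style argument, with the twist that left Noetherianity of $H$ must be extracted from \texttt{(NLC1)} rather than assumed. First I would dispatch the formalities. Laurentianity of $H$ is immediate: since $\Pi$ is finite, $H=\END(P)^\op$ has underlying graded vector space $\bigoplus_{\pi,\si\in\Pi}\HOM(P(\pi),P(\si))$, and each summand is Laurentian by \texttt{(NLC3)}. The functor $F$ commutes with the degree shifts (since $\HOM(P,qC)\cong q\HOM(P,C)$), is exact by projectivity of $P$, and sends $P$ to $H$ as left $H$-modules. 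That $F(C)\in\mod{H}$ for any $C\in\catC$ follows from Lemma~\ref{L171013}(i): pick an epimorphism onto $C$ from a finite sum of shifts of the $P(\pi)$, view it as a quotient of a finite sum of shifts of $P$, and apply $F$ to obtain a surjection from a finitely generated free $H$-module onto $F(C)$.

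The main step is left Noetherianity of $H$. Here I would establish order-preserving mutually inverse bijections
\[
\{\text{subobjects } N\subseteq P\} \longleftrightarrow \{\text{graded left ideals } I\subseteq H\},\qquad N\mapsto I_N,\quad I\mapsto N_I,
\]
where $I_N:=\{f\in\HOM(P,P)\mid \im f\subseteq N\}$ and $N_I:=\sum_{f\in I}\im f$. The containments $I\subseteq I_{N_I}$ and $N_{I_N}\subseteq N$ are formal, and both equalities rest on Noetherianity of $P$ in $\catC$. For $N=N_{I_N}$ one applies Lemma~\ref{L171013}(i) to $N$ (itself Noetherian by \texttt{(NLC1)}) to cover $N$ by finitely many morphisms from shifts of the $P(\pi)$, all of which lie in $I_N$. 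For $I_{N_I}\subseteq I$: Noetherianity of $P$ gives $f_1,\dots,f_k\in I$ with $(f_1,\dots,f_k):\bigoplus_i q^{m_i}P\to P$ having image $N_I$; given $\psi\in I_{N_I}$, projectivity of $P$ produces a lift $(\psi_1,\dots,\psi_k):P\to \bigoplus_i q^{m_i}P$, so $\psi=\sum_i f_i\circ\psi_i$, which lies in $I$ because $I$ is closed under precomposition with $\END(P)$ (this is exactly the left $H$-module axiom). The order-preserving bijection then transports Noetherianity of $P$ in $\catC$ into Noetherianity of $H$ as a left $H$-module.

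With $H$ identified as a left Noetherian Laurentian algebra, the equivalence becomes routine. Full faithfulness of $F$ on $\HOM$-spaces between finite direct sums of shifts of $P$ is immediate from additivity together with $F(P)=H$ and compatibility with $q$. For arbitrary $C,C'\in\catC$, applying Lemma~\ref{L171013}(i) twice (the second time to the Noetherian subobject $\ker(P_0\onto C)$) yields an exact sequence $P_1\to P_0\to C\to 0$ with $P_0,P_1$ finite direct sums of shifts of $P$; applying $\HOM_\catC(-,C')$ and $\HOM_H(-,F(C'))$ produces two left-exact sequences linked by $F$, and the already-verified isomorphisms at the $P_i$ terms force $\HOM_\catC(C,C')\iso\HOM_H(F(C),F(C'))$. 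For essential surjectivity, any $M\in\mod{H}$ admits a \emph{finite} presentation by left Noetherianity of $H$; the defining map between finitely generated free $H$-modules lifts by the established full faithfulness to a morphism in $\catC$, whose cokernel $C$ then satisfies $F(C)\cong M$ by exactness of $F$. The main obstacle is the Noetherianity step, where the bijection must be set up carefully and the projectivity of $P$ used at exactly the right place to close the loop.
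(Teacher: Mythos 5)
Your proof is correct and follows the same Morita-theoretic route as the paper: full faithfulness via projective presentations coming from Lemma~\ref{L171013}(i)/Corollary~\ref{CRes}(i), and essential surjectivity by lifting a finite presentation of $M\in\mod{H}$ through the already-established full faithfulness. Where you genuinely add value is in the left Noetherianity step: the paper disposes of this with the one-line ``left Noetherian by {\tt (NLC1)}'', whereas you supply the actual argument --- the order-preserving mutually inverse bijection $N\mapsto I_N$, $I\mapsto N_I$ between subobjects of $P$ and graded left ideals of $H$, with the nontrivial inclusion $I_{N_I}\subseteq I$ extracted from projectivity of $P$ and Noetherianity of $P$ itself; this correctly transports {\tt (NLC1)} into the ACC for graded left ideals of $H$. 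Your full-faithfulness step, comparing two left-exact $\HOM$-sequences and invoking the five-lemma, is a slightly cleaner packaging of the paper's explicit diagram chases (which verify faithfulness and fullness separately), but substantively equivalent.
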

\begin{proof}
This is a graded version of a standard result, cf. for example \cite[Exercise on p. 55]{Bass}. Note that  $H$ is Laurentian by {\tt (NLC3)} and left Noetherian by {\tt (NLC1)}. Since $P$ is projective the functor $\funF:=\HOM(P,-)$ is exact. 

To prove that $\funF$ is fully faithful, note first that this is true on finite direct sums of objects of the form $q^nP(\pi)$. For an arbitrary object $X\in\catC$, by Corollary~\ref{CRes}(i), we have an exact sequence
$P_1(X)\stackrel{\de_X}{\longrightarrow} P_0(X)\stackrel{\eps_X}{\longrightarrow} X\to 0$ in $\catC$, with $P_0(X)$ and $P_1(X)$ finite direct sums of objects of the form $q^nP(\pi)$. Under $\funF$, this exact sequence goes to a projective presentation of $\funF(X)$. Let $f:X\to Y$ be a morphism with $\funF(f)=0$. There exist morphisms $f_0,f_1$ which make the following diagram commutative:
$$\begin{tikzpicture}
\node at (0,0) {$P_1(Y)\stackrel{\de_Y}{\longrightarrow} P_0(Y)\stackrel{\eps_Y}{\longrightarrow} Y\to 0$}; 

\node at (0,1.5) {$P_1(X)\stackrel{\de_X}{\longrightarrow} P_0(X)\stackrel{\eps_X}{\longrightarrow} X\to 0$}; 

\node at  (1.7,0.8) {\tiny $f$};
\node at  (0,0.8) {\tiny $f_0$};
\node at  (-1.9,0.8) {\tiny $f_1$};  

\draw [->] (1.5,1.2) -- (1.5,0.3);
\draw [->] (-0.2,1.2) -- (-0.2,0.3);
\draw [->] (-2.1,1.2) -- (-2.1,0.3);
\end{tikzpicture}
$$
Since $\funF(f)=0$, there is a morphism $g':\funF(P_0(X))\to \funF(P_1(Y))$ such that $ \funF(\de_Y)\circ g'=\funF(f_0)$. However, $g'=\funF(g)$ for some $g:P_0(X)\to P_1(Y)$ such that  $f_0=\de_Y\circ g$. Hence $f\circ \eps_X=\eps_Y\circ \de_Y\circ g=0$, and so $f=0$. We have proved that $\funF$ is faithful. In order to prove that it is full, let $f':\funF(X)\to \funF(Y)$ be a morphism. We then obtain the morphisms $f_0'$ and $f_1'$ which make the following diagram commutative 
$$\begin{tikzpicture}
\node at (0,0) {$\funF(P_1(Y))\stackrel{\funF\de_Y}{\longrightarrow} \funF(P_0(Y))\stackrel{\funF\eps_Y}{\longrightarrow} \funF(Y)\to 0$}; 

\node at (0,1.5) {$\funF(P_1(X))\stackrel{\funF\de_X}{\longrightarrow} \funF(P_0(X))\stackrel{\funF\eps_X}{\longrightarrow} \funF(X)\to 0$}; 

\node at  (2.1,0.8) {\tiny $f'$};
\node at  (0,0.8) {\tiny $f_0'$};
\node at  (-1.9,0.8) {\tiny $f_1'$};  

\draw [->] (1.9,1.2) -- (1.9,0.3);
\draw [->] (-0.2,1.2) -- (-0.2,0.3);
\draw [->] (-2.1,1.2) -- (-2.1,0.3);
\end{tikzpicture}
$$
We can write $f_0'=\funF(f_0)$ and $f_1'=\funF(f_1)$, and we have $\de_Y\circ f_1=f_0\circ \de_X$. Since $\eps_Y\circ f_0\circ \de_X=0$, there is $f:X\to Y$ such that $\eps_Y\circ f_0=f\circ \eps_X$. Then 
$$\funF f\circ \funF\eps_X=\funF\eps_Y\circ \funF f_0=\funF\eps_Y\circ f_0'=f'\circ \funF\eps_X.
$$
Since $\eps_X$ is an epimorphism, it follows that  $\funF f=f'$. 

It remains to prove that any $M\in\mod{H}$ is isomorphic to a module of the form $\funF(X)$ for $X\in\catC$. Let $e_\pi\in H$ be the projection of $P$ to the summand $P(\pi)$. Then $e_\pi\in H$ is a primitive idempotent, and the modules $He_\pi$ are exactly the projective indecomposable modules over $H$ up to isomorphism and degree shift. So we can use these modules to obtain a projective presentation of $M$, and then the corresponding presentation in $\catC$ will define an object $X$ with $\funF(X)\cong M$. 
\end{proof}

\subsection{The truncation functor $\funQ^\Si$}\label{SSFunQ}
 We continue to use the notation of the previous subsection. In particular, $\catC$ is a graded Noetherian Laurentian category with a complete irredundant set of simple objects $\{L(\pi)\mid\pi\in\Pi\}$ up to isomorphism and degree shift. 

Let $\Si$ be a subset of $\Pi$. An object $X$ of $\catC$ {\em belongs to $\Si$} if any of its composition factors is 
isomorphic to $q^nL(\si)$ for some $n\in\Z$ and $\si\in\Si$. 
Let $\catC(\Si)$ be the Serre subcategory consisting of all objects which belong to $\Si$. 

Let $\iota_\Si:\catC(\Si)\to\catC$ be the natural inclusion, and write $$\funQ^\Si:\catC\to\catC(\Si)$$  for the left  adjoint functor to $\iota_\Si$.  We call $\funQ^\Si$ a {\em truncation functor}. 
More explicitly, let $V\in\catC$. Among all subobjects $U$ of $V$ such that $V/U$ belongs to $\Si$, there is a unique minimal one, which we denote by $\funO^\Si(V)$. Using the notation (\ref{ETrace}), we have $\funO^\Si(V)=VX$, where $X=\sqcup_{\pi\in\Pi\setminus\Si}\Hom(P(\pi),V).$ 
Then  
$$
\funQ^\Si(V)=V/\funO^\Si(V).
$$

Let also $\funO_\Si(V)$ be the unique maximal subobject of $V$ which belongs to $\Si$. 

Since $\funQ^\Si$ is left adjoint to the exact functor $\iota_\Si$, it is right exact and sends projectives to projectives.
The following is now clear:

\begin{Lemma} \label{LSiPrCovCat}
We have that 
$
\{L(\si)\mid \si\in \Si\}
$
is a complete and irredundant family of simple objects in $\catC(\Si)$ up to isomorphism and degree shift. Moreover, 
for each $\si\in\Si$, we have that $\funQ^\Si(P(\si))$ is a projective cover of $L(\sigma)$ in $\catC(\Si)$. Finally, the category $\catC(\Si)$ is a graded Noetherian Laurentian category.
\end{Lemma}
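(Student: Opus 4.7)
My plan is to verify the three assertions in order, using the Serre-subcategory structure of $\catC(\Si)$ and the standard adjunction properties of $\funQ^\Si \dashv \iota_\Si$.

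For (i), I would first observe that a simple object of $\catC(\Si)$ is automatically simple in $\catC$: any proper nonzero subobject in $\catC$ would, by the Serre property of $\catC(\Si)$ applied to subobjects of an object of $\catC(\Si)$, be a proper nonzero subobject in $\catC(\Si)$. Hence simples of $\catC(\Si)$ form a subset of $\{q^n L(\pi)\}$; and such $q^nL(\pi)$ lies in $\catC(\Si)$ exactly when $\pi\in\Si$, since the only composition factor is $L(\pi)$. Completeness and irredundance now transfer from $\catC$.

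For (ii), the functor $\iota_\Si$ is exact (Serre inclusion), so its left adjoint $\funQ^\Si$ preserves projectives; in particular $\funQ^\Si(P(\si))$ is projective in $\catC(\Si)$. Since $L(\si)\in\catC(\Si)$, the surjection $\phi_\si\colon P(\si)\to L(\si)$ kills $\funO^\Si(P(\si))$ by definition of $\funO^\Si$ as the minimal subobject with quotient in $\catC(\Si)$. Thus $\phi_\si$ factors through a surjection $\bar\phi_\si\colon\funQ^\Si(P(\si))\to L(\si)$. To prove this is a projective cover I need to verify essentiality: given a subobject $K\subseteq \funQ^\Si(P(\si))$ for which $K\to L(\si)$ is already surjective, let $K'\subseteq P(\si)$ be its preimage under $P(\si)\twoheadrightarrow\funQ^\Si(P(\si))$. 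Then $K'\supseteq \funO^\Si(P(\si))$ and the composition $K'\hookrightarrow P(\si)\stackrel{\phi_\si}{\to}L(\si)$ is surjective; essentiality of $\phi_\si$ in $\catC$ forces $K'=P(\si)$, hence $K=\funQ^\Si(P(\si))$. This essentiality step is the one piece of actual work; everything else is formal.

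For (iii), I verify {\tt (NLC1)}--{\tt (NLC3)} for $\catC(\Si)$. For {\tt (NLC1)}, any $C\in\catC(\Si)$ inherits the filtration $C\supseteq C_1\supseteq C_2\supseteq\cdots$ produced in $\catC$; each quotient $C/C_m$ has finite length in $\catC$ and (being a subquotient of $C$) lies in $\catC(\Si)$, where its length as an object of $\catC(\Si)$ coincides with its length in $\catC$ by part (i). Noetherianity in $\catC(\Si)$ reduces to Noetherianity in $\catC$ because subobjects agree. Part {\tt (NLC2)} is exactly (ii). For {\tt (NLC3)}, the adjunction gives
\[
\HOM_{\catC(\Si)}\bigl(\funQ^\Si(P(\pi)),\funQ^\Si(P(\si))\bigr)\cong \HOM_\catC\bigl(P(\pi),\funQ^\Si(P(\si))\bigr),
\]
and the right-hand side is Laurentian by Lemma~\ref{L171013}(ii) applied to the object $\funQ^\Si(P(\si))\in\catC$. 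This completes the verification.
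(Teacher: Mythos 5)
Your proof is correct and follows the same approach the paper implicitly uses: the paper states the lemma as "now clear" immediately after observing that $\funQ^\Si$ is left adjoint to the exact inclusion $\iota_\Si$ and hence sends projectives to projectives. Your argument supplies the routine details the paper omits — transfer of simples via the Serre property, essentiality of the projective cover by pulling back along $P(\si)\twoheadrightarrow\funQ^\Si(P(\si))$, and the adjunction isomorphism reducing {\tt (NLC3)} to Lemma~\ref{L171013}(ii).
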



Assume that $\Pi$ is finite, set $P:=\bigoplus_{\pi\in\Pi} P(\pi)$,  $H:=\END(P)^\op$, and let 
$$\funF_\Pi:\HOM(P,-):\catC\to\mod{H}$$ be the equivalence of categories from Theorem~\ref{TEquiv}. 
We abuse notation and write $L(\pi)$ for $\funF_\Pi(L(\pi))$ for $\pi\in\Pi$. Then $\mod{H}$ is a graded Noetherian Laurentian category with a complete irredundant set of irreducible modules $\{L(\pi)\mid \pi\in\Pi\}$, and we can apply the above theory of the truncation functor $\funQ^\Si$ to this situation.

Let $h\in H$ be a homogeneous element, and let $\phi:H\to H$ be the right multiplication by $h$. Then, considering $H$ as a left regular $H$-module, we have $\phi(\funO^\Si(H))\subseteq \funO^\Si(H)$, i.e. $\funO^\Si(H)h\subseteq \funO^\Si(H)$, so that $\funO^\Si(H)$ is a (two-sided) ideal of $H$. Set 
\begin{equation}\label{EH(Si)}
H(\Si):=H/\funO^\Si(H). 
\end{equation}

\begin{Lemma} \label{LOSi1}
For  $V\in\mod{H}$, we have $\funO^\Si(H)V=\funO^\Si(V)$. 
\end{Lemma}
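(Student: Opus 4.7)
The plan is to identify both $\funO^\Si(V)$ and $\funO^\Si(H)\cdot V$ with a common explicit expression in terms of the primitive idempotents of $H$. Under the equivalence of Theorem~\ref{TEquiv}, write $e_\pi\in H$ for the idempotent projecting $P$ onto its summand $P(\pi)$, so that $P(\pi)\cong He_\pi$ as a graded left $H$-module. For any $W\in\mod H$, evaluation at $e_\pi$ gives a natural isomorphism $\HOM_H(P(\pi),W)\cong e_\pi W$ of graded vector spaces, and hence by~(\ref{EMultHom}) the multiplicity $[W:L(\pi)]_q$ vanishes precisely when $e_\pi W=0$. In particular, \emph{$W$ belongs to $\Si$ if and only if $e_\pi W=0$ for every $\pi\in\Pi\setminus\Si$}.

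Armed with this criterion, I would first claim
\[
\funO^\Si(V)\;=\;\sum_{\pi\in\Pi\setminus\Si} He_\pi V,
\]
and the analogous identity $\funO^\Si(H)=\sum_{\pi\in\Pi\setminus\Si} He_\pi H$ obtained by taking $V=H$. For the inclusion $\supseteq$, the quotient of $V$ by $\sum_{\pi\notin\Si}He_\pi V$ has trivial action of every $e_\pi$ with $\pi\notin\Si$, so by the criterion it belongs to $\Si$; hence the minimality of $\funO^\Si(V)$ forces $\funO^\Si(V)\subseteq\sum_{\pi\notin\Si}He_\pi V$. Conversely, since $V/\funO^\Si(V)$ belongs to $\Si$, the criterion gives $e_\pi V\subseteq \funO^\Si(V)$ for every $\pi\notin\Si$, and then $He_\pi V\subseteq\funO^\Si(V)$ since $\funO^\Si(V)$ is an $H$-submodule.

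With these descriptions in hand, the lemma reduces to an immediate computation using $HV=V$:
\[
\funO^\Si(H)\cdot V \;=\; \Bigl(\sum_{\pi\in\Pi\setminus\Si} He_\pi H\Bigr) V \;=\; \sum_{\pi\in\Pi\setminus\Si} He_\pi(HV) \;=\; \sum_{\pi\in\Pi\setminus\Si} He_\pi V \;=\; \funO^\Si(V).
\]
There is no real obstacle here; the only conceptual point is the translation between the abstract categorical definition of $\funO^\Si$ and the concrete action of the idempotents $e_\pi$ for $\pi\notin\Si$ (which uses the finiteness of $\Pi$ assumed throughout this subsection, so that the $e_\pi$ genuinely exist in $H$). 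Everything else is bookkeeping.
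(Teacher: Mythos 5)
Your proof is correct, but it takes a genuinely different route from the paper's. The paper observes that the identity is trivial for $V={}_HH$ (both sides are just $\funO^\Si(H)$), extends it to free modules, and then transfers it to an arbitrary $V\in\mod{H}$ via a free presentation and the fact that $\funO^\Si$ preserves epimorphisms; this is a two-line reduction that exploits nothing about $H$ beyond the general functoriality of $\funO^\Si$. You instead make $\funO^\Si(V)$ completely explicit as $\sum_{\pi\notin\Si}He_\pi V$ via the idempotents $e_\pi$ and then verify the equality by a direct associativity computation. In effect you are re-deriving, in idempotent form, the trace description $\funO^\Si(V)=VX$ that the paper has already recorded just before Lemma~\ref{LOSi1} (with $X=\sqcup_{\pi\notin\Si}\HOM(P(\pi),V)$ and $\HOM(He_\pi,V)\cong e_\pi V$); your approach is more concrete and self-contained, while the paper's is shorter and phrased to stress that only right-exactness is used. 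Both are valid. One minor slip: in the sentence beginning ``For the inclusion $\supseteq$'' you actually prove $\funO^\Si(V)\subseteq\sum_{\pi\notin\Si}He_\pi V$, so the label should be $\subseteq$; the conversely-clause then supplies $\supseteq$, and the argument as a whole is sound.
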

\begin{proof}
This holds for $V={}_HH$ and hence for free $H$-modules. Now any $V\in\mod{H}$ is a quotient of a free $H$-module, and so the result follows from the (right) exactness of $\funO^\Si$. 
\end{proof}

By the lemma, we can regard $\funQ^\Si(V)$ as an $H(\Si)$-module. In this way,   $\funQ^\Si$ becomes a functor
$$
\funQ^\Si:\mod{H}\to\mod{H(\Si)}.
$$

\begin{Lemma}
Let $\Pi$ be finite. With the notation as above, we have 
$$
H(\Si)\cong \END(Q^\Si(P))^\op,
$$  
and there is an isomorphism of functors $\funQ^\Si\circ \funF_\Pi\cong\funF_\Si\circ\funQ^\Si$: 
$$
\xymatrix{ \catC \ar^-{\funF_\Pi}[rr]  \ar^{\funQ^\Si}[d]&& \mod{H} \ar^{\funQ^\Si }[d] \\ \catC(\Si) \ar^-{\funF_\Si}[rr]&&\mod{H(\Si)}.}
$$
\end{Lemma}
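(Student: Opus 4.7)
The plan is to extract both assertions from a single natural isomorphism built out of the adjunction $(\funQ^\Si,\iota_\Si)$ and the fact that $\funF_\Pi=\HOM_\catC(P,-)$ is an exact graded equivalence. First I would observe that the canonical projection $p\colon P\twoheadrightarrow\funQ^\Si P$ together with the adjunction supplies a natural isomorphism $\HOM_\catC(P,\iota_\Si Y)\iso\HOM_{\catC(\Si)}(\funQ^\Si P,Y)$ for every $Y\in\catC(\Si)$. Secondly, since $\funF_\Pi$ is a graded equivalence identifying the simples $L(\pi)$ on both sides, it restricts to an equivalence $\catC(\Si)\to\mod{H(\Si)}$, and the intrinsic description of $\funO^\Si$ as the sum of images of morphisms from the objects $P(\pi)$ with $\pi\notin\Si$ is transported verbatim by $\funF_\Pi$. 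Consequently $\HOM_\catC(P,\funO^\Si X)=\funO^\Si(\HOM_\catC(P,X))$ for every $X\in\catC$, which by Lemma~\ref{LOSi1} equals $\funO^\Si(H)\cdot\HOM_\catC(P,X)$.

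To obtain the functorial isomorphism, I would then apply the exact functor $\HOM_\catC(P,-)$ to the short exact sequence $0\to\funO^\Si X\to X\to\funQ^\Si X\to 0$ (exactness on the right uses projectivity of $P$), yielding $\HOM_\catC(P,X)/\funO^\Si(\HOM_\catC(P,X))\iso\HOM_\catC(P,\funQ^\Si X)$. The left-hand side is $\funQ^\Si\funF_\Pi(X)$, while the right-hand side identifies with $\HOM_{\catC(\Si)}(\funQ^\Si P,\funQ^\Si X)=\funF_\Si\funQ^\Si(X)$ by the first step. Naturality in $X$ is automatic, so this gives the asserted isomorphism of functors $\funQ^\Si\circ\funF_\Pi\cong\funF_\Si\circ\funQ^\Si$.

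For the algebra identification, I would analyse the ring homomorphism $\Psi\colon\END_\catC(P)\to\END_{\catC(\Si)}(\funQ^\Si P)$ induced by functoriality of $\funQ^\Si$. Projectivity of $P$ shows $\Psi$ is surjective: any endomorphism $\psi$ of $\funQ^\Si P$ lifts along $p$ to some $\tilde\psi\in\END_\catC(P)$ with $\Psi(\tilde\psi)=\psi$. Its kernel consists of those $\phi\in\END_\catC(P)$ whose image lies in $\funO^\Si P$, that is, the left ideal $\HOM_\catC(P,\funO^\Si P)$ of $\END_\catC(P)=H^{\op}$; by the identification of the previous paragraph applied to $X=P$ this is exactly $\funO^\Si(H)$. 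Taking opposites then yields $\END_{\catC(\Si)}(\funQ^\Si P)^{\op}\cong H/\funO^\Si(H)=H(\Si)$.

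The main obstacle is the bookkeeping for the opposite-algebra conventions: since $H=\END_\catC(P)^{\op}$, one must verify that the two-sided ideal $\funO^\Si(H)\subseteq H$ corresponds, under this antiisomorphism, to the set of $\phi\in\END_\catC(P)$ factoring through $\funO^\Si P$. Once this matching is pinned down by tracking how left multiplication in $H$ becomes right composition in $\END_\catC(P)$, both statements fall out of the exact-sequence computation above.
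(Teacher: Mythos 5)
Your argument is correct and amounts to a careful unpacking of the paper's one-line proof, which cites only the definitions together with Lemmas~\ref{LSiPrCovCat} and~\ref{LOSi1}; the adjunction $(\funQ^\Si,\iota_\Si)$, the compatibility of the equivalence $\funF_\Pi$ with $\funO^\Si$, exactness of $\HOM_\catC(P,-)$, and the surjection--kernel analysis of $\Psi$ are exactly the steps the paper leaves to the reader. One minor imprecision: the kernel $\HOM_\catC(P,\funO^\Si P)$ is in fact a two-sided ideal of $\END_\catC(P)$ (as it must be, being the kernel of a ring homomorphism, and as one sees directly since any endomorphism of $P$ preserves $\funO^\Si P$), so calling it a left ideal is harmless but not the whole story.
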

\begin{proof}
This follows from definitions and Lemmas~\ref{LSiPrCovCat} and \ref{LOSi1}.
\end{proof}

\section{Standard and proper standard objects}\label{SStandMod}

Continuing with the notation of the previous section, we now also suppose that there is a fixed surjection with finite fibers
\begin{equation}\label{ERho}
\varrho: \Pi\to \Xi
\end{equation}
for some set $\Xi$ which is endowed with a    
{\em partial order}\, `$\leq$'. We will usually also assume that `$\leq$' is {\em interval-finite}, although this does not matter in this section.  We have a preorder `$\leq$' on $\Pi$ with $\pi\leq\si$ if and only if $\varrho(\pi)\leq\varrho(\si)$.

\subsection{Definition of standard objects}\label{SSStMod}
For $\pi\in \Pi$ and $\xi\in \Xi$ we define 
\begin{align*}
\Pi_{< \pi}:=\{\si\in\Pi\mid \si< \pi\},\ \Pi_{\leq \pi}:=\{\si\in\Pi\mid \si\leq \pi\},\ 
\Pi_{\geq \pi}:=\{\si\in\Pi\mid \si\geq \pi\},
\\
\Pi_{< \xi}:=\{\si\in\Pi\mid \varrho(\si)< \xi\},\ \Pi_{\leq \xi}:=\{\si\in\Pi\mid \varrho(\si)\leq \xi\},
\end{align*}
etc., and write 
$
\funO^{\leq \pi}:=\funO^{\Pi_{\leq \pi}}$, $\funO^{< \pi}:=\funO^{\Pi_{< \pi}}$, $\O^{<\xi}:=\O^{\Pi_{<\xi}}$, $ \funO_{\leq \pi}:=\funO_{\Pi_{\leq \pi}}$, $\O_{<\xi}:=\O_{\Pi_{<\xi}}$, $ 
\funQ^{\leq \pi}:=\funQ^{\Pi_{\leq \pi}} 
$, $\catC_{<\xi}:=\catC(\Pi_{<\xi})$, $\catC_{\leq\xi}:=\catC(\Pi_{\leq \xi})$, etc.

Recalling (\ref{EM(pi)}), we define for all $\pi\in\Pi$: 
$$
K(\pi):=\funO^{\leq \pi}(P(\pi))=\funO^{\leq \pi}(M(\pi)),\qquad \bar K(\pi):=\funO^{< \pi}(M(\pi)), 
$$
and 
\begin{equation}\label{EStand}
\De(\pi):=\funQ^{\leq \pi}(P(\pi))=P(\pi)/K(\pi),\qquad \bar\De(\pi):=P(\pi)/\bar K(\pi).
\end{equation}
Note that $\bar K(\pi)\supseteq K(\pi)$, and so $\bar \De(\pi)$ is naturally a quotient of $\De(\pi)$. Moreover, $\head \De(\pi)\cong \head \bar\De(\pi)\cong L(\pi)$. 
We call the objects $\De(\pi)$ {\em standard} and the objects $\bar\De(\pi)$ {\em proper standard}. 
By Lemma~\ref{LSiPrCovCat}, $\De(\pi)$ is the projective cover of $L(\pi)$ in the category $\catC_{\leq \pi}$. 
From definitions, we get:

\begin{Lemma} \label{LDirectedHom} 
If $\pi\not\leq \si$, then 
$$\HOM(\De(\pi),\De(\si))=\HOM(\De(\pi),\bar\De(\si))=\HOM(\bar\De(\pi),\bar\De(\si))=0.$$
\end{Lemma}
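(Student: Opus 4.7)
The plan is to reduce all three vanishing statements to the single observation that $\HOM(P(\pi), X) = 0$ whenever $X$ lies in the Serre subcategory $\catC_{\leq \sigma}$ and $\pi \not\leq \sigma$. This suffices because $\Delta(\pi)$ and $\bar\Delta(\pi)$ are both quotients of $P(\pi)$, while $\Delta(\sigma)$ and $\bar\Delta(\sigma)$ both belong to $\catC_{\leq\sigma}$.

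First, I would verify that $\Delta(\sigma), \bar\Delta(\sigma) \in \catC_{\leq \sigma}$. For $\Delta(\sigma) = \funQ^{\leq\sigma}(P(\sigma))$ this is immediate from the definition of the truncation functor $\funQ^{\leq\sigma}: \catC \to \catC_{\leq\sigma}$. For $\bar\Delta(\sigma) = P(\sigma)/\funO^{<\sigma}(M(\sigma))$, the short exact sequence
\begin{equation*}
0 \to \funQ^{<\sigma}(M(\sigma)) \to \bar\Delta(\sigma) \to L(\sigma) \to 0
\end{equation*}
exhibits $\bar\Delta(\sigma)$ as an extension of $L(\sigma)\in\catC_{\leq\sigma}$ by an object of $\catC_{<\sigma}\subseteq \catC_{\leq\sigma}$, so $\bar\Delta(\sigma)$ belongs to the Serre subcategory $\catC_{\leq \sigma}$ as well.

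Next, I would invoke the multiplicity formula (\ref{EMultHom}), namely $[X : L(\pi)]_q = \DIM \HOM(P(\pi), X)/\dim\operatorname{end}(L(\pi))$. By definition of $\catC_{\leq\sigma}$, every composition factor of $X \in \catC_{\leq\sigma}$ is of the form $q^n L(\tau)$ with $\tau \leq \sigma$; thus $\pi \not\leq \sigma$ forces $[X:L(\pi)]_q = 0$, and hence $\HOM(P(\pi), X) = 0$. Applying this with $X = \Delta(\sigma)$ and $X = \bar\Delta(\sigma)$ yields $\HOM(P(\pi),\Delta(\sigma)) = \HOM(P(\pi),\bar\Delta(\sigma)) = 0$.

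Finally, the inclusions $K(\pi) \subseteq \bar K(\pi) \subseteq M(\pi)$ yield the surjections $P(\pi) \twoheadrightarrow \Delta(\pi) \twoheadrightarrow \bar\Delta(\pi)$, and applying $\HOM(-,X)$ produces injections
\begin{equation*}
\HOM(\bar\Delta(\pi), X) \hookrightarrow \HOM(\Delta(\pi), X) \hookrightarrow \HOM(P(\pi), X) = 0
\end{equation*}
for $X \in \{\Delta(\sigma), \bar\Delta(\sigma)\}$, from which all three required equalities follow simultaneously. I do not anticipate a genuine obstacle here; the statement is essentially a formal consequence of the definitions of the standard and proper standard objects together with the universal property of the truncation functor.
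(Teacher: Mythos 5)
Your argument is correct and fills in exactly the details the paper leaves implicit behind its one-line "From definitions, we get:" -- namely, that $\De(\si)$ and $\bar\De(\si)$ belong to $\catC(\Pi_{\leq\si})$, that $\HOM(P(\pi),-)$ vanishes on that Serre subcategory when $\pi\not\leq\si$ by the multiplicity formula (\ref{EMultHom}), and that the three $\HOM$-spaces in question embed into $\HOM(P(\pi),-)$ via the surjections $P(\pi)\onto\De(\pi)\onto\bar\De(\pi)$. This is the same approach the paper takes.
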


\begin{Lemma} \label{LENDHOMDeL}
For any $\pi\in \Pi$, we have 
$$\END(L(\pi))\cong\END(\bar\De(\pi))\cong \HOM(\De(\pi),\bar\De(\pi)).$$
\end{Lemma}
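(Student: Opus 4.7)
The plan is to identify each of the three quantities with $\END(L(\pi))$ using the chain of surjections $P(\pi) \twoheadrightarrow \De(\pi) \twoheadrightarrow \bar\De(\pi) \twoheadrightarrow L(\pi)$. Two elementary principles drive the argument. First, any nonzero morphism from an object $V$ with simple head $L(\tau)$ to $L(\tau)$ is surjective and factors through $V/\rad V = L(\tau)$, because its kernel is a maximal subobject. Second, any morphism from an object with simple head $L(\tau)$ into an object $W$ not containing $L(\tau)$ as a composition factor is zero, since otherwise the image would be a nonzero quotient with head $L(\tau)$.

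First I would establish $\HOM(\De(\pi),\bar\De(\pi))\cong\END(L(\pi))$. Since $\bar\De(\pi)\in\catC_{\leq\pi}$, the adjunction between $\funQ^{\leq\pi}$ and $\iota_{\leq\pi}$ gives $\HOM(\De(\pi),\bar\De(\pi))\cong\HOM(P(\pi),\bar\De(\pi))$. Applying $\HOM(P(\pi),-)$ to the short exact sequence $0\to M(\pi)/\bar K(\pi)\to \bar\De(\pi)\to L(\pi)\to 0$, and noting that $M(\pi)/\bar K(\pi)\in\catC_{<\pi}$ by the very definition of $\bar K(\pi)$, the second principle yields $\HOM(P(\pi),M(\pi)/\bar K(\pi))=0$ while projectivity of $P(\pi)$ kills $\EXT^1(P(\pi),M(\pi)/\bar K(\pi))$. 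Hence $\HOM(P(\pi),\bar\De(\pi))\cong\HOM(P(\pi),L(\pi))$, and the first principle identifies the latter with $\END(L(\pi))$.

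Next I would show that precomposition with the canonical surjection $\bar p\colon\De(\pi)\twoheadrightarrow\bar\De(\pi)$ induces an isomorphism $\END(\bar\De(\pi))\cong\HOM(\De(\pi),\bar\De(\pi))$. Injectivity is immediate from surjectivity of $\bar p$. For surjectivity I must show that every $f\colon\De(\pi)\to\bar\De(\pi)$ kills $\bar J := \ker\bar p = \bar K(\pi)/K(\pi)$. A short computation using $M(\pi)/\bar K(\pi)\in\catC_{<\pi}$ identifies $\bar J = \funO^{<\pi}(\rad\De(\pi))$. The explicit description of $\funO^{<\pi}$ as the sum of images of morphisms $P(\rho)\to\rad\De(\pi)$ with $\rho\not<\pi$, combined with $\rad\De(\pi)\in\catC_{\leq\pi}$ and the second principle (which forces $\HOM(P(\rho),\rad\De(\pi))=0$ for $\rho\not\leq\pi$), reduces the problem to checking $f\circ\iota\circ g=0$ for every $g\colon P(\pi)\to\rad\De(\pi)$, where $\iota\colon\rad\De(\pi)\hookrightarrow\De(\pi)$. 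By the isomorphism $\HOM(P(\pi),\bar\De(\pi))\cong\HOM(P(\pi),L(\pi))$ from the previous step, it suffices to show the composition $P(\pi)\stackrel{g}{\to}\rad\De(\pi)\stackrel{\iota}{\to}\De(\pi)\stackrel{f}{\to}\bar\De(\pi)\to L(\pi)$ vanishes. But the first principle applied to $\De(\pi)$ shows that $\De(\pi)\to\bar\De(\pi)\to L(\pi)$ factors through $\De(\pi)/\rad\De(\pi)$, and hence already kills $\rad\De(\pi)$.

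The main obstacle is the surjectivity in the second isomorphism: the required factorization of an arbitrary $f\colon\De(\pi)\to\bar\De(\pi)$ through $\bar\De(\pi)$ is not formal. The key ingredient is the identification $\bar K(\pi)/K(\pi)=\funO^{<\pi}(\rad\De(\pi))$ combined with the explicit generator description of $\funO^{<\pi}$; together they reduce the problem to the single case $\rho=\pi$, where the first principle finishes the argument.
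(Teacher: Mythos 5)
Your proof is correct, but it takes a genuinely different route from the paper's.

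The paper establishes $\END(L(\pi))\cong\END(\bar\De(\pi))$ directly by a lift/descend argument: any $\theta\in\END(\bar\De(\pi))$ stabilizes the unique maximal subobject $M(\pi)/\bar K(\pi)$ (which has no $L(\pi)$ factors), hence descends to $\bar\theta\in\END(L(\pi))$; injectivity follows from the simple head, and surjectivity follows by lifting $\phi\in\END(L(\pi))$ to $\hat\phi\in\END(P(\pi))$ and observing that $\hat\phi$ preserves $\bar K(\pi)=\funO^{<\pi}(M(\pi))$ by functoriality. The paper then says the second isomorphism is proved ``similarly'', i.e.\ by the same lift/descend scheme between $\HOM(\De(\pi),\bar\De(\pi))$ and $\END(L(\pi))$.

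You instead compute $\HOM(\De(\pi),\bar\De(\pi))\cong\END(L(\pi))$ first, cleanly, via the adjunction $\HOM(\De(\pi),\bar\De(\pi))\cong\HOM(P(\pi),\bar\De(\pi))$ (using $\bar\De(\pi)\in\catC_{\leq\pi}$) together with the long exact sequence for $0\to M(\pi)/\bar K(\pi)\to\bar\De(\pi)\to L(\pi)\to 0$ and projectivity of $P(\pi)$. You then derive $\END(\bar\De(\pi))\cong\HOM(\De(\pi),\bar\De(\pi))$ from precomposition with $\bar p\colon\De(\pi)\twoheadrightarrow\bar\De(\pi)$, which requires the nontrivial surjectivity claim that every $f\colon\De(\pi)\to\bar\De(\pi)$ annihilates $\bar K(\pi)/K(\pi)$. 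Your identification $\bar K(\pi)/K(\pi)=\funO^{<\pi}(\rad\De(\pi))$, the reduction via $\HOM(P(\rho),\rad\De(\pi))=0$ for $\rho\not\leq\pi$ to the single contributor $\rho=\pi$, and the final appeal to the first step all check out. The trade-off: your first step is slicker than the paper's lift/descend (adjunction plus an exact sequence does the work automatically), but your second step is considerably more intricate than the paper's direct and symmetric treatment of each isomorphism. The paper's ``preserve $\bar K(\pi)$ by functoriality of $\funO^{<\pi}$'' trick avoids the whole generator-by-generator analysis of $\funO^{<\pi}(\rad\De(\pi))$ that you are forced into.
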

\begin{proof}
We prove the first isomorphism, the proof of the second one is similar.  Let $\theta\in \END(\bar\De(\pi))$. The object $\bar\De(\pi)=P(\pi)/\bar K(\pi)$ has unique maximal subobject $M(\pi)/\bar K(\pi)$, and $[M(\pi)/\bar K(\pi):L(\pi)]_q=0$. 
So 
$\theta(M(\pi)/\bar K(\pi))\subseteq M(\pi)/\bar K(\pi)$, and $\theta$ induces an endomorphism $\bar \theta\in\END(L(\pi))$. The map $\theta\mapsto \bar \theta$ is injective---indeed, if $\bar\theta=0$, then $\im\theta\subseteq M(\pi)/\bar K(\pi)$, whence $\theta=0$. To show that the map $\theta\mapsto \bar \theta $ is surjective, let $\phi\in\END(L(\pi))$. Then $\phi$ lifts to a morphism $\hat\phi\in\END(P(\pi))$. 
Note that $\hat\phi(\bar K(\pi))\subseteq \bar K(\pi)$, so $\hat\phi$ induces a morphism $\theta\in\END(\bar\De(\pi))$ with $\bar\theta=\phi$. 
\end{proof}


\begin{Lemma} \label{LEXTDirWeak}
Let $V$ be an object in $\catC$ and $\pi\in\Pi$. Then:
\begin{enumerate}
\item[{\rm (i)}] If\,  
$\EXT^1(\De(\pi),V)\neq 0$, then $V$ has a subquotient $\simeq L(\si)$ with $\si\not\leq \pi$. 
\item[{\rm (ii)}] If\,  
$\EXT^1(\bar \De(\pi),V)\neq 0$, then $V$ has a subquotient $\simeq L(\si)$ with $\si\not<\pi$. 
\end{enumerate} 
\end{Lemma}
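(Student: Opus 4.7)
The plan is to argue contrapositively in both parts, using the defining short exact sequences
\[
0 \to K(\pi) \to P(\pi) \to \De(\pi) \to 0
\quad\text{and}\quad
0 \to \bar K(\pi) \to P(\pi) \to \bar\De(\pi) \to 0.
\]
Since $P(\pi)$ is projective, applying $\HOM(-,V)$ and passing to the long exact sequence yields surjections
\[
\HOM(K(\pi),V) \twoheadrightarrow \EXT^1(\De(\pi),V),
\qquad
\HOM(\bar K(\pi),V) \twoheadrightarrow \EXT^1(\bar\De(\pi),V).
\]
So it suffices to show that $\HOM(K(\pi),V)=0$ (resp.\ $\HOM(\bar K(\pi),V)=0$) under the contrapositive hypothesis that every composition factor of $V$ is $\simeq L(\si)$ with $\si\leq\pi$ (resp.\ $\si<\pi$).

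Next, I would unpack the presentations of the two kernels from the explicit description of the trace subobject $\funO^\Si$ in $\S\ref{SSFunQ}$: the object $K(\pi)=\funO^{\leq\pi}(P(\pi))$ is the sum of the images of all morphisms $\phi:P(\tau)\to P(\pi)$ with $\tau\not\leq\pi$ (of any degree), while $\bar K(\pi)=\funO^{<\pi}(M(\pi))$ is the sum of the images of all morphisms $\phi:P(\tau)\to M(\pi)$ with $\tau\not<\pi$. Hence any $f$ in $\HOM(K(\pi),V)$ or $\HOM(\bar K(\pi),V)$ vanishes as soon as every composite $f\circ\phi:P(\tau)\to V$ is zero.

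The core step is then a one-line head argument: any nonzero morphism $h:P(\tau)\to V$ has image a nonzero quotient of $P(\tau)$, whose unique simple head is a degree shift of $L(\tau)$, so $L(\tau)$ must occur as a composition factor of $V$. Under the hypothesis of (i), the composition factors of $V$ are $L(\si)$ with $\si\leq\pi$, forcing $h=0$ for every $\tau\not\leq\pi$; the same reasoning applies in (ii) for $\tau\not<\pi$, the case $\tau=\pi$ being covered since $L(\pi)$ itself is excluded by the stricter hypothesis $\si<\pi$. Combined with the generating families of the previous paragraph this gives $f=0$ in both settings, completing the argument. No serious obstacle is anticipated; the only point that deserves attention is the asymmetric treatment of $\tau=\pi$, which is forbidden in (i) but permitted (and handled) in (ii).
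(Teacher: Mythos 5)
Your proof is correct. You use the same key short exact sequences $0\to K(\pi)\to P(\pi)\to\De(\pi)\to 0$ and $0\to\bar K(\pi)\to P(\pi)\to\bar\De(\pi)\to 0$ that underlie the paper's argument, so the mathematics is essentially the same, but your presentation is more uniform and slightly more direct. For (i) the paper simply cites the fact that $\De(\pi)$ is the projective cover of $L(\pi)$ in the Serre subcategory $\catC(\Pi_{\leq\pi})$ (so an extension by an object of that subcategory splits), whereas you run the same trace argument as in (ii). For (ii) the paper first reduces to $V\simeq L(\si)$ via Lemma~\ref{mittagleffler} and then deduces a contradiction from the minimality of $\bar K(\pi)$; you instead apply the explicit description $\funO^\Si(V)=VX$ from \S\ref{SSFunQ}, noting that $K(\pi)$ (resp.\ $\bar K(\pi)$) is the sum of images of maps from shifted $P(\tau)$ with $\tau\not\leq\pi$ (resp.\ $\tau\not<\pi$), so that any $f$ in the relevant $\HOM$ is forced to vanish by the head argument. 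This bypasses the Mittag-Leffler reduction entirely. One point worth spelling out when you write this up: the step ``$h:q^nP(\tau)\to V$ nonzero $\Rightarrow$ $L(\tau)$ is a subquotient of $V$'' relies on $V$ being Noetherian, so that the nonzero quotient $\im h$ of $q^nP(\tau)$ has a maximal subobject, with quotient necessarily $\simeq L(\tau)$ since $P(\tau)$ has unique maximal subobject $M(\tau)$; this is exactly what makes ``has a subquotient $\simeq L(\si)$'' in the lemma equivalent to nonvanishing of $\HOM(P(\si),V)$, which your contrapositive hypothesis controls.
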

\begin{proof}
 (i) follows from the fact that $\De(\pi)$ is the projective cover of $L(\pi)$ in the category $\catC(\Pi_{\leq \pi})$. 
 
 (ii)  
By Lemma~\ref{mittagleffler}, we may assume that $V=L(\si)$ for some $\si\in\Pi$. From the short exact sequence 
$
0\to \bar K(\pi)\to P(\pi)\to \bar \De(\pi)\to 0
$
we get an exact sequence 
$$
\HOM(\bar K(\pi),L(\si))\to \EXT^1(\bar \De(\pi),L(\si))\to 0.
$$
If $\EXT^1(\bar\De(\pi),L(\si))\neq 0$, then $\HOM(\bar K(\pi),L(\si))\neq 0$, whence there is a submodule $K'\subset \bar K(\pi)$ such that $\bar K(\pi)/K'\simeq L(\si)$. So if $\si< \pi$, then $P(\la)/K'$ belongs to $\Pi_{< \pi}$, hence $\bar K(\pi)\subseteq K'\subsetneq \bar K(\pi)$, a contradiction. 
\end{proof}

\begin{Corollary} 
Let $\pi,\si\in\Pi(\al)$. 
\begin{enumerate}
\item[{\rm (i)}] If $\EXT^1(\De(\pi),\De(\si))\neq 0$, then $\pi\not\geq\si$.
\item[{\rm (ii)}] If $\EXT^1(\bar\De(\pi),\bar\De(\si))\neq 0$, then $\pi\not>\si$. 
\end{enumerate}
\end{Corollary}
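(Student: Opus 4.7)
The plan is to deduce both parts as immediate consequences of Lemma~\ref{LEXTDirWeak} by combining the conclusion there with the known constraint on composition factors of standard and proper standard objects.

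For part (i), I would start by noting that $\De(\si) = \funQ^{\leq\si}(P(\si))$ belongs to $\catC_{\leq\si}$, so every composition factor of $\De(\si)$ is of the form $L(\tau)$ with $\tau\leq\si$. Applying Lemma~\ref{LEXTDirWeak}(i) with $V=\De(\si)$, the nonvanishing $\EXT^1(\De(\pi),\De(\si))\neq 0$ produces a composition factor $L(\tau)$ of $\De(\si)$ with $\tau\not\leq\pi$. If one assumed $\pi\geq\si$, then from $\tau\leq\si\leq\pi$ one would get $\tau\leq\pi$ (transitivity of the preorder pulled back from `$\leq$' on $\Xi$ via $\varrho$), contradicting $\tau\not\leq\pi$. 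Hence $\pi\not\geq\si$.

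For part (ii), I would argue similarly. The object $\bar\De(\si)=P(\si)/\bar K(\si)$ is a quotient of $\De(\si)$, so again all of its composition factors have the form $L(\tau)$ with $\tau\leq\si$; moreover, since $\bar K(\si) = \funO^{<\si}(M(\si))$, the head is $L(\si)$ and all other composition factors $L(\tau)$ satisfy $\tau<\si$. Applying Lemma~\ref{LEXTDirWeak}(ii) with $V=\bar\De(\si)$, we obtain a composition factor $L(\tau)$ of $\bar\De(\si)$ with $\tau\not<\pi$. If $\pi>\si$, then for any such $\tau\leq\si$ we would have $\tau\leq\si<\pi$, forcing $\tau<\pi$, a contradiction. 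So $\pi\not>\si$.

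Neither step presents a real obstacle, since Lemma~\ref{LEXTDirWeak} does the genuine work; the only thing to check carefully is the interaction of the preorder on $\Pi$ with the partial order on $\Xi$ via $\varrho$, which just amounts to the straightforward observation that $\tau\leq\si$ and $\si<\pi$ imply $\tau<\pi$ under the pullback preorder.
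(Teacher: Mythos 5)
Your proof is correct and follows the route the paper intends: the corollary is stated immediately after Lemma~\ref{LEXTDirWeak} precisely because it drops out by applying that lemma together with the fact that $\De(\si)$ (hence also its quotient $\bar\De(\si)$) belongs to $\catC_{\leq\si}$, so all its composition factors $L(\tau)$ satisfy $\tau\leq\si$. The transitivity bookkeeping you perform via the pullback preorder from $\Xi$ is exactly the point to check, and you did it correctly.
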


We say that an object $V$ of $\catC$ has a {\em $\De$-filtration} if there exists an exhaustive filtration
$
V=V_0\supseteq V_1\supseteq V_2\supseteq \dots
$ such that each $V_n/V_{n+1}$ is of the form  $q^m\De(\pi)$. 

\begin{Lemma} \label{L6214}
If an object $V\in\catC$ has a $\De$-filtration $V=V_0\supseteq V_1\supseteq V_2\supseteq \dots$, then there is an epimorphism $P\to V$, where $P$ is a finite direct sum of projectives of the form $q^mP(\si)$ such that $q^m\De(\si)\cong V_r/V_{r+1}$ for some $r$. 
\end{Lemma}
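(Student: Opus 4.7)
The plan is to lift each layer of the $\De$-filtration to its corresponding projective via projectivity, assemble the lifts into a partial cover, and then use the Noetherian hypothesis together with the Laurentian structure to extract a finite epimorphism onto $V$.

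First I would, for each $r \ge 0$, apply projectivity of $q^{m_r}P(\si_r)$ to lift the canonical surjection $q^{m_r}P(\si_r) \twoheadrightarrow q^{m_r}\De(\si_r) = V_r/V_{r+1}$ through the quotient map $V_r \twoheadrightarrow V_r/V_{r+1}$, obtaining a morphism $f_r \colon q^{m_r}P(\si_r) \to V_r \hookrightarrow V$ with the key property $\im(f_r) + V_{r+1} = V_r$. Setting $W_n := \sum_{r<n}\im(f_r)$, a routine induction on $n$ using the step $W_{n+1} + V_{n+1} = W_n + (\im(f_n) + V_{n+1}) = W_n + V_n$ yields $W_n + V_n = V$ for all $n \ge 0$.

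Next, invoking the Noetherian property of $V$ from {\tt (NLC1)}, the ascending chain $W_0 \subseteq W_1 \subseteq \cdots$ stabilizes at some $W := W_N$, and the assembled morphism $\phi := \sum_{r<N} f_r \colon P \to V$, where $P := \bigoplus_{r<N} q^{m_r}P(\si_r)$, is from a finite direct sum of projectives with each summand $q^{m_r}P(\si_r)$ satisfying $q^{m_r}\De(\si_r) \cong V_r/V_{r+1}$ by construction. Stabilization also forces $\im(f_r) \subseteq W$ for every $r \ge 0$, so $W + V_n = V$ persists for all $n$.

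The hard part will be verifying $W = V$, that is, that $\phi$ is surjective. Supposing $V/W \ne 0$ for a contradiction, the Noetherian object $V/W$ admits a simple quotient $L = q^dL(\tau)$, producing $\pi \colon V \twoheadrightarrow L$ with $W \subseteq \ker(\pi)$. If there is a smallest $n \ge 1$ with $\pi|_{V_n} = 0$, then $\pi|_{V_{n-1}}$ is nonzero and factors through the unique simple quotient $q^{m_{n-1}}L(\si_{n-1})$ of $q^{m_{n-1}}\De(\si_{n-1})$, forcing $L \cong q^{m_{n-1}}L(\si_{n-1})$; then the composition $\pi \circ f_{n-1}$ is the nonzero projective cover map $q^{m_{n-1}}P(\si_{n-1}) \twoheadrightarrow L$, contradicting $\im(f_{n-1}) \subseteq W \subseteq \ker(\pi)$. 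Otherwise $L$ is a simple quotient of every $V_n$, so $\dim\hom(q^dP(\tau), V_n)$ is a non-increasing sequence of positive integers bounded above by the coefficient of $q^d$ in the Laurent series $\DIM\HOM(P(\tau), V)$, which is finite by Lemma~\ref{L171013}(ii); the sequence therefore stabilizes, and equality of dimensions in the nested chain $\hom(q^dP(\tau), V_n) \subseteq \hom(q^dP(\tau), V)$ then forces every representative to have image in $\bigcap_n V_n = 0$, contradicting the positivity of the stable value.
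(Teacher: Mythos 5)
Your proof is correct, but it takes a genuinely different route from the paper's. The paper argues greedily: it picks a single factor $V_r/V_{r+1}\cong q^m\De(\si)$, lifts to a nonzero $f:q^mP(\si)\to V$, and, if $\im f\neq V$, passes to the quotient $V/\im f$, observes that the induced filtration must have a nonzero layer, lifts a projective over that layer to strictly enlarge the image, and invokes Noetherianity to terminate. You instead build the lifts $f_r\colon q^{m_r}P(\si_r)\to V_r$ for \emph{all} $r$ at once, form the increasing chain $W_n=\sum_{r<n}\im f_r$ with $W_n+V_n=V$, stabilize it at some $W=W_N$ by Noetherianity, and then show $W=V$ by contradiction: a surviving simple quotient $q^dL(\tau)$ of $V/W$ would make $\hom(q^dP(\tau),V_n)$ a decreasing chain of nonzero subspaces of the finite-dimensional $\hom(q^dP(\tau),V)$ whose eventual stable value must be $0$ because $\bigcap_n V_n=0$. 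The two arguments buy different things: the paper's is shorter and more flexible (it re-chooses the target layer at each step), while yours is more explicit about the "endgame" — in particular, your contradiction via Lemma~\ref{L171013}(ii) and the exhaustiveness of the filtration spells out exactly why the process cannot miss the tail of the filtration forever, a point the paper handles more tersely when it asserts that one can always "pick $r$ with $V'_r/V'_{r+1}\neq 0$." One small redundancy worth noting: once you have $W+V_n=V$ and $W\subseteq\ker\pi$, it follows immediately that $\pi(V_n)=L$ for every $n$, so your first case (a smallest $n$ with $\pi|_{V_n}=0$) never actually occurs; the proof reduces to the second case, which is where the real work is done.
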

\begin{proof}
If $q^m\De(\si)\cong V_r/V_{r+1}$ for some $r$, then there is a non-zero morphism $f:q^mP(\si)\to V$. If $\im f=V$, we are done. Otherwise $V/\im f$ has a filtration $V/\im f =V'_0\supseteq V'_1\supseteq V'_2\supseteq \dots$ with each $V'_r/V'_{r+1}$ being a quotient of $V_r/V_{r+1}$. Pick $r$ with $V'_r/V'_{r+1}\neq 0$. If $V_r/V_{r+1}\cong q^n\De(\tau)$ there is a non-zero map $\bar g:q^nP(\tau)\to V/\im f$, which lifts to a map $g:q^nP(\tau)\to V$ with $\im f+\im g\supsetneq \im f$. Continuing this way and using the fact that $V$ is noetherian, we get the required result. 
\end{proof}

\subsection{Standardization functor}\label{SSStandFunctor}
In this subsection we exploit the ideas of \cite[\S2]{WebLos}. 
Note that $\catC_{<\xi}$ is a 
Serre subcategory of $\catC_{\leq \xi}$, and the quotient category 
$\catC_\xi:=\catC_{\leq \xi}/\catC_{< \xi}$ 
is a graded abelian $F$-linear category. In fact,  $\catC_\xi$ is equivalent to the full subcategory $\catC_{\leq\xi}^0$ of $\catC_{\leq \xi}$ which consists of all objects $V\in\catC_{\leq \xi}$ with $\funO_{<\xi}(V)=0$ and $\funO^{<\xi}(V)=V$. It is easy to see that $\catC_\xi$ satisfies the axiom {\tt (NLC1)}. To show that $\catC_\xi$ is Noetherian Laurentian, note first that $\{L(\pi)\mid\varrho(\pi)=\xi\}$ is a complete family of simple objects in $\catC_\xi$ up to isomorphism and degree shift. 

\begin{Lemma} \label{L260314}
Let $\varrho(\pi)=\xi$. We have that $P_\xi(\pi):=\De(\pi)/\funO_{<\xi}(\De(\pi))$ is the projective cover of $L(\pi)$ in $\catC_\xi$, and $\End_{\catC_\xi}(P_\xi(\pi))\cong\End_{\catC_{\leq\xi}}(\De(\pi))$. Moreover, $\Hom_{\catC_\xi}(P_\xi(\pi),P_\xi(\si))\cong \Hom_{\catC_{\leq\xi}}(\De(\pi),\De(\si))$ for any $\pi,\si\in\varrho^{-1}(\xi)$. 
\end{Lemma}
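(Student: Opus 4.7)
The plan is to exploit two ingredients: first, that $\De(\pi)=\funQ^{\leq\xi}(P(\pi))$ is projective in $\catC_{\leq\xi}$ (the left adjoint of an exact functor preserves projectives), and second, the equivalence $\catC_\xi\simeq\catC_{\leq\xi}^0$ together with the Serre quotient description of $\HOM$. The key repeated ingredient is the vanishing $\HOM_{\catC_{\leq\xi}}(\De(\pi),X)=0$ for every $X\in\catC_{<\xi}$: the image of such a map, being a nonzero quotient of $\De(\pi)$, would inherit the simple head $L(\pi)$ as a composition factor, contradicting $\varrho(\pi)=\xi$. As a preliminary I would check $P_\xi(\pi)\in\catC_{\leq\xi}^0$. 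For $\funO_{<\xi}(P_\xi(\pi))=0$, the preimage $\tilde U\subseteq\De(\pi)$ of any nonzero subobject $U\subseteq P_\xi(\pi)$ in $\catC_{<\xi}$ fits in an extension of $U$ by $\funO_{<\xi}(\De(\pi))$, both in the Serre subcategory $\catC_{<\xi}$, so $\tilde U\in\catC_{<\xi}$, contradicting maximality of $\funO_{<\xi}(\De(\pi))$; the condition $\funO^{<\xi}(P_\xi(\pi))=P_\xi(\pi)$ holds because the unique simple head $L(\pi)$ has $\varrho(\pi)=\xi$, leaving no nonzero quotient in $\catC_{<\xi}$.

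For the morphism identifications, since $P_\xi(\pi),P_\xi(\si)\in\catC_{\leq\xi}^0$, the Serre quotient collapses to
$$\HOM_{\catC_\xi}(P_\xi(\pi),P_\xi(\si))=\HOM_{\catC_{\leq\xi}}(P_\xi(\pi),P_\xi(\si)).$$
Applying $\HOM_{\catC_{\leq\xi}}(-,P_\xi(\si))$ to $0\to\funO_{<\xi}(\De(\pi))\to\De(\pi)\to P_\xi(\pi)\to 0$ and using $\HOM(\funO_{<\xi}(\De(\pi)),P_\xi(\si))=0$ (any such map would land in $\funO_{<\xi}(P_\xi(\si))=0$) yields $\HOM(P_\xi(\pi),P_\xi(\si))\cong\HOM(\De(\pi),P_\xi(\si))$. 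Applying $\HOM(\De(\pi),-)$ to $0\to\funO_{<\xi}(\De(\si))\to\De(\si)\to P_\xi(\si)\to 0$, both $\HOM(\De(\pi),\funO_{<\xi}(\De(\si)))$ (by the key vanishing) and $\EXT^1_{\catC_{\leq\xi}}(\De(\pi),-)$ (by projectivity of $\De(\pi)$ in $\catC_{\leq\xi}$) vanish, yielding $\HOM(\De(\pi),P_\xi(\si))\cong\HOM(\De(\pi),\De(\si))$. Chaining the three isomorphisms gives the final assertion, and the special case $\pi=\si$ gives the statement about endomorphism algebras.

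For projectivity of $P_\xi(\pi)$ in $\catC_\xi$, given an epimorphism $f:V\twoheadrightarrow W$ in $\catC_\xi$ and $g:P_\xi(\pi)\to W$, choose representatives $V,W\in\catC_{\leq\xi}^0$; then $f,g$ are morphisms in $\catC_{\leq\xi}$, and exactness of the quotient functor forces $\operatorname{coker}_{\catC_{\leq\xi}}(f)\in\catC_{<\xi}$. Since $\HOM_{\catC_{\leq\xi}}(P_\xi(\pi),X)=0$ for $X\in\catC_{<\xi}$ (factoring through $\De(\pi)\twoheadrightarrow P_\xi(\pi)$ and applying the key vanishing), $g$ factors through $\im f$; lifting via projectivity of $\De(\pi)$ in $\catC_{\leq\xi}$ and identifying $\De(\pi)\cong P_\xi(\pi)$ in $\catC_\xi$ produces the required lift. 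Since $P_\xi(\pi)$ has unique simple quotient $L(\pi)$, it is its projective cover. The principal obstacle is the bookkeeping across the Serre quotient---specifically, ensuring that epimorphisms in $\catC_\xi$ correspond to maps in $\catC_{\leq\xi}$ whose cokernels lie in $\catC_{<\xi}$, which is precisely where the vanishing $\HOM(\De(\pi),\catC_{<\xi})=0$ is leveraged to secure the necessary factorization.
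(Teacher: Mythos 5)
Your proof is correct and follows essentially the same route as the paper: both rest on (a) projectivity of $\De(\pi)$ in $\catC_{\leq\xi}$, (b) the vanishing $\HOM_{\catC_{\leq\xi}}(\De(\pi),X)=0$ for $X\in\catC_{<\xi}$ coming from the simple head $L(\pi)$, and (c) the identification of $\catC_\xi$ with the full subcategory $\catC_{\leq\xi}^0$. The only difference is organizational: for the morphism spaces the paper directly exhibits the mutually inverse maps $\theta\mapsto\bar\theta$ (factor through the quotient) and $\bar\theta\mapsto\theta$ (lift via projectivity of $\De(\pi)$), whereas you chain three isomorphisms obtained by applying $\HOM(-,P_\xi(\si))$ and $\HOM(\De(\pi),-)$ to the two defining short exact sequences; you also explicitly verify the membership $P_\xi(\pi)\in\catC_{\leq\xi}^0$, which the paper takes as read. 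This is a slightly more detailed presentation of the same argument rather than a genuinely different proof.
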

\begin{proof}
Working in the equivalent category $\catC_{\leq \xi}^0$, we have to show that for any surjective morphism $f:M\to N$ and any morphism $g:P_\xi(\pi)\to N$, there is a morphism $h:P_\xi(\pi)\to M$ with $f\circ h=g$. Note that $f$ is surjective as a morphism in $\catC_{\leq \xi}$, and $\De(\pi)$ is projective in $\catC_{\leq \xi}$. So, if ${\tt p}:\De(\pi)\to P_\xi(\pi)$ is the natural surjection there is a morphism $\hat h:\De(\pi)\to M$ in $\catC_{\leq \xi}$ such that $f\circ \hat h=g\circ {\tt p}$. Since $M\in \catC_{\leq \xi}^0$, the morphism $\hat h$ factors to $h:\De(\pi)/\funO_{<\xi}(\De(\pi))\to M$. 


For the second statement, note that any morphism $\theta:\De(\pi)\to \De(\si)$ factors to a 
morphism $\bar\theta:P_\xi(\pi)\to P_\xi(\si)$, and conversely any endomorphism $\bar\theta$ as above lifts to a morphism $\theta$ as above using the projectivity of $\De(\pi)$ in $\catC_{\leq \pi}$. 
\end{proof}

For $\xi\in\Xi$, set 
$$
P_\xi:=\bigoplus_{\pi\in\varrho^{-1}(\xi)}P_\xi(\pi)\in\catC_\xi\qquad
\De_\xi:=\bigoplus_{\pi\in\varrho^{-1}(\xi)}\De(\pi)\in\catC_{\leq \xi}.
$$

\begin{Corollary} \label{C220414}
Let $\xi\in\Xi$. Then $\catC_\xi$ is a Noetherian Laurentian graded category graded equivalent to $\mod{B_\xi}$, where $B_\xi:=\End(\De_\xi)^{\op}$. 
\end{Corollary}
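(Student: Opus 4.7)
The plan is to deduce this from Theorem~\ref{TEquiv} applied to $\catC_\xi$ with distinguished projective generator $P_\xi$. So the strategy breaks into (a) verifying that $\catC_\xi$ is a Noetherian Laurentian graded category with finitely many simples up to isomorphism and degree shift, and (b) identifying the endomorphism algebra.

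First I would observe that simple objects in $\catC_\xi$ are (up to isomorphism and shift) precisely the $L(\pi)$ with $\varrho(\pi)=\xi$, which is a finite set because $\varrho$ has finite fibers. To verify the Noetherian Laurentian axioms, I would work throughout in the equivalent full subcategory $\catC_{\leq\xi}^0\subseteq \catC_{\leq\xi}$. Axiom {\tt (NLC1)} is inherited from $\catC_{\leq\xi}$: any descending chain of subobjects of $V\in\catC_{\leq\xi}^0$ is a descending chain in $\catC_{\leq\xi}$ (hence stabilizes), and one can restrict a filtration from $\catC_{\leq\xi}$ to get one with finite length quotients whose intersection is zero. Axiom {\tt (NLC2)} is exactly the content of Lemma~\ref{L260314}, which provides $P_\xi(\pi)$ as the projective cover of $L(\pi)$ in $\catC_\xi$.

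For {\tt (NLC3)}, Lemma~\ref{L260314} gives
\[
\HOM_{\catC_\xi}(P_\xi(\pi),P_\xi(\si))\cong \HOM_{\catC_{\leq\xi}}(\De(\pi),\De(\si)),
\]
and the latter embeds in $\HOM(P(\pi),\De(\si))$ via precomposition with the projection $P(\pi)\twoheadrightarrow \De(\pi)$; since $\HOM(P(\pi),\De(\si))$ is Laurentian by Lemma~\ref{L171013}(ii), its graded subspace is also Laurentian.

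Having established these axioms and the finiteness of the set of simples, Theorem~\ref{TEquiv} applies directly to $\catC_\xi$ with $P=P_\xi=\bigoplus_{\pi\in\varrho^{-1}(\xi)}P_\xi(\pi)$, producing a graded equivalence $\HOM_{\catC_\xi}(P_\xi,-):\catC_\xi\iso \mod{H_\xi}$, where $H_\xi:=\END_{\catC_\xi}(P_\xi)^\op$. Finally, assembling the isomorphisms of Lemma~\ref{L260314} over all $\pi,\si\in\varrho^{-1}(\xi)$ and using the block-decomposition of the endomorphism ring of a direct sum yields $\END_{\catC_\xi}(P_\xi)\cong \END_{\catC_{\leq\xi}}(\De_\xi)$, whence $H_\xi\cong B_\xi$, completing the proof.

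The main obstacle I anticipate is a bookkeeping one rather than a conceptual one: checking that the filtration and Noetherian conditions genuinely pass from $\catC_{\leq\xi}$ to the Serre quotient via its identification with $\catC_{\leq\xi}^0$, since suboject lattices in a Serre quotient can behave subtly. Everything else is a direct application of Lemma~\ref{L260314} and Theorem~\ref{TEquiv}.
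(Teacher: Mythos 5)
Your proposal is correct and follows the same route as the paper's (one-line) proof, namely combining Lemma~\ref{L260314} with Theorem~\ref{TEquiv}; you simply fill in the verification of the Noetherian Laurentian axioms and the identification $\END_{\catC_\xi}(P_\xi)^\op\cong B_\xi$, which the paper leaves implicit.
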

\begin{proof}
Use Lemma~\ref{L260314} and Theorem~\ref{TEquiv}. 
\end{proof}

We have a natural exact projection functor $\funR_\xi:\catC_{\leq \xi}\to\catC_\xi$. Its left adjoint  
\begin{equation}\label{EStandFunct}
\funE_\xi:\catC_\xi\to \catC_{\leq \xi},
\end{equation}
is called a {\em weak standardization functor}. Moreover, a weak standardization functor is called a {\em standardization functor} if it  is exact.

\begin{Lemma} \label{LApplicStandFunctor}
Let $\varrho(\pi)=\xi$ and suppose that a weak standardization functor $\funE_\xi$ exists. Then $\De(\pi)\cong \funE_\xi(P_\xi(\pi))$ and\, $\bar\De(\pi)\cong \funE_\xi(L(\pi))$. 
\end{Lemma}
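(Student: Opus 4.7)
The plan is to verify both isomorphisms using the adjunction $(\funE_\xi,\funR_\xi)$, together with the projective-cover description of $\De(\pi)$ and the minimality property defining $\bar K(\pi)$.

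For $\De(\pi)\cong\funE_\xi(P_\xi(\pi))$: since $\funR_\xi$ is exact, its left adjoint $\funE_\xi$ preserves projectives, so $\funE_\xi(P_\xi(\pi))$ is projective in $\catC_{\leq\xi}$. The adjunction gives
\[
\HOM_{\catC_{\leq\xi}}(\funE_\xi(P_\xi(\pi)),L(\sigma))\cong\HOM_{\catC_\xi}(P_\xi(\pi),\funR_\xi(L(\sigma))),
\]
which vanishes when $\varrho(\sigma)<\xi$ (as $\funR_\xi(L(\sigma))=0$) and, when $\varrho(\sigma)=\xi$, is nonzero only when $\sigma=\pi$, because $P_\xi(\pi)$ is the projective cover of $L(\pi)$ in $\catC_\xi$ by Lemma~\ref{L260314}. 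Hence the projective $\funE_\xi(P_\xi(\pi))$ has simple head $L(\pi)$ and is therefore isomorphic to $\De(\pi)$.

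For $\bar\De(\pi)\cong\funE_\xi(L(\pi))$: applying the right-exact functor $\funE_\xi$ to $0\to\rad P_\xi(\pi)\to P_\xi(\pi)\to L(\pi)\to 0$ and using Step~1 yields a surjection $q:\De(\pi)\twoheadrightarrow\funE_\xi(L(\pi))$ with kernel $N$ contained in $\rad\De(\pi)$. Since $\funO_{<\xi}(\bar\De(\pi))=M(\pi)/\bar K(\pi)\in\catC_{<\xi}$ with quotient $L(\pi)$, one has $\funR_\xi(\bar\De(\pi))\cong L(\pi)$; under the adjunction the identity $L(\pi)\to\funR_\xi(\bar\De(\pi))$ corresponds to the counit $r=\vare_{\bar\De(\pi)}:\funE_\xi(L(\pi))\to\bar\De(\pi)$. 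The triangle identity $\funR_\xi(r)\circ\eta_{L(\pi)}=\id_{L(\pi)}$ makes $\funR_\xi\funE_\xi(L(\pi))$ split as $L(\pi)\oplus\ker\funR_\xi(r)$; but $\funR_\xi\funE_\xi(L(\pi))$ is also a quotient of $\funR_\xi\funE_\xi(P_\xi(\pi))=P_\xi(\pi)$, whose head is the simple $L(\pi)$, forcing $\ker\funR_\xi(r)=0$. So $\funR_\xi(r)$ is an isomorphism and $\ker r\in\catC_{<\xi}$.

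To conclude, compare $r\circ q$ with the natural projection $p:\De(\pi)\to\bar\De(\pi)$: by Lemma~\ref{LENDHOMDeL}, $\HOM_{\catC_{\leq\xi}}(\De(\pi),\bar\De(\pi))\cong\End(L(\pi))$, so any two nonzero degree-zero elements differ by a unit; in particular $\ker(r\circ q)=\ker p=\bar K(\pi)/K(\pi)$. Surjectivity of $q$ gives $N\subseteq\bar K(\pi)/K(\pi)$ and $(\bar K(\pi)/K(\pi))/N\cong\ker r\in\catC_{<\xi}$. Letting $\tilde N\subseteq\bar K(\pi)$ be the preimage of $N$ in $M(\pi)$, the object $M(\pi)/\tilde N$ is an extension of $\bar K(\pi)/\tilde N$ and $M(\pi)/\bar K(\pi)$, both in $\catC_{<\xi}$, and so lies in $\catC_{<\xi}$; by the minimality of $\bar K(\pi)=\funO^{<\xi}(M(\pi))$ we get $\tilde N=\bar K(\pi)$, whence $N=\bar K(\pi)/K(\pi)$ and $\funE_\xi(L(\pi))\cong\bar\De(\pi)$. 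The main technical point is the adjunction bookkeeping showing $\funR_\xi(r)$ is an isomorphism; after that, the minimality argument closes the proof.
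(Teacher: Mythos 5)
Your proof of the first isomorphism $\De(\pi)\cong\funE_\xi(P_\xi(\pi))$ is the same as the paper's: $\funE_\xi$ preserves projectives, its value on $P_\xi(\pi)$ has simple head $L(\pi)$ by adjunction, and uniqueness of projective covers finishes. The second isomorphism, however, you prove by a genuinely different route. The paper first shows that the composition factors of $\funE_\xi(L(\pi))$ force it to be a quotient of $\bar\De(\pi)$ (using that $[\funE_\xi(L(\pi)):L(\pi)]_q=1$ and all other factors lie in $\Pi_{<\pi}$), and then produces a nonzero map $\funE_\xi(L(\pi))\to\bar\De(\pi)$ from the adjunction isomorphism $\HOM_{\catC_{\leq\xi}}(\funE_\xi L(\pi),\bar\De(\pi))\cong\HOM_{\catC_\xi}(L(\pi),\funR_\xi\bar\De(\pi))\cong\End(L(\pi))$; since $\End(\bar\De(\pi))$ is a division ring, the two maps compose to an isomorphism of $\bar\De(\pi)$. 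You instead track the kernel of $q:\De(\pi)\twoheadrightarrow\funE_\xi(L(\pi))$ directly, using the counit $r=\vare_{\bar\De(\pi)}$, the triangle identity to see $\funR_\xi(r)$ is a split epi, the local structure of $P_\xi(\pi)$ to force it to be an isomorphism, Schur's lemma in $\HOM(\De(\pi),\bar\De(\pi))\cong\End(L(\pi))$ to identify $\ker(r\circ q)$ with $\bar K(\pi)/K(\pi)$, and finally the minimality characterization $\bar K(\pi)=\funO^{<\xi}(M(\pi))$ to squeeze $\ker q$ up to $\bar K(\pi)/K(\pi)$. Both arguments are correct. Yours is longer, but it has the virtue of explicitly establishing that the unit $L(\pi)\to\funR_\xi\funE_\xi(L(\pi))$ is an isomorphism (equivalently, that $[\funE_\xi(L(\pi)):L(\pi)]_q=1$), a fact the paper's proof uses without comment; the paper's version is shorter because it reasons at the level of Hom-spaces and composition factors rather than chasing the counit morphism through the adjunction.
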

\begin{proof}
Since $\funE_\xi$ is left adjoint to the exact functor $\funR_\xi$, it sends projectives to projectives. Moreover, it is clear that the head of $\funE_\xi(P_\xi(\pi))$ is $L(\pi)$. Since $\De(\pi)$ is the projective cover of $L(\pi)$ in $\catC_{\leq \xi}$, the first isomorphism follows. For the second isomorphism, note that the head of $\funE_\xi(L(\pi))$ is $L(\pi)$, 
$[\funE_\xi(L(\pi)):L(\pi)]_q=1$, and other composition factors $L(\si)$ of $\funE_\xi(L(\pi))$ are of the form $L(\si)$ for $\si<\pi$. So $\funE_\xi(L(\pi))$ is a quotient of $\bar\De(\pi)$. Now 
$$
\Hom_{\catC_{\leq \xi}}(\funE_\xi(L(\pi)),\bar\De(\pi))\cong \Hom_{\catC_{ \xi}}(L(\pi),\funR_{\xi}(\bar\De(\pi)))= \Hom_{\catC_{ \xi}}(L(\pi),L(\pi))
$$
completes the proof of the second isomorphism. 
\end{proof}

Assume now that $\Pi_{\leq \xi}$ is finite. Then by Theorem~\ref{TEquiv}, the category $\catC_{\leq \xi}$ is explicitly graded equivalent to a category $\mod{H_{\leq \xi}}$ of finitely generated modules over a left Noetherian Laurentian algebra $H_{\leq \xi}$. So we will not distinguish between the two categories. Similarly we will not distinguish between $\catC_\xi$ and $\mod{B_\xi}$, where $B_\xi=\End(\De_\xi)^\op$. In these terms the quotient functor $\funR_\xi$ becomes  the functor
$$
\funR_\xi=\Hom_{H_{\leq\xi}}(\De_\xi,-):\mod{H_{\leq\xi}}\to\mod{B_\xi}.
$$
It always has left adjoint 
$$
\funE_\xi=\De_\xi\otimes_{B_\xi}-:\mod{B_\xi}\to \mod{H_{\leq\xi}}.
$$
To summarize:

\begin{Lemma} \label{LStandFunctExists}
If $\Pi_{\leq \xi}$ is finite then a weak standardization functor $\funE_\xi$ exists. Moreover, $\funE_\xi$ is a standardization functor if and only if $\De_\xi$ is flat as a right $B_\xi$-module. 
\end{Lemma}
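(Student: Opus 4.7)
The plan is to unpack the two category equivalences available in this finite setting and then apply the standard tensor-hom adjunction. Since $\Pi_{\leq\xi}$ is finite, Theorem~\ref{TEquiv} yields a graded equivalence $\catC_{\leq\xi} \simeq \mod{H_{\leq\xi}}$, while Corollary~\ref{C220414} yields $\catC_\xi \simeq \mod{B_\xi}$; under these equivalences $\De_\xi$ becomes an $(H_{\leq\xi},B_\xi)$-bimodule, with the right $B_\xi$-action coming from $B_\xi = \End(\De_\xi)^{\op}$.

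My first step will be to identify $\funR_\xi$ with $\Hom_{H_{\leq\xi}}(\De_\xi,-)$ modulo these equivalences. The key observation is that $\funR_\xi(\De_\xi) \cong P_\xi$, and the equivalence of Corollary~\ref{C220414} sends $P_\xi$ to the regular module ${}_{B_\xi}B_\xi$. Combined with the second part of Lemma~\ref{L260314}, which identifies morphism spaces between the $P_\xi(\pi)$'s in $\catC_\xi$ with the corresponding morphism spaces between the $\De(\pi)$'s in $\catC_{\leq\xi}$, this pins down the translation of $\funR_\xi$ on projective generators; since both candidate functors are additive and exact, they must agree on all of $\mod{H_{\leq\xi}}$.

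Once $\funR_\xi = \Hom_{H_{\leq\xi}}(\De_\xi,-)$ is in hand, the classical tensor-hom adjunction furnishes the left adjoint $\funE_\xi := \De_\xi \otimes_{B_\xi} -$. I will briefly note that $\funE_\xi$ does land in the subcategory of finitely generated modules: $\De_\xi$ is finitely generated over $H_{\leq\xi}$ (being a quotient of $\bigoplus_{\pi\in\Pi_{\leq\xi}} P(\pi)$), so any finite $B_\xi$-generating set of a module $M$ supplies a finite $H_{\leq\xi}$-generating set of $\De_\xi \otimes_{B_\xi} M$. This proves existence of the weak standardization functor.

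For the second claim, $\funR_\xi$ is automatically exact as the projection to a Serre quotient, so the requirement that $\funE_\xi$ actually be a standardization functor reduces to exactness of the tensor product $\De_\xi \otimes_{B_\xi} -$, which is by definition flatness of $\De_\xi$ as a right $B_\xi$-module. I anticipate no real obstacle beyond the bookkeeping required for the identification of $\funR_\xi$ in the first step; everything else is purely formal.
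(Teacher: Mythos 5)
Your proof is correct and follows the same approach the paper takes: identify $\catC_{\leq\xi}$ with $\mod{H_{\leq\xi}}$ and $\catC_\xi$ with $\mod{B_\xi}$, recognize $\funR_\xi$ as $\Hom_{H_{\leq\xi}}(\De_\xi,-)$, obtain $\funE_\xi = \De_\xi\otimes_{B_\xi}-$ from the tensor--hom adjunction, and read off the flatness criterion. One point your write-up leans on silently and would do well to make explicit: $\Hom_{H_{\leq\xi}}(\De_\xi,-)$ is exact because $\De_\xi$ is \emph{projective} in $\mod{H_{\leq\xi}}$ --- for $\varrho(\pi)=\xi$ one has $\Pi_{\leq\pi}=\Pi_{\leq\xi}$, so by Lemma~\ref{LSiPrCovCat} each $\De(\pi)=\funQ^{\leq\pi}(P(\pi))$ is the projective cover of $L(\pi)$ in $\catC_{\leq\xi}$; this, together with the fact that $\Hom_{H_{\leq\xi}}(\De_\xi,-)$ kills $\catC_{<\xi}$, is what lets your ``agree on $P_\xi$ plus exactness'' argument descend correctly through the Serre quotient.
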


\section{$\B$-highest weight and $\B$-stratified categories}\label{SHW}
We stick with the notation of the previous section. In particular, $\catC$ is a Noetherian Laurentian category with a complete set of simple objects $\{L(\pi)\mid\pi\in \Pi\}$ up to isomorphism and degree shift, $\Xi$ is a poset, and $\varrho:\Pi\to\Xi$ is as in (\ref{ERho}). Recall the notation 
$\De_\xi=\bigoplus_{\pi\in\varrho^{-1}(\xi)}\De(\pi)$. 
From now on we assume that the partial order $\leq$ on $\Xi$ is interval finite. We also assume for simplicity that $\catC$ is {\em Schurian}, i.e. $\End(L(\pi))\cong F$ for all $\pi$. For (weakly) $\B$-highest weight categories this condition will hold automatically, see Proposition~\ref{PBarDeNew}(i).

The notions introduced in this section develop (and are strongly  influenced by) those introduced by Cline-Parshall-Scott \cite{CPShw}, \cite{CPSss} and Dlab \cite{Dlab}.

\subsection{Definition of $\B$-stratified and $\B$-highest weight categories} 
Let $B$ be a Noetherian Laurentian graded algebra over $F$. We say that $B$ is {\em connected} if $B_n=0$ for $n<0$ and $B_0=F\cdot 1_B$. 
If $B$ is a connected algebra, then for its Jacobson radical we have $N(B)=\bigoplus_{n>0}B_n$. Note that, $N(B)\supseteq N(B)^2\supseteq N(B)^3\supseteq\dots$ is an exhaustive filtration with finite dimensional quotients. 
Now let $V\in\mod{B}$. 
The graded dimension of the vector space $V/NV$ over $F=B/N$ is called the (graded) rank of the $B$-module $V$, written $\RANK V_B$. Note that if $v_1,\dots, v_r$ is a minimal set of homogeneous $B$-generators of $V$, of degrees $d_1,\dots,d_r$, respectively, then  $\RANK V=q^{d_1}+\dots+q^{d_r}$. 
A {\em graded polynomial algebra} 
is a polynomial algebra  $F[x_1,\dots,x_n]$ graded by requiring that $\deg(x_i)\in\Z_{>0}$ for all $i=1,\dots,n$. A {\em (graded) affine algebra} is a quotient of a graded polynomial algebra by some homogeneous ideal.

Let $\B$ be a fixed class of left Noetherian Laurentian algebras over $F$.  
For example, $\B$ could be the  class of all left Noetherian Laurentian algebras over $F$ or a class of (graded) affine algebras, or the class of nilCoxeter algebras, etc.

\begin{Definition}\label{DStCat} 
{\rm 
The category $\catC$ is called a {\em $\B$-stratified category} (with respect to $\varrho:\Pi\to\Xi$) if the following properties hold:
\begin{enumerate}
\item[{\tt (SC1)}] For every $\pi\in\Pi$, the object $K(\pi)$ has a $\De$-filtration with quotients of the form  $q^n \De(\si)$ for $\si>\pi$. 
\item[{\tt (SC2)}] For every $\xi\in\Xi$, the algebra $B_\xi:=\END(\De_\xi)^\op$ belongs to $\B$.
\end{enumerate}
A $\B$-stratified category is called {\em $\B$-properly stratified} if the following properties hold: 
\begin{enumerate}
\item[{\tt (FGen)}] For every $\si\in\Pi$ and $\xi\in\Xi$, the right $B_\xi$-module $\Hom(P(\si),\De_\xi)$ is finitely generated. 

\item[{\tt (PSC)}] For every $\xi\in\Xi$, there is a standardization functor $\funE_\xi$.
\end{enumerate}
A $\B$-stratified category is called {\em $\B$-standardly stratified} if {\tt (FGen)} together with the following property {\tt (SSC)} hold:  
\begin{enumerate}
\item[{\tt (SSC)}] For every $\xi\in\Xi$, there is a weak standardization functor $\funE_\xi$.
\end{enumerate}
}
\end{Definition}

\begin{Definition}\label{DHWC} 
{\rm 
Let $\B$ be a class of connected algebras over $F$, and assume that $\Xi=\Pi$ and $\varrho=\id$. 
A $\B$-stratified category $\catC$ is called {\em $\B$-highest weight} if the following property holds for all $\pi,\si\in \Pi$:
\begin{enumerate}
\item[{\tt (HWC)}] The (right) $B_\pi$-module $\HOM(P(\si),\De(\pi))$ is free finite rank.   
\end{enumerate}
A $\B$-stratified category which satisfies ${\tt (FGen)}$ is called a {\em weak $\B$-highest weight} category. 
}
\end{Definition}

In the crucial case where $\B$ is the class of all  affine algebras, we write {\em affine highest weight} instead of  $\B$-highest weight, {\em affine stratified} instead of  $\B$-stratified, etc. 
When $\B$ is the class of all  polynomial algebras, we write {\em polynomial highest weight} instead of  $\B$-highest weight, etc. If $\B=\{F\}$, then a $\B$-highest weight category is a graded version of the usual 
highest weight category \cite{CPShw}.

\begin{Remark} \label{R3514} 
{\rm 
Let $\xi\in \Xi$ be such that $\Pi_{\leq \xi}$ is finite. 

(i) As noted in the end of \S\ref{SSStandFunctor}, 
we can identify the category $\catC_{\leq \xi}$ with the category $\mod{H_{\leq \xi}}$ for a left Noetherian Laurentian algebra $H_{\leq \xi}$, the category $\catC_\xi$ with the category $\mod{B_\xi}$, and the quotient functor $\funR_\xi$ with the functor 
$
\Hom_{H_{\leq\xi}}(\De_\xi,-):\mod{H_{\leq\xi}}\to\mod{B_\xi},
$
which has left adjoint 
$
\funE_\xi=\De_\xi\otimes_{B_\xi}-:\mod{B_\xi}\to \mod{H_{\leq\xi}},
$ 
see Lemma~\ref{LStandFunctExists}. So 
{\tt (SSC)} holds automatically. 

(ii) Decompose the left regular module $H_{\leq \xi}$ as a direct sum of projective indecomposable modules $H_{\leq \xi}=\oplus_{\si\in\Pi_{\leq \xi}}
(\DIM L(\si))P_{\leq \xi}(\si)$. Note using Lemma~\ref{LSiPrCovCat} that $P_{\leq \xi}(\si)=\funQ^{{\leq \xi}}(P(\si))$, and so 
 \begin{equation}\label{E1514}
 \begin{split}
\De_\xi\cong \HOM_{H_{\leq \xi}}(H_{\leq \xi},\De_\xi)
&\cong 
\hspace{-1mm}  \bigoplus_{\si\in\Pi_{\leq \xi}}
(\DIM L(\si))\HOM_{H_{\leq \xi}}(P_{\leq \xi}(\si),\De_\xi)\\
&\cong
\hspace{-1mm}  \bigoplus_{\si\in\Pi_{\leq \xi}}
(\DIM L(\si))\HOM_{\catC}(P(\si),\De_\xi).
\end{split}
\end{equation}
So, if all $\Pi_{\leq \xi}$ are finite, we can restate {\tt (HWC)} (resp. {\tt (FGen)} as a requirement that for each $\pi$ (resp. $\xi$), the right $B_\pi$-module $\De_\pi$ (resp $B_\xi$-module $\De(\xi)$) is free finite rank (resp. finitely generated). 
Similarly, in view of (i), we can restate {\tt (PSC)} as a requirement that for each $\xi$, the right $B_\xi$-module $\De(\xi)$ is flat. 
}
\end{Remark}

\begin{Proposition}
Let $\catC$ be a Noetherian Laurentian category, $\Xi=\Pi$, $\varrho=\id$, and $\B$ be a class of connected algebras. If $\Pi_{\leq \pi}$ is finite for every $\pi\in\Pi$, then: 
\begin{enumerate}
\item[{\rm (i)}] $\catC$ is $\B$-properly stratified if and only if $\catC$ is $\B$-highest weight.
\item[{\rm (ii)}] $\catC$ is $\B$-standardly stratified if and only if $\catC$ is weakly $\B$-highest weight.
\end{enumerate}
\end{Proposition}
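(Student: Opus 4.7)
The plan is to reduce both statements to the reformulations of \texttt{(HWC)}, \texttt{(FGen)}, \texttt{(PSC)}, \texttt{(SSC)} afforded by Remark~\ref{R3514}, after which each direction becomes a statement about the structure of $\De(\pi)$ as a right $B_\pi$-module. Since $\varrho=\id$, we have $\varrho^{-1}(\pi)=\{\pi\}$, so $\De_\pi=\De(\pi)$ and the algebra appearing in \texttt{(SC2)} is exactly $B_\pi=\END(\De(\pi))^\op$.

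Part~(ii) is essentially immediate. Under the standing finiteness hypothesis on $\Pi_{\leq\pi}$, Remark~\ref{R3514}(i) tells us that \texttt{(SSC)} holds automatically: the quotient functor $\Hom(\De_\pi,-)$ and its left adjoint $\De_\pi\otimes_{B_\pi}-$ exist for categorical reasons via the equivalences $\catC_{\leq\pi}\simeq\mod{H_{\leq\pi}}$ and $\catC_\pi\simeq\mod{B_\pi}$. Since weakly $\B$-highest weight is defined as $\B$-stratified plus \texttt{(FGen)}, while $\B$-standardly stratified is $\B$-stratified plus \texttt{(FGen)} plus \texttt{(SSC)}, the two notions coincide.

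For part~(i), Remark~\ref{R3514}(ii) together with Lemma~\ref{LStandFunctExists} translates the remaining conditions into properties of $\De(\pi)$ as a right $B_\pi$-module: \texttt{(HWC)} asserts that $\De(\pi)$ is free of finite rank over $B_\pi$, \texttt{(FGen)} asserts that $\De(\pi)$ is finitely generated, and \texttt{(PSC)} asserts that $\De(\pi)$ is flat. The forward direction is then trivial, as a free finite-rank module is automatically both finitely generated and flat. For the converse, the key lemma to prove is: if $B$ is a left Noetherian Laurentian connected graded algebra and $V$ is a finitely generated flat right graded $B$-module, then $V$ is graded free of finite rank. The argument is a standard graded Nakayama-type calculation: lift a homogeneous basis of the finite-dimensional $F$-vector space $V/V\!\cdot\! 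N(B)$ to homogeneous generators $v_1,\dots,v_r\in V$, producing a short exact sequence $0\to K\to\bigoplus_i q^{d_i}B\to V\to 0$; flatness yields $\Tor_1^B(V,B/N(B))=0$, so tensoring on the right with $B/N(B)$ gives an injection $K/K\!\cdot\!N(B)\hookrightarrow \bigoplus_i q^{d_i}F$, which by minimality lands in the kernel of an isomorphism and so vanishes; hence $K=K\!\cdot\!N(B)$, and since $K$ is bounded below (as a submodule of a Laurentian module) graded Nakayama forces $K=0$.

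The main obstacle I expect is the fine print in this graded-Nakayama step. Boundedness below of $K$ is automatic, but to apply the usual form of Nakayama one wants $K$ finitely generated; this requires the relevant Noetherian hypothesis for \emph{right} $B_\pi$-modules, whereas $\B$ is declared to consist of left Noetherian algebras. In all the motivating classes $\B$ (affine algebras, polynomial algebras) $B_\pi$ is commutative so no issue arises; in general one can circumvent this by combining finite generation with flatness in the Laurentian setting to obtain finite presentation, and then running Nakayama on the presenting relations, which are automatically finite in number.
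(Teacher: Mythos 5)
Your proof is correct and follows the same route as the paper: Remark~\ref{R3514} disposes of {\tt (SSC)} automatically (giving part~(ii)) and lets you restate {\tt (HWC)}, {\tt (FGen)}, {\tt (PSC)} as freeness, finite generation, and flatness of $\De(\pi)$ over $B_\pi$, after which part~(i) reduces to the fact that a finitely generated flat module over a connected graded algebra is free---which the paper simply invokes as ``a standard result, since $B_\pi$ is local.'' Your closing worry about left versus right Noetherian in the Nakayama step is unfounded: the graded Nakayama lemma for a bounded-below module $K$ over a connected algebra requires no finite generation hypothesis, since $K=K\cdot N(B)$ together with $K$ bounded below already forces $K=0$ by inspecting the lowest nonzero degree.
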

\begin{proof}
We use the notation of Remark~\ref{R3514}. By part (i) of that remark, {\tt (SSC)} holds automatically, so $\catC$ is weakly $\B$-highest weight if and only if $\catC$ is standardly stratified. 
Moreover, we can restate {\tt (HWC)} as a requirement that for each $\pi\in\Pi$, the right $B_\pi$-module $\De(\pi)$ is free finite rank. So, if $\catC$ is $\B$-highest weight, then the standardization functor $\funE_\pi=\De(\pi)\otimes_{B_\pi}-$ is exact since $\De(\pi)_{B_\pi}$ is free and hence flat. Conversely, let $\catC$ be a $\B$-properly stratified category. Then the functor $\funE_\pi=\De(\pi)\otimes_{B_\pi}-$ is exact. 
by {\tt (PSC)}, hence the finitely generated module $\De(\pi)_{B_\pi}$ is flat, and hence free by a standard result, since $B_\pi$ is local. 
\end{proof}


\subsection{Properties of $\B$-highest weight categories}\label{SSDefHW}

Let $\catC$ be a weakly $\B$-highest weight category as in Definition~\ref{DHWC}. 
For each $\pi\in\Pi$, we denote by $N_\pi$ the Jacobson radical $N(B_\pi)$ of $B_\pi$.
The notation $\De(\pi)N_\pi$  is understood in the sense of (\ref{ETrace}).

\begin{Proposition}\label{PBarDeNew}
Let $\catC$ be a weakly $\B$-highest weight category and $\pi,\si\in \Pi$. 
\begin{enumerate}
\item[{\rm (i)}] $\END(\bar\De(\pi))\cong F\cong \END(L(\pi))$.
\item[{\rm (ii)}] $[\De(\pi):L(\pi)]_q=\DIM B_\pi$. 
\item[{\rm (iii)}] $\bar\De(\pi)\cong \De(\pi)/\De(\pi)N_\pi$. 
\item[{\rm (iv)}] $\DIM\HOM(P(\si),\bar\De(\pi))= \RANK \HOM(P(\si),\De(\pi))_{B_\pi}$. In particular, the multiplicity $[\bar\De(\pi):L(\si)]_q$ is finite. 
 
\item[{\rm (v)}] $\De(\pi)$ has an exhaustive filtration 
$\De(\pi)\supset V_1\supset V_2\supset\dots$
such that $\De(\pi)/V_1\cong \bar\De(\pi)$ and each $V_i/V_{i+1}$, $i=1,2,\dots$, is a quotient of~$q^n\bar\De(\pi)$ for some $n\in\Z_{>0}$.
\end{enumerate}
\end{Proposition}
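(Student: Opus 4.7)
The key tools are the weak standardization functor $\funE_\pi\colon \catC_\pi \to \catC_{\leq\pi}$ (available via {\tt (SSC)}), left adjoint to the Serre quotient $\funR_\pi$; the identifications $\De(\pi)\cong \funE_\pi(P_\pi(\pi))$ and $\bar\De(\pi)\cong\funE_\pi(L(\pi))$ from Lemma~\ref{LApplicStandFunctor}; and the equivalence $\catC_\pi\simeq\mod{B_\pi}$ of Corollary~\ref{C220414}. Under this equivalence, $P_\pi(\pi)\leftrightarrow B_\pi$ and, since $B_\pi$ is connected Laurentian (hence local), $L(\pi)\leftrightarrow F:=B_\pi/N_\pi$; the adjunction then identifies $\funR_\pi(C)\leftrightarrow \HOM(\De(\pi),C)$ as $B_\pi$-modules for every $C\in\catC_{\leq\pi}$.

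For (i), adjunction gives $\END(\bar\De(\pi))\cong \HOM_{B_\pi}(F,\HOM(\De(\pi),\bar\De(\pi)))$. The short exact sequence $0\to M(\pi)/\bar K(\pi)\to\bar\De(\pi)\to L(\pi)\to 0$ has kernel in $\catC_{<\pi}$, so the exact $\funR_\pi$ sends it to $\funR_\pi(\bar\De(\pi))\cong L(\pi)\leftrightarrow F$, yielding $\END(\bar\De(\pi))\cong\END_{B_\pi}(F)=F$; Lemma~\ref{LENDHOMDeL} then delivers $\END(L(\pi))\cong F$. Part (ii) is immediate: the adjunction $\funQ^{\leq\pi}\dashv\iota_{\leq\pi}$ gives $\HOM(P(\pi),\De(\pi))\cong\END(\De(\pi))\cong B_\pi^{\op}$ as graded vector spaces, so $\DIM\HOM(P(\pi),\De(\pi))=\DIM B_\pi$, and combining with (\ref{EMultHom}) and (i) yields $[\De(\pi):L(\pi)]_q=\DIM B_\pi$. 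For (iii), the adjunction together with the identification of $\funR_\pi$ produces a natural isomorphism
$$ \HOM(\bar\De(\pi),C)\cong\bigl\{f\in\HOM(\De(\pi),C)\;\bigm|\; f\circ n=0\text{ for all }n\in N_\pi\bigr\} $$
for every $C\in\catC_{\leq\pi}$, where the $B_\pi$-action on $\HOM(\De(\pi),C)$ is pre-composition via $B_\pi\cong\END(\De(\pi))^{\op}$. The vanishing condition is equivalent to $f$ factoring through $\De(\pi)/\De(\pi)N_\pi$ (since $\De(\pi)N_\pi=\sum_{n\in N_\pi}\im n$), so Yoneda delivers $\bar\De(\pi)\cong\De(\pi)/\De(\pi)N_\pi$.

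For (iv), applying the exact functor $\HOM(P(\si),-)$ to $0\to\De(\pi)N_\pi\to\De(\pi)\to\bar\De(\pi)\to 0$ produces $\HOM(P(\si),\bar\De(\pi))=V/\HOM(P(\si),\De(\pi)N_\pi)$ with $V:=\HOM(P(\si),\De(\pi))$; choosing homogeneous left $B_\pi$-generators $n_1,\dots,n_r$ of $N_\pi$ (via left-Noetherianity of $B_\pi$), the surjection $\bigoplus_i q^{\deg n_i}\De(\pi)\twoheadrightarrow\De(\pi)N_\pi$ together with projectivity of $P(\si)$ identifies $\HOM(P(\si),\De(\pi)N_\pi)=VN_\pi$, giving $\DIM\HOM(P(\si),\bar\De(\pi))=\RANK V_{B_\pi}$; this is finite by {\tt (FGen)}, and the multiplicity claim follows from (\ref{EMultHom}) and (i). For (v), take $V_i:=\De(\pi)N_\pi^i$; by (iii), $V_0/V_1\cong\bar\De(\pi)$, while for each $i\geq 1$ the surjection $\bar\De(\pi)\otimes_F(N_\pi^i/N_\pi^{i+1})\twoheadrightarrow V_i/V_{i+1}$ together with a homogeneous $F$-basis of the finite-dimensional space $N_\pi^i/N_\pi^{i+1}$ (concentrated in degrees $\geq i$ by connectivity of $B_\pi$) refines $V_i\supset V_{i+1}$ into a chain whose subquotients are each quotients of $q^n\bar\De(\pi)$ for some $n>0$; reindexing delivers the filtration $\De(\pi)\supset V_1\supset V_2\supset\cdots$. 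Exhaustiveness $\bigcap V_i=0$ holds because, for any $\pi'\in\Pi$ and $k\in\Z$, the degree-$k$ part of $\HOM(P(\pi'),\De(\pi)N_\pi^i)$ vanishes for $i$ large (since $\HOM(P(\pi'),\De(\pi))$ is bounded below by Lemma~\ref{L171013}(ii) while the generators of $N_\pi^i$ have degrees $\geq i$), forcing any nonzero subobject of $\bigcap V_i$ to produce a nonzero map from some $q^k P(\pi')$—a contradiction. The principal technical subtlety is (iii)—correctly aligning the adjunction with pre-composition by $N_\pi$—after which (iv) and (v) follow by routine manipulations.
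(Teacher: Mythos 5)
Your proof plan for parts (ii), (iv), and (v) tracks the paper's argument closely: the adjunction $\funQ^{\leq\pi}\dashv\iota_{\leq\pi}$ in (ii), the diagram identification $\HOM(P(\si),\De(\pi)N_\pi)=\HOM(P(\si),\De(\pi))N_\pi$ in (iv), and the refinement of $\De(\pi)N_\pi^i/\De(\pi)N_\pi^{i+1}$ into quotients of shifts of $\bar\De(\pi)$ in (v) are all essentially what the paper does. The difficulty is in (i) and (iii), and more precisely in the ``key tools'' paragraph. You invoke the weak standardization functor $\funE_\pi$ (left adjoint to the quotient $\funR_\pi$), the identifications $\De(\pi)\cong\funE_\pi(P_\pi(\pi))$ and $\bar\De(\pi)\cong\funE_\pi(L(\pi))$, and the description of $\funR_\pi$ as $\HOM(\De(\pi),-)$. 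None of this is available for a general weakly $\B$-highest weight category. The existence of $\funE_\pi$ is exactly axiom {\tt (SSC)}, which defines the \emph{stronger} notion of a $\B$-standardly stratified category in Definition~\ref{DStCat}; it is not among the axioms of Definition~\ref{DHWC}, which asks only for a $\B$-stratified category satisfying {\tt (FGen)}. Lemma~\ref{LStandFunctExists} and Remark~\ref{R3514} supply $\funE_\pi$ and the $\HOM(\De(\pi),-)$ description of $\funR_\pi$ only under the extra hypothesis that $\Pi_{\leq\pi}$ is finite, which Proposition~\ref{PBarDeNew} does not assume. So the adjunction computations underlying your (i) and (iii) rest on machinery you have not shown to exist.

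The paper's own argument needs none of this. For (i): every endomorphism of $L(\pi)$ lifts to $P(\pi)$ and then factors through $\De(\pi)$, so $\END(L(\pi))$ is a quotient of the degree-zero part of the positively graded connected algebra $\END(\De(\pi))$; being a graded division ring, it must equal $F$, and Lemma~\ref{LENDHOMDeL} then handles $\bar\De(\pi)$. For (iii): writing $N_\pi=B_\pi x_1+\cdots+B_\pi x_k$ with $\deg x_m>0$, each $\De(\pi)x_m$ has head $\simeq L(\pi)$, so every nonzero quotient of $\De(\pi)N_\pi$ has a copy of $L(\pi)$ in its head; combined with $[\De(\pi)/\De(\pi)N_\pi:L(\pi)]_q=1$ (which follows from (ii)), this exhibits $\De(\pi)N_\pi$ as the smallest subobject $K$ of $\De(\pi)$ with $[\De(\pi)/K:L(\pi)]_q=1$, giving $\De(\pi)/\De(\pi)N_\pi\cong\bar\De(\pi)$. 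Your more conceptual route via standardization is a fine alternative once $\Pi_{\leq\pi}$ is assumed finite (so that {\tt (SSC)} holds automatically); absent that, (i) and (iii) need to be redone along these direct lines, after which the rest of your argument stands.
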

\begin{proof}
(i) 
Any endomorphism of $L(\pi)$ lifts to an endomorphism of $P(\pi)$, which in turn factors through an endomorphism of $\De(\pi)$. 
The endomorphism algebra of $\De(\pi)$ is positively graded and connected. We deduce that so is $\END(L(\pi))$. Since $\END(L(\pi))$ is a division ring, it follows that 
$\END(L(\pi))=F$. 
Now use Lemma~\ref{LENDHOMDeL} to complete the proof of (i).

(ii) By (i) and (\ref{EMultHom}), we have $[\De(\pi):L(\pi)]_q=\DIM \HOM(P(\pi),\De(\pi))$. But 
$$\DIM \HOM(P(\pi),\De(\pi))=\DIM \HOM(\De(\pi),\De(\pi))=\DIM B_\pi.$$ 

(iii) We need to prove that $\De(\pi)N_\pi$ is the smallest subobject  $K$ of $\De(\pi)$ with $[\De(\pi)/K:L(\pi)]_q=1$. By part (ii), $[\De(\pi)/\De(\pi)N_\pi:L(\pi)]_q=1$. On the other hand, since $B_\pi$ is Noetherian, we can write $N_\pi=B_\pi x_1+\dots+B_\pi x_k$ for some $x_1,\dots,x_n\in B_\pi$ of positive degrees. So we have $\De(\pi)N_\pi=\De(\pi)x_1+\dots+\De(\pi)x_k$. Moreover, each $\De(\pi)x_m$ has simple head $L(\pi)$. It follows that any non-trivial quotient of $\De(\pi)N_\pi$ has $L(\pi)$ as its quotient. This gives the required minimality property of $\De(\pi)N_\pi$. 

(iv) By (iii), the definition of $\RANK$ and {\tt (FGen)}, it suffices to prove that $\HOM(P(\si),\De(\pi)N_\pi)=\HOM(P(\si),\De(\pi))N_\pi$. The containment `$\supseteq$' is clear. On the other hand,  let $f:q^nP(\si)\to \De(\pi)N_\pi$. Using the notation from the previous paragraph, we have a commutative diagram
\vspace{-1mm}
$$\begin{tikzpicture}
\node at (0,0) {$\De(\pi)N_\pi$};
\node at (-2.5,0) {$\displaystyle\bigoplus_{i=1}^k q^{n_k}\De(\pi)$}; 

\node at (0,1.5) {$q^nP(\si)$}; 

\node at  (0,0.8) {\tiny $f$};
\node at  (-1.7,0.9) {\tiny $f'$};  

\draw [->] (-1.4,0) -- (-0.8,0);
\draw [->] (-0.2,1.2) -- (-0.2,0.3);
\draw [dotted] (-0.5,1.2) -- (-2.3,0.3);
\draw [->] (-2.2,0.35) -- (-2.3,0.3);
\end{tikzpicture}
\vspace{-2mm}
$$
with horizontal morphism being $(x_1,\dots,x_k)$ and where $n_k=\deg(x_k)$. The projectivity of $P(\si)$ yields the morphism $f'=
\left(
\begin{matrix}
 f_1   \\
 \vdots \\
 f_k
\end{matrix}
\right)
$ which makes the diagram commutative, i.e. $f=x_1\circ f_1+\dots +x_k\circ f_k=f_1x_1+\dots +f_kx_k \in \HOM(P(\si),\De(\pi))N_\pi$, as required.


(v) The filtration $\De(\pi)\supseteq \De(\pi)N_\pi\supseteq \De(\pi)N_\pi^2\supseteq\dots$ is exhaustive by degrees. It remains to observe that for any $m$, the module $\De(\pi)N_\pi^m/ \De(\pi)N_\pi^{m+1}$ has a finite filtration with subquotients isomorphic to quotients of modules of the form $q^n\bar\De(\pi)$ with $n>0$. Using the notation of the proof of (iii) above, we see that each $x_i\in B_\pi$ induces a morphism $\bar x_i:\De(\pi)N_\pi^{m-1}/ \De(\pi)N_\pi^{m}\to \De(\pi)N_\pi^m/ \De(\pi)N_\pi^{m+1}$, and 
$\De(\pi)N_\pi^m/ \De(\pi)N_\pi^{m+1}$ is the sum of the images of $\bar x_1,\dots,\bar x_k$. The result follows by induction on $m$. 
\end{proof}

Proposition~\ref{PBarDeNew} can be strengthened for $\B$-highest weight categories:

\begin{Proposition} \label{PDeBarDeNew}
Let $\catC$ be a $\B$-highest weight category and $\pi\in \Pi$. 
Then: 
\begin{enumerate}
\item[{\rm (i)}] In the Grothendieck group $[\catC]_q$, we have 
$
[\De(\pi)]=(\DIM B_\pi)[ \bar\De(\pi)].
$
\item[{\rm (ii)}] $\De(\pi)$ has an exhaustive filtration $\De(\pi)\supseteq \De(\pi)N_\pi\supseteq \De(\pi)N_\pi^2\supseteq\dots$, and $\De(\pi)N_\pi^n/ \De(\pi)N_\pi^{n+1}\cong (\DIM N_\pi^n/N_\pi^{n+1})\bar\De(\pi)$. 
\end{enumerate}
\end{Proposition}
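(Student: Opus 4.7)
The plan is to establish (ii) first; part (i) will then follow by summing the identities in (ii) in the graded Grothendieck group. Recall that $\De(\pi)$ carries a right $B_\pi$-action via $B_\pi=\END(\De(\pi))^\op$. Fix $n\geq 0$ and choose homogeneous $y_1,\dots,y_m\in N_\pi^n$ of degrees $d_1,\dots,d_m$ whose images form an $F$-basis of $N_\pi^n/N_\pi^{n+1}$. Since $B_\pi$ is connected and left Noetherian, graded Nakayama forces $y_1,\dots,y_m$ to generate $N_\pi^n$ as a left $B_\pi$-module; transferring through the opposite-algebra convention yields $\De(\pi)N_\pi^n=\sum_i\De(\pi)y_i$. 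Each $y_i$ acts as a morphism $\De(\pi)\to\De(\pi)$ of degree $d_i$ whose image modulo $\De(\pi)N_\pi^{n+1}$ annihilates $\De(\pi)N_\pi$, so descending and summing produces a surjection
$$
\phi_n:(\DIM N_\pi^n/N_\pi^{n+1})\,\bar\De(\pi)\twoheadrightarrow\De(\pi)N_\pi^n/\De(\pi)N_\pi^{n+1}.
$$

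To prove $\phi_n$ is an isomorphism, I would apply $\HOM(P(\si),-)$ for each $\si\in\Pi$ and match graded dimensions. By {\tt (HWC)}, $\HOM(P(\si),\De(\pi))$ is a free right $B_\pi$-module of some Laurent-polynomial rank $r_\si(q)$. Using the surjection $\bigoplus_i q^{d_i}\De(\pi)\twoheadrightarrow\De(\pi)N_\pi^n$ from the previous paragraph and the projectivity of $P(\si)$, a direct generalization of the argument of Proposition~\ref{PBarDeNew}(iv) gives $\HOM(P(\si),\De(\pi)N_\pi^n)=\HOM(P(\si),\De(\pi))\cdot N_\pi^n$, of graded dimension $r_\si(q)\DIM N_\pi^n$. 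Since $P(\si)$ is projective, $\HOM(P(\si),-)$ is exact on $0\to\De(\pi)N_\pi^{n+1}\to\De(\pi)N_\pi^n\to\De(\pi)N_\pi^n/\De(\pi)N_\pi^{n+1}\to 0$, so the target of $\HOM(P(\si),\phi_n)$ has graded dimension $r_\si(q)\DIM(N_\pi^n/N_\pi^{n+1})$. Proposition~\ref{PBarDeNew}(iv) gives $\DIM\HOM(P(\si),\bar\De(\pi))=r_\si(q)$, so the source of $\HOM(P(\si),\phi_n)$ has the same graded dimension. Hence $\HOM(P(\si),\phi_n)$ is a surjection between Laurentian graded vector spaces of equal graded dimension, and therefore an isomorphism for every $\si$; Lemma~\ref{LExCrit} then upgrades $\phi_n$ to an isomorphism, establishing (ii).

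Part (i) follows by summing in the Grothendieck group. Connectedness of $B_\pi$ places $N_\pi$ in strictly positive degrees, so $N_\pi^n$ lives in degrees $\geq n$ and $\De(\pi)N_\pi^n$ in degrees $\geq n+\min\deg\De(\pi)$, making the filtration $\De(\pi)\supseteq\De(\pi)N_\pi\supseteq\cdots$ exhaustive and eventually zero in each graded degree. The same degree argument, applied to $B_\pi$ itself (cf.\ Lemma~\ref{LJacLargeDeg}), yields $\DIM B_\pi=\sum_{n\geq 0}\DIM(N_\pi^n/N_\pi^{n+1})$. Combining with (ii) gives $[\De(\pi)]=\sum_{n\geq 0}\DIM(N_\pi^n/N_\pi^{n+1})[\bar\De(\pi)]=(\DIM B_\pi)[\bar\De(\pi)]$ in $G(\Pi)$, with convergence guaranteed because each coefficient $[\bar\De(\pi):L(\tau)]_q$ is a Laurent polynomial by Proposition~\ref{PBarDeNew}(iv). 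The main technical obstacle is the identification $\HOM(P(\si),\De(\pi)N_\pi^n)=\HOM(P(\si),\De(\pi))\cdot N_\pi^n$: it is precisely the freeness part of {\tt (HWC)}, rather than mere finite generation, that is used here, and this is what upgrades the ``quotient of $q^n\bar\De(\pi)$'' clause of Proposition~\ref{PBarDeNew}(v) to the honest isomorphism recorded in (ii).
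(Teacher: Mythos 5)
Your proof is correct, but it reverses the order of deduction compared to the paper, and the argument for part (ii) proceeds by a different mechanism. The paper proves (i) first, in one line: by Proposition~\ref{PBarDeNew}(iv), $[\bar\De(\pi):L(\si)]_q=\RANK\HOM(P(\si),\De(\pi))_{B_\pi}$, and {\tt (HWC)} (freeness) then gives $[\De(\pi):L(\si)]_q=\DIM\HOM(P(\si),\De(\pi))=(\DIM B_\pi)\cdot\RANK\HOM(P(\si),\De(\pi))_{B_\pi}$, so the two classes agree up to the scalar $\DIM B_\pi$. For (ii), the paper then picks a homogeneous basis $\{y_a\}$ of $B_\pi$ adapted to the radical filtration, notes that each $\bar y_a$ gives $\De(\pi)N_\pi^n/\De(\pi)N_\pi^{n+1}$ as a \emph{sum} of quotients of shifted copies of $\bar\De(\pi)$, and invokes (i) as a Grothendieck-group counting argument forcing each $\im\bar y_a$ to be all of $q^{\deg y_a}\bar\De(\pi)$ and the sum to be direct. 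You instead prove (ii) directly: you build the surjection $\phi_n$ from generators of $N_\pi^n$, compute that $\HOM(P(\si),-)$ sends both sides to Laurentian vector spaces of the same graded dimension (this is where {\tt (HWC)} enters, through the equality $\HOM(P(\si),\De(\pi)N_\pi^n)=\HOM(P(\si),\De(\pi))\cdot N_\pi^n$ and the freeness $\HOM(P(\si),\De(\pi))\cong r_\si(q)B_\pi$), and upgrade $\phi_n$ to an isomorphism via Lemma~\ref{LExCrit}; you then obtain (i) by summing the filtration and noting $\DIM B_\pi=\sum_n\DIM(N_\pi^n/N_\pi^{n+1})$. Both routes rest on exactly the same two ingredients (Proposition~\ref{PBarDeNew}(iv) and the freeness in {\tt (HWC)}); the paper's argument is shorter because it extracts the maximal mileage from (i) up front, while yours is more hands-on and makes the role of $\HOM(P(\si),-)$-exactness explicit. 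One minor point worth noting: finite-dimensionality of each $N_\pi^n/N_\pi^{n+1}$ (needed to pick finitely many $y_i$) follows from $B_\pi$ being left Noetherian, since $N_\pi^n$ is then finitely generated and $N_\pi^n/N_\pi^{n+1}$ is a finitely generated $B_\pi/N_\pi=F$-module; you use this implicitly, and it is worth flagging.
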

\begin{proof}
(i) Apply Proposition~\ref{PBarDeNew}(iv) and the axiom {\tt (HWC)}. 

(ii) We consider the exhaustive filtration $\De(\pi)\supseteq \De(\pi)N_\pi\supseteq \De(\pi)N_\pi^2\supseteq\dots$. We can choose a homogeneous basis of  $\{y_a\mid a\in A\}$  of $B_\pi$ and a partition $A=\sqcup_{n\geq 0} A_n$ in such a way that $\{y_a+N_\pi^{n+1}\mid a\in A_n\}$ is a basis of $N_\pi^n/N_\pi^{n+1}$ for all $n=0,1,\dots$. Now, each $y_a$ with $a\in A_n$ induces a morphism $\bar y_a:\bar\De(\pi)=\De(\pi)/\De(\pi)N_\pi\to \De(\pi)N_\pi^n/\De(\pi)N_\pi^{n+1}$. We have that 
\begin{equation}\label{E231113}
\De(\pi)N_\pi^n/\De(\pi)N_\pi^{n+1}=\sum_{a\in A_n} \im \bar y_a.
\end{equation}
Moreover, each $\im \bar y_a$ is a quotient of $q^{\deg(y_a)}\bar\De(\pi)$. Since $\DIM B_\pi=\sum_{a\in A}\deg(y_a)$, part (i) implies that each $\im \bar y_a\cong q^{\deg(y_a)}\bar\De(\pi)$ and the sum in (\ref{E231113}) is direct. 
\end{proof}

Recall the notation of \S\ref{SSStMod}. 
A version of the previous proposition for properly stratified categories is as follows:

\begin{Lemma} \label{L290414}
If $\catC$ is $\B$-properly stratified, then for any $\pi\in\Pi$ the standard object $\De(\pi)$ has an exhaustive filtration whose factors are properly stratified modules of the form $q^m\bar\De(\si)$ for $m\in\Z$ with $\varrho(\si)=\varrho(\pi)$; moreover $q^m\bar\De(\si)$ appears the same (finite) amount of times as a composition factor $q^mL(\si)$ appears in $P_\xi(\tau)$.
\end{Lemma}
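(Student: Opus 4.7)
Set $\xi:=\varrho(\pi)$. By Lemma~\ref{LApplicStandFunctor} together with axiom~{\tt (PSC)}, there is an exact standardization functor $\funE_\xi\colon \catC_\xi\to\catC_{\leq\xi}$ with $\De(\pi)\cong\funE_\xi(P_\xi(\pi))$ and $\bar\De(\si)\cong\funE_\xi(L(\si))$ for every $\si\in\varrho^{-1}(\xi)$. The plan is to produce an exhaustive composition-series-type filtration of $P_\xi(\pi)$ inside the Noetherian Laurentian category $\catC_\xi$ (see Corollary~\ref{C220414}) and then transport it through $\funE_\xi$.

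Axiom~{\tt (NLC1)} applied to $P_\xi(\pi)$ yields an exhaustive filtration $P_\xi(\pi)=G_0\supseteq G_1\supseteq\dots$ with $P_\xi(\pi)/G_i$ of finite length. Because $\varrho^{-1}(\xi)$ is finite, each finite-length quotient admits a composition series with shifted simples as factors, so after refinement we get an exhaustive filtration $P_\xi(\pi)=F_0\supseteq F_1\supseteq F_2\supseteq\dots$ with $F_i/F_{i+1}\simeq q^{m_i}L(\si_i)$ and $\varrho(\si_i)=\xi$. Applying the exact functor $\funE_\xi$ to the short exact sequences $0\to F_{i+1}\to F_i\to q^{m_i}L(\si_i)\to 0$ produces a descending chain of subobjects $\funE_\xi(F_i)\hookrightarrow \De(\pi)$ with $\funE_\xi(F_i)/\funE_\xi(F_{i+1})\cong q^{m_i}\bar\De(\si_i)$.

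The main obstacle is establishing exhaustiveness $\bigcap_i\funE_\xi(F_i)=0$ inside $\De(\pi)$. The plan is a degree-by-degree argument using the Laurentian structure. Since $\varrho^{-1}(\xi)$ is finite and each $\bar\De(\si)$ with $\varrho(\si)=\xi$ lies in the Noetherian Laurentian category $\catC_{\leq\xi}$, there is a uniform integer $d$ such that every $q^{m_k}\bar\De(\si_k)$ is concentrated in degrees $\geq m_k+d$. The Laurentian property of $P_\xi(\pi)$ (each graded component is finite dimensional, so only finitely many composition factors $q^{m_k}L(\si_k)$ can live in any given degree) forces $\min_{k\geq i}m_k\to\infty$ as $i\to\infty$. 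A straightforward induction using the short exact sequences together with the elementary fact that an extension of two objects concentrated in degrees $\geq N$ is again concentrated in degrees $\geq N$ then shows $\funE_\xi(F_i)$ is concentrated in degrees $\geq d+\min_{k\geq i}m_k$, and hence $(\funE_\xi(F_i))_n=0$ for $i$ sufficiently large (depending on $n$).

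Finally, the multiplicity claim is immediate from the construction: the number of occurrences of $q^m\bar\De(\si)$ among the factors $\funE_\xi(F_i)/\funE_\xi(F_{i+1})$ equals the number of occurrences of $q^mL(\si)$ among the $F_i/F_{i+1}$, i.e.\ the coefficient of $q^m$ in $[P_\xi(\pi):L(\si)]_q$, which is finite by the Laurentian property of $P_\xi(\pi)$.
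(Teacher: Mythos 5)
Your overall route is the same as the paper's: build a composition series of $P_\xi(\pi)$ in $\catC_\xi$, apply the exact functor $\funE_\xi$, and use Lemma~\ref{LApplicStandFunctor} to identify the image of the top and the subquotients. That part is fine. The problem is in your explicit argument for exhaustiveness, which is circular.

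You want to show that $\funE_\xi(F_i)$ is concentrated in degrees $\geq d+\min_{k\geq i}m_k$ by ``a straightforward induction using the short exact sequences'' $0\to\funE_\xi(F_{k+1})\to\funE_\xi(F_k)\to q^{m_k}\bar\De(\si_k)\to 0$ for $k\geq i$, together with the fact that an extension of two objects concentrated in degrees $\geq N$ is again so. But this induction has no base: going downward from $k$ to $k-1$ one needs to already know that some deep term $\funE_\xi(F_J)$ lies in high degrees, and going upward from the chain quotients alone only bounds $\funE_\xi(F_i)/\bigcap_j\funE_\xi(F_j)$, not $\funE_\xi(F_i)$ itself. In other words, the filtration data you use controls $\funE_\xi(F_i)$ modulo the very intersection you are trying to kill. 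Concretely: fixing $\tau$ and $n$, your short exact sequences only show that the descending chain $\hom(q^nP(\tau),\funE_\xi(F_i))$ of finite-dimensional subspaces of $\hom(q^nP(\tau),\De(\pi))$ \emph{stabilizes} for large $i$; they give no reason for the stable value to be zero.

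What actually closes the gap is the explicit description of the standardization functor. In the setting where one can pass to $\mod{H_{\leq\xi}}$ and $\mod{B_\xi}$ (cf. Remark~\ref{R3514} and Lemma~\ref{LStandFunctExists}), one has $\funE_\xi=\De_\xi\otimes_{B_\xi}-$, and then one should bound degrees on the \emph{source} side: $F_i$ is a $B_\xi$-submodule of the Laurentian module $P_\xi(\pi)$ with $P_\xi(\pi)/F_i$ finite-dimensional, so the graded components $(F_i)_n$ vanish for $n$ below a bound that tends to $\infty$ with $i$; combining this with $\De_\xi$ being bounded below gives the degree bound on $\De_\xi\otimes_{B_\xi}F_i=\funE_\xi(F_i)$ directly, with no appeal to the filtration of $\funE_\xi(F_i)$ itself. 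That is the non-circular version of the estimate you were after.
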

\begin{proof}
Apply the exact functor $\funE_\xi$ to a composition series of $P_\xi(\pi)$.
\end{proof}

\subsection{Extensions and $\De$-filtrations}
For $\B$-stratified categories, Lemma~\ref{LEXTDirWeak}(i) can be strengthened as follows: 

\begin{Lemma} \label{LEXTNonZero}
Let $\catC$ be a $\B$-stratified category and $X\in\catC$. 
\begin{enumerate}
\item[{\rm (i)}] If\, $\EXT^1(\De(\pi),X)\neq 0$ then $[X: L(\si)]_q\neq 0$ for some $\si>\pi$.  In particular, 
$\EXT^1(\De(\pi),\De(\si))\neq 0$ implies $\pi<\si$. 
\item[{\rm (ii)}] Assume that every $K(\pi)$ has a finite $\De$-filtration,  and $i\geq 1$. Then 
$\EXT^i(\De(\pi),X)\neq 0$ implies that $[X: L(\si)]_q\neq 0$ for some $\si>\pi$.  In particular, 
$\EXT^i(\De(\pi),\De(\si))\neq 0$ implies $\pi<\si$. 
\end{enumerate}
\end{Lemma}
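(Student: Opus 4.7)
\medskip
\noindent\textbf{Proof plan.}
For part (i), I would start with the defining short exact sequence $0 \to K(\pi) \to P(\pi) \to \De(\pi) \to 0$. Applying $\HOM(-, X)$ and invoking the projectivity of $P(\pi)$ yields a surjection
\[
\HOM(K(\pi), X) \onto \EXT^1(\De(\pi), X).
\]
By Lemma~\ref{mittagleffler}, it suffices to prove the contrapositive statement for $X = q^n L(\tau)$ a simple object: if $\HOM(K(\pi), L(\tau)) \neq 0$ then $\tau > \pi$. This reduces the lemma to a statement about simple quotients of $K(\pi)$.

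The $\De$-filtration $K(\pi) = F_0 \supseteq F_1 \supseteq F_2 \supseteq \cdots$ guaranteed by axiom {\tt (SC1)} is exhaustive with $F_j/F_{j+1} \simeq q^{m_j}\De(\si_j)$ for $\si_j > \pi$. Given a nonzero (hence surjective, as $L(\tau)$ is simple) morphism $f \colon K(\pi) \to L(\tau)$, I would show that $f$ must vanish on some $F_n$. Letting $n_0$ be the least such index, $f$ descends to a nonzero surjection $F_{n_0-1}/F_{n_0} \simeq q^{m_{n_0-1}}\De(\si_{n_0-1}) \onto L(\tau)$, which, since $\head \De(\si_{n_0-1}) = L(\si_{n_0-1})$, forces $\tau = \si_{n_0-1} > \pi$. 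The existence of such an $n_0$ rests on the Laurentian structure: by Lemma~\ref{L171013}(ii), $\HOM(P(\tau), K(\pi))$ is Laurentian, hence locally finite dimensional in each degree, so the decreasing sequence $\hom(q^d P(\tau), F_n)$ (with $d$ the degree of a fixed generator of $L(\tau)$) stabilizes; stabilization at a nonzero subspace would produce a nonzero homogeneous element lying in $\bigcap_n F_n = 0$, a contradiction.

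For part (ii), I would induct on $i \geq 1$, taking part (i) as the base case. For the inductive step with $i \geq 2$, projectivity of $P(\pi)$ applied to the same short exact sequence yields the isomorphism
\[
\EXT^i(\De(\pi), X) \cong \EXT^{i-1}(K(\pi), X).
\]
The hypothesis that $K(\pi) = F_0 \supset F_1 \supset \cdots \supset F_r = 0$ is a \emph{finite} $\De$-filtration now allows us to iterate the long exact sequence for $\EXT^{i-1}$ through the filtration: if $\EXT^{i-1}(K(\pi), X) \neq 0$, then $\EXT^{i-1}(F_j/F_{j+1}, X) \neq 0$, i.e.\ $\EXT^{i-1}(\De(\si_j), X) \neq 0$ (up to a degree shift), for some index $j$. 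Applying the inductive hypothesis to $\si_j$ in place of $\pi$, we find $[X : L(\tau)]_q \neq 0$ for some $\tau > \si_j$, and since $\si_j > \pi$, also $\tau > \pi$, as required.

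The ``in particular'' assertions in both parts follow by specializing to $X = \De(\si)$ and recalling that every composition factor of $\De(\si)$ is of the form $L(\tau)$ with $\tau \leq \si$, so the conclusion $\pi < \tau \leq \si$ collapses to $\pi < \si$. The principal obstacle is in part~(i), namely the careful handling of the possibly infinite $\De$-filtration sketched above; the stronger finiteness assumption in~(ii) is precisely what enables the clean iteration through the long exact sequence without any such Laurentian bookkeeping.
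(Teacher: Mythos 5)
Your proof is correct and follows essentially the same route as the paper's: reduce to $X$ simple via Lemma~\ref{mittagleffler} and projectivity of $P(\pi)$, use the exhaustive $\De$-filtration of $K(\pi)$ together with the Laurentian finite-dimensionality from Lemma~\ref{L171013}(ii) to show that the relevant hom-space vanishes unless a factor $\De(\si)$ with $\si>\pi$ maps onto $L(\tau)$, and then induct through the finite filtration of $K(\pi)$ for part~(ii). Your version merely spells out the induction in~(ii) that the paper leaves as a one-line remark, and phrases the part~(i) finiteness argument in its contrapositive form rather than by contradiction.
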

\begin{proof}
Using Lemma~\ref{mittagleffler}, we may assume that $X\simeq L(\si)$. 
The short exact sequence 
$
0\to K(\pi)\to P(\pi)\to \De(\pi)\to 0
$
gives rise to the exact sequence 
\begin{equation}\label{E291113}
0\to \EXT^{i-1}(K(\pi),L(\si))\to \EXT^i(\De(\pi),L(\si))\to 0
\end{equation}
for all $i\geq 1$.
So (i) will follow if we can show that $\HOM(K(\pi),L(\si))=0$ unless $\si>\pi$. Let $\si\not>\pi$. By {\tt (SC1)}, $K(\pi)$ has a $\De$-filtration $K(\pi)=K_0\supseteq K_1\supseteq\dots$ with factors of the form $q^m\De(\tau)$ with $\tau>\si$. So $\HOM(K(\pi)/K_n,L(\si))= 0$ for all~$n$. Assume for a contradiction that $\hom(K(\pi),q^kL(\si))\neq 0$ for some $k$. 
As $\hom(K(\pi)/K_n,q^kL(\si))= 0$ for all $n$, it then follows that $\hom(K_n,q^kL(\si))\neq 0$ for all $n$. In particular, $\hom(q^kP(\si),K_n)\neq 0$ for all $n$. Since the last $\hom$-space is finite dimensional by Lemma~\ref{L171013}(ii) and the filtration is exhaustive, we get a contradiction. 

(ii) follows from (\ref{E291113}) by induction on $i$. 
\end{proof}

\begin{Lemma} \label{LFinDe}
Let $\catC$ be a $\B$-stratified category,  $V=V_0\supseteq V_1\supseteq \dots$ be a $\De$-filtration of an abject $V\in\catC$, and 
$X(V):=\{\pi\in\Pi\mid V_r/V_{r+1}\simeq \De(\pi)\ \text{for some $r$}\}.$ 
Then:
\begin{enumerate}
\item[{\rm (i)}] The set $X(V)_{\min}$ of minimal elements of $X(V)$ is finite.
\item[{\rm (ii)}] $X(V)\subseteq \bigcup_{\pi\in X(V)_{\min}}\Pi_{\geq \pi}$; in particular, $X(V)_{\min}$ is non-empty. 
\item[{\rm (iii)}] For each $\pi\in\Pi$, there are only finitely many $r$ with $V_r/V_{r+1}\simeq\De(\pi)$. 
\end{enumerate} 
\end{Lemma}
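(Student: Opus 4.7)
The three assertions are interlocked finiteness statements, and I would attempt them in the order (ii), (i), (iii). For (ii), I start with $\si\in X(V)$ and seek a minimal element of $X(V)\cap\Pi_{\leq\si}$. I look at the finite truncations $V/V_n$: each has a finite $\De$-filtration of length $n$, so $X(V/V_n)$ is a finite set, and $X(V/V_n)\cap\Pi_{\leq\si}$ is a finite subset with a well-defined minimum $\pi^{(n)}$. As $n$ grows, the sequence $\pi^{(n)}$ is non-increasing in the preorder, forming a descending sequence in $\Pi_{\leq\si}$. Interval-finiteness of $\leq$ on $\Xi$, combined with the finite fibers of $\varrho$ and the fact that each $\pi^{(n)}$ already lies in $X(V)$, forces the sequence to stabilize once the finite interval $[\varrho(\pi^{(n_0)}),\varrho(\si)]$ has been exhausted. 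The stable value is the desired minimal element of $X(V)\cap\Pi_{\leq\si}$.

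For (i), I invoke Lemma~\ref{L6214} to produce an epimorphism $\bigoplus_j q^{m_{r_j}}P(\pi_{r_j})\twoheadrightarrow V$ from a finite projective. The head $V/\rad V$ is then finitely generated and semisimple, spanned by the images of the simples $L(\pi_{r_j})$. I would then argue that any $\pi\in X(V)_{\min}$ must actually lie in $\{\pi_{r_j}\}_j$. For this, pick $r$ with $V_r/V_{r+1}\simeq q^{m_r}\De(\pi)$, and consider the truncation functor $\funQ^{\{\pi\}}$ that selects the $L(\pi)$-isotypic quotient: since $\pi$ is minimal, no filtration factor $\De(\si)$ with $\si<\pi$ occurs, so the head $L(\pi)$ of $q^{m_r}\De(\pi)$ cannot be swallowed by any lower $\De$-layer, and must survive to the head of $V$. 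Hence $\pi$ appears among the finitely many $\pi_{r_j}$, so $X(V)_{\min}$ is finite. Part (ii) then implies $X(V)\subseteq\bigcup_{\pi\in X(V)_{\min}}\Pi_{\geq\pi}$, and non-emptiness of $X(V)_{\min}$ is automatic as $X(V)$ itself is non-empty.

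For (iii), I argue by contradiction: suppose $V_r/V_{r+1}\simeq q^{m_r}\De(\pi)$ for infinitely many $r$. By the projectivity of $P(\pi)$, each such $r$ produces a nonzero lift $f_r\colon q^{m_r}P(\pi)\to V_r$ with $f_r\not\subset V_{r+1}$. Lemma~\ref{L171013}(iii), applied to the exhaustive descending chain $V\supseteq V_1\supseteq\dots$, shows that for each fixed $m\in\Z$ one has $\hom(q^mP(\pi),V_n)=0$ for $n\gg 0$; this rules out any value of $m$ being attained by $m_r$ for infinitely many $r$. To handle the remaining scenario $m_r\to\infty$, I form the submodule $T:=\sum_r\im f_r\subseteq V$, which is Noetherian as a subobject of $V$; by ACC, $T=\sum_{r\in F}\im f_r$ for some finite $F$, and setting $r^*:=\min F$ one gets $T\subseteq V_{r^*}$. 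For any $r<r^*$ with $\pi_r=\pi$ this forces $\im f_r\subseteq T\subseteq V_{r^*}\subseteq V_{r+1}$, contradicting $f_r\not\subset V_{r+1}$. A parallel application of Lemma~\ref{L171013}(iii) inside $T$, applied to the induced exhaustive chain $T\cap V_n$, together with the finite generation of $T$, is meant to close the gap for $r>r^*$.

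The principal obstacle I foresee lies in (iii): while Lemma~\ref{L171013}(iii) cleanly handles each individual shift $m$, the difficult step is ruling out unboundedly many distinct $m_r$'s, which can only be done by combining the Noetherian structure of $V$ with the rigidity of the $\De$-filtration. Pinning down precisely how the finite generation of $T$ forces $\{m_r\}$ to be bounded, rather than merely coefficient-finite, is the delicate point that requires careful analysis.
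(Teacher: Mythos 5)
Your proposal never invokes Lemma~\ref{LEXTNonZero}(i), which says that $\EXT^1(\De(\pi),\De(\si))\neq 0$ forces $\pi<\si$; this single fact is the engine of the paper's proof, because it permits consecutive layers $\De(\si)$, $\De(\pi)$ of a $\De$-filtration to be swapped whenever $\pi\not>\si$. All three of the soft spots in your argument trace back to this omission.

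In (i), the claim that $L(\pi)$ ``must survive to the head of $V$'' is unjustified as stated: the layer $V_r/V_{r+1}\simeq\De(\pi)$ is a \emph{subquotient}, not a quotient, so the head of $V_r$ need not inject into $\head V$. The correct mechanism is that for $\pi\in X(V)_{\min}$ every layer $\De(\si)$ has $\si\not<\pi$, hence $\EXT^1(\De(\si),\De(\pi))=0$, and one reorders the filtration to bring $\De(\pi)$ to the top, producing a genuine epimorphism $V\twoheadrightarrow\De(\pi)$. In (ii), interval-finiteness of $\Xi$ only says that closed intervals $[a,b]$ are finite; it does not prevent an infinite strictly descending chain inside $\Pi_{\leq\si}$, so your sequence $\pi^{(n)}$ has no a priori reason to stabilize. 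The actual lower bound in the paper comes from the finite set $X'$ of possible \emph{top} layers of $\De$-filtrations of $V$ (finite by Lemma~\ref{L171013}(iii)), together with an inductive reordering argument showing that every layer lies in $\bigcup_{\pi_k\in X'}\Pi_{\geq\pi_k}$; your proposal never establishes this bound. In (iii), the ACC argument on $T=\sum_r\im f_r$ only excludes indices $r<r^*$, as you observe, and you concede that closing the gap for $r\geq r^*$ is unresolved. The paper's route is clean: since $\EXT^1(\De(\pi),\De(\pi))=0$ and $\pi$ can be taken minimal, one reorders to bring arbitrarily many $\De(\pi)$'s to the top, which would make $\HOM(V,L(\pi))$ infinite-dimensional, contradicting Corollary~\ref{CRes}(iii).
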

\begin{proof}
Let $X'=\{\pi_1,\dots,\pi_m\}$ be the set of all $\pi$ such that there exists a $\De$-filtration of $V$ with the top subquotient $\simeq \De(\pi)$. The set $X'$ is indeed finite by 
Lemma~\ref{L171013}(iii). 

(i) If $\pi\in X(V)_{\min}$, then by Lemma~\ref{LEXTNonZero}(i), we have $\Ext^1(\De(\si),\De(\pi))=0$ for all $\si\in X(V)$. It follows that $\pi\in X'$. 
In particular, $X(V)_{\min}$ is finite.

(ii) We first prove by induction on $n$ that the top $n$ subquotients of an arbitrary $\De$-filtration $V=W_0\supseteq W_1\supseteq\dots$ are of the form $\simeq \De(\pi)$ with $\pi\in Y:=\bigcup_{k=1}^m\Pi_{\geq \pi_k}$. The induction base $n=1$ is clear from the definitions. Let $n>1$ and $W_{n-1}/W_{n}\simeq \De(\pi)$. We have to prove that $\pi\in Y$. Let $W_{n-2}/W_{n-1}\simeq \De(\si)$. By the inductive assumption, $\si\in Y$. If $\pi>\si$, then  $\pi\in Y$. Otherwise, $\EXT^1(\De(\si),\De(\pi))=0$ by Lemma~\ref{LEXTNonZero}(i). So we can reorder the levels of the $\De$-filtration so that $W_{n-2}/W_{n-1}\simeq \De(\pi)$, and $\pi\in Y$ again by the inductive assumption. 

We deduce from the previous paragraph that $X(V)\subseteq Y$, which easily implies (ii) since $\Xi$ is interval-finite. 

(iii) Let $\pi\in X(V)_{\min}$. If there are infinitely many $r$ with $V_r/V_{r+1}\simeq\De(\pi)$, then Lemma~\ref{LEXTNonZero}(i) allows us to reorder the filtration in such a way that arbitrarily large amount of its top subquotients is $\simeq \De(\pi)$, in which case, since $\EXT^1(\De(\pi),\De(\pi))=0$ again by Lemma~\ref{LEXTNonZero}(i), we have that $\HOM(V,L(\pi))$ is infinite dimensional, which contradicts Corollary~\ref{CRes}(iii). 
Thus there are only finitely many $r$ with $V_r/V_{r+1}\simeq\De(\pi)$, and we can reorder the layers of the filtration to get a subobject $V'\subseteq V$ having a $\De$-filtration such that 
$\De(\pi)$ does not arise as a subquotient the $\De$-filtration of $V'$. 

Using the assumption that $\Pi$ is interval finite, for any $\si\in X(V)$, after finitely many steps as above, we reach a subobject $V''\subset V$ with a $\De$-filtration 
$V''=V''_0\supseteq V''_1\supseteq\dots$ 
such that 
$|\{r\mid V_r/V_{r+1}\simeq \De(\si)\}|=|\{r\mid V''_r/V''_{r+1}\simeq\De(\si)\}|$ and $\si$ is minimal in $X(V'')$. The finiteness of $|\{r\mid V''_r/V''_{r+1}\simeq\De(\si)\}|$ now follows from the argument in the previous paragraph. 
\end{proof}

\begin{Corollary} \label{COrder}
Let $\catC$ be a $\B$-stratified category, 
$V\in\catC$, $V=V_0\supseteq V_1\supseteq \dots$ be a $\De$-filtration, and 
$X(V):=\{\pi\in\Pi\mid V_r/V_{r+1}\simeq \De(\pi)\ \text{for some $r$}\}.$ 
Then:
\begin{enumerate}
\item[{\rm (i)}] There is a $\De$-filtration 
$
V=W_0\supseteq W_1\supseteq \dots
$
such that $W_i/W_{i+1}\simeq \De(\pi_i)$ for $i=0,1,\dots$, and 
for all $i,j\geq 0$, we have that $\pi_i<\pi_j$ implies $i<j$.

\item[{\rm (ii)}] If $\pi$ is a minimal  element of $X(V)$, then there is a $\De$-filtration of\, $V$ with the top subquotient $\simeq \De(\pi)$. 
\item[{\rm (iii)}] If $X(V)$ is finite, and $\pi$ is a maximal element of $X(V)$, then there is a $\De$-filtration of\, $V$ with the bottom subquotient $\simeq \De(\pi)$. 
\end{enumerate} 
\end{Corollary}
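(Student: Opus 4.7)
For parts~(ii) and~(iii), the plan is to iterate the following operation: two adjacent layers of a $\De$-filtration $V=V_0\supseteq V_1\supseteq\dots$ with $V_{s-1}/V_s\simeq\De(\si)$ and $V_s/V_{s+1}\simeq\De(\pi)$ can be reordered (by replacing $V_s$ with a different subobject) precisely when the short exact sequence $0\to\De(\pi)\to V_{s-1}/V_{s+1}\to\De(\si)\to 0$ splits, i.e.\ $\EXT^1(\De(\si),\De(\pi))=0$; by Lemma~\ref{LEXTNonZero}(i) this vanishing holds whenever $\si\not<\pi$. For~(ii), Lemma~\ref{LFinDe}(iii) yields the smallest index $r_0$ with $V_{r_0}/V_{r_0+1}\simeq\De(\pi)$, and I would swap this layer upward through positions $r_0,r_0-1,\dots,1$; at each such swap the label above is some $\lambda\in X(V)$ with $\lambda\neq\pi$ (by minimality of $r_0$), and minimality of $\pi$ in $X(V)$ forces $\lambda\not<\pi$. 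Part~(iii) is dual: finiteness of $X(V)$ and Lemma~\ref{LFinDe}(iii) force the given $\De$-filtration to be finite, $V=V_0\supsetneq\dots\supsetneq V_N=0$, and I would locate the largest $r$ with $V_r/V_{r+1}\simeq\De(\pi)$ and swap this layer downward through positions $r+1,\dots,N-1$; at each swap the vanishing $\EXT^1(\De(\pi),\De(\mu))=0$ is supplied by maximality of $\pi$, which ensures $\pi\not<\mu$ for the label $\mu$ below.

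For part~(i), the plan is to apply~(ii) recursively with a carefully chosen priority. Fix a linear extension $\prec$ of the partial order on the countable set $X(V)$ in which every element has only finitely many $\prec$-predecessors; such an extension exists because $\{\si\in X(V)\mid \si\leq \pi\}$ is finite for each $\pi$, by Lemma~\ref{LFinDe}(i)--(ii) together with interval-finiteness of~$\leq$. Set $W_0:=V$ and, having constructed $W_0\supseteq\dots\supseteq W_n$ together with a $\De$-filtration of $W_n$, take $\pi_n$ to be the minimal element of $X(W_n)$ with smallest $\prec$-index and apply~(ii) to produce a $\De$-filtration of $W_n$ whose top subquotient is $\simeq\De(\pi_n)$; let $W_{n+1}$ be its second term. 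The ordering condition is then immediate: for $j>i$ we have $\pi_j\in X(W_j)\subseteq X(W_i)$, so the minimality of $\pi_i$ in $X(W_i)$ forces $\pi_j\not<\pi_i$.

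The main obstacle is to verify that the chain $W_0\supseteq W_1\supseteq\dots$ is exhaustive, i.e.\ $\bigcap_n W_n=0$, and this is where the priority rule is essential. Enumerate $X(V)=\{\tau_0,\tau_1,\dots\}$ according to $\prec$, and let $N_k$ be the multiplicity of $\De(\tau_k)$ in the original $\De$-filtration of $V$, which is finite by Lemma~\ref{LFinDe}(iii). An induction on $k$ shows that $\pi_n=\tau_k$ for every $n$ in the interval $[N_0+\cdots+N_{k-1},\,N_0+\cdots+N_k)$: once $\tau_0,\dots,\tau_{k-1}$ have been depleted from $X(W_n)$, the $<$-predecessors of $\tau_k$ in $X(V)$ (which lie in $\{\tau_0,\dots,\tau_{k-1}\}$ because $\prec$ extends $<$) are all gone, hence $\tau_k$ is minimal in $X(W_n)$, and its smallest $\prec$-index forces the priority to select it until its $N_k$ layers are exhausted. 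Therefore every layer of the original filtration is consumed after finitely many steps, and for each $\tau\in\Pi$ and $d\in\Z$ we have $\hom(q^dP(\tau),W_n)=0$ for $n$ sufficiently large. A nonzero homogeneous element of $\bigcap_n W_n$ would generate a Noetherian subobject with some simple quotient $q^dL(\tau)$, producing $\hom(q^dP(\tau),W_n)\neq 0$ for every $n$, a contradiction; hence $\bigcap_n W_n=0$, as required.
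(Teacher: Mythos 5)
Your argument is correct, and it takes a genuinely different route from the paper's. The paper's proof proceeds in one pass by decomposing $X(V)$ into ``height layers'' $X^1=X(V)_{\min}$, $X^n=\bigl(X(V)\setminus(X^1\cup\dots\cup X^{n-1})\bigr)_{\min}$, and then using Lemmas~\ref{LFinDe} and~\ref{LEXTNonZero}(i) to push, for each $n$, all subquotients labelled by $X^n$ below those labelled by $X^1\cup\dots\cup X^{n-1}$; this yields a coarse filtration $V=V^0\supseteq V^1\supseteq\dots$ with $V^{n-1}/V^n$ a finite direct sum of $q^m\De(\pi)$'s with $\pi\in X^n$, from which (i), (ii), and (iii) are read off simultaneously. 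You instead first prove (ii) and (iii) by adjacent-layer swaps, then iterate (ii) with a priority rule (a linear extension $\prec$ of $<$ on the countable set $X(V)$ in which every element has finitely many $\prec$-predecessors, available because each $\Pi_{\leq\pi}\cap X(V)$ is finite) to get (i). Both proofs hinge on the same ingredients: $\EXT^1(\De(\lambda),\De(\pi))=0$ for $\lambda\not<\pi$ from Lemma~\ref{LEXTNonZero}(i), and the finiteness facts in Lemma~\ref{LFinDe}. The main gain of your presentation is that it makes the exhaustiveness of the constructed filtration $V=W_0\supseteq W_1\supseteq\dots$ explicit (via the bookkeeping argument showing that each layer of the original filtration is consumed after finitely many steps, so that $\hom(q^dP(\tau),W_n)=0$ for $n\gg0$), whereas the paper leaves this point implicit; the paper's approach is shorter and exhibits the coarse block structure of the rearranged filtration directly, which is convenient for proving Lemma~\ref{LFiltSi} afterwards.
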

\begin{proof}
Define the sets $X^1,X^2,\dots$ recurrently from 
 $X^1=X(V)_{\min}$ and $X^n=(X\setminus (X^1\cup\dots\cup X^{n-1}))_{\min}$ for $n>0$. By Lemma~\ref{LFinDe} and since $\Pi$ is interval-finite, we have $X=\sqcup_{n\geq 1}X^n$. 
We construct a filtration $V=V^0\supseteq V^1\supseteq V_2\supseteq\dots$ recurrently as follows.  

By Lemmas \ref{LFinDe} and ~\ref{LEXTNonZero}(i), there is a subobject $V^1\subseteq V$ such that $V^1$ has a $\De$-filtration with subquotients of the form $\De(\si)$ with $\si\in X\setminus X^1$ and $V/V^1$ has a $\De$-filtration  with subquotients of the form $\De(\si)$ with $\si\in X^1$. 
By the same argument, we get a subobject $V^2\subseteq V^1$ 
such that $V^2$ has a $\De$-filtration with subquotients of the form $\De(\si)$ with $\si\in X\setminus (X^1\cup X^2)$ and $V/V^2$ has a $\De$-filtration  with subquotients of the form $\De(\si)$ with $\si\in X^1\cup X^2$. 
Continuing this way, we get an exhaustive $\De$-filtration $V=V^0\supseteq V^1\supseteq V^2\supseteq\dots$  of $V$, where $V^{n-1}/V^{n}$ has $\De$-filtration with subquotients $\simeq \De(\pi)$ for $\pi\in X^n$. 
By Lemma~\ref{LEXTNonZero}(i), $V^n/V^{n+1}$ is isomorphic to a direct sum of these subquotients. Parts (i) and (ii) now follow. Part (iii) is proved similarly, but starting from the maximal elements of $X(V)$. 
\end{proof}

\subsection{Saturated sets} 
A subset $\Si\subseteq \Pi$ is called {\em saturated}\, if $\pi\in\Si$ whenever $\pi\leq\si$ and $\si\in\Si$. 

\begin{Lemma} \label{LFiltSi}
Let $\catC$ be a $\B$-stratified category,  $V=V_0\supseteq V_1\supseteq\dots$ be a $\De$-filtration of an object $V\in\catC$, and 
$X(V):=\{\pi\in\Pi\mid V_r/V_{r+1}\simeq \De(\pi)\ \text{for some $r$}\}.$ 
Let $\Si\subseteq \Pi$ be a saturated subset such that $\Si\cap X(V)$ is finite. Then there is a $\De$-filtration $V=W_0\supseteq W_1\supseteq\dots$ such that the following two conditions hold:
\begin{enumerate}
\item[{\rm (1)}] $|\{r\mid V_r/V_{r+1}\cong q^n\De(\pi)\}|=|\{r\mid W_r/W_{r+1}\cong q^n\De(\pi)\}|$ for every $n\in\Z$ and every $\pi\in \Pi$. 
\item[{\rm (2)}] there is $t\in\Z_{\geq 0}$ such that $W_t=\O^\Si(V)$, 
$r<t$ implies $W_r/W_{r+1}\simeq \De(\si)$ for $\si\in \Si$, and $r\geq t$ implies $W_r/W_{r+1}\simeq \De(\pi)$ for $\pi\not\in \Si$.
\end{enumerate}
In particular, $\funO^\Si(V)$ and $\funQ^\Si(V)$ have $\De$-filtrations. 
\end{Lemma}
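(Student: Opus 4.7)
The plan is to induct on the total number
\[
t \ := \ \sum_{\si \in \Si \cap X(V)} \bigl|\{r \geq 0 \mid V_r/V_{r+1} \simeq \De(\si)\}\bigr|
\]
of $\Si$-layers of the given filtration, which is finite thanks to Lemma~\ref{LFinDe}(iii) and the finiteness of $\Si \cap X(V)$. The foundational observation is that for any $\pi \notin \Si$ and $\si \in \Si$ we have $\EXT^1(\De(\pi), \De(\si)) = 0$: by Lemma~\ref{LEXTNonZero}(i) a non-trivial Ext would force $\pi < \si$, and then saturation of $\Si$ would put $\pi$ into $\Si$, a contradiction. Consequently, whenever a non-$\Si$-layer lies immediately above a $\Si$-layer in a $\De$-filtration, the length-two subquotient splits, and one may replace the intermediate subobject by a direct complement; this swaps the two layers while preserving the rest of the filtration. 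Iterating, any fixed $\Si$-layer---being at some finite position---can be brought to the top in finitely many swaps, and the multiset of layers is preserved throughout.

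For the base case $t=0$, every layer is non-$\Si$-type, and Lemma~\ref{L6214} produces an epimorphism $\bigoplus_i q^{m_i} P(\pi_i) \twoheadrightarrow V$ with each $\pi_i \notin \Si$; each summand contributes to the defining family of morphisms for $\funO^\Si(V)$, so $V = \funO^\Si(V)$ and taking $W_r := V_r$, $t := 0$, settles the case. For the inductive step, the swap procedure yields a $\De$-filtration of $V$ whose top layer is $\simeq \De(\si_0)$ for some $\si_0 \in \Si$, with the same multiset of layers. Writing $V'$ for the corresponding subobject, $V'$ inherits a $\De$-filtration with exactly $t - 1$ many $\Si$-layers, and the inductive hypothesis applied to $V'$ supplies a filtration $V' = W'_0 \supseteq W'_1 \supseteq \cdots$ with $W'_{t-1} = \funO^\Si(V')$, $\Si$-type layers above and non-$\Si$-type layers below. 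Setting $W_0 := V$, $W_{r+1} := W'_r$ and $W_t := W'_{t-1}$ gives the required filtration with matching multiplicities.

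It remains to identify $W_t$ with $\funO^\Si(V)$. Saturation of $\Si$ ensures that every composition factor $L(\tau)$ of $\De(\si_0)$ satisfies $\tau \leq \si_0 \in \Si$, so $\tau \in \Si$; thus $V/V' \simeq \De(\si_0)$ belongs to $\Si$. Minimality of $\funO^\Si(V)$ gives $\funO^\Si(V) \subseteq V'$, and since $V'/\funO^\Si(V)$ embeds into $V/\funO^\Si(V) \in \Si$, we get $\funO^\Si(V') \subseteq \funO^\Si(V)$. Conversely, the short exact sequence
\[
0 \to V'/\funO^\Si(V') \to V/\funO^\Si(V') \to V/V' \to 0
\]
has both outer terms in $\Si$, forcing $V/\funO^\Si(V') \in \Si$ and hence $\funO^\Si(V) \subseteq \funO^\Si(V')$. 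Therefore $W_t = \funO^\Si(V') = \funO^\Si(V)$, and both $\funO^\Si(V)$ and $\funQ^\Si(V) = V/W_t$ inherit $\De$-filtrations from the construction. The principal obstacle is that saturation of $\Si$ must be invoked twice---once to power the Ext-vanishing that drives the swaps, and again to control composition factors in the identification of $\funO^\Si(V)$; without saturation, neither step goes through, and the filtration can fail to break exactly at $\funO^\Si(V)$.
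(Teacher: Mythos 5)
Your proof is correct, and it reaches the same conclusion as the paper by a genuinely different route. The paper's proof invokes the machinery of minimal elements (Lemma~\ref{LFinDe} and Corollary~\ref{COrder}): it peels off $X(V)_{\min}\cap\Si$ from the top in each pass, using saturation to guarantee $X(V)_{\min}\cap\Si\neq\emptyset$ whenever $X(V)\cap\Si\neq\emptyset$, then recurses on the resulting subobject $V^1$ with $\funO^\Si(V)=\funO^\Si(V^1)$. You instead induct directly on the total number $t$ of $\Si$-type layers and effect the rearrangement by hand via explicit length-two splittings of adjacent layers, which is the same fundamental mechanism (the vanishing $\EXT^1(\De(\pi),\De(\si))=0$ for $\pi\notin\Si$, $\si\in\Si$, via Lemma~\ref{LEXTNonZero}(i) and saturation of $\Si$) that underlies Corollary~\ref{COrder} but avoids quoting it. Your version is more elementary and more careful about the two points the paper leaves to the reader: that the swaps preserve the multiset of layers (so condition (1) holds), and that the inductive breakpoint $W_t=\funO^\Si(V')$ actually equals $\funO^\Si(V)$, which you verify by the two containments via the Serre property of $\catC(\Si)$. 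One small imprecision in your write-up: you say ``any fixed $\Si$-layer...can be brought to the top,'' but the swap argument only applies when all layers above the one being moved are non-$\Si$; you should specify that you move the \emph{topmost} $\Si$-layer, which is what you implicitly do and is exactly the case you need for the inductive step.
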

\begin{proof}
We use the notation of Lemma~\ref{LFinDe}. We may assume that $X(V)\cap\Si\neq \emptyset$ since otherwise $\O^\Si(V)=V$. Let $X(V)_{\min}=X_1\sqcup X_1'$, where $X_1=X(V)_{\min}\cap\Si$. By Lemma~\ref{LFinDe}, $X_1\neq \emptyset$ since $\Si$ is saturated. By Corollary~\ref{COrder}(ii), there exists a subobject $V^1\subseteq V$ such that $V/V^1$ has a finite $\De$-filtration with subquotients $\simeq \De(\pi)$ for $\pi\in X_1$ and $V_1$ has a $\De$-filtration with subquotients $\simeq \De(\pi)$ for $\pi\in X(V)\setminus X_1$.  Note that $O^\Si(V)\subseteq V_1$, hence $\funO^\Si(V)=\funO^\Si(V_1)$. Repeating this procedure finitely many times, we get a chain of submodules $V\supset V^1\supset V^2\supset \dots\supseteq V^l=\O^\Si(V)$, and the result follows. 
\end{proof}


Let $V=V_0\supseteq V_1\supseteq \dots$ be a $\De$-filtration. Define the corresponding $\De$-multiplicities as follows
$$
(V:\De(\pi))_q:=\sum_{n\in\Z}|\{r\mid V_r/V_{r+1}\cong q^n\De(\pi)\}|q^n\qquad\qquad(\pi\in \Pi).
$$
In view of Lemma~\ref{LFinDe}(iii), we have $(V:\De(\pi))_q\in\Z[q,q^{-1}]$. 

\begin{Lemma} 
Let $\catC$ be a $\B$-stratified category and $V$ be an object with a $\De$-filtration. Then for each $\pi\in\Pi$, the multiplicity $(V:\De(\pi))_q$ does not depend on the choice of a  $\De$-filtration of $V$. 
\end{Lemma}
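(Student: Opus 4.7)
Fix $\pi\in\Pi$ and suppose we are given two $\Delta$-filtrations of $V$ yielding multiplicities $m(q)$ and $m'(q)$ at $\Delta(\pi)$. My plan is to first use truncation to replace the two filtrations by finite filtrations of an intrinsic quotient of $V$, and then deduce uniqueness in the finite case from a Grothendieck-group triangularity argument.

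For truncation I would apply Lemma~\ref{LFiltSi} with $\Si:=\Pi_{\leq\pi}$. The hypothesis that $X(V)\cap\Pi_{\leq\pi}$ is finite follows by combining Lemma~\ref{LFinDe}(i)--(ii) with the standing assumptions on $\varrho$ and $\Xi$: one has $X(V)\subseteq\bigcup_{\tau\in X(V)_{\min}}\Pi_{\geq\tau}$ with $X(V)_{\min}$ finite, while interval-finiteness of $\leq$ on $\Xi$ and the finite fibers of $\varrho$ make each intersection $\Pi_{\geq\tau}\cap\Pi_{\leq\pi}=\varrho^{-1}([\varrho(\tau),\varrho(\pi)])$ finite. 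Lemma~\ref{LFiltSi} then reorders each filtration (preserving all multiplicities) into a form $V=W_0\supseteq\dots\supseteq W_t=\O^{\Pi_{\leq\pi}}(V)\supseteq\dots$, whose finite initial segment is a $\Delta$-filtration of the intrinsic quotient $U:=V/\O^{\Pi_{\leq\pi}}(V)$ by subquotients $\Delta(\sigma)$ with $\sigma\leq\pi$. Since $U$ depends only on $V$, we now have $m(q)$ and $m'(q)$ equalling the $\Delta(\pi)$-multiplicities in two finite $\Delta$-filtrations of the same object $U$, reducing the claim to the finite-length case.

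For the finite case I would pass to the graded Grothendieck group $[\catC]_q$ of~(\ref{E080414_1}), where a finite $\Delta$-filtration yields an equality $[U]=\sum_\sigma n_\sigma(q)[\Delta(\sigma)]$ with $n_\sigma(q)\in\Z[q,q^{-1}]$ of finite support. It suffices to establish $\Z[q,q^{-1}]$-linear independence of the finitely many classes $\{[\Delta(\sigma)]\mid\sigma\leq\pi\}$ appearing. Given a hypothetical nontrivial relation $\sum_\sigma a_\sigma(q)[\Delta(\sigma)]=0$, I would choose $\xi_0\in\Xi$ maximal in $\varrho(\mathrm{supp}(a))$ (possible since the support is finite) and examine, for each $\sigma_0\in\varrho^{-1}(\xi_0)$, the $[L(\sigma_0)]$-coefficient of the relation: only the $\sigma$ with $\varrho(\sigma)=\xi_0$ can contribute, since $L(\sigma_0)$ occurs in $\Delta(\sigma)$ only when $\varrho(\sigma)\geq\varrho(\sigma_0)$, and maximality forbids strict inequality. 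This produces a finite linear system inside the fiber $\varrho^{-1}(\xi_0)$, which Corollary~\ref{C220414} transports through the graded equivalence $\catC_{\xi_0}\simeq\mod B_{\xi_0}$; the resulting system expresses that a certain $\Z[q,q^{-1}]$-linear combination of free $B_{\xi_0}$-module classes vanishes, forcing all $a_\sigma=0$ in the top fiber. Removing the top $\varrho$-level and iterating gives the full linear independence and hence uniqueness of the $n_\sigma(q)$.

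The main obstacle is precisely this intra-fiber triangularity: in a general $\B$-stratified (as opposed to $\B$-highest weight) setting, several $\Delta(\sigma)$ within a single $\varrho$-fiber can contribute to $[L(\sigma_0)]$ simultaneously, so scalar triangularity cannot isolate $a_{\sigma_0}$ from a single coefficient equation. The remedy, as sketched, is to upgrade scalar triangularity to a matrix triangularity and handle each fiber via the identification $\catC_{\xi_0}\simeq\mod B_{\xi_0}$ of Corollary~\ref{C220414}, reducing the fiber-level uniqueness to standard Nakayama-type facts about finitely generated $B_{\xi_0}$-modules; the remaining outer triangularity across different $\varrho$-fibers is then handled by the finite support and is largely formal.
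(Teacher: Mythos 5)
Your truncation step is the same as the paper's: the paper fixes $\si\in\Pi$, sets $\Si:=\Pi_{\leq\si}$, and invokes Lemma~\ref{LFiltSi} to reduce to finite $\De$-filtrations; your version with $\Si=\Pi_{\leq\pi}$ is equally valid. The gap is in the finite-case step. The system you extract from the top $\varrho$-fiber,
$$\sum_{\sigma\in\varrho^{-1}(\xi_0)} a_\sigma(q)\,[\De(\sigma):L(\sigma_0)]_q=0\qquad\bigl(\sigma_0\in\varrho^{-1}(\xi_0)\bigr),$$
says that $(a_\sigma)_\sigma$ lies in the kernel of the Cartan matrix of $B_{\xi_0}$, and your stated justification that this forces $a_\sigma=0$ --- ``a $\Z[q,q^{-1}]$-linear combination of \emph{free} $B_{\xi_0}$-module classes vanishes'' --- is not what the system expresses: under Corollary~\ref{C220414} the $\De(\sigma)$ correspond to the indecomposable \emph{projective} $B_{\xi_0}$-modules, which are free only when $B_{\xi_0}$ is local, i.e.\ in the highest-weight situation with singleton fibers. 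For an honest $\B$-stratified category the Cartan matrix of $B_{\xi_0}$ need not be invertible over $\Z((q))$. Concretely, take $\Xi=\{*\}$, $\Pi=\{1,2\}$, $\varrho$ constant, and $\catC=\mod{A}$ where $A$ is the path algebra of the quiver $1\rightleftarrows 2$ modulo both length-two paths, graded in degree zero; then $\catC$ is $\B$-stratified for any class $\B$ containing $A$, $\De(i)=P(i)$, and $[\De(1)]=[L(1)]+[L(2)]=[\De(2)]$ in $G(\Pi)$. So the classes $\{[\De(\sigma)]\}$ within a fiber are not even $\Z$-linearly independent, and no computation purely in the Grothendieck group $G(\Pi)$ can recover the multiplicities.

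The lemma is nevertheless true, and your instinct to appeal to ``Nakayama-type facts'' is the right one, but it has to be run at the module level rather than in $G(\Pi)$. After the truncation you perform, use that $\EXT^1(\De(\sigma),\De(\tau))=0$ for $\sigma,\tau$ in a common fiber (Lemma~\ref{LEXTNonZero}(i)), so the relevant canonical quotient is a finite \emph{direct sum} of shifted $\De(\sigma)$'s; then Krull--Schmidt, or equivalently the observation that $\DIM\HOM(q^m\De(\tau),L(\sigma))=q^{-m}\delta_{\tau,\sigma}$ because $\head\De(\tau)\cong L(\tau)$, pins down each multiplicity from the isomorphism type of the object, not from its class in $G(\Pi)$. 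That is where the uniqueness actually lives, and it is the content the paper's terse phrase ``Grothendieck group argument'' must be taken to carry once $\varrho$ is a genuine preorder rather than a partial order.
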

\begin{proof}
Let $X(V):=\{\pi\in\Pi\mid V_r/V_{r+1}\simeq \De(\pi)\ \text{for some $r$}\}.$ By Lemma~\ref{LFinDe}, 
$X(V)\subseteq \bigcup_{\pi\in X(V)_{\min}}\Pi_{\geq \pi}$. Fix an arbitrary $\si\in\Pi$ and consider the saturated set $\Si:=\Pi_{\leq \si}$. 
Since $\Pi$ is interval finite, $X(V)\cap\Si$ is finite. By Lemma~\ref{LFiltSi},  $Q^\Si(V)$ has a finite $\De$-filtration with $(Q^\Si(V):\De(\tau))_q=(V:\De(\tau))_q$ for all $\tau\in \Si\cap X(V)$. In this way, we see that to check that $(V:\De(\pi))_q$ does not depend on the choice of a  $\De$-filtration of $V$ can be reduced to the case where $V$ has a {\em finite} $\De$-filtration. The finite filtration case follows by a Grothendieck group argument. 
\end{proof}

\begin{Proposition} \label{PTruncSat}
Let $\Si\subseteq \Pi$ be a finite saturated subset. If the category $\catC$ is $\B$-stratified, (resp. standardly $\B$-stratified,  properly $\B$-stratified, $\B$-highest weight, weakly $\B$-highest weight) with respect to $\varrho:\Pi\to\Xi$, 
then so is $\catC(\Si)$ with respect to $\varrho{|}_\Si:\Si\to \varrho(\Si)$; the standard objects for $\catC(\Si)$ are $\{\De(\si)\mid \si\in\Si\}$.
\end{Proposition}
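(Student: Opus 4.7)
The plan is to first identify the standard objects, projective covers, and endomorphism algebras in the subcategory $\catC(\Si)$ with the ``old'' data from $\catC$, and then verify the axioms one by one using the adjunction $\funQ^\Si\dashv\iota_\Si$ together with Lemma~\ref{LFiltSi}. Two saturation-type observations drive everything. First, for any $\si\in\Si$ one has $\Pi_{\leq\si}\subseteq\Si$, because $\Si$ is saturated. Second, for $\xi\in\varrho(\Si)$ one has $\varrho^{-1}(\xi)\subseteq\Si$: if $\varrho(\pi)=\varrho(\si)=\xi$ with $\si\in\Si$, then $\pi\leq\si$ in the preorder on $\Pi$, so $\pi\in\Si$. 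Consequently $\catC(\Si)_{\leq\si}=\catC_{\leq\si}$ for every $\si\in\Si$ and $\catC(\Si)_{\leq\xi}=\catC_{\leq\xi}$, $\catC(\Si)_\xi=\catC_\xi$ for every $\xi\in\varrho(\Si)$.

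By Lemma~\ref{LSiPrCovCat} the projective cover of $L(\si)$ in $\catC(\Si)$ is $P_\Si(\si)=\funQ^\Si(P(\si))$. Since $\Pi_{\leq\si}\subseteq\Si$, the functor $\funQ^{\leq\si}$ factors as $\funQ^{\leq\si}\circ\funQ^\Si$, so the standard object in $\catC(\Si)$ is
$$\De^\Si(\si)=\funQ^{\leq\si}\!\bigl(\funQ^\Si(P(\si))\bigr)=\funQ^{\leq\si}(P(\si))=\De(\si),$$
and the same argument with $\Pi_{<\si}$ gives $\bar\De^\Si(\si)=\bar\De(\si)$. Combined with the second observation above, the standardization data match: $\De^\Si_\xi=\De_\xi$ and therefore $B^\Si_\xi:=\END(\De^\Si_\xi)^{\op}=B_\xi$, which immediately yields {\tt (SC2)}. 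For {\tt (FGen)}, {\tt (HWC)}, {\tt (PSC)} and {\tt (SSC)} one uses that $\De_\xi$ and $\De(\pi)$ belong to $\catC(\Si)$, so the adjunction gives
$$\HOM_{\catC(\Si)}(P_\Si(\si),\De_\xi)\cong \HOM_\catC(P(\si),\De_\xi)$$
as right $B_\xi$-modules; the finitely generated / free finite rank condition, respectively, is therefore inherited. Likewise the standardization functor $\funE_\xi$ for $\catC$ takes values in $\catC_{\leq\xi}=\catC(\Si)_{\leq\xi}$, so it is also a (weak) standardization functor for $\catC(\Si)$.

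The one step that needs a small argument is {\tt (SC1)}. Set $K(\si)=\funO^{\leq\si}(P(\si))$ and $K^\Si(\si)=\ker(P_\Si(\si)\onto\De(\si))$. I would first show $\funO^\Si(P(\si))=\funO^\Si(K(\si))$: the inclusion ``$\supseteq$'' is obvious, and for ``$\subseteq$'' note that the extension $P(\si)/\funO^\Si(K(\si))$ of $\De(\si)$ by $K(\si)/\funO^\Si(K(\si))$ belongs to $\Si$, because $\De(\si)$ belongs to $\Pi_{\leq\si}\subseteq\Si$. Consequently $K^\Si(\si)=\funQ^\Si(K(\si))$. Now apply Lemma~\ref{LFiltSi} to the $\De$-filtration of $K(\si)$ provided by {\tt (SC1)} for $\catC$: since $\Si$ is finite, $X(K(\si))\cap\Si$ is finite, so $\funQ^\Si(K(\si))$ inherits a $\De$-filtration with subquotients $q^n\De(\tau)$ for $\tau\in\Si$, and these subquotients still satisfy $\tau>\si$ because they form a sub-multiset of the original filtration. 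This gives {\tt (SC1)} for $\catC(\Si)$.

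The main potential obstacle is bookkeeping rather than a hard step: one must confirm that $\funQ^\Si(K(\si))$ is indeed the kernel of the map $P_\Si(\si)\to\De(\si)$ in $\catC(\Si)$ (handled by the equality $\funO^\Si(P(\si))=\funO^\Si(K(\si))$ above) and that Lemma~\ref{LFiltSi} applies to an object (namely $K(\si)$) that is only guaranteed to have an exhaustive $\De$-filtration, which is exactly its hypothesis. All the other axioms reduce by the two saturation identities to statements already assumed for $\catC$.
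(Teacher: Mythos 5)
Your proposal is correct and follows essentially the same route as the paper's proof: Lemma~\ref{LSiPrCovCat} identifies the projective covers in $\catC(\Si)$, Lemma~\ref{LFiltSi} provides the $\De$-filtration needed for {\tt (SC1)}, and the remaining axioms are inherited. You spell out in detail several steps the paper merely asserts (the identities $\De^\Si(\si)=\De(\si)$, $B^\Si_\xi=B_\xi$, and $K^\Si(\si)=\funQ^\Si(K(\si))$ via $\funO^\Si(P(\si))=\funO^\Si(K(\si))$), which is a welcome tightening but not a different method.
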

\begin{proof}
By Lemma~\ref{LSiPrCovCat}, the category $\catC(\Si)$ is graded Noetherian Laurentian.  
Let $\si\in\Si$. Then the standard object $\De(\si)\in\catC$ belongs to  $\catC(\Si)$. Moreover, by Lemma~\ref{LFiltSi}, the projective cover $\funQ^\Si(P(\si))$ of $L(\si)$ in  $\catC(\Si)$ 
has a $\De$-filtration with factors $\simeq\De(\pi)$ for $\pi\in\Si$ such that $\pi>\si$. Now, it is easy to see that $\De(\si)$ is indeed a standard object in $\catC(\Si)$, as defined in (\ref{EStand}). 
Finally,  
the properties {\tt (SC2)}, {\tt (FGen)}, {\tt (PSC)}, {\tt (SSC)}, {\tt (HWC)}, {\tt (FGen)} for $\catC(\Si)$ get inherited from $\catC$. 
\end{proof}

\subsection{Homological dimensions}

For a subset $\Si\subseteq\Pi$ recall $l(\Si)\in\Z_{\geq 0}\cup\{\infty\}$ defined in  (\ref{EL(Si)}). 

\begin{Lemma} \label{LPDDe}
If $\catC$ is a $\B$-stratified category and $\pi\in\Pi$, then 
$\pd  \De(\pi)\leq l(\Pi_{\geq \pi}).$ 
\end{Lemma}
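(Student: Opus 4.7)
The plan is to induct on $\ell := l(\Pi_{\geq \pi})$; when $\ell=\infty$ the bound is vacuous, so assume $\ell<\infty$. By Lemma~\ref{mittagleffler} it suffices to show $\EXT^{\ell+1}(\De(\pi),L(\tau))=0$ for every $\tau\in\Pi$, which forces $\pd \De(\pi)\leq \ell$. The base case $\ell=0$ is immediate: $\pi$ is maximal in $\Pi$, so axiom {\tt (SC1)} produces no layers in the $\De$-filtration of $K(\pi)$, hence $K(\pi)=0$ and $\De(\pi)=P(\pi)$ is projective.

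For the inductive step with $\ell\geq 1$, the short exact sequence $0\to K(\pi)\to P(\pi)\to \De(\pi)\to 0$ together with projectivity of $P(\pi)$ identifies $\EXT^{\ell+1}(\De(\pi),L(\tau))$ with $\EXT^\ell(K(\pi),L(\tau))$, reducing the task to showing vanishing of the latter. I invoke the $\De$-filtration $K(\pi)=K_0\supseteq K_1\supseteq \cdots$ from {\tt (SC1)}, whose factors $K_i/K_{i+1}\simeq \De(\si_i)$ satisfy $\si_i>\pi$ and hence, by the inductive hypothesis, $\pd \De(\si_i)\leq l(\Pi_{\geq \si_i})\leq \ell-1$. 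Consequently $\EXT^\ell$ and $\EXT^{\ell+1}$ of each $\De(\si_i)$ against $L(\tau)$ vanish, and the long exact sequence associated to $0\to K_{n+1}\to K_n\to \De(\si_n)\to 0$ yields graded isomorphisms $\EXT^\ell(K_n,L(\tau))\iso \EXT^\ell(K_{n+1},L(\tau))$ for every $n$. A standard d\'evissage applied to the finite $\De$-filtration of $K(\pi)/K_n$ shows $\pd(K(\pi)/K_n)\leq \ell-1$, and the long exact sequence of $0\to K_n\to K(\pi)\to K(\pi)/K_n\to 0$ then produces a graded isomorphism $\EXT^\ell(K(\pi),L(\tau))\iso \EXT^\ell(K_n,L(\tau))$ for every $n$.

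The main obstacle is that the $\De$-filtration of $K(\pi)$ may be infinite, so one cannot directly splice together finite resolutions of the individual $\De(\si_i)$. I propose to overcome this by a degree argument exploiting the Laurentian structure. Fix $m\in\Z$. Since $K(\pi)\subseteq P(\pi)$ is Laurentian and $\bigcap_n K_n=0$, the descending chain $(K_n)_j$ of finite-dimensional subspaces has trivial intersection in each fixed graded degree $j$, so the minimum degree of $K_n$ tends to $\infty$ with $n$. For $n$ chosen so large that $K_n$ is concentrated in degrees strictly above the entire support of $q^{-m}L(\tau)$, a finitely generated (minimal) projective resolution of $K_n$, available by Corollary~\ref{CRes}(i), has each term concentrated in degrees admitting no nonzero degree-preserving morphism to $q^{-m}L(\tau)$, whence $\EXT^\ell(K_n,L(\tau))_m=\ext^\ell(K_n,q^{-m}L(\tau))=0$. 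Combined with the graded isomorphism of the previous paragraph this gives $\EXT^\ell(K(\pi),L(\tau))_m=0$; as $m\in\Z$ was arbitrary, $\EXT^\ell(K(\pi),L(\tau))=0$, completing the induction.
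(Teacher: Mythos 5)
Your proof is correct and reaches the bound by the same outer induction on $\ell=l(\Pi_{\geq\pi})$, but the inductive step is handled quite differently. The paper invokes {\tt (SC1)} together with Lemma~\ref{LFinDe}(iii) to assert that $K(\pi)$ has a \emph{finite} $\De$-filtration with factors $\De(\si)$, $\si>\pi$, applies the inductive hypothesis to each factor, and concludes $\pd K(\pi)<\ell$ by a short d\'evissage, hence $\pd\De(\pi)\le\pd K(\pi)+1\le\ell$. You never reduce to a finite filtration: you pass to $\EXT^{\ell+1}(\De(\pi),L(\tau))\cong\EXT^\ell(K(\pi),L(\tau))$ via Lemma~\ref{mittagleffler}, use the finite truncations $K(\pi)/K_n$ (whose projective dimension you bound by the inductive hypothesis and finite d\'evissage) to obtain $\EXT^\ell(K(\pi),L(\tau))\cong\EXT^\ell(K_n,L(\tau))$, and then kill the latter degree-by-degree using $\bigcap_n K_n=0$. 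This is longer but more robust, since it avoids any appeal to the finiteness of the set $X(K(\pi))$ of distinct $\si$ appearing in the filtration, a point the paper glosses over (Lemma~\ref{LFinDe}(iii) by itself only bounds the multiplicity of each fixed $\De(\si)$, not the number of distinct layer types). One caveat you should state explicitly: your claim that a minimal projective resolution of $K_n$ is eventually supported above the range of $q^{-m}L(\tau)$ requires the lower degree bounds of the indecomposable projectives $P(\si)$ entering the resolution to be uniform, i.e.\ that only finitely many $\si$ can occur; this is exactly the content of Lemma~\ref{LFinSubtle}, but that lemma is stated under the extra hypothesis $l(\Pi)<\infty$, and the paper uses a near-identical degree argument in the proof of Corollary~\ref{CUppBoundGlobDim} under that same stronger hypothesis. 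So your argument is sound in the same settings in which the paper's degree arguments are, and it is more candid than the paper's proof about the possible infinitude of the $\De$-filtration.
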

\begin{proof}
We may assume that $l(\Pi_{\geq \pi})$ is finite. Apply induction on $l(\Pi_{\geq \pi})$. If $l(\Pi_{\geq \pi})=0$ then $\pi$ is a maximal element in $\Pi$, and  $\De(\pi)$ is projective by {\tt (SC1)},  giving the induction base. If $l(\Pi_{\geq \pi})>0$, then by {\tt (SC1)} and Lemma~\ref{LFinDe}(iii), $K(\pi)$ has a finite filtration with factors $\simeq\De(\si)$ with $\si>\pi$. For such $\si$, we have $l(\Pi_{\geq \si})<l(\Pi_{\geq \pi})$, and by the inductive assumption $\pd  \De(\si)< l(\Pi_{\geq \pi})$. So $\pd K(\pi)< l(\Pi_{\geq \pi})$ and 
$
\pd  \De(\pi)\leq \pd K(\pi)+1\leq l(\Pi_{\geq \pi}).
$ 
\end{proof}

Let $\xi\in\Xi$. Denote the (possibly infinite) global dimension of $B_\xi$ (as a graded algebra) by
$$
d_\xi:=\gldim B_\xi\qquad(\xi\in B_\xi). 
$$

\begin{Lemma} \label{LZSeqStrat}
Let $\catC$ be a properly $\B$-stratified category, and $\pi\in\Pi$ with $\varrho(\pi)=\xi$. Then there exists an exact sequence
\begin{equation}\label{EZSeqStrat}
\dots\to Z_2\stackrel{\partial_2}{\longrightarrow} Z_{1}\stackrel{\partial_1}{\longrightarrow} Z_0\stackrel{\partial_0}{\longrightarrow} \bar\De(\pi)\to 0,
\end{equation}
with each $Z_k$ being a finite direct sum of modules of the form $q^m\De(\si)$ with $m\in\Z$ and $\varrho(\si)=\xi$, such that $Z_n=0$ for all $n>d_\xi$.  
\end{Lemma}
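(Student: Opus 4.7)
The plan is to construct the resolution in the quotient category $\catC_\xi$ first, where it comes essentially for free from the definition of global dimension of $B_\xi$, and then transport it to $\catC_{\leq \xi}$ via the standardization functor $\funE_\xi$, which is \emph{exact} by the hypothesis {\tt (PSC)} of proper $\B$-stratification.

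More precisely, I would proceed as follows. By Corollary~\ref{C220414}, the quotient category $\catC_\xi$ is graded equivalent to $\mod{B_\xi}$, and under this equivalence the simple object $L(\pi)$ corresponds to a simple graded $B_\xi$-module (recall $\varrho(\pi)=\xi$). Since $B_\xi$ is left Noetherian Laurentian, Corollary~\ref{CRes}(i) applied to $\catC_\xi$ furnishes a projective resolution
\[
\dots\to P^\xi_2\to P^\xi_1\to P^\xi_0\to L(\pi)\to 0
\]
in $\catC_\xi$, where each $P^\xi_k$ is a \emph{finite} direct sum of objects of the form $q^m P_\xi(\si)$ with $\varrho(\si)=\xi$. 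If $d_\xi=\gldim B_\xi$ is finite, then the $d_\xi$-th syzygy of $L(\pi)$ in this resolution is projective; since the category is Noetherian and a direct summand of the finitely generated projective $P^\xi_{d_\xi-1}$ is again a finite direct sum of shifts of $P_\xi(\si)$'s, we may truncate and arrange $P^\xi_n=0$ for all $n>d_\xi$. (If $d_\xi=\infty$, no truncation is needed.)

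Now I would apply the standardization functor $\funE_\xi:\catC_\xi\to\catC_{\leq\xi}$, which exists by {\tt (PSC)} and is exact. Setting $Z_k:=\funE_\xi(P^\xi_k)$, exactness yields an exact sequence
\[
\dots\to Z_2\stackrel{\partial_2}{\longrightarrow} Z_1\stackrel{\partial_1}{\longrightarrow} Z_0\stackrel{\partial_0}{\longrightarrow}\funE_\xi(L(\pi))\to 0
\]
with $Z_n=0$ for $n>d_\xi$. By Lemma~\ref{LApplicStandFunctor}, $\funE_\xi(L(\pi))\cong\bar\De(\pi)$ and $\funE_\xi(q^m P_\xi(\si))\cong q^m \De(\si)$, so each $Z_k$ is a finite direct sum of modules of the form $q^m\De(\si)$ with $\varrho(\si)=\xi$, as required.

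The only real point that needs attention is the truncation step: one must check that in the Noetherian Laurentian category $\catC_\xi$ a finite global-dimension bound forces the $d_\xi$-th syzygy of a finitely generated module to be a \emph{finitely generated} projective; but this is a standard consequence of the fact that direct summands of finitely generated projectives over a semiperfect Noetherian algebra are again finite direct sums of indecomposable projectives. Everything else is a formal consequence of the exactness of $\funE_\xi$ and Lemma~\ref{LApplicStandFunctor}.
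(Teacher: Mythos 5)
Your proof is correct and follows essentially the same route as the paper's: the paper's entire proof is to apply $\funE_\xi$ to a \emph{minimal} projective resolution of $L(\pi)$ in $\catC_\xi\simeq\mod{B_\xi}$, whose length is automatically bounded by $d_\xi$; you instead start from an arbitrary finite-rank projective resolution (via Corollary~\ref{CRes}(i)) and truncate it at step $d_\xi$ using that the $d_\xi$-th syzygy is projective. The two variants are interchangeable. One small inaccuracy in your truncation argument: the $d_\xi$-th syzygy $\Omega^{d_\xi}L(\pi)$ is a \emph{subobject} of $P^\xi_{d_\xi-1}$, not in general a direct summand (the sequence $0\to\Omega^{d_\xi}\to P^\xi_{d_\xi-1}\to\Omega^{d_\xi-1}\to 0$ need not split). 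What you actually need — and what holds — is that $\Omega^{d_\xi}L(\pi)$ is finitely generated (as a subobject in a Noetherian category) and projective, hence over the semiperfect Laurentian algebra $B_\xi$ it is a finite direct sum of degree-shifts of the indecomposable projectives $P_\xi(\si)$. With that phrasing corrected, the argument is complete.
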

\begin{proof}
Apply the standardizetion functor $\funE_\xi$ to a minimal projective resolution of $L(\pi)$ in $\catC_\xi$. 
\end{proof}

A version of the previous lemma for $\B$-highest weight categories (when $\Pi=\Xi$) is as follows

\begin{Lemma} \label{LZSeq}
Let $\catC$ be a $\B$-highest weight category, and $\pi\in\Pi$. Then there exists an exact sequence
\begin{equation}\label{EZSeq}
\dots\to Z_2\stackrel{\partial_2}{\longrightarrow} Z_{1}\stackrel{\partial_1}{\longrightarrow} Z_0\stackrel{\partial_0}{\longrightarrow} \bar\De(\pi)\to 0,
\end{equation}
with each $Z_k$ being a finite direct sum of modules of the form $q^m\De(\pi)$, such that $Z_n=0$ for all $n>d_\pi$.  
\end{Lemma}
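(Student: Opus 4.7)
The plan is to imitate the proof of Lemma~\ref{LZSeqStrat} but avoid invoking a global standardization functor (which may not be available since we do not assume $\Pi_{\leq\pi}$ is finite). Instead, I build the resolution by hand, by transporting a minimal graded free $B_\pi$-resolution of the trivial $B_\pi$-module $F = B_\pi/N_\pi$ across the identification $B_\pi = \END_\catC(\De(\pi))^{\op}$.

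Since $B_\pi$ is a left Noetherian connected Laurentian $F$-algebra of global dimension $d_\pi$, and $F$ is finitely generated, there is a minimal graded free resolution
$$0\to G_{d_\pi}\to\cdots\to G_1\to G_0\to F\to 0$$
with each $G_k = f_k(q) B_\pi$ a finite direct sum of shifts of $B_\pi$, and differentials $d_k$ given by left multiplication by homogeneous matrices $(b^{(k)}_{ij})$ with entries in $B_\pi$. I then set $Z_k := f_k(q)\De(\pi)$ and build $\partial_k\colon Z_k\to Z_{k-1}$ using the same matrices, now interpreting each $b^{(k)}_{ij}$ as an endomorphism of $\De(\pi)$ via $B_\pi\cong\END_\catC(\De(\pi))^{\op}$. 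I take $\partial_0\colon Z_0 = \De(\pi)\to\bar\De(\pi)$ to be the canonical projection, whose kernel is $\De(\pi)N_\pi$ by Proposition~\ref{PBarDeNew}(iii). Since passing from $B_\pi$ to $\END_\catC(\De(\pi))^{\op}$ is an algebra (anti-)isomorphism, the relations $d_k d_{k+1} = 0$ translate into $\partial_k \partial_{k+1} = 0$, producing a complex $Z_\bullet\to\bar\De(\pi)\to 0$ with $Z_n = 0$ for $n > d_\pi$.

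To verify exactness, I would apply Lemma~\ref{LExCrit} and show that $\HOM_\catC(P(\tau), Z_\bullet\to\bar\De(\pi)\to 0)$ is exact for every $\tau\in\Pi$. The key identification is that this complex of graded vector spaces coincides with
$$\HOM_\catC(P(\tau),\De(\pi))\otimes_{B_\pi}(G_\bullet\to F\to 0),$$
where $\HOM_\catC(P(\tau),\De(\pi))$ carries the natural right $B_\pi$-action by postcomposition. On the level of terms, $\HOM_\catC(P(\tau), f_k(q)\De(\pi)) \cong f_k(q)\HOM_\catC(P(\tau),\De(\pi)) \cong \HOM_\catC(P(\tau),\De(\pi))\otimes_{B_\pi} G_k$, and a direct check shows the differentials match. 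For the augmentation I use the equality $\HOM_\catC(P(\tau), \De(\pi)N_\pi) = \HOM_\catC(P(\tau),\De(\pi))N_\pi$ established in the proof of Proposition~\ref{PBarDeNew}(iv) to identify $\HOM_\catC(P(\tau),\bar\De(\pi))$ with $\HOM_\catC(P(\tau),\De(\pi))\otimes_{B_\pi} F$. By hypothesis {\tt (HWC)}, $\HOM_\catC(P(\tau),\De(\pi))$ is a free, and hence flat, right $B_\pi$-module, so tensoring with it preserves the exactness of $G_\bullet\to F\to 0$, yielding the required exactness after applying $\HOM_\catC(P(\tau),-)$.

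The main obstacle will be the careful bookkeeping with degree shifts and opposite-algebra conventions needed to verify that the transported differentials $\partial_k$ correspond, under $\HOM_\catC(P(\tau),-)$, to the tensored differentials $\id\otimes d_k$; once this identification is in place, exactness is formal from flatness and Lemma~\ref{LExCrit}.
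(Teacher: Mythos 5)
Your proposal is correct and follows essentially the same route as the paper: take a finite graded free resolution of $B_\pi/N_\pi$ over $B_\pi$, transport it to a complex of direct sums of shifted copies of $\De(\pi)$ via $B_\pi = \END_\catC(\De(\pi))^{\op}$, and verify exactness by applying $\HOM_\catC(P(\tau),-)$, invoking Lemma~\ref{LExCrit} and the flatness of $\HOM_\catC(P(\tau),\De(\pi))_{B_\pi}$ coming from {\tt (HWC)}. Your treatment of the augmentation term (matching $\HOM_\catC(P(\tau),\bar\De(\pi))$ with $\HOM_\catC(P(\tau),\De(\pi))\otimes_{B_\pi}F$ via the identity $\HOM_\catC(P(\tau),\De(\pi)N_\pi)=\HOM_\catC(P(\tau),\De(\pi))N_\pi$ from Proposition~\ref{PBarDeNew}(iv)) makes explicit a point the paper leaves implicit, but otherwise the arguments coincide.
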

\begin{proof}
We have a 
free resolution $\mathbf{F}$ of the right $B_\pi$-module $B_\pi/N_\pi$ 
$$
0\to F_{d_\pi}\stackrel{\partial_{d_\pi}'}{\longrightarrow}\dots
\stackrel{\partial_2'}{\longrightarrow} F_{1}\stackrel{\partial_1'}{\longrightarrow} F_0\stackrel{\partial_0'}{\longrightarrow} B_\pi/N_\pi\to 0
$$
with the free modules $F_i$ of the form  
$$F_i= q^{m_{i1}}B_\pi\oplus\dots\oplus q^{m_{il_i}}B_\pi\qquad(i=0,1,\dots,d_\pi),
$$
and the boundary maps $\partial_i$ given by the left multiplication with  
the matrices
$
M^i=(b^i_{rs})_{1\leq r\leq l_{i-1},1\leq s\leq l_{i}}
$
with entries in $B_\pi$. 
Now define 
$$Z_i:= q^{m_{i1}}\De(\pi)\oplus\dots\oplus q^{m_{il_i}}\De(\pi)\qquad(i=1,2,\dots),
$$
and let the map $\partial_i:Z_i\to Z_{i-1}$ be given by multiplication with the matrix $M^i$, which makes sense since each $b^i_{rs}\in B_\pi=\HOM(\De(\pi),\De(\pi))$. It is clear that this gives a complex $\mathbf Z$ of the form (\ref{EZSeq}). To prove that it is exact, we use the criterion of Lemma~\ref{LExCrit}. For $\si\in\Pi$, the module $X:=\HOM(P(\si),\De(\pi))_{B_\pi}$ is free finite rank by {\tt (HWC)}, and the application of the functor  $\HOM(P(\si),-)$ to the complex $\mathbf Z$ gives a compex isomorphic to $X\otimes_{B_\pi} \mathbf{F}$, which is exact. 
\end{proof}

\begin{Lemma} \label{LPDBarDe}
Let $\catC$ be a properly $\B$-stratified or a $\B$-highest weight category, $\pi\in\Pi$, and $\xi=\varrho(\pi)$. Then $$\pd \bar\De(\pi)\leq l(\Pi_{\geq \pi})+d_\xi.$$ 
\end{Lemma}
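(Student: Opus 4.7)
The plan is to reduce the claim to a bound on the projective dimension of standard objects by using the finite $\Delta$-resolution of $\bar\De(\pi)$ already constructed in the previous two lemmas, together with Lemma~\ref{LPDDe}.

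First I would apply Lemma~\ref{LZSeqStrat} in the properly $\B$-stratified case, or Lemma~\ref{LZSeq} in the $\B$-highest weight case, to produce a finite resolution
\[
0 \to Z_{d_\xi} \to Z_{d_\xi-1} \to \dots \to Z_1 \to Z_0 \to \bar\De(\pi) \to 0,
\]
in which each $Z_k$ is a finite direct sum of shifts $q^m \De(\si)$ with $\varrho(\si)=\xi$ (and $\si=\pi$ in the highest weight case). Note that the resolution has length at most $d_\xi$ (respectively $d_\pi = d_\xi$).

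Next I would observe the crucial order-theoretic fact: since the preorder on $\Pi$ is pulled back from the partial order on $\Xi$ via $\varrho$, the condition $\varrho(\si)=\varrho(\pi)=\xi$ forces $\Pi_{\geq\si}=\Pi_{\geq\pi}$, and consequently $l(\Pi_{\geq\si})=l(\Pi_{\geq\pi})$. Combined with Lemma~\ref{LPDDe}, this yields
\[
\pd \De(\si) \;\le\; l(\Pi_{\geq\si}) \;=\; l(\Pi_{\geq\pi})
\]
for every summand $\De(\si)$ occurring in any $Z_k$, and hence $\pd Z_k \le l(\Pi_{\geq\pi})$ for all $k$.

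Finally, I would invoke the standard dimension-shift argument: breaking the above resolution into short exact sequences $0 \to \Omega_{k+1} \to Z_k \to \Omega_k \to 0$ (with $\Omega_0 = \bar\De(\pi)$ and $\Omega_{d_\xi}=Z_{d_\xi}$) and applying the long exact sequence for $\EXT^\bullet(-,V)$, one deduces inductively that
\[
\pd \Omega_k \;\le\; \pd Z_k + \pd \Omega_{k+1} - \text{(easy bookkeeping)} \;\le\; l(\Pi_{\geq\pi}) + (d_\xi - k),
\]
so in particular $\pd \bar\De(\pi) = \pd \Omega_0 \le l(\Pi_{\geq\pi}) + d_\xi$. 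There is no real obstacle here; the only subtle point is the order-theoretic identification $\Pi_{\geq\si}=\Pi_{\geq\pi}$ for $\si,\pi$ in the same $\varrho$-fiber, which is what allows the bound on $\pd \De(\si)$ to be uniform across the resolution.
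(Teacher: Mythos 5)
Your proposal is correct and follows essentially the same route as the paper: the paper's proof is the one-line remark that this is a standard consequence of Lemmas~\ref{LZSeqStrat}, \ref{LZSeq} and \ref{LPDDe}, and your elaboration (finite $\De$-resolution of length $\le d_\xi$ with terms whose projective dimension is bounded by $l(\Pi_{\geq\pi})$ via the identity $\Pi_{\geq\si}=\Pi_{\geq\pi}$ for $\si$ in the same $\varrho$-fibre, followed by a dimension shift) is exactly what is meant. One small nitpick: the displayed inequality $\pd\Omega_k\le\pd Z_k+\pd\Omega_{k+1}-\text{(bookkeeping)}$ is not the correct recursion; the standard bound from $0\to\Omega_{k+1}\to Z_k\to\Omega_k\to 0$ is $\pd\Omega_k\le\max(\pd Z_k,\ \pd\Omega_{k+1}+1)$, which still yields $\pd\Omega_{d_\xi-j}\le l(\Pi_{\geq\pi})+j$ by downward induction, hence the claimed bound.
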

\begin{proof}
This is a standard consequence of Lemmas~\ref{LZSeqStrat}, \ref{LZSeq}  and \ref{LPDDe}. 
\end{proof}

Define 
$
d_\rho:=\max\{d_\xi\mid\xi\in\Xi\}.
$

\begin{Proposition}\label{PSimpleSimple}
Let $\catC$ be a properly $\B$-stratified or a $\B$-highest weight category, and $\pi,\si\in\Pi$. Then
$\EXT^i(L(\pi),L(\si))=0$ for all $i> l(\Pi_{\leq \pi})+l(\Pi)+d_\rho$. In particular, if $i>2l(\Pi)+d_\rho$, then $\EXT^i(L,L')=0$ for all simple objects $L,L'$.  
\end{Proposition}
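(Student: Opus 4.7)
I plan to establish the stronger statement that $\pd L(\pi)\le l(\Pi_{\le\pi})+l(\Pi)+d_\rho$ for every $\pi\in\Pi$, from which the main claim follows immediately, and the ``in particular'' clause follows because $l(\Pi_{\le\pi})\le l(\Pi)$ for every $\pi$. I proceed by induction on the nonnegative integer $l(\Pi_{\le\pi})$, which may be assumed finite since otherwise the bound is vacuous.

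For the base case $l(\Pi_{\le\pi})=0$, the element $\varrho(\pi)$ is minimal in $\Xi$, so $\Pi_{<\pi}=\emptyset$. By definition of $\O^{<\pi}$ this forces $\bar K(\pi)=\O^{<\pi}(M(\pi))=M(\pi)$, and hence $\bar\De(\pi)=P(\pi)/M(\pi)=L(\pi)$. Applying Lemma~\ref{LPDBarDe} yields
$$\pd L(\pi)=\pd\bar\De(\pi)\leq l(\Pi_{\geq\pi})+d_{\varrho(\pi)}\leq l(\Pi)+d_\rho = l(\Pi_{\leq\pi})+l(\Pi)+d_\rho,$$
as required.

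For the inductive step I exploit the short exact sequence
$$0\longrightarrow V\longrightarrow \bar\De(\pi)\longrightarrow L(\pi)\longrightarrow 0,\qquad V:=M(\pi)/\bar K(\pi).$$
Since $V\in\catC(\Pi_{<\pi})$, each composition factor of $V$ is some $L(\tau)$ with $\tau<\pi$, and any such $\tau$ satisfies $l(\Pi_{\le\tau})\le l(\Pi_{\le\pi})-1$; so the inductive hypothesis gives $\pd L(\tau)\le l(\Pi_{\le\pi})-1+l(\Pi)+d_\rho$. The key auxiliary input is that $V$ has \emph{finite length}: in the $\B$-highest weight case this is because {\tt (HWC)} plus Proposition~\ref{PBarDeNew}(iii) identify $\bar\De(\pi)\cong \De(\pi)\otimes_{B_\pi}F$ with $\De(\pi)$ free of finite rank over $B_\pi$, so $\bar\De(\pi)$ is finite dimensional over $F$; in the properly $\B$-stratified case Lemma~\ref{LApplicStandFunctor} identifies $\bar\De(\pi)\cong\funE_\xi(L(\pi))\cong\De_\xi\otimes_{B_\xi}L(\pi)$, and combining {\tt (FGen)}, the finiteness of the fibre $\varrho^{-1}(\xi)$, and the decomposition (\ref{E1514}) again forces $\bar\De(\pi)$ to be finite dimensional over $F$. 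Hence the submodule $V$ is finite dimensional as well, and a straightforward induction on composition length using $\pd B\le\max(\pd A,\pd C)$ for $0\to A\to B\to C\to 0$ yields $\pd V\le l(\Pi_{\le\pi})-1+l(\Pi)+d_\rho$.

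Applying the same inequality to our displayed short exact sequence and using Lemma~\ref{LPDBarDe} once more to get $\pd\bar\De(\pi)\le l(\Pi_{\geq\pi})+d_\rho\le l(\Pi)+d_\rho$, we conclude
$$\pd L(\pi)\le \max\bigl(\pd V+1,\ \pd\bar\De(\pi)\bigr)\le l(\Pi_{\le\pi})+l(\Pi)+d_\rho,$$
completing the induction. The main obstacle I expect is making the finite-length assertion for $V$ fully rigorous in the properly $\B$-stratified case: {\tt (FGen)} only supplies finite generation of the $B_\xi$-modules $\Hom(P(\sigma),\De_\xi)$, not finite dimensionality, so one must carefully combine the summand decomposition of $\De_\xi$ with the finiteness of the fibres of $\varrho$ and the fact that $L(\pi)$ is finite dimensional as a $B_\xi$-module to deduce finite dimensionality of the tensor product $\De_\xi\otimes_{B_\xi}L(\pi)$.
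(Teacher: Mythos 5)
Your proof follows the same strategy as the paper's: induction on $l(\Pi_{\le\pi})$, the short exact sequence $0\to V\to\bar\De(\pi)\to L(\pi)\to 0$, Lemma~\ref{LPDBarDe} to control $\pd\bar\De(\pi)$, and the finite length of $V$ in order to run the inductive hypothesis through a composition series. Restating the argument as a bound on $\pd L(\pi)$ rather than as explicit vanishing of $\EXT^i(L(\pi),L(\si))$ is a cosmetic change. The one step where you genuinely diverge is the justification of finite length for $V$, and there your argument has a gap: the assertion that $\De(\pi)$ is free of finite rank as a right $B_\pi$-module (likewise $\De_\xi$ finitely generated over $B_\xi$) is the restatement of {\tt (HWC)}/{\tt (FGen)} given in Remark~\ref{R3514}(ii), and that restatement uses the decomposition (\ref{E1514}), which presupposes $\Pi_{\le\pi}$ (resp.\ $\Pi_{\le\xi}$) finite --- a hypothesis not in force in this proposition. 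The paper sidesteps this by invoking Proposition~\ref{PBarDeNew}(iv), which gives the finiteness of each multiplicity $[\bar\De(\pi):L(\si)]_q$ directly from {\tt (FGen)}, with no finiteness assumption on $\Pi_{\le\pi}$; combined with the fact that only finitely many $\si$ can contribute once one has reduced to $l(\Pi)<\infty$ (cf.\ Lemma~\ref{LFinSubtle}), this gives finite length of $\bar\De(\pi)$ and hence of $V$. The difficulty you flag for the properly $\B$-stratified case is real, but it applies equally to your $\B$-highest weight argument as written; replacing your Prop.~(iii)-plus-free-rank route by the Prop.~(iv) citation is the fix.
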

\begin{proof}
We may assume that $d_\rho$ and $l(\Pi)$ are  finite. Then  $l(\Pi_{\leq \pi})$ is also finite. We apply induction on $l(\Pi_{\leq \pi})$. If $l(\Pi_{\leq \pi})=0$, then $L(\pi)=\bar\De(\pi)$, and the result follows from Lemma~\ref{LPDBarDe}. Let $l(\Pi_{\leq \pi})>0$. The short exact sequence 
$
0\to K\to \bar\De(\pi)\to L(\pi)\to 0
$
yields the exact sequence in cohomology:
$$
\EXT^{i-1}(K,L(\si))\to \EXT^i(L(\pi),L(\si))\to\EXT^i(\bar\De(\pi),L(\si)),
$$
with the last term being zero by Lemma~\ref{LPDBarDe} again. So it remains to prove that the first term is zero. 

All composition factors of $K$ are $\simeq L(\kappa)$ with $\kappa<\pi$. Note $l(\Pi_{\leq \pi})-1\geq l(\Pi_{\leq \kappa})$ for all such $\kappa$. Let $i>l(\Pi_{\leq \pi})+l(\Pi)+d_\rho$. Then $i-1> l(\Pi_{\leq \kappa})+l(\Pi)+d_\rho$ for all $\kappa<\pi$, and so $\EXT^{i-1}(K,L(\si))=0$ by the inductive assumption using the fact that $K$ has finite length in view of Proposition~\ref{PBarDeNew}(iv). 
\end{proof}

\begin{Lemma} \label{LFinSubtle}
If $l(\Pi)<\infty$, then for every object in a $\B$-stratified category $\catC$ there are only finitely many $\pi\in \Pi$ with $\HOM(P(\pi),C)\neq 0$.
\end{Lemma}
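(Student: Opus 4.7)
The plan is to argue by induction on $l(\Pi)$, with a secondary induction on $l(\Pi_{\geq\si})$ inside the inductive step. The base case $l(\Pi)=0$ is immediate: $\Pi$ is an antichain, so {\tt (SC1)} gives $K(\pi)=0$, hence $P(\pi)=\De(\pi)\in\catC(\{\pi\})$ and $L(\pi)$ is the only composition factor of $P(\pi)$; since every Noetherian $C$ is a quotient of a finite direct sum of shifted $P(\pi)$'s by Lemma~\ref{L171013}(i), the conclusion follows at once.

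For the inductive step I would use Lemma~\ref{L171013}(i) again to reduce the claim to $C=P(\si)$ for a fixed $\si\in\Pi$, and then split it via the short exact sequence $0\to K(\si)\to P(\si)\to \De(\si)\to 0$. To handle $\De(\si)$, I observe that $\rad\De(\si)$ lies in the Serre subcategory $\catC(\Pi_{<\si})$, and I would verify that $\catC(\Pi_{<\si})$ is itself $\B$-stratified---an extension of Proposition~\ref{PTruncSat} from finite saturated subsets to the (possibly infinite) saturated subset $\Pi_{<\si}$, enabled by the standing interval-finiteness of $\leq$: for every $\tau<\si$, the intersection $\Pi_{<\si}\cap\Pi_{\geq\tau}=[\tau,\si)$ is finite, which is exactly the hypothesis needed for Lemma~\ref{LFiltSi} to produce the required $\De$-filtration of $\funQ^{<\si}(P(\tau))$. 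Since any chain in $\Pi_{<\si}$ can be extended by $\si$ on top we have $l(\Pi_{<\si})\leq l(\Pi)-1$, so the outer inductive hypothesis, applied inside $\catC(\Pi_{<\si})$, bounds the composition-factor support of $\rad\De(\si)$ and hence that of $\De(\si)$.

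For the kernel $K(\si)$ I would apply a secondary induction on $l(\Pi_{\geq\si})$. By Corollary~\ref{CRes}(iii) the heads of $K(\si)$ form a finite set $\{\tau^{(1)},\dots,\tau^{(p)}\}$, and the key point is that each $\tau^{(j)}$ lies in $\Pi_{>\si}$. I would prove this by showing that every nonzero $\phi\in\HOM(K(\si),L(\rho))$ satisfies $\phi|_{V_n}=0$ for $n$ sufficiently large, where $K(\si)=V_0\supseteq V_1\supseteq\cdots$ is the $\De$-filtration from {\tt (SC1)}: by projectivity of $P(\rho)$, non-vanishing of $\phi|_{V_n}$ in the relevant degree is equivalent to non-vanishing of $\hom(q^{?}P(\rho),V_n)$ for a specific $?$, and this forms a decreasing sequence of subspaces of the finite-dimensional $\hom(q^{?}P(\rho),K(\si))$ (finite-dimensional by Lemma~\ref{L171013}(ii)) whose intersection is $\hom(q^{?}P(\rho),\bigcap_n V_n)=0$, so the sequence must stabilize at zero. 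Hence $\phi$ factors through the finite-length $\De$-filtered quotient $K(\si)/V_n$, and a standard long exact sequence induction on the number of layers---using $\HOM(\De(\tau),L(\rho))\neq 0\iff\rho=\tau$---forces $\rho\in\{\tau_0,\dots,\tau_{n-1}\}\subseteq\Pi_{>\si}$. Since each $\tau^{(j)}\in\Pi_{>\si}$ gives $l(\Pi_{\geq\tau^{(j)}})<l(\Pi_{\geq\si})$, the secondary inductive hypothesis bounds $\{\pi:\HOM(P(\pi),P(\tau^{(j)}))\neq 0\}$; and since $K(\si)$ is a quotient of $\bigoplus_j q^{m_j}P(\tau^{(j)})$ (generators chosen of the head types), the bound for $K(\si)$ follows.

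The main obstacle will be the observation that the heads of $K(\si)$ lie strictly above $\si$: because the $\De$-filtration of $K(\si)$ may be infinite, the finite-layer long exact sequence does not apply directly, and the limiting argument sketched above---using the Laurentian property to force the decreasing sequence of $\hom$-subspaces to stabilize at $0$---is what bridges that gap. A secondary technical point is extending Proposition~\ref{PTruncSat} to the (possibly infinite) saturated subset $\Pi_{<\si}$, which relies crucially on the standing interval-finiteness of the order.
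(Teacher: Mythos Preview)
Your proof contains a genuine gap in the treatment of $\De(\si)$. You assert that $\rad\De(\si)$ lies in $\catC(\Pi_{<\si})$, but this is false in general: since $\De(\si)$ is the projective cover of $L(\si)$ in $\catC_{\leq\si}$, one has $[\De(\si):L(\si)]_q=\DIM\HOM(P(\si),\De(\si))=\DIM\END(\De(\si))$, and in the $\B$-highest-weight case this equals $\DIM B_\si$ by Proposition~\ref{PBarDeNew}(ii). Whenever $B_\si\neq F$ (for instance $B_\si=F[x]$, the simplest genuinely affine example) this multiplicity is a nontrivial power series, so $L(\si)$ is a composition factor of $\rad\De(\si)$ and $\rad\De(\si)\notin\catC(\Pi_{<\si})$. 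Your outer induction on $l(\Pi)$, which you intend to apply to $\rad\De(\si)$ viewed as an object of $\catC(\Pi_{<\si})$, therefore does not go through. The statement you appear to have in mind is the correct one for the \emph{proper} standard object: $\rad\bar\De(\si)\in\catC(\Pi_{<\si})$ holds by construction. But replacing $\De(\si)$ by a tower of $\bar\De$'s requires a $\bar\De$-filtration of $\De(\si)$, which is only guaranteed under extra hypotheses such as properly stratified or $\B$-highest weight (Lemma~\ref{L290414}, Proposition~\ref{PDeBarDeNew}), not for arbitrary $\B$-stratified categories as in the lemma.

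Your secondary induction handling $K(\si)$ is correct, and your careful justification that the head types of $K(\si)$ lie in $\Pi_{>\si}$ (using the Laurentian property to force $\phi|_{V_n}=0$ for large $n$) is a sound way to pass through a possibly infinite $\De$-filtration. The paper's argument, however, avoids the subcategory machinery and the double induction entirely: after reducing to $C=P(\si)$, it simply records that the $\De$-filtration of $P(\si)$ has factors $\De(\kappa)$ only for $\kappa\geq\si$ (finitely many such $\kappa$), and that each $\De(\kappa)$ has composition factors $L(\pi)$ only for $\pi\leq\kappa$ (finitely many such $\pi$), finishing in three sentences.
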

\begin{proof}
By Lemma~\ref{L171013}(i), we may assume that $C\cong P(\si)$. But $P(\si)$ has a filtration with factors $\simeq \De(\kappa)$ for $\kappa\geq \si$, and there are only finitely many such $\kappa$'s by assumption. Furthermore, $\HOM(P(\pi),\De(\kappa))\neq 0$ implies $\pi\leq \kappa$, and there are only finitely many such $\pi$'s by assumption. 
\end{proof}

Recall the notion of the global dimension of an abelian category \cite[\S VII.6]{Mi}. 

\begin{Corollary} \label{CUppBoundGlobDim}
If\, $\catC$ is a $\B$-properly stratified or a $\B$-highest weight category, then the global dimension of $\catC$ is at most $2l(\Pi)+d_\rho$.
\end{Corollary}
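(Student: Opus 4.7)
The plan is to start from Proposition~\ref{PSimpleSimple}, which already gives the desired bound on $\EXT^i$ between pairs of simples, and propagate it to arbitrary objects in two stages. Setting $N := 2l(\Pi) + d_\rho$, I may assume $N < \infty$. First I would invoke Lemma~\ref{mittagleffler} in the second argument: since every $V \in \catC$ has all its composition factors isomorphic to simples up to shift, the vanishing $\EXT^i(L, L') = 0$ for $i > N$ and all simples $L, L'$ upgrades to $\EXT^i(L, V) = 0$ for every simple $L$, every $V \in \catC$, and every $i > N$. Equivalently, $\pd L \leq N$ for every simple $L$.

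Next, for any $C \in \catC$ of finite composition length, I would perform a standard dévissage via short exact sequences $0 \to C' \to C \to L'' \to 0$ with $L''$ simple, using the long exact sequence and induction on length to conclude $\pd C \leq N$. The main obstacle is extending this bound to an arbitrary $C \in \catC$. By Lemma~\ref{mittagleffler}, this reduces to showing $\EXT^{N+1}(C, L) = 0$ for every simple $L$. Using axiom {\tt (NLC1)}, I would fix a filtration $C \supseteq C_1 \supseteq C_2 \supseteq \cdots$ with each $C/C_m$ of finite length and $\bigcap_m C_m = 0$. The long exact sequence of $0 \to C_m \to C \to C/C_m \to 0$ combined with the finite-length vanishing $\EXT^{i}(C/C_m, L) = 0$ for $i > N$ produces a natural isomorphism $\EXT^{N+1}(C, L) \cong \EXT^{N+1}(C_m, L)$ for every $m$.

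The hard part will be showing that the right-hand side vanishes componentwise as $m \to \infty$. The key input is that $\bigcap_m C_m = 0$ together with the Laurentian structure (via Lemma~\ref{L171013}, and using Lemma~\ref{LFinSubtle} to obtain uniformity over the finite set $\{\pi \mid \HOM(P(\pi), C) \neq 0\}$) forces, for every fixed graded degree $d^*$, the vanishing $\hom(q^{d^*} P(\pi), C_m) = 0$ for all $\pi$ and all sufficiently large $m$. A minimal projective resolution $P^m_\bullet \to C_m$ by finitely generated projectives (Corollary~\ref{CRes}(i)) then has the generators of $P^m_{N+1}$ in degrees tending to $+\infty$ with $m$; since $L$ is finite dimensional, this forces the degree-$d^*$ part of $\HOM(P^m_{N+1}, L)$---and thus of its subquotient $\EXT^{N+1}(C_m, L)$---to vanish for $m$ large. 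The isomorphism above then yields $\EXT^{N+1}(C, L)_{d^*} = 0$ for every $d^*$, so $\EXT^{N+1}(C, L) = 0$. A final application of Lemma~\ref{mittagleffler} gives $\pd C \leq N$, completing the bound $\gldim \catC \leq 2l(\Pi) + d_\rho$.
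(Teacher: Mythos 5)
Your proof is correct and follows essentially the same route as the paper: reduce to $\EXT^i(C,L)=0$ for simple $L$ via Lemma~\ref{mittagleffler}, dispose of the finite-length case via Proposition~\ref{PSimpleSimple}, and then use the {\tt (NLC1)} filtration $C\supseteq C_1\supseteq\cdots$ together with Corollary~\ref{CRes}(i), Lemmas~\ref{L171013}(iii) and \ref{LFinSubtle} to push the generators of a projective resolution of $C_m$ into arbitrarily high degree, forcing the relevant $\mathrm{ext}$-groups to vanish. The only cosmetic difference is that you phrase the final reduction through $\EXT^{N+1}(C,L)=0$ (hence $\pd C\leq N$ by a syzygy argument) and argue degree-by-degree in $d^*$, whereas the paper simply fixes degree zero (working with $\ext^i$) and treats all $i>N$ uniformly; the content is the same.
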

\begin{proof}
We follow the ideas of \cite[Lemma 4.10 and proof of Theorem 4.6]{McN}. Let $i>2l(\Pi)+d_\rho$. By Lemma~\ref{mittagleffler}, it suffices to prove that $\EXT^i(C,L)=0$ for an arbitrary $C\in\catC$ and simple $L\in\catC$. For this to suffices to prove that $\ext^i(C,L)=0$. If $C$ is finite length, the result follows from Proposition~\ref{PSimpleSimple}. 

In general, let $C\supseteq C_1\supseteq C_2\supseteq \dots$ be a filtration as in {\tt (NLC2)}. Since $C/C_n$ is finite length, it suffices to prove that $\ext^i(C_n,L)= 0$ for $n$ sufficiently large. By Corollary~\ref{CRes}(i), $C_n$ has a projective resolution 
$
\dots\to P_1\to P_0,
$
with $P_i$ being a finite direct sum of modules of the form $q^mP(\si)$. Moreover, it follows from {\tt (NLC3)} and Lemmas~\ref{L171013}(iii),  \ref{LFinSubtle}  that for $n\gg0$ we also have $m\gg0$ for all summands $q^mP(\si)$ of $P_i$. So $\hom(P_i,L)=0$ and $\ext^i(C_n,L)= 0$ for $n\gg0$. 
\end{proof}

Finally, we strengthen Lemma~\ref{LEXTDirWeak}(ii):

\begin{Lemma} \label{LEXTNonZeroNew}
Let $\catC$ be a $\B$-highest weight category and  $X\in\catC$. 
\begin{enumerate}
\item[{\rm (i)}] $\EXT^1(\bar\De(\pi),X)\neq 0$ implies that $X$ has a subquotient $\simeq L(\si)$ for $\si\geq\pi$.  In particular, $\EXT^1(\bar\De(\pi),\bar\De(\si))\neq 0$ implies $\pi\leq \si$.
\item[{\rm (ii)}] Assume that every $K(\pi)$ has a finite $\De$-filtration,  and $i\geq 1$. Then $\EXT^i(\bar\De(\pi),X)\neq 0$ implies that $X$ has a subquotient $\simeq L(\si)$ for $\si\geq\pi$.  In particular, $\EXT^i(\bar\De(\pi),\bar\De(\si))\neq 0$ implies $\pi\leq \si$.
\end{enumerate} 
\end{Lemma}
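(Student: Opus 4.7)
My plan is to prove both parts by reducing to the case of a simple target and then exploiting the short exact sequence relating $\De(\pi)$ and $\bar\De(\pi)$, together with the resolution supplied by Lemma~\ref{LZSeq}.

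First, by Lemma~\ref{mittagleffler}, it suffices in both parts to show that $\EXT^i(\bar\De(\pi),L(\si))=0$ for every simple object $L(\si)$ with $\si\not\geq\pi$, so I fix such a $\si$ throughout.

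For part (i), I would apply $\HOM(-,L(\si))$ to the short exact sequence $0\to\De(\pi)N_\pi\to\De(\pi)\to\bar\De(\pi)\to 0$ coming from Proposition~\ref{PBarDeNew}(iii), producing
$$\HOM(\De(\pi)N_\pi,L(\si))\to\EXT^1(\bar\De(\pi),L(\si))\to\EXT^1(\De(\pi),L(\si)).$$
The rightmost term vanishes by Lemma~\ref{LEXTNonZero}(i), since $\si\not>\pi$. For the leftmost term, using that $B_\pi$ is Noetherian I would write $N_\pi=\sum_{i=1}^r B_\pi x_i$ for finitely many homogeneous $x_i$ of positive degree, so that $\De(\pi)N_\pi=\sum_i\De(\pi)x_i$. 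Each summand $\De(\pi)x_i$ is a cyclic quotient of a shift of $\De(\pi)$, so any simple quotient of $\De(\pi)x_i$ is a shift of $L(\pi)$; as $\si\neq\pi$, this forces $\HOM(\De(\pi)x_i,L(\si))=0$ for every $i$, and hence $\HOM(\De(\pi)N_\pi,L(\si))=0$. The exact sequence then yields $\EXT^1(\bar\De(\pi),L(\si))=0$.

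For part (ii), I would invoke the resolution from Lemma~\ref{LZSeq},
$$0\to Z_{d_\pi}\to\cdots\to Z_1\to Z_0\to\bar\De(\pi)\to 0,$$
with each $Z_k$ a finite direct sum of shifts of $\De(\pi)$. Setting $K_{-1}:=\bar\De(\pi)$ and $K_k:=\ker(Z_k\to Z_{k-1})$ (so that $K_{d_\pi-1}=Z_{d_\pi}$), I break the resolution into short exact sequences $0\to K_k\to Z_k\to K_{k-1}\to 0$. Their long exact sequences for $\EXT^*(-,L(\si))$ give, at each stage, that either $\EXT^{i-k}(Z_k,L(\si))\neq 0$ or $\EXT^{i-k-1}(K_k,L(\si))\neq 0$. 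Since negative $\EXT$'s vanish, iterated dimension shifting starting from the hypothesis $\EXT^i(\bar\De(\pi),L(\si))\neq 0$ must terminate at some $k\leq\min(i,d_\pi)$ with $\EXT^{i-k}(Z_k,L(\si))\neq 0$, i.e.\ with $\EXT^j(\De(\pi),L(\si))\neq 0$ for $j:=i-k\geq 0$. If $j=0$, a nonzero morphism $\De(\pi)\to L(\si)$ forces $L(\si)$ to be a simple quotient of $\De(\pi)$, and hence a shift of its head $L(\pi)$, so $\si=\pi$; if $j\geq 1$, Lemma~\ref{LEXTNonZero}(ii), whose hypothesis on the $K(\pi)$'s is part of our assumption, yields $\si>\pi$. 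Both alternatives contradict $\si\not\geq\pi$.

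The main obstacle is the bookkeeping of the iterated long exact sequences in part (ii): the intermediate kernels $K_k$ are not themselves direct sums of standard modules, so I must organize the dimension shifting so as to strip them off and land on an $\EXT$-group involving $\De(\pi)$ itself, where Lemma~\ref{LEXTNonZero}(ii) supplies the required posetal control.
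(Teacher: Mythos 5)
Your proof is correct and takes essentially the same approach as the paper's: reduce to simple targets via Lemma~\ref{mittagleffler}, then dimension-shift along the resolution of $\bar\De(\pi)$ by shifts of $\De(\pi)$ from Lemma~\ref{LZSeq}, invoking Lemma~\ref{LEXTNonZero} to control the resulting $\EXT$-groups. The single short exact sequence $0\to\De(\pi)N_\pi\to\De(\pi)\to\bar\De(\pi)\to 0$ you use for part (i) is exactly the first step of that resolution, so there is no genuine divergence.
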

\begin{proof}
(i) Using Lemma~\ref{mittagleffler}, we may assume that $X\simeq L(\si)$. If $\si\not\geq\pi$, then $\si\not>\pi$, so $\EXT^1(\De(\pi),L(\si))=0$ by Lemma~\ref{LEXTNonZero}(i).
Now use the resolution (\ref{EZSeqStrat}) or (\ref{EZSeq}) of $\bar\De(\pi)$ to see that $\EXT^1(\bar\De(\pi),L(\si))$. The proof of (ii) is similar, but uses Lemma~\ref{LEXTNonZero}(ii). 
\end{proof}

\section{$\B$-quasihereditary and $\B$-stratified algebras
}\label{SBQHACPST}
Let $\catC$ be a graded Noetherian Laurentian category with $|\Pi|<\infty$, i.e. with finitely many simple objects up to isomorphism and degree shift. By Theorem~\ref{TEquiv}, the category $\catC$ is equivalent to the category $\mod{H}$ of finitely generated graded modules over a left Noetherian Laurentian graded algebra $H$. Let $\B$ be a fixed class of left Noetherian Laurentian  algebras. 
In this section we introduce and study several versions of $\B$-stratified algebras. The main result is that the algebra $H$ is $\B$-stratified (resp. $\B$-properly stratified, resp. $\B$-standardly stratified, resp. $\B$-quasihereditary, weakly $\B$-quasihereditary) if and only if the category $\mod{H}$  is 
$\B$-stratified (resp. $\B$-properly stratified, resp. $\B$-standardly stratified, resp. $\B$-highest weight, resp. weakly $\B$-highest weight), cf. \cite{CPShw}, \cite{CPSss}.  


\subsection{$\B$-stratified algebras}
Let $H$ be a left Noetherian Laurentian algebra. We adopt the notation of \S\ref{SSSPA}. In particular, for every $\pi\in \Pi$ we have a projective indecomposable module $P(\pi)$. 

\begin{Definition}\label{DSI}
{\rm 
A two-sided ideal $J\subseteq H$ is called {\em $\B$-stratifying} if it satisfies the following conditions:
\begin{enumerate}
\item[{\tt (SI1)}] $\HOM_H(J,H/J)=0$;
\item[{\tt (SI2)}] As a left module,  
$
J\cong \bigoplus_{\pi\in\xi}m_\pi(q) P(\pi)
$
for some graded multiplicities $m_\pi(q)\in\Z[q,q^{-1}]$ and some  subset $\xi\subseteq\Pi$, such that, setting $P_\xi:=\bigoplus_{\pi\in\xi}P(\pi)$, we have $B_\xi:=\End_H(P_\xi)^{\op}\in \B$.  
\end{enumerate}
A $\B$-stratifying ideal is called {\em $\B$-standardly stratifying}, if 
\begin{enumerate}
\item[{\tt (SSI)}]  As a right $B_\xi$-module, $P_\xi$ is finitely generated. 
\end{enumerate}
A $\B$-stratifying ideal is called {\em $\B$-properly stratifying}, if 
\begin{enumerate}
\item[{\tt (PSI)}]  As a right $B_\xi$-module, $P_\xi$ is finitely generated and flat. 
\end{enumerate}
}
\end{Definition}

\begin{Definition}\label{DAHI}
{\rm 
Let $\B$ be a class of connected algebras. 
An ideal $J\subseteq H$ is called a {\em $\B$-heredity ideal} if it is $\B$-properly stratifying as in the definition above with $|\xi|=1$. An ideal $J\subseteq H$ is called a {\em weakly $\B$-heredity ideal} if it is $\B$-standardly stratifying as in the definition above with $|\xi|=1$.
}
\end{Definition}

\begin{Remark} \label{R1514}
{\rm 
Using the fact that for a connected algebra, a finitely generated module is flat if and only if it is free, we can restate the definition of a $\B$-heredity ideal (resp. weakly $\B$-heredity ideal) as follows: it is an ideal $J$ which satisfies {\tt (SI1)} and such that as a left module, 
$
J\cong m(q) P(\pi)
$
for some graded multiplicity $m(q)\in\Z[q,q^{-1}]$ and some $\pi\in\Pi$, such that $B_\pi:=\End(P(\pi)^{\op}\in \B$ and $P(\pi)$ 
 is free finite rank (resp. finitely generated) as a right $B_\pi$-module.   
}
\end{Remark}

\begin{Definition}\label{DAQHA}
{\rm 
The algebra $H$ is called {\em $\B$-stratified} (resp. {\em $\B$--standardly stratified}, resp. {\em $\B$-properly stratified}, resp. (weakly) {\em $\B$-quasihereditary}) if there exists a finite chain of ideals 
$$
H=J_0\supsetneq J_1\supsetneq\dots\supsetneq J_n=(0)
$$
with $J_i/J_{i+1}$ a 
 $\B$-stratifying (resp. $\B$--standardly stratifying, resp.  $\B$-properly stratifying, resp. (weakly) $\B$-heredity)
 ideal in $H/J_{i+1}$ for all $0\leq i<n$. Such a chain of ideals is called a $\B$-stratifying (resp. $\B$--standardly stratifying, resp.  $\B$-properly stratifying, resp. (weakly) $\B$-heredity) chain.  
}
\end{Definition}

The following two lemmas are basically known.

\begin{Lemma} \label{LDefHer} 
Let 
$J$ be an ideal in $H$ such that the left $H$-module ${}_HJ$ is projective. 
Then the condition {\tt (SI1)} 
is equivalent to the condition $J^2=J$, which in turn is equivalent to $J=HeH$ for an idempotent $e\in H$.
\end{Lemma}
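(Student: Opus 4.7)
The second equivalence, $J^2 = J$ iff $J = HeH$ for some idempotent $e \in H$, is the content of Lemma~\ref{LIdIdeal} and uses no projectivity hypothesis, so my task reduces to the first equivalence: $\HOM_H(J, H/J) = 0$ iff $J^2 = J$.

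The direction $J^2 = J \Rightarrow \HOM_H(J, H/J) = 0$ is a one-line check that does not need projectivity. Given any $f \in \HOM_H(J, H/J)$ and any product $ab$ with $a, b \in J$, one has $f(ab) = a\, f(b) \in J \cdot (H/J) = 0$; since $J^2$ is spanned by such products and $J = J^2$, we conclude $f = 0$.

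For the harder direction, $\HOM_H(J, H/J) = 0 \Rightarrow J^2 = J$, projectivity enters essentially. My plan is to combine two ingredients. First, since $H$ is left Noetherian the ideal ${}_H J$ is finitely generated, and combined with projectivity this yields the (homogeneous) dual basis lemma: there exist homogeneous elements $x_1, \dots, x_n \in J$ and homogeneous maps $\phi_1, \dots, \phi_n \in \HOM_H(J, H)$ such that $j = \sum_{i=1}^n \phi_i(j)\, x_i$ for every $j \in J$. Second, projectivity of $J$ makes the functor $\HOM_H(J, -)$ exact, so the short exact sequence $0 \to J \to H \to H/J \to 0$ produces an exact sequence
$$
0 \to \HOM_H(J, J) \to \HOM_H(J, H) \to \HOM_H(J, H/J) \to 0.
$$
Under the hypothesis the third term vanishes, so every $H$-module map $J \to H$ has image contained in $J$; in particular $\phi_i(J) \subseteq J$ for every $i$. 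Substituting into the dual basis expression gives $j = \sum_i \phi_i(j)\, x_i$ with $\phi_i(j), x_i \in J$, and hence $j \in J^2$. This proves $J \subseteq J^2$, and together with the automatic reverse inclusion yields $J^2 = J$.

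The only point that needs care is the \emph{homogeneous} form of the dual basis lemma, but it is standard: split a finite homogeneous free cover $\bigoplus_i q^{n_i} H \twoheadrightarrow J$ by a homogeneous section (available since $J$ is graded projective and finitely generated with homogeneous generators) and read off the $x_i$ and $\phi_i$ from the resulting retraction. With this in hand the argument is entirely formal, and combining the three implications with Lemma~\ref{LIdIdeal} closes the chain of equivalences.
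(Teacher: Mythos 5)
Your proof is correct and takes essentially the same approach as the paper: the paper simply cites Lemma~\ref{LIdIdeal} together with \cite[Statement 2]{DR} for the equivalence of {\tt (SI1)} with $J^2 = J$, and your dual-basis argument is precisely the content of that cited statement of Dlab--Ringel, here spelled out (and checked in the graded setting). The only extra observation worth noting is that you invoke exactness of $\HOM_H(J,-)$ to get $\phi_i(J)\subseteq J$, whereas left exactness of $\HOM$ already suffices for that step; projectivity is needed only for the dual basis lemma itself, which you use correctly.
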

\begin{proof}
Use Lemma~\ref{LIdIdeal} and \cite[Statement 2]{DR}. 
\end{proof}

\begin{Lemma} \label{LeHe}%
Let $J$ be a weakly $\B$-heredity ideal in $H$. Write $J=HeH$ for an idempotent $e$, according to Lemma~\ref{LDefHer}. Then the natural map $He\otimes_{eHe}eH\to J$ is an isomorphism. Moreover, we may choose an idempotent $e$ to be primitive so that, using the notation of Remark~\ref{R1514}, we have $He\cong P(\pi)$ and 
$B_\pi\cong eHe$. 
\end{Lemma}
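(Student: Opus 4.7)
The plan is to first refine $e$ to a primitive idempotent, identify $eHe$ with $B_\pi$, and then prove the tensor product isomorphism by exhibiting $eH$ as a finitely generated free left $eHe$-module and comparing graded dimensions.

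First I would decompose the $e$ provided by Lemma~\ref{LDefHer} as a sum $e = e_1 + \cdots + e_k$ of pairwise orthogonal primitive idempotents, possible by the graded semiperfectness of $H$ (Lemma~\ref{CLaurent}(ii)). Each $He_i$ is indecomposable projective, so $He_i \simeq P(\tau_i)$ for some $\tau_i \in \Pi$. To identify $\tau_i$, pass to $\bar H := H/N(H)$: the hypothesis $J \cong m(q) P(\pi)$ gives $J/N(H)J \simeq m(q) L(\pi)$, so the image $\bar J$ lies in the simple summand of $\bar H$ corresponding to $\pi$. Each primitive $\bar e_i \in \bar J$ therefore lies in this summand, whence $\bar H \bar e_i \simeq L(\pi)$ and $\tau_i = \pi$. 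Consequently $He_i \simeq He_j \simeq P(\pi)$ for all $i, j$, and choosing mutually inverse isomorphisms $He_i \to He_j$ and $He_j \to He_i$ realized by right multiplication by elements $a \in e_iHe_j$ and $b \in e_jHe_i$ (so that $ab = e_i$ and $ba = e_j$), one gets $e_j = ba \in He_iH$, which forces $He_iH = He_jH = HeH = J$. Replacing $e$ by $e_1$, I may assume $e$ is primitive with $He \simeq P(\pi)$. The isomorphism $B_\pi \cong eHe$ then follows from the standard anti-isomorphism $\End_H(He) \to eHe$, $\phi \mapsto \phi(e)$.

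Next I would verify that $eH = eHeH$: the inclusion $\supseteq$ is trivial, while $\subseteq$ follows from $eh = e \cdot e \cdot h \in eHeH$ for any $h \in H$. Multiplying the isomorphism $J \simeq m(q) He$ on the left by $e$ then produces a graded isomorphism $eH = eJ \simeq m(q) eHe$ of left $eHe$-modules (up to an overall degree shift coming from $He \simeq P(\pi)$, which will cancel), exhibiting $eH$ as a finitely generated free left $eHe$-module. Therefore $He \otimes_{eHe} eH \simeq m(q) He \simeq J$ as graded left $H$-modules, so the two sides have the same graded dimension. Since the natural map $\Phi: He \otimes_{eHe} eH \to J$, $x \otimes y \mapsto xy$, is surjective (its image contains every product $heg$ for $h, g \in H$, hence equals $HeH = J$), and both sides are Laurentian of equal graded dimension, $\Phi$ is a degreewise surjection between finite-dimensional spaces of equal dimension, hence a graded isomorphism.

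The main obstacle I anticipate is establishing the freeness of $eH$ as a left $eHe$-module. This step uses in a crucial way that the definition of a weakly $\B$-heredity ideal (see Remark~\ref{R1514}) requires an actual graded isomorphism $J \cong m(q) P(\pi)$ of left $H$-modules, and not merely a statement about graded composition multiplicities; once this freeness is in hand, the tensor isomorphism reduces to a routine graded-dimension count.
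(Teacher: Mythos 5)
Your proof is correct, but it takes a genuinely different route from the paper's. For the tensor isomorphism $He\otimes_{eHe}eH\cong J$, the paper simply cites Dlab--Ringel \cite[Statement 7]{DR}, which establishes this for any idempotent ideal $HeH$ that is projective as a left $H$-module (so in particular for all $\B$-stratifying ideals, not just heredity ones). You instead exploit the extra structure of a heredity ideal: applying the exact functor $\HOM_H(He,-)\cong e(-)$ to $J\cong m(q)He$ gives $eH=eJ\cong m(q)\,eHe$ as a free left $eHe$-module, so $He\otimes_{eHe}eH\cong m(q)He$ has the same graded dimension as $J$, and the obviously surjective multiplication map must therefore be a graded isomorphism. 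This is a clean self-contained argument, but it relies on $J$ being a sum of shifts of a single indecomposable projective, so it would not generalize to the stratifying setting the way the cited Dlab--Ringel result does.

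For the choice of a primitive idempotent, the paper picks any primitive $e'$ with $He'\simeq P(\pi)$ and identifies $He'H$ with the trace of $P(\pi)$ in $H$; the axiom {\tt (SI1)} (namely $\HOM_H(J,H/J)=0$) then forces this trace to equal $J$. You instead decompose the given $e=e_1+\cdots+e_k$ into orthogonal primitive idempotents, identify each $He_i\simeq P(\pi)$ by passing to $\bar H=H/N(H)$, and show $He_iH=He_jH$ via explicit elements $a\in e_iHe_j$, $b\in e_jHe_i$ with $ab=e_i$, $ba=e_j$. Both arguments are sound; the paper's is shorter and makes the role of {\tt (SI1)} transparent, while yours is more hands-on. (As a minor simplification of your identification step: since $e_i\in J$ and $He_i$ is a direct summand of $H$ contained in $J$, Krull--Schmidt applied to $J\cong m(q)P(\pi)$ gives $He_i\simeq P(\pi)$ directly, without passing to $\bar H$.)
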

\begin{proof}
For the first statement see \cite[Statement 7]{DR}. For the second statement, by Remark~\ref{R1514}, we know that ${}_HHeH\cong m(q) P(\pi)$. 
Let $e'$ be a primitive idempotent such that $He'\simeq P(\pi)$. Then $He'H=\sum_{f\in\HOM_H(P(\pi),H)}\im f$. 
But by {\tt (SI1)}, we have $\sum_{f\in\HOM_H(P(\pi),H)}\im f=J$. 
\end{proof}


\subsection{$\B$-analogue of Cline-Parshall-Scott Theorem}
The following is a version of the fundamental theorem of Cline-Parshall-Scott \cite{CPShw}, \cite[(2.2.3)]{CPSss}.

\begin{Theorem} \label{TCPS}
Let $H$ be a left Noetherian Laurentian algebra. Then the category $\mod{H}$ of finitely generated graded $H$-modules is a is 
$\B$-stratified (resp. $\B$-properly stratified, resp. $\B$-standardly stratified, resp. weakly $\B$-highest weight, resp. $\B$-highest weight) category if and only if $H$ is a $\B$-stratified (resp. $\B$-properly stratified, resp. $\B$-standardly stratified, resp. weakly $\B$-quasihereditary, resp. $\B$-quasihereditary)  algebra. Moreover:
\begin{enumerate}
\item[{\rm (i)}] Suppose that $\mod{H}$ is $\B$-stratified with respect to $\varrho:\Pi\to\Xi$. Let  $\Xi=\{\xi_1,\dots,\xi_n\}$ with $\xi_i<\xi_j$ only if $i<j$, and define $\Si(i):=\varrho^{-1}(\xi_1)\cup\dots\cup\varrho^{-1}(\pi_i)$ for $1\leq i\leq n$. Then 
$$
H\supsetneq \funO^{\Si(1)}(H) \supsetneq \funO^{\Si(2)}(H)\supsetneq \dots \supsetneq \funO^{\Si(n)}(H)=0  
$$
is a $\B$-stratifying chain of ideals. Moreover, if $\mod{H}$ is $\B$-properly stratified (resp. $\B$-standardly stratified, resp. weakly $\B$-highest weight, resp. $\B$-highest weight), then the chain is $\B$-properly stratifying (resp. $\B$-standardly stratifying, resp. weakly $\B$-heredity, resp. $\B$-heredity).

\item[{\rm (ii)}] Suppose that $
H=J_0\supsetneq J_1\supsetneq\dots\supsetneq J_n=(0)
$ is a $\B$-stratifying (resp. $\B$-properly stratifying, resp. $\B$-standardly stratifying, resp. weakly $\B$-heredity, resp. $\B$-heredity) chain. 
Let $\Xi:=\{1,2,\dots,n\}$ (standardly ordered), and for $\pi\in \Pi$ define 
$$
\varrho(\pi):=\min\{i\mid  [H/J_i:L(\pi)]\neq 0\}.
$$ 
Then the category $\mod{H}$ is $\B$-stratified (resp. $\B$-properly stratified, resp. $\B$-standardly stratified, resp. weakly $\B$-highest weight, resp. $\B$-highest weight) with respect to~$\varrho$. 
\end{enumerate}
\end{Theorem}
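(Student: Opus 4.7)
The theorem is a graded/Laurentian and class-$\B$ generalization of the classical Cline--Parshall--Scott theorem \cite{CPShw}, \cite{CPSss}. The plan is to adapt their argument, treating the five variants in parallel since the structural skeleton is identical, with only the last step (finitely generated vs.\ flat vs.\ free) distinguishing them. The truncation functor $\funQ^\Si$ and its associated ideal $\funO^\Si(H)$ are the central tools.

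For part~(i) (category $\Rightarrow$ chain): first I set $J_i:=\funO^{\Si(i)}(H)$ and verify that this gives a strictly descending chain from $J_0=H$ to $J_n=0$. The crux is to identify $J_i/J_{i+1}$ as a left $H/J_{i+1}$-module. Passing to the quotient category $\mod{H/J_{i+1}}\cong\catC(\Si(i+1))$, the ideal $J_i/J_{i+1}$ becomes the trace of the projective covers of $L(\pi)$ with $\pi\in\varrho^{-1}(\xi_{i+1})$, and since these are maximal in $\Si(i+1)$, Proposition~\ref{PTruncSat} identifies those projective covers with the standard objects $\De(\pi)$. Using the decomposition (\ref{E1514}) of the regular representation, the trace can be written as $\De_{\xi_{i+1}}\otimes_{B_{\xi_{i+1}}} N$ where $N:=\HOM(\De_{\xi_{i+1}},\funQ^{\Si(i+1)}(H))$ is a right $B_{\xi_{i+1}}$-module. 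The hypothesis {\tt (FGen)} makes $N$ finitely generated, {\tt (PSC)} yields flatness (via flatness of $\De_{\xi_{i+1}}$), and {\tt (HWC)} yields freeness of finite rank; each produces the required stratifying/properly stratifying/(weakly) heredity shape of $J_i/J_{i+1}$. Condition {\tt (SI1)} then follows from Lemma~\ref{LDirectedHom}, since $H/J_i$ lies in $\catC(\Si(i))$ while the generators of $J_i/J_{i+1}$ are standards $\De(\pi)$ with $\pi\not\in\Si(i)$; the endomorphism algebra requirement is immediate from {\tt (SC2)}.

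For part~(ii) (chain $\Rightarrow$ category): I induct on $n$. For $n=1$, the only way $H$ can be a stratifying ideal of itself is to have $\xi=\Pi$ (forced by the canonical decomposition ${}_HH\cong\bigoplus_\pi(\DIM L(\pi))P(\pi)$); then $\varrho$ is trivial, $\De(\pi)=P(\pi)$, and all category axioms hold tautologically. For $n>1$, the bottom ideal $J_{n-1}$ is stratifying in $H$ with indexing set $\xi^{(n-1)}$, which by {\tt (SI1)} together with the canonical decomposition equals $\varrho^{-1}(n)$; these elements are maximal in $\Pi$. The quotient $H/J_{n-1}$ carries the shorter chain, so by induction $\mod{H/J_{n-1}}\cong\catC(\Si(n-1))$ is $\B$-stratified, and its standards agree with those in $\mod H$ by transitivity of $\funQ^\Si$. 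It remains to verify {\tt (SC1)} for $\pi\in\Si(n-1)$: the kernel $K(\pi)$ fits into a short exact sequence
\[
0\to J_{n-1}P(\pi)\to K(\pi)\to K'(\pi)\to 0,
\]
where $K'(\pi)$ has the required filtration by induction. Writing $J_{n-1}=He_{n-1}H$ via Lemmas~\ref{LDefHer} and~\ref{LeHe}, one has $J_{n-1}P(\pi)=J_{n-1}e_\pi\cong He_{n-1}\otimes_{e_{n-1}He_{n-1}}e_{n-1}He_\pi$, and the ideal-level flatness/freeness on the bimodule factor produces a direct-sum decomposition into copies of $\De(\tau)=He_\tau$ for $\tau\in\varrho^{-1}(n)$. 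Conditions {\tt (SC2)}, {\tt (FGen)}, {\tt (PSC)}, {\tt (HWC)} transfer directly from the corresponding ideal axioms.

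The main obstacle I anticipate is the trace-as-tensor-product identification in part~(i): one must carefully extract from {\tt (FGen)}/{\tt (PSC)}/{\tt (HWC)} the fact that $J_i/J_{i+1}$ splits as a direct sum of standard objects rather than merely admitting a filtration by them. This is the precise point where the five variants diverge, and where the flatness or freeness of the right $B_{\xi_{i+1}}$-module $N$ is indispensable.
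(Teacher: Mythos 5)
Your overall skeleton is the same as the paper's---peel off the bottom ideal $\funO^{\Si(i)}(H)$, reduce by truncation to a smaller algebra, and induct on the length of the chain---but the mechanism you propose for the crucial step, the direct-sum decomposition of $J_i/J_{i+1}$ in {\tt (SI2)}, does not work, and your closing worry misdiagnoses the difficulty.

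Here is the gap. The shape $J\cong\bigoplus_{\pi\in\xi}m_\pi(q)P(\pi)$ required by {\tt (SI2)} must be established \emph{for all five variants}, in particular for the weakest one where $\mod H$ is merely $\B$-stratified. In that case there is no assumption at all on the right $B_\xi$-module structure of the $\HOM$-spaces: no finite generation, no flatness, no freeness. Your proposal to realize $J_i/J_{i+1}$ as $\De_{\xi_{i+1}}\otimes_{B_{\xi_{i+1}}}N$ and then invoke {\tt (FGen)}/{\tt (PSC)}/{\tt (HWC)} to control $N$ therefore has nothing to hang on for the base variant, and so fails exactly where the theorem needs it most. What actually forces the splitting is much more elementary: by {\tt (SC1)} and Lemmas~\ref{LFiltSi} and~\ref{LFinDe}(iii), the ideal $J=\funO^\Si(H)$ acquires a finite $\De$-filtration whose layers all come from the single maximal fiber $\rho^{-1}(\xi)$; and since Lemma~\ref{LEXTNonZero}(i) says $\EXT^1(\De(\pi),\De(\si))\neq0$ forces $\pi<\si$, two standards over the \emph{same} $\xi$ have no extensions between them in either direction, so this filtration is automatically split. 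Here $\De(\pi)=P(\pi)$ because $\xi$ is maximal, giving {\tt (SI2)}. Thus the decomposition is available already from {\tt (SC1)}+{\tt (SC2)}; the conditions {\tt (FGen)}/{\tt (PSC)}/{\tt (HWC)} are only called on afterwards, via Remark~\ref{R3514}(ii), to upgrade the ideal to the standardly stratifying/properly stratifying/heredity flavour by controlling $P_\xi$ as a \emph{right} $B_\xi$-module.

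The same issue recurs in your part (ii). Your computation $J_{n-1}P(\pi)=Je_\pi\cong He_{n-1}\otimes_{e_{n-1}He_{n-1}}e_{n-1}He_\pi$ and the appeal to ``ideal-level flatness/freeness'' to split this into copies of $\De(\tau)$ is unavailable for the $\B$-stratifying (and $\B$-standardly stratifying) variants; it also quietly swaps chirality, since {\tt (SSI)}/{\tt (PSI)}/heredity constrain $P_\xi$ as a right $B_\xi$-module whereas $e_{n-1}He_\pi$ enters your tensor product as a left $B_\xi$-module. The correct argument is simpler and variant-independent: $J$ is projective as a left $H$-module and, by {\tt (SI2)}, a direct sum of copies of $P(\tau)$ with $\tau\in\xi$; since $Je_\pi$ is a direct summand of $J$, Krull--Schmidt forces $Je_\pi$ to be itself a direct sum of copies of those $P(\tau)\cong\De(\tau)$. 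No tensor products and no flatness are involved. Once this is in place, one compares the short exact sequence $0\to R(\si)\to K(\si)\to K_0(\si)\to 0$ with the inductive hypothesis for $H/J$ to obtain {\tt (SC1)}, and transfers {\tt (SSI)}/{\tt (PSI)}/heredity conditions to {\tt (FGen)}/{\tt (PSC)}/{\tt (HWC)} via Remark~\ref{R3514}(ii)---rather than feeding them into the decomposition itself.
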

\begin{proof}
(i) We first show that if $\xi$ is a maximal element of $\Xi$, and $\Si:=\Pi\setminus\{\rho^{-1}(\xi)\}$, then $J:=\funO^\Si(H)$ is a $\B$-stratifying (resp. $\B$-standardly stratifying, resp. $\B$-properly stratifying) ideal in $H$. By (\ref{ERegModDec}), {\tt (SC1)}, and Lemma~\ref{LFinDe}(iii), the regular module ${}_HH$ has a finite $\De$-filtration. So by Lemma~\ref{LFiltSi}, the module ${}_HJ$ has a finite filtration with sections $\simeq \De(\pi)$ for $\pi\in \rho^{-1}(\xi)$.  Hence by Lemma~\ref{LEXTNonZero}(i), we have that ${}_HJ$ is a finite direct sum of modules $\simeq \De(\pi)$ with $\pi\in \rho^{-1}(\xi)$. On the other hand, by the maximality of $\xi$ and {\tt (SC1)}, we have $P(\pi)\cong\De(\pi)$ for all $\pi\in \rho^{-1}(\xi)$, hence we have {\tt (SI2)}.  Moreover 
\begin{align*}
\DIM\HOM_H(\De(\pi),H/J)&= \DIM\HOM_H(P(\pi),H/J)
=[H/J:L(\pi)]_q=0
\end{align*}
for all $\pi\in \rho^{-1}(\xi)$, hence $\HOM_H(J,H/J)=0$, which is {\tt (SI1)}. 
In view of Remark~\ref{R3514}(ii), the property {\tt (FGen)} implies  the property {\tt (SSI)} for $J$, and the property {\tt (PSC)} implies the property {\tt (PSI)} for $J$. 

Finally, we need to establish that $\funO^{\Si(i)}(H)/\funO^{\Si(i+1)}(H)$ is a $\B$-stratifying (resp. $\B$-standardly stratifying, resp. $\B$-properly stratifying) ideal in the algebra $H/\funO^{\Si(i+1)}(H)=H(\Si(i+1))$ for $0\leq i<n$. Note that 
$$\funO^{\Si(i)}(H)/\funO^{\Si(i+1)}(H)=\funO^{\Si(i)}(H(\Si(i+1))),$$ and the required fact follows by Proposition~\ref{PTruncSat} and the previous paragraph. 

(ii) 
We apply induction on $n$ which starts with the trivial case $n=0$ (or with the easy case $n=1$). Let $n\geq 1$. Set $J:=J_{n-1}$ and $\Si:=\{\pi\in\Pi\mid [H/J:L(\pi)]_q\neq 0\}$. By the inductive assumption, $\mod{H/J}$ is a $\B$-stratified (resp. $\B$-properly stratified, resp. $\B$-standardly stratified, resp. weakly $\B$-highest weight, resp. $\B$-highest weight) with respect to~$\varrho{|}_\Si:\Si\to \{1,\dots,n-1\}$. 

By {\tt (SI2)}, we have
$
J\cong \bigoplus_{\pi\in\xi}m_\pi(q) P(\pi)
$
for some non-zero multiplicities $m_\pi(q)\in\Z[q,q^{-1}]$ and some    $\xi\subseteq\Pi$. 
If there is $\pi\in\Si\cap\xi$, then 
$
\HOM_H(J,H/J)\neq 0,
$ which contradicts  {\tt (SI1)}. We deduce that $\xi=\Pi\setminus\Si$, that $[H/J:L(\pi)]_q=0$ for all $\pi\in\xi$, and that $J=\funO^\Si(H)$. In particular, $\varrho(\pi)=n$ for all $\pi\in\xi$, and so all such $\pi$ are maximal. It follows that $P(\pi)=\De(\pi)$ for all $\pi\in\xi$ and that $\De_n$ satisfies {\tt (SC2)}, thanks to {\tt (SI2)}. Moreover, in view of Remark~\ref{R3514}(ii), {\tt (SSI)} (resp. {\tt (PSI)}) implies that $\De_n$ satisfies {\tt (FGen)} (resp. {\tt (PSC)}). 

Fix an arbitrary $\si\in \Si$. Denote $R(\si):=\funO^\Si(P(\si))=JP(\si)$, see Lemma~\ref{LOSi1}. By Lemma~\ref{LSiPrCovCat}, $P_0(\si):=P(\si)/R(\si)$ is a projective cover of $L(\si)$ as modules over  $H(\Si)=H/J$, and by definition $\De_0(\si)=\funQ^{\leq \si}(P_0(\si))$ is the corresponding standard module. Working in the (weakly) $\B$-highest weight category $\mod{H(\Si)}$, let $K_0(\si)=\funO^{\leq \si}(P_0(\Si))$. As $J$ is a projective $H$-module and $P(\si)$ is a direct summand of $H$, we get that $R(\si)=JP(\si)$ is also a projective  $H$-module. Moreover, $R(\si)$ has no quotient belonging to $\Si$. Hence $R(\si)$ is a direct sum of modules $\simeq P(\pi)\cong\De(\pi)$ with $\pi\in\xi$. In particular, $\funQ^{\leq \si}(R(\si))=0$. So the short exact sequence 
$$0\to R(\si)\to P(\si)\to P_0(\si)\to 0$$ induces an isomorphism $\De(\si)\to \De_0(\si)$. In particular, by the inductive assumption we have that $\De(\si)$ satisfies the properties {\tt (SC2)}, {\tt (FGen)}, and {\tt (PSC)} (resp. {\tt (HWC)}). 

Moreover, there is a short exact sequence $0\to R(\si)\to K(\si)\to K_0(\si)\to 0.$ 
By the inductive assumption, $K_0(\si)$ has a filtration with sections $\simeq\De(\kappa)$ for $\si<\kappa\in\Si$, and we have shown that $R(\si)$ has filtration with sections $\simeq \De(\pi)$ for $\pi\in\xi$. Since $\pi>\si$ for all $\pi\in\xi$, we get the property {\tt (SC1)} for $\mod{H}$. 
\end{proof}

\begin{Corollary} \label{CHWQHFPi}
Let $\catC$ be a $\B$-stratified (resp. $\B$-properly stratified, resp. $\B$-standardly stratified, resp. weakly $\B$-highest weight, resp. $\B$-highest weight) with respect to $\varrho:\Pi\to\Xi$, and assume that $\Pi$ is finite. Then $\catC$ is graded equivalent to $\mod{H}$ for some $\B$-stratified (resp. $\B$-properly stratified, resp. $\B$-standardly stratified, resp. weakly $\B$-quasihereditary, resp. $\B$-quasihereditary) algebra $H$. 
\end{Corollary}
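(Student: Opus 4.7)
The plan is to combine Theorem~\ref{TEquiv} with Theorem~\ref{TCPS}. Since $\Pi$ is assumed finite, set $P := \bigoplus_{\pi \in \Pi} P(\pi)$ and $H := \END(P)^\op$. By Theorem~\ref{TEquiv}, the algebra $H$ is left Noetherian Laurentian, and the functor $\funF := \HOM(P,-) : \catC \to \mod{H}$ is a graded equivalence of categories.

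Next I would observe that all the structural notions appearing in the statement (being $\B$-stratified, $\B$-properly stratified, $\B$-standardly stratified, (weakly) $\B$-highest weight) are defined purely in terms of graded abelian $F$-linear categorical data: simple objects, projective covers, the graded Hom-spaces between them, the poset structure transported along $\varrho$, the induced standard objects $\De(\pi)$, $\De_\xi$, their endomorphism algebras $B_\xi$, and the existence and exactness of the standardization functors $\funE_\xi$. Each of these is preserved (up to canonical isomorphism) under any graded equivalence; in particular, the images $\funF(L(\pi))$ form a complete irredundant set of simples in $\mod{H}$ up to isomorphism and degree shift, $\funF(P(\pi))$ are their projective covers, $\funF(\De(\pi))$ and $\funF(\bar\De(\pi))$ are the standard and proper standard modules, and the endomorphism algebras $B_\xi$ and the functors $\funR_\xi, \funE_\xi$ correspond under $\funF$. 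Hence $\mod{H}$ inherits from $\catC$ exactly the same property on the list (and with respect to the transported map $\varrho$).

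Finally, I would invoke Theorem~\ref{TCPS} applied to $\mod{H}$: since $\mod{H}$ is $\B$-stratified (resp.\ $\B$-properly stratified, $\B$-standardly stratified, weakly $\B$-highest weight, $\B$-highest weight), the algebra $H$ is itself $\B$-stratified (resp.\ $\B$-properly stratified, $\B$-standardly stratified, weakly $\B$-quasihereditary, $\B$-quasihereditary). This yields the desired algebra.

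I do not anticipate serious obstacles here: the corollary is essentially a packaging of the equivalence theorem with the Cline--Parshall--Scott-type theorem already proved. The only point that requires a little care is spelling out that the transfer of structure along the graded equivalence $\funF$ is compatible with the chosen map $\varrho : \Pi \to \Xi$ and with axioms like {\tt (FGen)} and {\tt (PSC)}, which involve finite generation and flatness over $B_\xi$; but since $\funF$ identifies $B_\xi$ on both sides and induces isomorphisms on the relevant Hom-spaces, these conditions translate verbatim. Thus the proof reduces to a brief verification that each property in Definitions~\ref{DStCat} and~\ref{DHWC} is invariant under graded equivalence, followed by a citation of Theorem~\ref{TCPS}.
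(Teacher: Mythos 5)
Your proof is correct and is exactly the argument the paper has in mind: the corollary is stated without proof because it is an immediate combination of Theorem~\ref{TEquiv} (finite $\Pi$ gives a graded equivalence $\catC\simeq\mod{H}$, and the paper even explicitly identifies $L(\pi)$, $P(\pi)$ with their images under this equivalence in the discussion after Theorem~\ref{TEquiv}) and Theorem~\ref{TCPS} applied to $\mod{H}$. Your added remark that the axioms {\tt (SC1)}, {\tt (SC2)}, {\tt (FGen)}, {\tt (SSC)}, {\tt (PSC)}, {\tt (HWC)} are all phrased in terms of data preserved by a graded equivalence is the only verification one needs to supply, and you handle it correctly.
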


\section{Proper costandard modules}\label{SPropCost}
In \S\ref{SSProperCost}, we use $\B$-stratified algebras  to construct proper costandard objects in $\B$-stratified categories with finite $\Pi$. In \S\ref{SSPropGoodFiltFinPi}, we apply proper costandard modules to deduce usual nice properties of good filtrations. In \S\S\ref{SCost},\ref{SSGoodFiltMoreGen} we  deal with more general $\B$-stratified categories. 

\subsection{\boldmath Proper costandard modules}\label{SSProperCost}
Throughout the subsection, $H$ is a $\B$-stratified algebra with a complete set of irreducible modules $\{L(\pi)\mid\pi\in\Pi\}$ up to isomorphism and degree shift. By Theorem~\ref{TCPS}, the category $\mod{H}$ of finitely generated graded $H$-modules is $\B$-stratified with respect to some $\varrho:\Pi\to\Xi$. 

For every $\pi\in\Pi$, let $I(\pi)$ be the injective envelope of $L(\la)$ in the category of {\em all} graded $H$-modules. In general (and in most interesting cases), the module $I(\pi)$ is not finitely generated and not Laurentian. Consider the short exact sequence
$$
0\longrightarrow L(\pi)\longrightarrow I(\pi)\stackrel{{\tt p}}{\longrightarrow} I(\pi)/L(\pi)\longrightarrow 0.
$$
As usual, $\varrho$ defines a partial preorder $\leq $ on $\Pi$. 
Let $A(\pi)$ be the largest submodule of $I(\pi)/L(\pi)$ all of whose irreducible subquotients are of the form $\simeq L(\si)$ for $\si<\pi$.  
Define the corresponding {\em proper costandard} module: 
\begin{equation}\label{EBarNabla}
\bar\nabla(\pi)={\tt p}^{-1}(A(\pi)).
\end{equation} 


\begin{Lemma} \label{LHomDeNabla}
For any $\pi,\si\in\Pi$, we have 
$
\DIM\HOM_H(\De(\si),\bar\nabla(\pi))=\de_{\si,\pi}. 
$
\end{Lemma}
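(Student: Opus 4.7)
The plan is to exploit the composition-factor structure of $\De(\si)$ and $\bar\nabla(\pi)$ together with the essential-socle property of the injective envelope. First I would record: $\bar\nabla(\pi)$ has simple socle $L(\pi)$ concentrated in degree $0$, since $I(\pi)$ is essential over $L(\pi)$; and by the construction of $A(\pi)$ in (\ref{EBarNabla}), all composition factors of $\bar\nabla(\pi)/L(\pi)$ are $\simeq L(\tau)$ with $\tau<\pi$. Dually, $\De(\si)=\funQ^{\leq\si}(P(\si))$ has simple head $L(\si)$ in degree $0$ and all composition factors $\simeq L(\tau)$ with $\tau\leq\si$.

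For the diagonal case $\si=\pi$, I would first compute
\[
\DIM\HOM_H(P(\pi),\bar\nabla(\pi))=[\bar\nabla(\pi):L(\pi)]_q=1,
\]
concentrated in degree $0$, using (\ref{EMultHom}) and the Schurian assumption $\End(L(\pi))\cong F$. The composition $P(\pi)\twoheadrightarrow L(\pi)\hookrightarrow\bar\nabla(\pi)$ provides a nonzero degree-zero morphism that visibly kills $M(\pi)\supseteq K(\pi)$, so it descends to $\De(\pi)$. Uniqueness up to scalar then forces $\DIM\HOM_H(\De(\pi),\bar\nabla(\pi))=1$.

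For the off-diagonal case $\si\neq\pi$, I would argue by contradiction. Suppose $f\colon\De(\si)\to\bar\nabla(\pi)$ is a nonzero homogeneous map. Since $\im f$ is a nonzero quotient of $\De(\si)$, its head is isomorphic to $q^n L(\si)$ for some $n$; thus $L(\si)$ is a composition factor of $\bar\nabla(\pi)$, which forces $\si=\pi$ or $\si<\pi$ strictly. Since $\si\neq\pi$, we must have $\si<\pi$. But then $\soc(\im f)\subseteq\soc\bar\nabla(\pi)=L(\pi)$ (in degree $0$), so $L(\pi)$ occurs as a composition factor of $\im f$ and hence of $\De(\si)$, which demands $\pi\leq\si$, contradicting $\si<\pi$.

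The only mild bookkeeping point, and probably the main potential pitfall, is that $\leq$ on $\Pi$ is only a preorder pulled back from $\Xi$: if $\si$ and $\pi$ are distinct but lie in the same $\varrho$-fiber, then $\si\leq\pi$ and $\pi\leq\si$ simultaneously. The head argument still succeeds in this case because the composition factors of $\bar\nabla(\pi)$ lying above the socle satisfy the strict inequality $\tau<\pi$ (hence $L(\si)$ never appears), so the contradiction goes through without invoking the socle argument.
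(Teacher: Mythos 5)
Your proof is correct and takes essentially the same route as the paper's: the off-diagonal case is settled by exactly the same head/socle tension ($\head\De(\si)\cong L(\si)$ together with $\soc\bar\nabla(\pi)\cong L(\pi)$ forcing $\si<\pi$ and $\pi\leq\si$ at once, a contradiction). The paper leaves the diagonal case and the same-$\varrho$-fiber subtlety implicit, which you spell out, but the underlying argument is identical.
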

\begin{proof}
We have $\head \De(\si)\cong L(\si)$ and $\soc \bar\nabla(\pi)\cong L(\pi)$. So if $\pi\neq \si$ and $\HOM_H(\De(\si),\bar\nabla(\pi))\neq 0$, we get $\si<\pi$ and $\pi\leq\si$, which is a contradiction. 
\end{proof}

\begin{Lemma} \label{LFiniteLengthNabla}
Let $\pi\in\Pi$. Then the module $\bar\nabla(\pi)$ has finite length; in particular, $\bar\nabla(\pi)\in\mod{H}$. 
\end{Lemma}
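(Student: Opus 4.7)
The plan is to reduce to the truncation $H':=H(\Pi_{\leq\pi})$, inside which $\pi$ becomes maximal among the \emph{finite} index set $\Pi':=\Pi_{\leq\pi}$, then to use the finite $\De$-filtrations of the projectives in $\mod{H'}$ together with Lemma~\ref{LHomDeNabla} to obtain finite-dimensional $\HOM$-spaces, and finally to promote these bounds to finite generation (and hence finite length) of $\bar\nabla(\pi)$.

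First I would note that every composition factor of $\bar\nabla(\pi)$ is $\simeq L(\sigma)$ with $\sigma\leq\pi$: the socle is $L(\pi)$ and, by construction, the quotient $\bar\nabla(\pi)/L(\pi)=A(\pi)$ has all its composition factors $\simeq L(\sigma)$ with $\sigma<\pi$. Hence by Lemma~\ref{LOSi1} the ideal $\funO^{\Pi_{\leq\pi}}(H)$ annihilates $\bar\nabla(\pi)$, so $\bar\nabla(\pi)$ is naturally a module over $H':=H(\Pi_{\leq\pi})$. Theorem~\ref{TCPS} then ensures that $H'$ is itself $\B$-stratified with respect to the finite saturated subset $\Pi'$, in which $\pi$ is maximal; here I use the standing hypothesis of $\S\ref{SPropCost}$ that $\Pi_{\leq\pi}$ is finite, and the fact that the truncated standard objects $\De'(\tau)$ agree with $\De(\tau)$ for $\tau\in\Pi'$.

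Next I would verify that in $\mod{H'}$ each projective cover $P'(\sigma)$ ($\sigma\in\Pi'$) admits a \emph{finite} $\De$-filtration: by {\tt (SC1)} for $H'$, $K'(\sigma)$ has a $\De$-filtration with subquotients $\simeq\De(\tau)$, $\tau>\sigma$ in $\Pi'$; Lemma~\ref{LFinDe}(iii) bounds the number of occurrences of each $\De(\tau)$, and $|\Pi'|<\infty$ bounds the number of distinct $\tau$'s. Applying the left-exact contravariant functor $\HOM_{H'}(-,\bar\nabla(\pi))$ to this finite filtration and invoking Lemma~\ref{LHomDeNabla} (which gives $\DIM\HOM(\De(\tau),\bar\nabla(\pi))=\delta_{\tau,\pi}$) then yields
\[
\dim_F\HOM_{H'}(P'(\sigma),\bar\nabla(\pi))<\infty\qquad\text{for every }\sigma\in\Pi'.
\]

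Finally, I would argue that these $\HOM$-bounds force $\bar\nabla(\pi)$ itself to be finitely generated. By (\ref{ERegModDec}), ${}_{H'}H'\cong\bigoplus_{\sigma\in\Pi'}(\DIM L(\sigma))P'(\sigma)$, so any homogeneous element of $\bar\nabla(\pi)$ lies in the sum of images of morphisms $q^nP'(\sigma)\to\bar\nabla(\pi)$ for various $\sigma\in\Pi'$ and $n\in\Z$; thus $\bar\nabla(\pi)$ equals $\sum_\sigma\sum_f\im f$, where $f$ ranges over a basis of $\HOM_{H'}(P'(\sigma),\bar\nabla(\pi))$. The previous step exhibits this as a \emph{finite} sum of cyclic submodules, so $\bar\nabla(\pi)\in\mod{H'}\subseteq\mod{H}$. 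The multiplicity formula (\ref{EMultHom}) then shows each $[\bar\nabla(\pi):L(\sigma)]_q\in\Z[q,q^{-1}]$, and summing over the finite set $\Pi'$ yields finite total composition length. The main obstacle I anticipate is this last bridge: mere finite-dimensionality of $\HOM_{H'}(P'(\sigma),\bar\nabla(\pi))$ does not visibly imply finite generation of $\bar\nabla(\pi)$, and the argument hinges on combining the graded direct-sum decomposition of the regular module with the fact that each homogeneous vector of $\bar\nabla(\pi)$ is hit by some degree-shifted projective.
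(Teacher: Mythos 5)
Your proof follows the same strategy as the paper's: reduce to proving finite-dimensionality of each $\HOM(P(\si),\bar\nabla(\pi))$, then exploit a finite $\De$-filtration of the projective cover together with Lemma~\ref{LHomDeNabla}. That core is correct. However, the argument has one unnecessary layer and one technically flawed step.

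The detour through $H':=H(\Pi_{\leq\pi})$ is superfluous. In \S\ref{SSProperCost} the algebra $H$ is Laurentian by hypothesis, so $\Pi$ itself is automatically finite by Lemma~\ref{CLaurent}(ii); you seem to have imported the standing assumption of \S\ref{SCost} (that $\Pi_{\leq\pi}$ is finite), which is not needed here because the entire index set is already finite. Consequently $P(\si)$ already admits a finite $\De$-filtration over $H$ (by {\tt (SC1)}, Lemma~\ref{LFinDe}(iii), and $|\Pi|<\infty$), and you can apply $\HOM_H(-,\bar\nabla(\pi))$ directly, exactly as the paper does.

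More seriously, your appeal to Lemma~\ref{LOSi1} to show that $\funO^{\Pi_{\leq\pi}}(H)$ annihilates $\bar\nabla(\pi)$ is circular: that lemma is stated for $V\in\mod{H}$, and at that point in your argument $\bar\nabla(\pi)$ is a priori only a submodule of the (generally non-finitely-generated) injective $I(\pi)$, so you cannot yet invoke it. This gap can be repaired—e.g.\ by observing $\HOM_H(P(\tau),\bar\nabla(\pi))=0$ for $\tau\not\leq\pi$ and arguing with an idempotent $e$ generating the relevant ideal—but it adds work to a step that, as noted above, is not needed at all.

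Finally, your concern about the ``last bridge'' is a fair one: the paper does leave implicit why finite-dimensionality of the $\HOM$-spaces implies finite length. Your finite-generation argument via (\ref{ERegModDec}) works; a shorter route is the dimension identity
$\DIM\bar\nabla(\pi)=\sum_{\si\in\Pi}(\dim_{q^{-1}}L(\si))\,\DIM\HOM_H(P(\si),\bar\nabla(\pi))$,
obtained from $V\cong\HOM_H({}_HH,V)$ and (\ref{ERegModDec}), which immediately gives finite dimensionality, hence finite length.
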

\begin{proof}
It suffices to prove that $\HOM_H(P(\si),\bar\nabla(\pi))$ is finite dimensional for every $\si\in\Pi$. But this follows from Lemma~\ref{LHomDeNabla} and the fact that the multiplicity of $\De(\pi)$ in a $\De$-filtration of $P(\si)$ is finite, see Lemma~\ref{LFinDe}(iii). 
\end{proof} 

\begin{Lemma} \label{LEXTDeBarNabla}
For any $\pi,\si\in\Pi$ and $i\geq 1$, we have 
$
\EXT^i_H(\De(\si),\bar\nabla(\tau))=0.
$
\end{Lemma}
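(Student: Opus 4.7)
The strategy is to handle the case $i=1$ first by case analysis on the position of $\si$ relative to $\pi$, and then to deduce the vanishing for $i\ge 2$ by dimension shifting in a downward induction on $\si$. The key short exact sequences to invoke are
\[
(\alpha)\ 0 \to L(\pi) \to \bar\nabla(\pi) \to A(\pi) \to 0,\quad
(\beta)\ 0 \to \bar\nabla(\pi) \to I(\pi) \to Y \to 0,\quad
(\gamma)\ 0 \to K(\si) \to P(\si) \to \De(\si) \to 0,
\]
where $Y := I(\pi)/\bar\nabla(\pi)$. Since $H$ is Laurentian, the set $\Pi$ is finite, so by Lemma~\ref{LFinDe}(iii) every $K(\si)$ admits a finite $\De$-filtration with sections $\De(\tau)$ for $\tau>\si$, and therefore Lemma~\ref{LEXTNonZero}(ii) applies throughout.

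For $i=1$ I split into three cases. When $\si=\pi$, apply $\HOM(\De(\pi),-)$ to $(\alpha)$ and invoke Lemma~\ref{LEXTNonZero}(ii): $\EXT^j(\De(\pi),L(\pi))=0$ (as $\pi\not>\pi$) and $\EXT^j(\De(\pi),L(\tau))=0$ for $\tau<\pi$ (as $\tau\not>\pi$); together with the finite length of $A(\pi)$ (Lemma~\ref{LFiniteLengthNabla}), Lemma~\ref{mittagleffler} then gives $\EXT^j(\De(\pi),A(\pi))=0$ for $j\ge 1$, and the long exact sequence yields the full vanishing $\EXT^j(\De(\pi),\bar\nabla(\pi))=0$. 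When $\si<\pi$ strictly, use $(\beta)$ together with the injectivity of $I(\pi)$ in the category of all graded $H$-modules. The maximality of $A(\pi)$ in the definition of $\bar\nabla(\pi)$ forces $Y$ to contain no nonzero submodule all of whose composition factors lie in $\{L(\tau):\tau<\pi\}$; as every composition factor of $\De(\si)$ is of the form $L(\tau)$ with $\tau\le\si<\pi$, the image of any map $\De(\si)\to Y$ must vanish, so $\HOM(\De(\si),Y)=0$ and hence $\EXT^1(\De(\si),\bar\nabla(\pi))=0$. In the remaining case $\si\neq\pi$ and $\si\not<\pi$ (equivalently $\pi\not>\si$), work with $(\gamma)$: $L(\si)$ is not a composition factor of $\bar\nabla(\pi)$, so $\HOM(P(\si),\bar\nabla(\pi))=0$; moreover $\De(\pi)$ does not appear in the finite $\De$-filtration of $K(\si)$, so iterating the long exact sequence through the layers and using Lemma~\ref{LHomDeNabla} at each stage yields $\HOM(K(\si),\bar\nabla(\pi))=0$, whence $\EXT^1(\De(\si),\bar\nabla(\pi))=0$.

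For $i\ge 2$, proceed by downward induction on $\si$ in the partial preorder on $\Pi$, which is well-founded since $\Pi$ is finite. In the base case $\si$ is maximal, so $K(\si)=0$ and $\De(\si)=P(\si)$ is projective. For the inductive step, assuming $\EXT^j(\De(\tau),\bar\nabla(\pi))=0$ for all $\tau>\si$, $j\ge 1$, and all $\pi$, passing through the finite $\De$-filtration of $K(\si)$ via the long exact sequences of its layers yields $\EXT^j(K(\si),\bar\nabla(\pi))=0$ for $j\ge 1$; dimension shifting along $(\gamma)$ with $\EXT^j(P(\si),-)=0$ then gives $\EXT^i(\De(\si),\bar\nabla(\pi))\cong\EXT^{i-1}(K(\si),\bar\nabla(\pi))=0$ for $i\ge 2$, which combined with the $i=1$ analysis finishes the proof. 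The main obstacle is the case $\si<\pi$ for $i=1$: a direct dimension count through $(\gamma)$ would reduce the vanishing to a form of BGG reciprocity that is not yet available at this point in the paper, and the structural use of the quotient $Y$ together with the maximality of $A(\pi)$ is precisely what bypasses this circularity.
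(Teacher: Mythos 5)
Your proof is correct, but it takes a more elaborate route than the paper's. The paper handles $i=1$ with a single short exact sequence --- the one coming from the injective hull, $0\to\bar\nabla(\pi)\to I(\pi)\to I(\pi)/\bar\nabla(\pi)\to 0$ --- split only into the two cases $\pi\not>\si$ (where finite length of $\bar\nabla(\pi)$, Lemma~\ref{mittagleffler}, and Lemma~\ref{LEXTNonZero}(i) give the vanishing directly) and $\pi>\si$ (where $\HOM(\De(\si),I(\pi)/\bar\nabla(\pi))=0$ by the maximality of $A(\pi)$). Then for $i>1$ the paper does a plain induction on $i$: assume the result for $i-1$, feed the finite $\De$-filtration of $K(\si)$ into the long exact sequence for $(\gamma)$, and conclude. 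You instead break $i=1$ into three sub-cases using three different short exact sequences --- $(\alpha)$ for $\si=\pi$, $(\beta)$ for $\si<\pi$, $(\gamma)$ for the rest --- and then run a separate downward induction on $\si$ for $i\geq2$. Both work; yours makes the case structure very explicit, at the cost of being longer and somewhat redundant (your Case~1 already disposes of all $i\geq1$ when $\si=\pi$, so it partially overlaps the $i\geq2$ induction). Also, the observation in Case~3 that $\HOM(P(\si),\bar\nabla(\pi))=0$ is a harmless detour --- the actual work there is the iteration through the $\De$-filtration of $K(\si)$ with Lemma~\ref{LHomDeNabla}, which you carry out correctly. The paper's induction on $i$ (rather than on $\si$) is the cleaner of the two options since it requires only one pass.
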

\begin{proof}
We apply induction on $i$. Let $i=1$. 
From the short exact sequence
$0\to \bar\nabla(\pi)\to I(\pi)\to I(\pi)/\bar\nabla(\pi)\to 0
$
we get the exact sequence 
\begin{align*}
0 &\to \HOM_H(\De(\si),\bar\nabla(\pi))\to \HOM_H(\De(\si),I(\pi))\to \HOM_H(\De(\si),I(\pi)/\bar\nabla(\pi))
\\
&\to \EXT^1_H(\De(\si),\bar\nabla(\pi))\to 0.
\end{align*}

By Lemma~\ref{LEXTNonZero}, $\EXT^1_H(\De(\si),L(\pi))\neq 0$ implies $\pi>\si$. So Lemma~\ref{LFiniteLengthNabla} implies that $\EXT^1_H(\De(\si),\bar\nabla(\pi))= 0$, unless $\pi>\si$. On the other hand, if $\pi>\si$, then by the definition of $\bar\nabla(\pi)$ we have $\HOM_H(\De(\si), I(\pi)/\bar\nabla(\pi))=0$, and so from the exact sequence above, we get $\EXT^1_H(\De(\si),\bar\nabla(\pi))=0$ again.

Let $i>1$ and suppose that we have proved that $\EXT^{i-1}_H(\De(\pi),\bar\nabla(\si))=0$ for all $\pi$ and $\si$. 
Applying $\HOM_H(-,\bar\nabla(\si))$ to the short exact sequence
$
0\to K(\pi)\to P(\pi)\to \De(\pi)\to 0
$ 
and using the fact that $K(\pi)$ has a finite $\De$-filtration, completes the inductive step. 
\end{proof}

\begin{Lemma} \label{LDFBounded}
If $V\in\mod{H}$ has a $\De$-filtration, then 
$$(V:\De(\pi))_q=\dim_{q^{-1}}\HOM_H(V,\bar\nabla(\pi)).$$
\end{Lemma}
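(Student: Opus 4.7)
The plan is to use finiteness of $\Pi$ (since $H$ is left Noetherian Laurentian) to turn any $\Delta$-filtration of $V$ into a finite one, and then telescope through graded Hom-dimensions using the computations in Lemmas~\ref{LHomDeNabla} and~\ref{LEXTDeBarNabla}.

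First I would observe that $\Pi$ is finite by Lemma~\ref{CLaurent}(ii) and that, by Lemma~\ref{LFinDe}(iii), for every $\tau\in\Pi$ only finitely many indices $r$ satisfy $V_r/V_{r+1}\simeq\Delta(\tau)$. Combining these, any $\Delta$-filtration of $V$ has finite length. Fix such a filtration
$$V=V_0\supseteq V_1\supseteq \cdots \supseteq V_N=0, \qquad V_i/V_{i+1}\cong q^{m_i}\Delta(\sigma_i).$$

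Next I would apply the functor $\HOM_H(-,\bar\nabla(\pi))$ to each short exact sequence $0\to V_{i+1}\to V_i\to V_i/V_{i+1}\to 0$. Since $V_i/V_{i+1}\simeq\Delta(\sigma_i)$, Lemma~\ref{LEXTDeBarNabla} gives $\EXT^1_H(V_i/V_{i+1},\bar\nabla(\pi))=0$, so the resulting long exact sequence truncates to a short exact sequence
$$0\to \HOM_H(V_i/V_{i+1},\bar\nabla(\pi))\to \HOM_H(V_i,\bar\nabla(\pi))\to \HOM_H(V_{i+1},\bar\nabla(\pi))\to 0$$
of graded vector spaces. Taking graded dimensions and iterating over $i=0,\dots,N-1$ (using $V_N=0$) yields the additivity formula
$$\DIM\HOM_H(V,\bar\nabla(\pi))=\sum_{i=0}^{N-1}\DIM\HOM_H(V_i/V_{i+1},\bar\nabla(\pi)).$$

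Finally I would evaluate the right-hand side. By Lemma~\ref{LHomDeNabla}, $\HOM_H(\Delta(\sigma_i),\bar\nabla(\pi))$ is one-dimensional and concentrated in degree $0$ when $\sigma_i=\pi$, and zero otherwise; consequently
$$\DIM\HOM_H(q^{m_i}\Delta(\sigma_i),\bar\nabla(\pi))=\delta_{\sigma_i,\pi}\,q^{-m_i}.$$
Thus
$$\DIM\HOM_H(V,\bar\nabla(\pi))=\sum_{i:\,\sigma_i=\pi}q^{-m_i}.$$
On the other hand, by definition $(V:\Delta(\pi))_q=\sum_{i:\,\sigma_i=\pi}q^{m_i}$, so the two Laurent polynomials are related by $q\mapsto q^{-1}$, which is precisely the claim $(V:\Delta(\pi))_q=\dim_{q^{-1}}\HOM_H(V,\bar\nabla(\pi))$.

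There is no real obstacle here: the only subtlety is ensuring the filtration is actually finite, and that is handed to us by finiteness of $\Pi$ together with Lemma~\ref{LFinDe}(iii). The more delicate infinite-filtration case, where one must control vanishing of $\DIM\HOM_H(V_n,\bar\nabla(\pi))$ in each fixed degree as $n\to\infty$, will be needed in the more general setting of \S\ref{SSGoodFiltMoreGen}, but is not required for the present lemma.
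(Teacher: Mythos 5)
Your proof is correct and follows the same route the paper intends: the paper's one-line proof simply cites Lemmas~\ref{LHomDeNabla} and \ref{LEXTDeBarNabla}, and you have filled in exactly the expected details (reduction to a finite filtration via finiteness of $\Pi$ and Lemma~\ref{LFinDe}(iii), telescoping using $\EXT^1$-vanishing, and the degree-shift bookkeeping that produces the $q\mapsto q^{-1}$).
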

\begin{proof}
Follows immediately from Lemmas~\ref{LHomDeNabla} and \ref{LEXTDeBarNabla}.
\end{proof}

The following is a version of the Brauer-Humpreys-Bernstein-Gelfand-Gelfand Reciprocity:

\begin{Theorem} \label{TBGG} 
{\bf (BHBGG-Reciprocity)}
For any $\pi,\si\in\Pi$, we have $$(P(\pi):\De(\si))_q=[\bar\nabla(\si):L(\pi)]_{q^{-1}}.$$ 
\end{Theorem}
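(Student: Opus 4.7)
The plan is to exploit Lemma~\ref{LDFBounded} applied to the projective cover $P(\pi)$, together with the standard identification of composition multiplicities via Hom from a projective cover.

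First I would verify that $P(\pi)$ admits a $\De$-filtration. Since $H$ is $\B$-stratified, by (SC1) applied in $\mod{H}$ the submodule $K(\pi) = \O^{\leq \pi}(P(\pi))$ has a $\De$-filtration, and $P(\pi)/K(\pi) \cong \De(\pi)$ by definition (\ref{EStand}); so $P(\pi)$ itself has a $\De$-filtration with top quotient $\De(\pi)$. Thus Lemma~\ref{LDFBounded} is applicable and yields
\[
(P(\pi):\De(\si))_q = \dim_{q^{-1}}\HOM_H(P(\pi), \bar\nabla(\si)).
\]

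Next I would compute $\HOM_H(P(\pi), \bar\nabla(\si))$ using that $P(\pi)$ is the projective cover of $L(\pi)$ together with the Schurian assumption $\END(L(\pi)) \cong F$ (the blanket assumption of Section~\ref{SHW}, which propagates to the present section). The formula (\ref{EMultHom}) then gives
\[
\DIM \HOM_H(P(\pi), \bar\nabla(\si)) = [\bar\nabla(\si):L(\pi)]_q,
\]
and both sides are genuine Laurent polynomials because $\bar\nabla(\si)$ has finite length by Lemma~\ref{LFiniteLengthNabla}. Since $\dim_{q^{-1}}$ is by definition $\DIM$ with $q$ replaced by $q^{-1}$, combining the two displayed identities immediately gives
\[
(P(\pi):\De(\si))_q = [\bar\nabla(\si):L(\pi)]_{q^{-1}},
\]
which is the asserted reciprocity.

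There is essentially no obstacle: the two key inputs -- the ``Hom into $\bar\nabla$ computes $\De$-multiplicities'' fact (Lemma~\ref{LDFBounded}, powered by Lemmas~\ref{LHomDeNabla} and \ref{LEXTDeBarNabla}) and the ``Hom from $P(\pi)$ computes $L(\pi)$-multiplicities'' fact -- pair up cleanly. The only point requiring mild care is the bookkeeping for the $q$-versus-$q^{-1}$ swap; one must observe that the natural grading on $\HOM_H(q^{n}\De(\tau),\bar\nabla(\si))$ carries a degree shift by $-n$ rather than $+n$, which is precisely what produces the $q^{-1}$ in $\dim_{q^{-1}}$ on one side and allows the $q^{-1}$ on the multiplicity $[\bar\nabla(\si):L(\pi)]_{q^{-1}}$ on the other side to appear symmetrically.
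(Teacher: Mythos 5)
Your proof is correct and follows essentially the same route as the paper: the paper's own proof consists of a single line applying Lemma~\ref{LDFBounded} to $V=P(\pi)$ and then identifying $\DIM\HOM_H(P(\pi),\bar\nabla(\si))$ with $[\bar\nabla(\si):L(\pi)]_q$. You have simply made explicit the two ingredients the paper leaves implicit — that $P(\pi)$ has a $\De$-filtration via~{\tt (SC1)} and that the Schurian assumption makes the Hom-space compute the composition multiplicity on the nose.
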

\begin{proof}
By Lemma~\ref{LDFBounded}, we have 
$$
(P(\pi):\De(\si))_q=\dim_{q^{-1}}\HOM_H(P(\pi),\bar\nabla(\si))=[\bar\nabla(\si):L(\pi)]_{q^{-1}},
$$
as required. 
\end{proof}

\begin{Lemma} \label{LEXTLBarNabla}
We have that $\EXT^1_H(L(\si),\bar\nabla(\pi))=0$ for all $\si<\pi$. 
\end{Lemma}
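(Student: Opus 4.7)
The plan is to establish $\EXT^1_H(L(\si),\bar\nabla(\pi))=0$ by showing that every graded short exact sequence
$$0\to \bar\nabla(\pi)\to E\to L(\si)\to 0$$
splits when $\si<\pi$. Since $\bar\nabla(\pi)$ has finite length by Lemma~\ref{LFiniteLengthNabla} and $L(\si)$ is finite dimensional, such an $E$ is itself finite length (in particular in $\mod{H}$), so it suffices to exhibit a splitting at the level of graded-module extensions.

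The construction uses the injective envelope. By definition $\bar\nabla(\pi)\subseteq I(\pi)$, and $I(\pi)$ is injective in the category of all graded $H$-modules, so the inclusion $\bar\nabla(\pi)\hookrightarrow I(\pi)$ extends along $\bar\nabla(\pi)\hookrightarrow E$ to a homogeneous morphism $f:E\to I(\pi)$ whose restriction to $\bar\nabla(\pi)$ is the given inclusion. Setting $K:=\ker f$, the injectivity of $f|_{\bar\nabla(\pi)}$ forces $K\cap\bar\nabla(\pi)=0$, so the composite $K\hookrightarrow E\twoheadrightarrow L(\si)$ is injective. Hence either $K=0$ or $K\iso L(\si)$.

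I will then dispatch each case. If $K\iso L(\si)$, the inclusion $K\hookrightarrow E$ together with the induced isomorphism $K\iso L(\si)$ provides a splitting of the original sequence, and we are done. If instead $K=0$, then $f$ is a graded embedding $E\hookrightarrow I(\pi)$. Dividing by $L(\pi)\subseteq\bar\nabla(\pi)\subseteq E$, the image $\bar E$ of $E$ in $I(\pi)/L(\pi)$ fits in an extension of $L(\si)$ by $A(\pi)=\bar\nabla(\pi)/L(\pi)$, so every irreducible subquotient of $\bar E$ is of the form $L(\kappa)$ with $\kappa<\pi$ (using $\si<\pi$). By the defining maximality of $A(\pi)$ inside $I(\pi)/L(\pi)$ in~(\ref{EBarNabla}), we get $\bar E\subseteq A(\pi)$, so $E\subseteq\bar\nabla(\pi)$, contradicting $E/\bar\nabla(\pi)\cong L(\si)\neq 0$. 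This case therefore does not arise, and the sequence splits.

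The only mildly subtle point is bookkeeping for the grading: when computing $\EXT^1_H(L(\si),\bar\nabla(\pi))_n=\ext^1(q^nL(\si),\bar\nabla(\pi))$, one must ensure that the extension $f$ obtained from injectivity is degree preserving. This is automatic once $I(\pi)$ is taken to be the graded injective envelope in the category of graded $H$-modules, as is the convention in this section, so the argument proceeds uniformly in each graded degree.
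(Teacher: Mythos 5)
Your proof is correct and is essentially the paper's argument unwound at the level of Yoneda extensions: both rely on the same two facts, namely that $I(\pi)$ is injective in the category of all graded $H$-modules and that $A(\pi)$ is maximal among submodules of $I(\pi)/L(\pi)$ with all irreducible subquotients $\simeq L(\kappa)$, $\kappa<\pi$. The paper packages this more briefly by applying $\HOM_H(L(\si),-)$ to $0\to\bar\nabla(\pi)\to I(\pi)\to I(\pi)/\bar\nabla(\pi)\to 0$ and observing that $\HOM_H(L(\si),I(\pi)/\bar\nabla(\pi))=0$ for $\si<\pi$, whereas your case analysis on $\ker f$ makes the same vanishing visible directly as a splitting.
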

\begin{proof}
From the short exact sequence 
$0\to \bar\nabla(\pi)\to I(\pi)\to I(\pi)/\bar\nabla(\pi)\to 0
$ 
we get the exact sequence 
\begin{align*}
0 &\to \HOM_H(L(\si),\bar\nabla(\pi))\to \HOM_H(L(\si),I(\pi))\to \HOM_H(L(\si),I(\pi)/\bar\nabla(\pi))
\\
&\to \EXT^1_H(L(\si),\bar\nabla(\pi))\to 0,
\end{align*}
and the result follows since $\HOM_H(L(\si),I(\pi)/\bar\nabla(\pi))=0$. 
\end{proof}

\subsection{Properties of good filtrations}\label{SSPropGoodFiltFinPi}
We keep the notation of the previous subsection. In particular, $H$ is a $\B$-stratified algebra. 

\begin{Lemma} \label{LDeCrit}
Let $V\in\mod{H}$. Then the following are equivalent:
\begin{enumerate}
\item[{\rm (i)}] $V$ has a $\De$-filtration;
\item[{\rm (ii)}] $V$ has a finite $\De$-filtration;
\item[{\rm (iii)}] $\EXT^1_H(V,\bar\nabla(\pi))=0$ for all $\pi\in\Pi$. 
\end{enumerate} 
\end{Lemma}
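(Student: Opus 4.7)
The strategy is to prove the cycle $\mathrm{(ii)}\Rightarrow\mathrm{(i)}\Rightarrow\mathrm{(ii)}\Rightarrow\mathrm{(iii)}\Rightarrow\mathrm{(ii)}$, with only the last implication presenting real difficulty. The implication $\mathrm{(ii)}\Rightarrow\mathrm{(i)}$ is immediate from the definitions. For $\mathrm{(i)}\Rightarrow\mathrm{(ii)}$, note that $\Pi$ is finite by Lemma~\ref{CLaurent} (since $H$ is Laurentian), and by Lemma~\ref{LFinDe}(iii) each $\pi\in\Pi$ labels only finitely many factors of any given $\Delta$-filtration; hence an arbitrary $\Delta$-filtration is automatically finite after rearrangement.

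For $\mathrm{(ii)}\Rightarrow\mathrm{(iii)}$, I would induct on the length $n$ of a finite $\Delta$-filtration $V=V_0\supset V_1\supset\cdots\supset V_n=0$. The base case $n=0$ is trivial. For the step, with $V/V_1\simeq \Delta(\sigma)$, the short exact sequence $0\to V_1\to V\to\Delta(\sigma)\to 0$ yields the exact sequence
$$\EXT^1_H(\Delta(\sigma),\bar\nabla(\pi))\longrightarrow \EXT^1_H(V,\bar\nabla(\pi))\longrightarrow \EXT^1_H(V_1,\bar\nabla(\pi)),$$
whose outer terms vanish by Lemma~\ref{LEXTDeBarNabla} and by the inductive hypothesis respectively.

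The substantive direction is $\mathrm{(iii)}\Rightarrow\mathrm{(ii)}$. Here I would induct on $|\varrho(\operatorname{supp} V)|$, where $\operatorname{supp} V:=\{\pi\in\Pi\mid [V:L(\pi)]_q\neq 0\}$ (finite since $\Pi$ is). Choose $\xi$ maximal in $\varrho(\operatorname{supp} V)$, set $P_\xi:=\bigoplus_{\varrho(\sigma)=\xi}P(\sigma)$, and let $U\subseteq V$ be the trace of $P_\xi$ in $V$, namely the submodule generated by the images of all graded morphisms $q^m P_\xi\to V$. By projectivity of $P_\xi$, any morphism $P_\xi\to V/U$ lifts to $P_\xi\to V$ whose image lies in $U$, so $\HOM_H(P_\xi,V/U)=0$ and hence no $L(\sigma)$ with $\varrho(\sigma)=\xi$ is a composition factor of $V/U$; consequently $|\varrho(\operatorname{supp} V/U)|<|\varrho(\operatorname{supp} V)|$. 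It then remains to show: (a) $U$ admits a finite $\Delta$-filtration with factors $q^m\Delta(\sigma)$, $\varrho(\sigma)=\xi$; and (b) $V/U$ continues to satisfy $\EXT^1_H(V/U,\bar\nabla(\pi))=0$ for every $\pi$. Once (a) and (b) are established, induction gives a finite $\Delta$-filtration of $V/U$, which concatenates with the one on $U$ to produce a finite $\Delta$-filtration of $V$.

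The main obstacle will be part (a). For this I would use the $\B$-stratified structure at the stratum $\xi$: the space $\HOM_H(P_\xi,V)$ is a right $B_\xi$-module, and the evaluation morphism $P_\xi\otimes_{B_\xi}\HOM_H(P_\xi,V)\to V$ has image exactly $U$. Combining this with the $\Delta$-filtration of $P_\xi$ (whose factors are $\Delta(\sigma)$'s with $\varrho(\sigma)\geq\xi$, cut down to $\varrho(\sigma)=\xi$ by the maximality of $\xi$ in $\varrho(\operatorname{supp} V)$ together with Lemma~\ref{LEXTNonZero}(i)) should allow one to transfer a presentation of $\HOM_H(P_\xi,V)$ over $B_\xi$ into a $\Delta$-filtration of $U$; if needed, one first restricts to a saturated subset via Proposition~\ref{PTruncSat} and Lemma~\ref{LFiltSi} so that $\xi$ can be taken maximal in the relevant $\Xi$. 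Part (b) will then follow by applying $\HOM_H(-,\bar\nabla(\pi))$ to $0\to U\to V\to V/U\to 0$ and using the exactness
$$\HOM_H(V,\bar\nabla(\pi))\longrightarrow\HOM_H(U,\bar\nabla(\pi))\longrightarrow\EXT^1_H(V/U,\bar\nabla(\pi))\longrightarrow\EXT^1_H(V,\bar\nabla(\pi))=0,$$
together with the filtration of $U$ from (a), Lemma~\ref{LHomDeNabla} and Lemma~\ref{LEXTDeBarNabla}, to extend arbitrary morphisms $U\to\bar\nabla(\pi)$ to $V\to\bar\nabla(\pi)$.
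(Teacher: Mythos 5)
Your handling of $\mathrm{(ii)}\Rightarrow\mathrm{(i)}$, $\mathrm{(i)}\Rightarrow\mathrm{(ii)}$ and $\mathrm{(ii)}\Rightarrow\mathrm{(iii)}$ is correct and matches the paper (which compresses these into citations of Lemma~\ref{LFinDe}(iii) and Lemma~\ref{LEXTDeBarNabla}). The issue is $\mathrm{(iii)}\Rightarrow\mathrm{(ii)}$, where your route is genuinely different from the paper's and has a gap.

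Your step (a) — that the trace $U$ of $P_\xi$ in $V$ has a finite $\De$-filtration with factors $\simeq\De(\sigma)$, $\varrho(\sigma)=\xi$ — cannot be established as sketched, because the sketch only uses the $\B$-stratified structure of $P_\xi$ and the $B_\xi$-module structure of $\HOM_H(P_\xi,V)$, and never invokes the hypothesis $\EXT^1_H(V,\bar\nabla(\pi))=0$. But (a) is simply false without that hypothesis: take $V=L(\sigma)$ for any $\sigma$ with $\De(\sigma)\neq L(\sigma)$. Then $\xi=\varrho(\sigma)$ is maximal in $\varrho(\operatorname{supp}V)$, the trace of $P_\xi$ in $V$ is all of $L(\sigma)$, and $L(\sigma)$ is a proper quotient of $\De(\sigma)$ with no $\De$-filtration at all. (Such $V$ violates (iii), so this is not a counterexample to the lemma, but it shows the Ext hypothesis must enter the argument for (a), which your sketch never does.) In addition, the evaluation map $P_\xi\otimes_{B_\xi}\HOM_H(P_\xi,V)\to U$ need not be injective, and in the merely $\B$-stratified setting $P_\xi$ carries no flatness or freeness over $B_\xi$, so transporting a $B_\xi$-presentation of $\HOM_H(P_\xi,V)$ into a $\De$-filtration of $U$ is not available. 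Step (b) is also incomplete: for $\varrho(\pi)=\xi$ you need surjectivity of the restriction $\HOM_H(V,\bar\nabla(\pi))\to\HOM_H(U,\bar\nabla(\pi))$, which does not follow from (a) and the cited lemmas alone.

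The paper's proof of $\mathrm{(iii)}\Rightarrow\mathrm{(ii)}$ works from the opposite (minimal) end of the preorder and uses the Ext hypothesis immediately. One takes $\sigma$ minimal with $\HOM_H(V,L(\sigma))\neq 0$. Applying $\HOM_H(V,-)$ to $0\to L(\kappa)\to\bar\nabla(\kappa)\to\bar\nabla(\kappa)/L(\kappa)\to 0$, the hypothesis and the minimality of $\sigma$ (via Lemma~\ref{mittagleffler}) give $\EXT^1_H(V,L(\kappa))=0$ for all $\kappa\leq\sigma$. Feeding this into $0\to\rad\De(\sigma)\to\De(\sigma)\to L(\sigma)\to 0$ shows $\HOM_H(V,\De(\sigma))\to\HOM_H(V,L(\sigma))$ is onto; since $\head\De(\sigma)\cong L(\sigma)$, any nonzero map $V\to q^m L(\sigma)$ lifts to a surjection $V\twoheadrightarrow q^m\De(\sigma)$. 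The kernel $V_1$ still satisfies (iii) because $\EXT^2_H(\De(\sigma),\bar\nabla(\pi))=0$. Iterating, and using finiteness of $\Pi$ and finite-dimensionality of each $\HOM_H(V,\bar\nabla(\pi))$ (Lemma~\ref{LFiniteLengthNabla}), forces termination. The point is that the Ext hypothesis is exactly what furnishes the lift $V\twoheadrightarrow\De(\sigma)$ at the top of the filtration; your bottom-up trace construction leaves the hard part — why the bottom layer really consists of honest $\De$'s — without an engine.
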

\begin{proof}
(i) $\iff$ (ii) by Theorem~\ref{TCPS} and Lemma~ \ref{LFinDe}(iii).  

(ii) $\implies$ (iii) is clear from Lemma~\ref{LEXTDeBarNabla}. 

(iii) $\implies$ (ii) 
Let $\si$ be minimal with $\HOM_H(V,L(\si))\neq 0$. 

We claim that $\kappa\leq \si$ implies $\EXT^1_H(V,L(\kappa))=0$. Indeed, using the short exact sequence
$
0\to L(\kappa)\to\bar\nabla(\kappa)\to \bar\nabla(\kappa)/L(\kappa)\to 0,
$
we get the exact sequence
$$
\HOM_H(V,\bar\nabla(\kappa)/L(\kappa)\to \EXT^1_H(V,L(\kappa))\to \EXT^1_H(V,\bar\nabla(\kappa)),
$$
which by assumptions implies the claim. 

From 
$
0\to Q\to\De(\si)\to L(\si)\to 0
$
we get the exact sequence
$$
\HOM_H(V,\De(\si))\to \HOM_H(V,L(\si))\to \EXT^1_H(V,Q).
$$ 
By Lemma~\ref{mittagleffler} and the claim proved in the previous paragraph, we deduce $ \EXT^1_H(V,Q)=0$. Thus the map $
\HOM_H(V,\De(\si))\to \HOM_H(V,L(\si))$ is onto, and we deduce that there exists a surjective homomorphism $V\to q^m \De(\si)$ for some $m$, since $\head \De(\si)= L(\si)$. 

Thus we have found a submodule $V_1\subseteq V$ with $V/V_1\simeq \De(\si)$. For any $\pi$, we have an the exact sequence
$$
\EXT^1_H(V,\bar\nabla(\pi))\to \EXT^1_H(V_1,\bar\nabla(\pi))\to \EXT^2_H(\De(\si),\bar\nabla(\pi)),
$$
with the third term being zero by Lemma~\ref{LEXTDeBarNabla}, so $V_1$ again satisfies the assumptions of the lemma. Continuing as above yields arbitrarily long filtration $V=V_0\supseteq V_1\supseteq V_2\supseteq\dots\supseteq V_n$ with quotients $\simeq \De(\pi)$ for various $\pi$, and $\EXT^1_H(V_n,\bar\nabla(\si))=0$ for all $\si$. Now, by Lemma~\ref{LEXTDeBarNabla}, for each $\si$, we have 
\begin{align*}
\DIM\HOM_H(V,\bar\nabla(\si))=
\sum_{i=0}^{n-1}\DIM\HOM_H(V_i/V_{i+1},\bar\nabla(\si))
+\DIM\HOM_H(V_n,\bar\nabla(\si)).
\end{align*}
Since $\Pi$ is finite and $\HOM_H(V,\bar\nabla(\si))$ is finite dimensional for all $\si\in\Pi$ by Lemma~\ref{LFiniteLengthNabla}, the filtration must reach $0$ for some $n$. 
\end{proof}

\begin{Lemma} \label{LNaCrit}
Let $V$ be a finite length $H$-module. Then $V$ has a  $\bar\nabla$-filtration if and only if $\EXT^1_H(\De(\pi),V)=0$ for all $\pi\in\Pi$. 
\end{Lemma}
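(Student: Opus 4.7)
The plan is to dualize the strategy of Lemma~\ref{LDeCrit}. For the $(\Rightarrow)$ direction, since $V$ has finite length, any $\bar\nabla$-filtration is finite; I would induct on its length. Extracting one $\bar\nabla$-layer yields a short exact sequence, and the long exact sequence for $\EXT^\bullet_H(\De(\si),-)$ combined with $\EXT^1_H(\De(\si),\bar\nabla(\pi))=0$ from Lemma~\ref{LEXTDeBarNabla} and the inductive hypothesis gives $\EXT^1_H(\De(\si),V)=0$ for every $\si$.

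For the $(\Leftarrow)$ direction I would induct on the length of $V$. The core task is to produce an embedding $\bar\nabla(\tau)\hookrightarrow V$ for some $\tau\in\Pi$: once this is done, the short exact sequence $0\to\bar\nabla(\tau)\to V\to V/\bar\nabla(\tau)\to 0$ together with $\EXT^2_H(\De(\si),\bar\nabla(\tau))=0$ from Lemma~\ref{LEXTDeBarNabla} shows that $V/\bar\nabla(\tau)$ satisfies $\EXT^1_H(\De(\si),V/\bar\nabla(\tau))=0$ for all $\si$; the induction hypothesis applied to the strictly shorter $V/\bar\nabla(\tau)$ then yields a $\bar\nabla$-filtration of $V$.

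To produce the embedding, I would choose $\tau\in\Pi$ with $L(\tau)\subseteq\soc V$ such that $\varrho(\tau)$ is \emph{minimal} among the $\varrho$-values of composition factors of $\soc V$. This minimality forces $\HOM_H(L(\si),V)=0$ for every $\si$ with $\varrho(\si)<\varrho(\tau)$. Applying $\HOM_H(-,V)$ to $0\to L(\tau)\to\bar\nabla(\tau)\to\bar\nabla(\tau)/L(\tau)\to 0$, it would suffice to show $\EXT^1_H(\bar\nabla(\tau)/L(\tau),V)=0$, for then the inclusion $L(\tau)\hookrightarrow V$ extends to a map $\phi\colon\bar\nabla(\tau)\to V$; since $L(\tau)=\soc\bar\nabla(\tau)$ is essential in the indecomposable injective envelope $I(\tau)$ containing $\bar\nabla(\tau)$, the condition $\ker\phi\cap L(\tau)=0$ forces $\phi$ to be injective. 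The composition factors of $\bar\nabla(\tau)/L(\tau)$ are $L(\kappa)$ with $\kappa<\tau$, so by Lemma~\ref{mittagleffler} it is enough to establish $\EXT^1_H(L(\kappa),V)=0$ for every $\kappa<\tau$.

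The main obstacle is this auxiliary vanishing, which I would deduce in two steps via Proposition~\ref{PBarDeNew}. First, applying $\HOM_H(-,V)$ to $0\to\De(\kappa)N_\kappa\to\De(\kappa)\to\bar\De(\kappa)\to 0$: the filtration of $\De(\kappa)N_\kappa$ given by Proposition~\ref{PBarDeNew} has composition factors among the $L(\si)$ with $\si\leq\kappa$, all of which satisfy $\varrho(\si)\leq\varrho(\kappa)<\varrho(\tau)$, so $\HOM_H(\De(\kappa)N_\kappa,V)=0$ by minimality of $\tau$ and Lemma~\ref{mittagleffler}; combined with the hypothesis $\EXT^1_H(\De(\kappa),V)=0$ this gives $\EXT^1_H(\bar\De(\kappa),V)=0$. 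Second, applying $\HOM_H(-,V)$ to $0\to\rad\bar\De(\kappa)\to\bar\De(\kappa)\to L(\kappa)\to 0$ and observing that $\rad\bar\De(\kappa)$ has composition factors $L(\si)$ with $\si<\kappa$ (hence $\varrho(\si)<\varrho(\kappa)<\varrho(\tau)$, so $\HOM_H(\rad\bar\De(\kappa),V)=0$ again by the same argument), yields $\EXT^1_H(L(\kappa),V)=0$ sandwiched between two zeros in the long exact sequence.
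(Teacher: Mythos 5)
Your proof is correct and follows essentially the same strategy as the paper: pick a minimal $\si$ with $\HOM_H(L(\si),V)\neq 0$, establish the auxiliary vanishing $\EXT^1_H(L(\kappa),V)=0$ for $\kappa<\si$, lift the inclusion $L(\si)\into V$ along $0\to L(\si)\to \bar\nabla(\si)\to Q\to 0$ using $\EXT^1_H(Q,V)=0$, deduce injectivity from $\soc\bar\nabla(\si)=L(\si)$, and induct on length via $\EXT^2_H(\De(\pi),\bar\nabla(\si))=0$.

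The one place you diverge is in establishing the auxiliary claim $\EXT^1_H(L(\kappa),V)=0$. You make a two-step detour through $\bar\De(\kappa)$, first using $0\to\De(\kappa)N_\kappa\to\De(\kappa)\to\bar\De(\kappa)\to 0$ and then $0\to\rad\bar\De(\kappa)\to\bar\De(\kappa)\to L(\kappa)\to 0$. The paper does this in one step from $0\to\rad\De(\kappa)\to\De(\kappa)\to L(\kappa)\to 0$: all composition factors $L(\mu)$ of $\rad\De(\kappa)$ have $\mu\leq\kappa<\si$, so $\HOM_H(\rad\De(\kappa),V)=0$ by minimality of $\si$, and the connecting sequence sandwiches $\EXT^1_H(L(\kappa),V)$ between this and the hypothesis $\EXT^1_H(\De(\kappa),V)=0$. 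Your detour is not wrong, but it is unnecessary, and it leans on $\bar\De(\kappa)\cong\De(\kappa)/\De(\kappa)N_\kappa$ from Proposition~\ref{PBarDeNew}(iii), which is stated for \emph{weakly $\B$-highest weight} categories; Lemma~\ref{LNaCrit} lives in the broader $\B$-\emph{stratified} setting (with a preorder rather than a partial order on $\Pi$), where this identification is not immediately available. The paper's shorter argument avoids that dependence entirely and works at the stated level of generality.
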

\begin{proof}
The `only-if' part is clear from Lemma~\ref{LEXTDeBarNabla}. For the `if-part', let $\si$ be minimal with $\HOM_H(L(\si),V)\neq 0$. 

Let $\si\in \Pi$ be minimal with $\Hom_H(L(\si),V)\neq 0$. We claim that $\kappa< \si$ implies $\EXT^1_H(L(\kappa),V)=0$. Indeed, using 
$
0\to N\to\De(\kappa)\to L(\kappa)\to 0,
$
we get the exact sequence
$$
\HOM_H(N,V)\to \EXT^1_H(L(\kappa),V)\to \EXT^1_H(\De(\kappa),V),
$$
which by assumptions implies the claim. 

From 
$
0\to L(\si)\to\bar\nabla(\si)\to Q\to 0
$
we get the exact sequence
$$
\HOM_H(\bar\nabla(\si),V)\to \HOM_H(L(\si),V)\to \EXT^1_H(Q,V).
$$ 
Using the claim from  
the previous paragraph, we deduce 
$ \EXT^1_H(Q,V)=0$. So the map $
\HOM_H(\bar\nabla(\si),V)\to \HOM_H(L(\si),V)$ is onto. 
Since $\soc \bar\nabla(\si)= L(\si)$, there exists an injective homomorphism $q^m\bar\nabla(\si)\to V$ for some $m\in\Z$. 

Thus we have found a submodule $V_1\subseteq V$ with $V_1\simeq \bar\nabla(\si)$. For any $\pi$, we have an the exact sequence
$$
\EXT^1_H(\De(\pi),V)\to \EXT^1_H(\De(\pi),V/V_1)\to \EXT^2_H(\De(\pi),V_1),
$$
which implies, using Lemma~\ref{LEXTDeBarNabla}, that $V/V_1$ again satisfies the assumptions of the lemma. Now we can apply induction on the length of $V$.
\end{proof}

\begin{Corollary} \label{CSESDelta}
Let 
$0\to V'\to V\to V''\to 0$ 
be a short exact sequence of 
$H$-modules. 
\begin{enumerate}
\item[{\rm (i)}] If $V$ and $V''$ have $\De$-filtrations, then so does $V'$.  
\item[{\rm (ii)}] If $V$ and $V'$ have finite $\bar\nabla$-filtrations, then so does $V''$.  
\end{enumerate} 
\end{Corollary}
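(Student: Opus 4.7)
The plan is to use the $\Ext^1$-criteria of Lemmas~\ref{LDeCrit} and~\ref{LNaCrit}, combined with the vanishing of $\Ext^i(\De(\si),\bar\nabla(\tau))$ for all $i\geq 1$ from Lemma~\ref{LEXTDeBarNabla}. The key additional ingredient will be a dévissage argument to upgrade this $\Ext^i$-vanishing from standard/proper costandard objects to modules admitting finite $\De$- or $\bar\nabla$-filtrations.

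For part (i), by Lemma~\ref{LDeCrit} it suffices to show $\EXT^1(V',\bar\nabla(\pi))=0$ for every $\pi\in\Pi$. From the given short exact sequence, we get the long exact sequence
$$\EXT^1(V,\bar\nabla(\pi))\to \EXT^1(V',\bar\nabla(\pi))\to \EXT^2(V'',\bar\nabla(\pi)).$$
The first term vanishes because $V$ has a $\De$-filtration (Lemma~\ref{LDeCrit}). For the third term, I will first observe that by the (i)$\iff$(ii) equivalence in Lemma~\ref{LDeCrit}, the $\De$-filtration of $V''$ may be taken to be finite; then I will induct on its length, with the base case $V''\simeq \De(\si)$ handled by Lemma~\ref{LEXTDeBarNabla}, and the inductive step driven by the long exact sequence attached to an obvious short exact sequence $0\to U\to V''\to q^m\De(\si)\to 0$ in the filtration, giving
$$\EXT^2(q^m\De(\si),\bar\nabla(\pi))\to \EXT^2(V'',\bar\nabla(\pi))\to \EXT^2(U,\bar\nabla(\pi)),$$
both outer terms vanishing by Lemma~\ref{LEXTDeBarNabla} and the induction hypothesis respectively.

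For part (ii), I first note that, by Lemma~\ref{LFiniteLengthNabla}, each $\bar\nabla(\pi)$ has finite length, so a finite $\bar\nabla$-filtration forces the module to have finite length; in particular, $V$ (and hence its quotient $V''$) is of finite length. By Lemma~\ref{LNaCrit} applied to $V''$, it then suffices to prove $\EXT^1(\De(\pi),V'')=0$ for every $\pi\in \Pi$. The long exact sequence
$$\EXT^1(\De(\pi),V)\to \EXT^1(\De(\pi),V'')\to \EXT^2(\De(\pi),V')$$
reduces this to two vanishings: the first term is zero by Lemma~\ref{LNaCrit} applied to $V$; the third term is zero by a short induction on the length of a finite $\bar\nabla$-filtration of $V'$, with base case $V'\simeq \bar\nabla(\si)$ again provided by Lemma~\ref{LEXTDeBarNabla}.

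I expect no real obstacle here; the whole argument is a standard dimension-shift/dévissage, and the only point requiring a moment of care is ensuring that the $\De$-filtration in (i) and the $\bar\nabla$-filtration in (ii) can be taken to be finite so that the induction on filtration length actually terminates. This is automatic in (ii) by hypothesis, and in (i) it comes from the (i)$\iff$(ii) equivalence in Lemma~\ref{LDeCrit}.
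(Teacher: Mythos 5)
Your proof is correct and follows exactly the route the paper intends: the paper's one-line proof of this corollary invokes precisely Lemma~\ref{LEXTDeBarNabla}, long exact sequences, and Lemmas~\ref{LDeCrit} and~\ref{LNaCrit}, and your write-up simply fills in the d\'evissage details (including the observation that the $\De$-filtration in (i) may be taken finite via the (i)$\iff$(ii) equivalence in Lemma~\ref{LDeCrit}, and the finite-length check needed to invoke Lemma~\ref{LNaCrit} in (ii)).
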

\begin{proof}
Using Lemma~\ref{LEXTDeBarNabla} and long exact sequences in cohomology, (i) and (ii) follow from Lemmas~\ref{LDeCrit} and \ref{LNaCrit} respectively. 
\end{proof}

\begin{Corollary} \label{C6214}
Let $W$ be a direct summand of an $H$-module $V$. 
\begin{enumerate}
\item[{\rm (i)}] If $V$ has a $\De$-filtration, then so does $W$.  
\item[{\rm (ii)}] If $V$ has a finite $\bar\nabla$-filtration, then so does $W$.  
\end{enumerate} 
\end{Corollary}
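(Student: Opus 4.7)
The proof plan is essentially immediate from the $\Ext^1$-criteria established in Lemmas~\ref{LDeCrit} and~\ref{LNaCrit}, together with the elementary fact that taking a direct summand respects Ext decompositions. My approach is as follows.

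Write $V = W \oplus W'$ for some $H$-module $W'$. For any $H$-module $X$ and any $i \geq 0$ there are natural isomorphisms
$$
\EXT^i_H(V,X) \cong \EXT^i_H(W,X) \oplus \EXT^i_H(W',X), \qquad \EXT^i_H(X,V) \cong \EXT^i_H(X,W) \oplus \EXT^i_H(X,W').
$$
In particular, vanishing of an Ext group over $V$ implies vanishing over $W$.

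For (i): assuming $V$ has a $\De$-filtration, Lemma~\ref{LDeCrit} gives $\EXT^1_H(V,\bar\nabla(\pi))=0$ for every $\pi\in\Pi$. The decomposition above forces $\EXT^1_H(W,\bar\nabla(\pi))=0$ for every $\pi$, and applying Lemma~\ref{LDeCrit} in the reverse direction yields a $\De$-filtration of $W$.

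For (ii): since $V$ has a finite $\bar\nabla$-filtration and each $\bar\nabla(\pi)$ is of finite length by Lemma~\ref{LFiniteLengthNabla}, $V$ itself has finite length, and hence so does its direct summand $W$. By Lemma~\ref{LNaCrit} the hypothesis gives $\EXT^1_H(\De(\pi),V)=0$ for all $\pi\in\Pi$, and the Ext decomposition yields $\EXT^1_H(\De(\pi),W)=0$ for all $\pi$. Applying Lemma~\ref{LNaCrit} to the finite length module $W$ produces a $\bar\nabla$-filtration, which is automatically finite since $W$ has finite length.

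There is no serious obstacle here; the whole point is that the $\Ext^1$-criteria in Lemmas~\ref{LDeCrit} and~\ref{LNaCrit} are manifestly inherited by direct summands, so the corollary is essentially a formal consequence once those criteria are in place.
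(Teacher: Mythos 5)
Your proof is correct and takes exactly the route the paper does: the paper's own proof simply says "(i) and (ii) follow from Lemmas~\ref{LDeCrit} and \ref{LNaCrit} respectively," and you have spelled out precisely the $\Ext^1$-vanishing-passes-to-direct-summands argument that the paper leaves implicit, including the necessary observation that $W$ has finite length so that Lemma~\ref{LNaCrit} applies.
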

\begin{proof}
(i) and (ii) follow from Lemmas~\ref{LDeCrit} and \ref{LNaCrit} respectively. 
\end{proof}

\subsection{Proper costandard objects in  $\B$-highest weight categories}\label{SCost}
Let $\catC$ be a $\B$-stratified category with respect to  $(\Pi,\leq)$.  
Throughout this and the next subsection we make the {\em additional assumption}\, that $\Pi_{\leq \pi}$ is finite for every $\pi\in \Pi$. 

Fix $\pi\in\Pi$. There exists a finite saturated set $\Si$ containing $\pi$---for example one can take $\Si=\Pi_{\leq \pi}$. By Proposition~\ref{PTruncSat}, the full subcategory $\catC(\Si)$ in $\catC$ is $\B$-stratified, with standard modules $\{\De(\si)\mid \si\in\Si\}$. By Corollary~\ref{CHWQHFPi} there is a $\B$-stratified algebra $H_\Si$ such that  the category $\mod{H_\Si}$ is graded equivalent to $\catC(\Si)$. Let $\bar\nabla_\Si(\pi)$ be the proper costandard $H_\Si$-module   constructed in (\ref{EBarNabla}). 
Via equivalence between $\mod{H_\Si}$ and $\catC(\Si)$ we get an object of $\catC(\Si)$, which we again denote $\bar\nabla_\Si(\pi)$. Since $\catC(\Si)$ is a subcategory of $\catC$, we may consider $\bar\nabla_\Si(\pi)$ as an object of $\catC$. The following lemma shows that this construction does not depend on the choice of $\Si$:

\begin{Lemma} 
If $\Si,\Omega\subseteq \Pi$ are finite saturated subsets containing $\pi\in\Pi$, then $\bar\nabla_\Si(\pi)\cong \bar\nabla_\Omega(\pi)$. 
\end{Lemma}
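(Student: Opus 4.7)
The plan is to prove the statement first under the additional assumption $\Sigma \subseteq \Omega$, from which the general case follows by comparing each of $\bar\nabla_\Sigma(\pi)$ and $\bar\nabla_\Omega(\pi)$ with $\bar\nabla_{\Sigma \cup \Omega}(\pi)$, noting that $\Sigma \cup \Omega$ is itself a finite saturated subset of $\Pi$ containing $\pi$. So from now on I assume $\Sigma \subseteq \Omega$.

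The first observation is that $\bar\nabla_\Omega(\pi)$ already belongs to the Serre subcategory $\catC(\Sigma)$. By the construction (\ref{EBarNabla}), its composition factors consist of $L(\pi)$ together with simples $L(\sigma)$ for various $\sigma < \pi$; since $\Sigma$ is saturated and contains $\pi$, every such $\sigma$ lies in $\Sigma$. Hence $\bar\nabla_\Omega(\pi)$ can legitimately be viewed as an object of $\catC(\Sigma)$ with socle $L(\pi)$ (the latter because $\bar\nabla_\Omega(\pi) \subseteq I_\Omega(\pi)$ and $\soc I_\Omega(\pi) \cong L(\pi)$).

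Next I build embeddings in both directions. Let $I_\Sigma(\pi)$ (respectively $I_\Omega(\pi)$) denote the injective envelope of $L(\pi)$ in the category of all graded $H_\Sigma$-modules (respectively $H_\Omega$-modules), so that $\bar\nabla_\Sigma(\pi) \subseteq I_\Sigma(\pi)$ and $\bar\nabla_\Omega(\pi) \subseteq I_\Omega(\pi)$. Viewing $\bar\nabla_\Omega(\pi)$ as an object of $\catC(\Sigma)$, its socle inclusion $L(\pi) \hookrightarrow \bar\nabla_\Omega(\pi)$ extends by injectivity to an embedding $\bar\nabla_\Omega(\pi) \hookrightarrow I_\Sigma(\pi)$; the image has socle $L(\pi)$, and all further composition factors are of the form $L(\sigma)$ with $\sigma < \pi$, so by the maximality built into the construction (\ref{EBarNabla}) of $\bar\nabla_\Sigma(\pi) \subseteq I_\Sigma(\pi)$, this image lies inside $\bar\nabla_\Sigma(\pi)$. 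Symmetrically, since $\catC(\Sigma) \subseteq \catC(\Omega)$, the object $\bar\nabla_\Sigma(\pi)$ embeds into $I_\Omega(\pi)$ with image contained in $\bar\nabla_\Omega(\pi)$.

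Composing the two embeddings produces an injective endomorphism of $\bar\nabla_\Sigma(\pi)$. By Lemma~\ref{LFiniteLengthNabla} this object has finite length, so the endomorphism is an isomorphism; hence each of the two embeddings is itself an isomorphism, proving $\bar\nabla_\Sigma(\pi) \cong \bar\nabla_\Omega(\pi)$. The only real subtlety is the bookkeeping between the two ambient module categories over $H_\Sigma$ and $H_\Omega$, mediated by the common simple socle $L(\pi)$; once one commits to always extending socle inclusions via injectivity, the maximality characterisation of proper costandard modules forces the two candidates to coincide.
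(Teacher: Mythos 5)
Your proof is correct, and the overall skeleton matches the paper's: reduce to $\Sigma\subseteq\Omega$ via $\Sigma\cup\Omega$, observe $\bar\nabla_\Omega(\pi)$ lives in $\catC(\Sigma)$, produce an embedding in each direction extending the socle inclusion of $L(\pi)$, and finish by Lemma~\ref{LFiniteLengthNabla}. Where you genuinely diverge is in \emph{how} the embedding $\bar\nabla_\Omega(\pi)\hookrightarrow\bar\nabla_\Sigma(\pi)$ is produced. The paper applies $\HOM_{H_\Sigma}(-,\bar\nabla_\Sigma(\pi))$ to the short exact sequence $0\to L(\pi)\to\bar\nabla_\Omega(\pi)\to\bar\nabla_\Omega(\pi)/L(\pi)\to 0$ and lifts the socle inclusion through the resulting long exact sequence, using the Ext-vanishing Lemma~\ref{LEXTLBarNabla} to kill the obstruction. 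You instead extend $L(\pi)\hookrightarrow I_\Sigma(\pi)$ along $L(\pi)\hookrightarrow\bar\nabla_\Omega(\pi)$ using injectivity of $I_\Sigma(\pi)$ directly, noting the resulting map is injective because the kernel misses the simple socle, and then land inside $\bar\nabla_\Sigma(\pi)$ via the defining maximality of $A(\pi)$ in (\ref{EBarNabla}). Your route is slightly more self-contained, since it avoids invoking Lemma~\ref{LEXTLBarNabla} (whose proof is itself the same injective-envelope computation, so nothing is really saved in total length, but the dependency graph is flatter). The paper's route has the small bonus of showing along the way that $\HOM_{H_\Sigma}(\bar\nabla_\Omega(\pi),\bar\nabla_\Sigma(\pi))$ is one-dimensional, i.e.\ the embedding is essentially unique, though this is not needed to conclude.
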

\begin{proof}
By passing to $\Si\cup\Omega$, we may assume that $\Si\subseteq \Omega$. By Corollary~\ref{CHWQHFPi}, there is a $\B$-stratified algebra $H_\Si$ with $\mod{H_\Si}$ graded equivalent to $\catC(\Si)$ and a $\B$-stratified algebra $H_\Omega$ with $\mod{H_\Omega}$ graded equivalent to $\catC(\Omega)$. Moreover, $\catC(\Si)=(\catC(\Omega))(\Sigma)$, and, using the notation (\ref{EH(Si)}), we also have  $H_\Si\cong H_\Omega(\Si)$. 

By definition, the composition factors of the modules $\bar\nabla_\Si(\pi)$ and $\bar\nabla_\Omega(\pi)$ are of the form $L(\si)$ for $\si\leq \pi$, in particular $\si\in\Si$. It follows that $\funQ^\Si(\bar\nabla_\Omega(\pi))=\bar\nabla_\Omega(\pi)$, and so both $\bar\nabla_\Si(\pi)$ and $\bar\nabla_\Omega(\pi)$ can be considered as modules over $H_\Si$. The short exact sequence of $H_\Si$-modules
$
0\to L(\pi)\to\bar\nabla_\Omega(\pi)\to \bar\nabla_\Omega(\pi)/L(\pi)\to 0
$
yields the long exact sequence  
\begin{align*}
0&\to \HOM_{H_\Si}(\bar\nabla_\Omega(\pi)/L(\pi), \bar\nabla_\Si(\pi))\to
\HOM_{H_\Si}(\bar\nabla_\Omega(\pi), \bar\nabla_\Si(\pi))
\\
&\to
\HOM_{H_\Si}(L(\pi), \bar\nabla_\Si(\pi))
\to \EXT^1_{H_\Si}(\bar\nabla_\Omega(\pi)/L(\pi), \bar\nabla_\Si(\pi)).
\end{align*}
Note that $\HOM_{H_\Si}(\bar\nabla_\Omega(\pi)/L(\pi), \bar\nabla_\Si(\pi))=0$, since all composition factors of $\bar\nabla_\Omega(\pi)/L(\pi)$ are of the form $L(\si)$ for $\si<\pi$, and $\soc \bar\nabla_\Sigma(\pi)\cong L(\pi)$. Moreover, $\EXT^1_{H_\Si}(\bar\nabla_\Omega(\pi)/L(\pi), \bar\nabla_\Si(\pi))=0$ by Lemma~\ref{LEXTLBarNabla}. It follows that the embedding $\soc \bar\nabla_\Omega(\pi)=L(\pi)\into \bar\nabla_\Si(\pi)$ lifts to a map $\bar\nabla_\Omega(\pi)\to \bar\nabla_\Si(\pi)$, which has to be injective. Similarly, 
$\bar\nabla_\Si(\pi)$ embeds into $\bar\nabla_\Omega(\pi)$. As both are finite length modules by Lemma~\ref{LFiniteLengthNabla}, the result follows. 
\end{proof}
 
 In view of the lemma, we can drop the index $\Si$ from the notation $\bar\nabla_\Si(\pi)$ and simply write $\bar\nabla(\pi)\in\catC$. This is a {\em proper costandard object} in $\catC$.

 \begin{Theorem} \label{T6214}
 Let $\catC$ be a $\B$-stratified category.  Assume that $\Pi_{\leq \pi}$ is finite for all $\pi\in \Pi$. Fix $\pi,\si\in\Pi$. Then:
\begin{enumerate}
 \item[{\rm (i)}] The object $\bar\nabla(\pi)\in\catC$ has finite length, $\soc \bar\nabla(\pi)\cong L(\pi)$, and all composition factors of\, $\bar\nabla(\pi)/\soc \bar\nabla(\pi)$ are of the form $L(\kappa)$ for $\kappa<\pi$.
 \item[{\rm (ii)}] We have
 $$\DIM\HOM_\catC(\De(\si),\bar\nabla(\pi))=\de_{\si,\pi}$$
 and
 $$
 \EXT^1_\catC(\De(\si),\bar\nabla(\pi))=0.
 $$
 \item[{\rm (iii)}] If $\si<\pi$, then
 $
 \EXT^1_\catC(L(\si),\bar\nabla(\pi))=0.
 $

 \item[{\rm (iv)}] If $V\in\catC$ has a $\De$-filtration, then 
 $(V:\De(\pi))_q=\dim_{q^{-1}}\HOM_\catC(V,\bar\nabla(\pi)).$

 \item[{\rm (v)}] We have 
 $(P(\pi):\De(\si))_q=[\bar\nabla(\si):L(\pi)]_{q^{-1}}.$ 
 \end{enumerate}
 \end{Theorem}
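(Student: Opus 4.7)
The plan is to reduce each part to the finite-$\Pi$ results of \S\ref{SSProperCost} by truncating to a suitable finite saturated subset $\Si \subseteq \Pi$. For such an $\Si$, Proposition~\ref{PTruncSat} ensures $\catC(\Si)$ is itself $\B$-stratified, Corollary~\ref{CHWQHFPi} provides a graded equivalence $\catC(\Si) \simeq \mod{H_\Si}$ for a $\B$-stratified algebra $H_\Si$, and the lemma immediately preceding the theorem identifies $\bar\nabla(\pi)$ (for $\pi \in \Si$) with the proper costandard module $\bar\nabla_\Si(\pi)$ constructed in \eqref{EBarNabla} inside $\mod{H_\Si}$. The key transfer principle I will use is that for $X, Y \in \catC(\Si)$ the natural maps $\HOM_{\catC(\Si)}(X,Y) \to \HOM_\catC(X,Y)$ and $\EXT^1_{\catC(\Si)}(X,Y) \to \EXT^1_\catC(X,Y)$ are isomorphisms: the former because $\catC(\Si) \hookrightarrow \catC$ is fully faithful, the latter by the Yoneda interpretation of $\EXT^1$, since $\catC(\Si)$ is a Serre subcategory and hence closed under extensions in $\catC$.

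With this machinery in place, part~(i) is immediate from the construction of $\bar\nabla_\Si(\pi)$ together with Lemma~\ref{LFiniteLengthNabla}. For parts~(ii) and~(iii) I would take $\Si := \Pi_{\leq \pi} \cup \Pi_{\leq \si}$, which is finite by hypothesis and saturated; then $\De(\si)$, $L(\si)$, and $\bar\nabla(\pi)$ all lie in $\catC(\Si)$, and the claims follow by applying Lemmas~\ref{LHomDeNabla}, \ref{LEXTDeBarNabla}, and \ref{LEXTLBarNabla} inside $\mod{H_\Si}$ and invoking the transfer principle.

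For part~(iv), I would take $\Si := \Pi_{\leq \pi}$, which is finite by assumption. Because $\bar\nabla(\pi) \in \catC(\Si)$, every morphism from $V$ to $\bar\nabla(\pi)$ kills $\funO^\Si(V)$ and hence factors through $\funQ^\Si(V)$, giving $\HOM_\catC(V, \bar\nabla(\pi)) = \HOM_{\catC(\Si)}(\funQ^\Si V,\, \bar\nabla(\pi))$. Lemma~\ref{LFiltSi} combined with Lemma~\ref{LFinDe}(iii) then shows that $\funQ^\Si V$ admits a \emph{finite} $\De$-filtration (finite because $\Si$ is finite and each $\De(\tau)$ appears only finitely many times in any $\De$-filtration of $V$) and preserves the multiplicity $(\funQ^\Si V : \De(\pi))_q = (V : \De(\pi))_q$. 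Lemma~\ref{LDFBounded} applied inside $\mod{H_\Si}$ then delivers the formula. Part~(v) will follow by specializing (iv) to $V = P(\pi)$, which has a $\De$-filtration by axiom {\tt (SC1)}, and unwinding $\DIM \HOM_\catC(P(\pi), \bar\nabla(\si)) = [\bar\nabla(\si) : L(\pi)]_q$ via \eqref{EMultHom} and the Schurian assumption $\operatorname{end}(L(\pi)) = F$.

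The main obstacle should be the careful transfer of $\HOM$ and especially $\EXT^1$ between the ambient category $\catC$ and its Serre subcategory $\catC(\Si)$; once that is justified, the rest is a mechanical reduction to the finite-$\Pi$ theorems of \S\ref{SSProperCost}, modulo keeping track of which finite saturated $\Si$ is needed in each case.
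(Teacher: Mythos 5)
Your proposal is correct and follows essentially the same route as the paper's proof: truncate to a finite saturated subset $\Si\supseteq\{\pi,\si\}$, identify $\catC(\Si)$ with $\mod{H_\Si}$ for a $\B$-stratified algebra $H_\Si$ via Proposition~\ref{PTruncSat} and Corollary~\ref{CHWQHFPi}, quote the finite-$\Pi$ results of \S\ref{SSProperCost}, and transfer back using full-faithfulness of the inclusion for $\HOM$ and closure of the Serre subcategory under extensions for $\EXT^1$. The only cosmetic difference is that you choose a tailored $\Si$ for each part, while the paper fixes $\Si=\Pi_{\leq\si}\cup\Pi_{\leq\pi}$ throughout; your treatment of (iv) via factoring morphisms through $\funQ^\Si(V)$ and invoking Lemma~\ref{LFiltSi} is exactly the mechanism the paper compresses into its citation of that lemma.
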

 \begin{proof}
 Let $\Si$ be a finite saturated set containing $\si$ and $\pi$, for example, we can take $\Si=\Pi_{\leq\si}\cup\Pi_{\leq\pi}$. Then the category $\catC(\Si)$ is $\B$-stratified, and by Corollary~\ref{CHWQHFPi}, it is graded equivalent to $\mod{H_\Si}$ for some $\B$-stratified algebra $H_\Si$. So we have (i)-(v) holding in $\catC(\Si)$ by the corresponding facts in $\mod{H_\Si}$ proved in \S\ref{SSProperCost}. Since $\catC(\Si)$ is a full subcategory, and in view of Lemma~\ref{LFiltSi} and Proposition~\ref{PTruncSat}, part (i), the first equality in part (ii), and parts (iv) and (v) follow. 
To prove the facts involving $\EXT^1$, it now remains to note that any extension in $\catC$ of $\bar\nabla(\pi)$ by $\De(\si)$ or by $L(\si)$ belongs to $\catC(\Si)$. 
\end{proof}

\subsection{Good filtrations in  $\B$-stratified categories}
\label{SSGoodFiltMoreGen}
Throughout the subsection, $\catC$ is a $\B$-stratified category such that $\Pi$ is countable and  $\Pi_{\leq \pi}$ is finite for every $\pi\in \Pi$. These assumptions are equivalent to the fact that there is a nested family of finite saturated sets 
\begin{equation}\label{ENested family}
\Si_1\subseteq \Si_2\subseteq \dots\quad \text{with}\quad \cup_{n\geq 1}\Si_n=\Pi.
\end{equation}

\begin{Lemma} \label{LKerDe}
Let\, $0\to U\to V\to W\to 0$ be a short exact sequence in $\catC$. If\,  $V$ and\, $W$ have $\De$-filtrations, then so does $U$, and $(U:\De(\pi))_q+(W:\De(\pi))_q=(V:\De(\pi))_q$ for all $\pi\in\Pi$. 
\end{Lemma}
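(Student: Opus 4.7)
The strategy is to reduce to the finite-$\Pi$ case (Corollary~\ref{CSESDelta}(i)) by exhausting $\Pi$ with finite saturated subsets and truncating. Since $\Pi$ is countable and every $\Pi_{\leq\pi}$ is finite, I fix an enumeration $\pi_1,\pi_2,\dots$ of $\Pi$ and set $\Si_n:=\bigcup_{k=1}^n\Pi_{\leq\pi_k}$, giving a nested exhaustion $\Si_1\subseteq\Si_2\subseteq\cdots$ of $\Pi$ by finite saturated subsets. By Proposition~\ref{PTruncSat} each $\catC(\Si_n)$ is a finite $\B$-stratified category (hence graded equivalent to $\mod{H_{\Si_n}}$ for some $\B$-stratified algebra $H_{\Si_n}$ by Corollary~\ref{CHWQHFPi}), and by Lemma~\ref{LFiltSi} the objects $\funQ^{\Si_n}(V)$ and $\funQ^{\Si_n}(W)$ carry finite $\De$-filtrations whose $\De(\pi)$-multiplicities agree with $(V:\De(\pi))_q$ and $(W:\De(\pi))_q$ for $\pi\in\Si_n$.

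The key step---and the only place I expect real work---is to show that the truncation functor $\funQ^{\Si_n}$ remains left exact on our particular sequence, i.e.\ that
\[
0\to \funQ^{\Si_n}(U)\to \funQ^{\Si_n}(V)\to \funQ^{\Si_n}(W)\to 0
\]
is exact in $\catC(\Si_n)$. By the snake lemma applied to the natural $\funO^{\Si_n}/\funQ^{\Si_n}$ decomposition of $0\to U\to V\to W\to 0$, this reduces to the surjectivity of the induced map $\alpha_n:\funO^{\Si_n}(V)\to\funO^{\Si_n}(W)$. To verify it, let $W':=\alpha_n(\funO^{\Si_n}(V))\subseteq\funO^{\Si_n}(W)$; then $W/W'$ is a quotient of $V/\funO^{\Si_n}(V)=\funQ^{\Si_n}(V)\in\catC(\Si_n)$, so $W/W'\in\catC(\Si_n)$, and the minimality property defining $\funO^{\Si_n}(W)$ forces $\funO^{\Si_n}(W)\subseteq W'$, i.e.\ $\alpha_n$ is surjective. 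The same snake lemma now simultaneously yields the truncated SES above and the SES
\[
0\to U^{[n]}\to U\to K_n\to 0,\qquad U^{[n]}:=U\cap \funO^{\Si_n}(V),\ K_n:=\ker\bigl(\funQ^{\Si_n}(V)\to\funQ^{\Si_n}(W)\bigr).
\]

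Applying Corollary~\ref{CSESDelta}(i) inside the finite $\B$-stratified category $\catC(\Si_n)$ shows that each $K_n$ has a finite $\De$-filtration, and a Grothendieck-group count gives $(K_n:\De(\pi))_q=(V:\De(\pi))_q-(W:\De(\pi))_q$ for $\pi\in\Si_n$ (and $0$ otherwise). Iterating the same argument in $\catC(\Si_{n+1})$ applied to the natural surjection $K_{n+1}\twoheadrightarrow K_n$ (whose kernel is $U^{[n]}/U^{[n+1]}$), Corollary~\ref{CSESDelta}(i) again furnishes a finite $\De$-filtration of each quotient $U^{[n]}/U^{[n+1]}$, and the same bookkeeping shows only factors $\De(\pi)$ with $\pi\in\Si_{n+1}\setminus\Si_n$ appear, each with multiplicity $(V:\De(\pi))_q-(W:\De(\pi))_q$. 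Concatenating these finite filtrations along the nested chain $U=U^{[0]}\supseteq U^{[1]}\supseteq\cdots$ (with the convention $\Si_0:=\varnothing$, $U^{[0]}=U$) produces the desired $\De$-filtration of $U$; exhaustiveness $\bigcap_n U^{[n]}=0$ reduces to $\bigcap_n\funO^{\Si_n}(V)=0$, which follows because any term $V_m$ of a $\De$-filtration of $V$ contains $\funO^{\Si_n}(V)$ as soon as $\Si_n$ contains the finitely many weights occurring as $\De$-sections of $V/V_m$ (saturation of $\Si_n$ then forces each such $\De(\pi)$, and hence $V/V_m$, to lie in $\catC(\Si_n)$). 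Finally, each $\pi\in\Pi$ lies in $\Si_{n+1}\setminus\Si_n$ for exactly one $n$, so summing the multiplicities across the pieces gives the asserted additivity $(U:\De(\pi))_q+(W:\De(\pi))_q=(V:\De(\pi))_q$.
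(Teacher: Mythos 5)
Your proof is correct and follows the same strategy as the paper: exhaust $\Pi$ by a nested family of finite saturated subsets $\Si_n$, truncate the sequence to $\catC(\Si_n)$, apply the finite-$\Pi$ result Corollary~\ref{CSESDelta}(i) there, and concatenate the resulting pieces along the exhaustive chain $U\supseteq U\cap\funO^{\Si_1}(V)\supseteq U\cap\funO^{\Si_2}(V)\supseteq\cdots$. If anything your write-up is more careful: you explicitly verify the surjectivity of $\funO^{\Si_n}(V)\to\funO^{\Si_n}(W)$, which the paper leaves implicit, and your identification of the kernel of $\funQ^{\Si_n}(V)\to\funQ^{\Si_n}(W)$ as $K_n=U/(U\cap\funO^{\Si_n}(V))$ fixes what appears to be a misprint in the paper's displayed short exact sequence (whose left term should read $U/(U\cap\funO^\Si(V))$ rather than $U\cap\funO^\Si(V)$).

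One cosmetic imprecision: the displayed sequence you state as the ``key step,'' with $\funQ^{\Si_n}(U)$ on the left, is not what the snake lemma gives in general. Since $\funO^{\Si_n}(U)\subseteq U\cap\funO^{\Si_n}(V)$ can be a strict inclusion, the actual kernel of $\funQ^{\Si_n}(V)\to\funQ^{\Si_n}(W)$ is $K_n=U/(U\cap\funO^{\Si_n}(V))$, which is in general a proper quotient of $\funQ^{\Si_n}(U)=U/\funO^{\Si_n}(U)$. You then pivot immediately to $K_n$ and carry out the whole argument with it, so there is no gap in the reasoning; just replace that first display with the $K_n$-version.
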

\begin{proof}
For any finite saturated $\Si\subseteq \Pi$, we have a short exact sequence 
$$0\to U\cap O^\Si(V)\to V/O^\Si(V)\to W/O^\Si(W)\to 0.$$
By Lemmas~\ref{LFiltSi} and \ref{LFinDe}(iii), $V/O^\Si(V)$ and $W/O^\Si(W)$ have finite $\De$-filtrations. 
The category $\catC(\Si)$ is $\B$-stratified, and by Corollary~\ref{CHWQHFPi}, it is graded equivalent to $\mod{H_\Si}$ for some $\B$-stratified algebra $H_\Si$. So $U\cap O^\Si(V)$ has a finite $\De$-filtration by Corollary~\ref{CSESDelta}(i). 

Using the family (\ref{ENested family}), we get an exhaustive filtration $$U\supseteq U\cap O^{\Si_1}(V)\supseteq U\cap O^{\Si_2}(V)\supseteq\dots$$  whose subfactors have finite $\De$-filtrations by Lemmas~\ref{LFiltSi} and \ref{LFinDe}(iii). It follows that $U$ has a $\De$-filtration. 
Now, the statement about the multiplicities is clear. 
\end{proof}

\begin{Lemma} \label{LBigRes}
Let $V\in\catC$ have a $\De$-filtration,  $M:=\{\si\in\Pi\mid (V:\De(\si))_q\neq 0\}$, and $\Pi_{\geq M}=\{\tau\in\Pi\mid \tau\geq \si\ \text{for some $\si\in M$}\}$. 
Then $V$ has projective resolution $\dots \to P_1\to P_0\to V$,
where each $P_i$ is a finite direct sum of projectives of the form $q^mP(\tau)$ with $\tau\in \Pi_{\geq M}$. 
\end{Lemma}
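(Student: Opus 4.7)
The plan is to build the resolution inductively, using Lemma~\ref{L6214} together with Lemma~\ref{LKerDe} to control both the summands of each $P_i$ and the support of the $\De$-filtration of each successive syzygy.

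First, apply Lemma~\ref{L6214} to $V$ to obtain a finite epimorphism $\epsilon_0\colon P_0\twoheadrightarrow V$, where $P_0$ is a finite direct sum of projectives of the form $q^m P(\si)$ with $q^m\De(\si)\cong V_r/V_{r+1}$ for some $r$ in a fixed $\De$-filtration of $V$. By the definition of $M$, every such $\si$ belongs to $M\subseteq\Pi_{\geq M}$, so the summands of $P_0$ are of the desired form. Moreover, {\tt (SC1)} implies that each projective $P(\si)$ with $\si\in M$ has a $\De$-filtration whose top quotient is $\De(\si)$ and whose remaining quotients are of the form $q^n\De(\tau)$ with $\tau>\si$. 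Hence $P_0$ itself admits a $\De$-filtration whose multiplicities are supported in $\Pi_{\geq M}$.

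Next, I would analyze the kernel $K_0:=\ker\epsilon_0$. Since both $P_0$ and $V$ have $\De$-filtrations, Lemma~\ref{LKerDe} yields a $\De$-filtration of $K_0$ together with the identity
\[
(K_0:\De(\tau))_q=(P_0:\De(\tau))_q-(V:\De(\tau))_q
\]
for every $\tau\in\Pi$. In particular, $(K_0:\De(\tau))_q\neq 0$ forces $(P_0:\De(\tau))_q\neq 0$, so $\tau\in\Pi_{\geq M}$. Setting $M_1:=\{\tau\mid (K_0:\De(\tau))_q\neq 0\}\subseteq\Pi_{\geq M}$, we have $\Pi_{\geq M_1}\subseteq \Pi_{\geq M}$. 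Repeating the construction of the previous paragraph with the pair $(K_0,M_1)$ in place of $(V,M)$, one obtains a finite epimorphism $\epsilon_1\colon P_1\twoheadrightarrow K_0$ whose summands are of the form $q^m P(\tau)$ with $\tau\in M_1\subseteq\Pi_{\geq M}$, and a kernel $K_1:=\ker\epsilon_1$ whose $\De$-filtration is again supported in $\Pi_{\geq M}$. Splicing the sequences $0\to K_i\to P_i\to K_{i-1}\to 0$ yields the required projective resolution.

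The only nontrivial input is Lemma~\ref{LKerDe}, which is invoked at every stage to guarantee that the syzygies carry a $\De$-filtration even though the $\De$-filtrations of $V$ and the successive $K_i$ may be infinite; the finiteness of each $P_i$ is then automatic from Lemma~\ref{L6214}, and the stability of the support in $\Pi_{\geq M}$ follows formally from the additivity of $\De$-multiplicities in short exact sequences of filtered objects.
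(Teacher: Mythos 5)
Your proof is correct and follows the same route as the paper: use Lemma~\ref{L6214} to obtain the finite epimorphism $P_0\twoheadrightarrow V$ with summands indexed by $M$, then use Lemma~\ref{LKerDe} together with {\tt (SC1)} to see that the kernel again carries a $\De$-filtration supported in $\Pi_{\geq M}$, and iterate. Your explicit expansion of the multiplicity bookkeeping (nonnegativity of $\De$-multiplicities forcing $M_1\subseteq\Pi_{\geq M}$) is exactly the reasoning the paper's terser argument is implicitly invoking.
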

\begin{proof}
By Lemma~\ref{L6214}, there is a module $P_0$ of the required form and an epimorphism $P_0\to V$. By Lemma~\ref{LKerDe} and {\tt (SC1)}, we have a short exact sequence
$$
0\to K\to P_0\to V\to 0,
$$
where $K$ has $\De$-filtration with subfactors $\simeq \De(\tau)$ for $\tau\in \Pi_{\geq M}$. Now, there is a module $P_1$ of the required form and an epimorphism $P_1\to K$, and so on. 
\end{proof}

\begin{Lemma} \label{LQFilt}
Let $V\in\catC$, and  $\Si_1\subset\Si_2\subset\dots$ be finite saturated subsets of $\Pi$ with $\Pi=\cup_{n\geq 1} \Si_n$. If $\funQ^{\Si_n}(V)$ has a $\De$-filtration for every $n$, then so does $V$. 
\end{Lemma}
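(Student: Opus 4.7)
The plan is to build a $\De$-filtration of $V$ out of the descending chain of subobjects $V\supseteq\funO^{\Si_1}(V)\supseteq\funO^{\Si_2}(V)\supseteq\cdots$. First I would note that $\Si_n\subseteq\Si_{n+1}$ together with the minimality characterization of $\funO^{\Si_n}(V)$ forces $\funO^{\Si_{n+1}}(V)\subseteq\funO^{\Si_n}(V)$, and that the induced surjection $\funQ^{\Si_{n+1}}(V)\twoheadrightarrow\funQ^{\Si_n}(V)$ has kernel $\funO^{\Si_n}(V)/\funO^{\Si_{n+1}}(V)$.

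Next I would show that each layer $\funO^{\Si_n}(V)/\funO^{\Si_{n+1}}(V)$ admits a $\De$-filtration. This falls straight out of Lemma~\ref{LKerDe} applied to the short exact sequence
\[
0\to \funO^{\Si_n}(V)/\funO^{\Si_{n+1}}(V)\to \funQ^{\Si_{n+1}}(V)\to \funQ^{\Si_n}(V)\to 0,
\]
since the two right-hand terms have $\De$-filtrations by hypothesis. Together with the $\De$-filtration of $V/\funO^{\Si_1}(V)=\funQ^{\Si_1}(V)$, which is given by hypothesis, and refining each layer's $\De$-filtration, this yields a descending chain of subobjects of $V$ whose successive quotients are of the form $q^m\De(\pi)$.

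The main obstacle is verifying that this chain is exhaustive, i.e.\ that $\bigcap_{n\geq 1}\funO^{\Si_n}(V)=0$; without this the concatenated filtration would fail to satisfy the exhaustiveness requirement of a $\De$-filtration. For this I would invoke axiom {\tt (NLC1)} to choose a filtration $V\supseteq V_1\supseteq V_2\supseteq\cdots$ with $V/V_m$ of finite length and $\bigcap_m V_m=0$. For each $m$, the finite length object $V/V_m$ has only finitely many simple subquotients $L(\pi_1),\dots,L(\pi_k)$; since $\Pi=\bigcup_n\Si_n$, there exists $n=n(m)$ with $\pi_1,\dots,\pi_k\in\Si_n$, so that $V/V_m$ belongs to $\Si_n$. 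The minimality of $\funO^{\Si_n}(V)$ then forces $\funO^{\Si_n}(V)\subseteq V_m$, and hence $\bigcap_n\funO^{\Si_n}(V)\subseteq V_m$ for every $m$, which gives the desired vanishing.

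Combining these ingredients, $V=V\supseteq\funO^{\Si_1}(V)\supseteq\funO^{\Si_2}(V)\supseteq\cdots$ is an exhaustive filtration with each subquotient admitting a $\De$-filtration, and refining this by inserting the individual $\De$-filtrations of the layers (carefully indexed so that the exhaustiveness at the top level translates to exhaustiveness of the refined filtration, using Lemma~\ref{L171013}(iii) to control how many shifts of each $\De(\pi)$ appear in the lower layers) produces the required $\De$-filtration of $V$.
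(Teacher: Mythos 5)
Your argument follows the paper's proof in all essentials: the same descending chain $V\supseteq\funO^{\Si_1}(V)\supseteq\funO^{\Si_2}(V)\supseteq\cdots$, the same short exact sequence $0\to \funO^{\Si_n}(V)/\funO^{\Si_{n+1}}(V)\to \funQ^{\Si_{n+1}}(V)\to \funQ^{\Si_n}(V)\to 0$ showing each layer has a $\De$-filtration, and the same kernel-inherits-$\De$-filtration step (the paper invokes Corollary~\ref{CSESDelta}(i) inside $\catC(\Si_{n+1})$, which is precisely what Lemma~\ref{LKerDe} packages). Your explicit verification that $\bigcap_{n}\funO^{\Si_n}(V)=0$ is a genuine detail the paper leaves implicit; the only slip is that the finiteness of each layer's $\De$-filtration, which is what makes the concatenation exhaustive, comes from Lemma~\ref{LFinDe}(iii) applied in $\catC(\Si_{n+1})$ (finite $\Si_{n+1}$), not from Lemma~\ref{L171013}(iii).
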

\begin{proof}
We have the sequence $V\supseteq \O^{\Si_1}(V)\supseteq \O^{\Si_2}\supseteq\dots$ with $\cap_{n\geq 1} \O^{\Si_n}(V)=(0)$, so it suffice to show that each $\O^{\Si_n}(V)/\O^{\Si_{n+1}}(V)$ has a finite $\De$-filtration. By Proposition~\ref{PTruncSat}, $\catC(\Si_{n+1})$ is a $\B$-stratified category. By assumption and Lemma~\ref{LFinDe}(iii), we have that $V/\O^{\Si_{n+1}}(V)=\funQ^{\Si_{n+1}}(V)\in \catC(\Si_{n+1})$ has a finite $\De$-filtration. Now apply Corollary~\ref{CSESDelta}(i). 
\end{proof}


We denote by $L_i\funQ^\Si$ the $i$th left derived functor of the functor $\funQ^\Si:\catC\to\catC(\Si)$. 

\begin{Lemma} \label{LLDeTriv}
Let 
$V\in\catC$ have a $\De$-filtration, and $\Si\subseteq \Pi$ be a finite saturated subset. Then $L_i\funQ^\Si(V)=0$ for all $i>0$.\end{Lemma}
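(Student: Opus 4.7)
The plan is to prove the vanishing in two stages. For the first stage, I would show that whenever $W$ has a $\De$-filtration whose factors are all $\De(\pi)$ with $\pi \notin \Si$, one has $L_i\funQ^\Si(W) = 0$ for every $i \geq 0$. Indeed, Lemma~\ref{LBigRes} supplies a projective resolution of $W$ by finite direct sums of $q^m P(\tau)$ with $\tau \in \Pi_{\geq M}$ for some $M \subseteq \Pi \setminus \Si$; since $\Si$ is saturated, each such $\tau$ also lies outside $\Si$, and because $\head P(\tau) = L(\tau) \notin \Si$, the only quotient of $P(\tau)$ in $\catC(\Si)$ is zero, so $\funQ^\Si(P(\tau)) = 0$ and the whole resolution is annihilated. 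Now, for the given $V$ with a $\De$-filtration, Lemma~\ref{LFiltSi} (applicable since $\Si$ is finite) produces a reordered $\De$-filtration in which $W_t = \O^\Si(V)$ has $\De$-filtration by factors outside $\Si$, while $V/W_t = \funQ^\Si(V)$ has a finite $\De$-filtration by $\De(\si)$ with $\si \in \Si$. The long exact sequence for $0 \to W_t \to V \to V/W_t \to 0$, combined with the first stage, gives $L_i\funQ^\Si(V) \cong L_i\funQ^\Si(V/W_t)$ for every $i$, and a straightforward induction on the length of the finite filtration of $V/W_t$ (via the long exact sequence in each short exact step) reduces the problem to proving $L_i\funQ^\Si(\De(\si)) = 0$ for $\si \in \Si$ and $i > 0$.

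For this key special case I would induct on $l(\Si_{\geq \si})$. Applying the long exact sequence to $0 \to K(\si) \to P(\si) \to \De(\si) \to 0$ and using the projectivity of $P(\si)$ gives $L_i\funQ^\Si(\De(\si)) \cong L_{i-1}\funQ^\Si(K(\si))$ for $i \geq 2$ and $L_1\funQ^\Si(\De(\si)) = \ker\bigl(\funQ^\Si(K(\si)) \to \funQ^\Si(P(\si))\bigr)$. By {\tt (SC1)}, $K(\si)$ has a $\De$-filtration by $\De(\pi)$ with $\pi > \si$; Lemma~\ref{LFiltSi} splits this into the non-$\Si$ subobject $\O^\Si(K(\si))$ and the quotient $K(\si)/\O^\Si(K(\si))$ with a finite $\De$-filtration by $\De(\pi)$ with $\pi \in \Si$, $\pi > \si$. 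The first stage handles $\O^\Si(K(\si))$, while the inductive hypothesis applies to each $\De(\pi)$ in the quotient because $l(\Si_{\geq \pi}) < l(\Si_{\geq \si})$; combining these with a length induction gives $L_i\funQ^\Si(K(\si)) = 0$ for $i > 0$, and hence $L_i\funQ^\Si(\De(\si)) = 0$ for $i \geq 2$.

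The main obstacle is the case $i = 1$, which amounts to showing that the natural map $\funQ^\Si(K(\si)) \to \funQ^\Si(P(\si))$ is injective. I would establish this by proving the equality $\O^\Si(K(\si)) = \O^\Si(P(\si))$ as subobjects of $P(\si)$. The inclusion $\O^\Si(K(\si)) \subseteq \O^\Si(P(\si))$ follows from $\De(\si) \in \catC(\Si)$, which forces $\O^\Si(P(\si)) \subseteq K(\si)$ and makes $K(\si)/\O^\Si(P(\si))$ a submodule of $\funQ^\Si(P(\si)) \in \catC(\Si)$, hence itself in $\catC(\Si)$. For the reverse inclusion, compare graded dimensions: by Lemma~\ref{LFiltSi} both $\O^\Si(P(\si))$ and $\O^\Si(K(\si))$ admit $\De$-filtrations with the same non-$\Si$ multiplicities, since the $\De$-filtration of $P(\si)$ differs from that of $K(\si)$ only by an additional top factor $\De(\si)$ with $\si \in \Si$; hence $\DIM \O^\Si(K(\si)) = \DIM \O^\Si(P(\si))$ and the inclusion must be an equality. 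Consequently $\funQ^\Si(K(\si)) = K(\si)/\O^\Si(P(\si))$ embeds into $P(\si)/\O^\Si(P(\si)) = \funQ^\Si(P(\si))$, killing $L_1\funQ^\Si(\De(\si))$ and completing the induction.
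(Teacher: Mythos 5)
Your proof is correct, and it establishes the lemma by a genuinely different inductive scheme from the paper. The key ingredients coincide: both arguments use {\tt (SC1)} together with Lemma~\ref{LBigRes} to kill all derived functors on $\funO^\Si(V)$ (since $\Si$ is saturated, the resolution lives entirely over $\Pi\setminus\Si$ and is annihilated by $\funQ^\Si$), and both arguments reduce the crucial $L_1$-vanishing to the equality $\funO^\Si(K(\si))=\funO^\Si(P(\si))$ for $\si\in\Si$, which makes $\funQ^\Si(K(\si))\to\funQ^\Si(P(\si))$ injective. The difference is in how the higher derived functors are handled. The paper bootstraps on homological degree: it first proves $L_1\funQ^\Si(\De(\pi))=0$ for \emph{every} $\pi$ (treating $\pi\notin\Si$ separately using that no $\tau>\pi$ lies in $\Si$), propagates this to all objects with a $\De$-filtration, then uses $L_2\funQ^\Si(\De(\pi))\cong L_1\funQ^\Si(K(\pi))$ to start the next round, and so on. You instead fix the homological degree and induct on $l(\Si_{\geq\si})$, proving all $L_i\funQ^\Si(\De(\si))$ vanish simultaneously for a given $\si\in\Si$; this lets you avoid the $\pi\notin\Si$ case entirely, since you absorb those factors into $\O^\Si(V)$ from the start. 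Both routes are valid and of comparable length.

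One step is more complicated than necessary. To prove $\funO^\Si(P(\si))\subseteq\funO^\Si(K(\si))$ you appeal to a graded-dimension comparison, but in an abstract Noetherian Laurentian category the expression $\DIM\funO^\Si(K(\si))$ is not directly meaningful (one would have to phrase this in terms of composition multiplicities $[\,\cdot\,:L(\tau)]_q$, and then argue that equality of $\De$-multiplicities forces equality of these). The same inclusion follows much more directly: since $K(\si)/\funO^\Si(K(\si))$ and $P(\si)/K(\si)\cong\De(\si)$ both lie in the Serre subcategory $\catC(\Si)$, the extension $P(\si)/\funO^\Si(K(\si))$ lies in $\catC(\Si)$ too, so by minimality $\funO^\Si(P(\si))\subseteq\funO^\Si(K(\si))$. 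This is presumably what the paper has in mind when it asserts the equality without proof.
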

\begin{proof}
Let $\pi\in\Pi$. From the exact sequence
$
0\to K(\pi)\to P(\pi)\to\De(\pi)\to 0
$ 
we get the exact sequence 
$$
0\to L_1\funQ^\Si(\De(\pi))\to \funQ^\Si (K(\pi))\to \funQ^\Si(P(\pi))\to\funQ^\Si(\De(\pi))\to 0 
$$
and the isomorphisms $L_{i+1}\funQ^\Si(\De(\pi))\cong L_{i}\funQ^\Si(K(\pi))$.

If $\pi\in\Si$, then $\funO^\Si(K(\pi))=\funO^\Si(P(\pi))$, so the map $\funQ^\Si (K(\pi))\to \funQ^\Si(P(\pi))$ is injective, whence $L_1\funQ^\Si(\De(\pi))=0$. If $\pi\not\in\Si$, then no $\si>\pi$ belongs to $\Si$, whence $\funQ^\Si(K(\pi))=0$ in view of {\tt (SC1)}, and again $L_1\funQ^\Si(\De(\pi))=0$. We have proved that $L_1\funQ^\Si(\De(\pi))=0$ for all $\pi$, hence $L_1\funQ^\Si(V)=0$ for any $V$, which has a {\em finite} $\De$-filtration, by the long exact sequence argument. 

To deal with an arbitrary $\De$-filtration, note by Lemma~\ref{LFiltSi} that $\O^\Si(V)$ has a $\De$-filtration with subfactors $\simeq \De(\tau)$ for $\tau\in\Pi\setminus \Si$. So by Lemma~\ref{LBigRes}, there is a projective resolution $\dots \to P_1\to P_0\to \O^\Si(V)$,
where each $P_i$ is a finite direct sum of projectives of the form $q^mP(\tau)$ with $\tau\in \Pi\setminus \Si$. Hence $\funQ^\Si(P_i)=0$ for all $i>0$, and we have $L_i\funQ^\Si(\O^\Si(V))=0$ for all $i>0$. So, since $\funQ^\Si(V)$ has a finite $\De$-filtration, the long exact sequence corresponding to  $0\to \O^\Si(V)\to V\to \funQ^\Si(V)\to 0$ gives $L_i\funQ^\Si(V)=0$. 

Now, by the first paragraph, $L_2\funQ^\Si(\De(\pi))\cong L_1\funQ^\Si(K(\pi))=0$. Therefore $L_2\funQ^\Si(V)=0$ for all modules $V$ with finite $\De$-filtration, and then as in the second paragraph one shows that $L_2\funQ^\Si(V)=0$ for modules $V$ with an arbitrary $\De$-filtration. Continuing like this we prove $L_i\funQ^\Si(V)=0$ for all $i>0$. 
\end{proof}

\begin{Lemma} \label{LLSiTriv}
If 
$\Si\subseteq \Pi$ is a finite saturated subset and $V\in\catC(\Si)$, then $L_i\funQ^\Si(V)=0$ for all $i>0$.
\end{Lemma}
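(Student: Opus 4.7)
The plan is to reduce Lemma~\ref{LLSiTriv} to Lemma~\ref{LLDeTriv} by resolving $V$ inside $\catC(\Si)$ using objects that carry $\De$-filtrations in the ambient category $\catC$. Since $\catC(\Si)$ is itself a Noetherian Laurentian category by Lemma~\ref{LSiPrCovCat}, with indecomposable projectives $\funQ^\Si(P(\si))$ for $\si\in\Si$, one can iterate Lemma~\ref{L171013}(i) inside $\catC(\Si)$ to build a projective resolution
$$
\cdots\to \tilde P_1\to\tilde P_0\to V\to 0
$$
in $\catC(\Si)$ whose terms are direct sums of degree shifts of the objects $\funQ^\Si(P(\si))$ with $\si\in\Si$.

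Next I would check that every $\tilde P_i$ admits a $\De$-filtration in $\catC$. For each $\si\in\Pi$ the short exact sequence $0\to K(\si)\to P(\si)\to\De(\si)\to 0$ together with the axiom {\tt (SC1)} shows that $P(\si)$ itself has a $\De$-filtration, and since $\Si$ is finite and saturated, Lemma~\ref{LFiltSi} transfers this to a $\De$-filtration of $\funQ^\Si(P(\si))$. Lemma~\ref{LLDeTriv} then yields $L_j\funQ^\Si(\tilde P_i)=0$ for every $j>0$, so each $\tilde P_i$ is $\funQ^\Si$-acyclic as an object of $\catC$.

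Because $\catC(\Si)$ is a Serre subcategory of $\catC$, the inclusion $\iota_\Si$ is exact and the resolution above is also exact in $\catC$. Thus $\tilde P_\bullet\to V$ is a resolution of $V$ in $\catC$ by $\funQ^\Si$-acyclic objects, and the standard comparison theorem for right-exact functors gives
$$
L_i\funQ^\Si(V)\cong H_i\bigl(\funQ^\Si(\tilde P_\bullet)\bigr).
$$
Since each $\tilde P_i$ already lies in $\catC(\Si)$, we have $\funO^\Si(\tilde P_i)=0$ and therefore $\funQ^\Si(\tilde P_i)=\tilde P_i$; consequently $\funQ^\Si(\tilde P_\bullet)$ coincides with the exact complex $\tilde P_\bullet$, whose homology vanishes in positive degrees. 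This gives $L_i\funQ^\Si(V)=0$ for all $i>0$.

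The only mildly delicate point I anticipate is the justification that an $\funQ^\Si$-acyclic resolution does compute the left derived functors $L_\ast\funQ^\Si$. This is a routine consequence of the existence of projective resolutions in $\catC$, which follows from {\tt (NLC1)}, {\tt (NLC2)} and Lemma~\ref{L171013}(i) by iterated application; the usual comparison between projective and acyclic resolutions then proceeds verbatim.
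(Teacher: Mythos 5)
Your proof is correct and follows essentially the same strategy as the paper: both reduce to Lemma~\ref{LLDeTriv} by resolving $V$ with objects of the form $\funQ^\Si(P)$ that carry $\De$-filtrations (via Lemma~\ref{LFiltSi}). The paper performs the dimension shifting explicitly with a single short exact sequence and induction, while you package the same reduction as an acyclic-resolution argument; the content is identical.
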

\begin{proof}
Note that $V$ is a quotient of a module $P$ which is a finite direct sum of modules $\simeq P(\si)$ with $\si\in\Si$. So $V$ is also a quotient of $\funQ^\Si(P)$ which has a $\De$-filtration by Lemma~\ref{LFiltSi}. Hence $L_i\funQ^\Si(\funQ^\Si(P))=0$ by Lemma~\ref{LLDeTriv}. So the short exact sequence $0\to N\to \funQ^\Si(P)\to V\to 0$ yields an exact sequence
$$
0\to L_1\funQ^\Si(V)\to N\to \funQ^\Si(P)\to V\to 0 
$$
and isomorphisms $L_{i+1}\funQ^\Si(V)\cong L_{i}\funQ^\Si(N)$ for all $i>0$. The exact sequence implies that $L_1\funQ^\Si(V)=0$ for all $\pi\in\Si$. Since $V$ is an arbitrary object in $\catC(\Si)$, we now deduce that $L_{1}\funQ^\Si(N)$. Then $L_{2}\funQ^\Si(V)=0$, and so on by induction. 
\end{proof}

\begin{Proposition}\label{P6214}
Let $\Si\subseteq \Pi$ be a finite saturated subset. 
If $V,W\in\catC(\Si)$, then 
$\EXT^i_{\catC(\Si)}(V,W)\cong \EXT^i_{\catC}(V, W)$ for all $i\geq 0$. 
\end{Proposition}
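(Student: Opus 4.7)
The plan is to compare the two $\EXT$-groups by using a projective resolution in $\catC$ and showing that its image under $\funQ^\Si$ is a projective resolution in $\catC(\Si)$, after which the adjunction $(\funQ^\Si, \iota_\Si)$ will identify the two Hom-complexes.

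First I would invoke Corollary~\ref{CRes}(i) to choose a projective resolution $\cdots \to P_1 \to P_0 \to V$ in $\catC$, with each $P_i$ a finite direct sum of objects of the form $q^m P(\pi)$. Applying the right-exact functor $\funQ^\Si$ yields a complex $\cdots \to \funQ^\Si P_1 \to \funQ^\Si P_0 \to \funQ^\Si V \to 0$. Since $V \in \catC(\Si)$, we have $\funQ^\Si V = V$. Each $\funQ^\Si P_i$ is a finite direct sum of objects $q^m \funQ^\Si P(\pi)$, which are either zero (if $\pi \notin \Si$) or projective covers of $L(\pi)$ in $\catC(\Si)$ by Lemma~\ref{LSiPrCovCat}; hence $\funQ^\Si P_i$ is projective in $\catC(\Si)$.

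Next, to check exactness of $\funQ^\Si P_\bullet \to V$, I would compute the left derived functors of $\funQ^\Si$ from this resolution: $L_i \funQ^\Si(V) = H_i(\funQ^\Si P_\bullet)$. By Lemma~\ref{LLSiTriv}, $L_i \funQ^\Si(V) = 0$ for all $i > 0$ since $V \in \catC(\Si)$, and right-exactness of $\funQ^\Si$ handles the augmentation. Therefore $\funQ^\Si P_\bullet \to V$ is a genuine projective resolution of $V$ in $\catC(\Si)$.

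Finally, for $W \in \catC(\Si)$, the adjunction between $\funQ^\Si$ and the fully faithful inclusion $\iota_\Si$ gives, in each degree of the grading, natural isomorphisms
\[
\HOM_\catC(P_i, W) \;\cong\; \HOM_\catC(P_i, \iota_\Si W) \;\cong\; \HOM_{\catC(\Si)}(\funQ^\Si P_i, W),
\]
compatible with the differentials. Taking cohomology of the two resulting complexes yields $\EXT^i_\catC(V, W) \cong \EXT^i_{\catC(\Si)}(V, W)$ for all $i \geq 0$, and the case $i = 0$ is just the full faithfulness of $\iota_\Si$. The only mildly delicate step is justifying that $\funQ^\Si P_\bullet$ is exact in positive degrees; this is where Lemma~\ref{LLSiTriv} does the essential work, so the main obstacle has already been absorbed into that lemma.
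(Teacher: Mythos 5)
Your argument is correct and in substance identical to the paper's: both rely on Lemma~\ref{LLSiTriv} to show that $\funQ^\Si$ carries a projective resolution of $V$ in $\catC$ to a projective resolution of $V$ in $\catC(\Si)$, and on the adjunction $(\funQ^\Si,\iota_\Si)$ to identify the two $\HOM$-complexes. The only cosmetic difference is that the paper packages this as a degenerating Grothendieck spectral sequence rather than unwinding the resolution by hand.
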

\begin{proof}
The functor $\funQ^\Si$ takes projectives to acyclics, so we have the Grothendieck spectral sequence \cite[Theorem 10.48]{Ro}: 
$$
\EXT^i_{\catC(\Si)}(L_j\funQ^\Si(W),V)\implies \EXT^{i+j}_{\catC}(W,V).
$$
But $L_j\funQ^\Si(W)=0$ for $j>0$ by Lemma~\ref{LLSiTriv}, so the spectral sequence degenerates, and we get the required isomorphism. 
\end{proof}

\begin{Theorem} \label{TGoodFilCCount}
Let $\catC$ be a $\B$-stratified category with countable $\Pi$ such that $\Pi_{\leq \pi}$ is finite for every $\pi\in \Pi$. Then $V\in\catC$ has a  $\Delta$-filtration if and only if $\EXT^1_\catC(V,\bar\nabla(\pi))=0$ for all $\pi\in\Pi$. Moreover, if $V$ has a  $\Delta$-filtration then $\EXT^i_\catC(V,\bar\nabla(\pi))=0$ for all $\pi\in\Pi$ and $i>0$. 
\end{Theorem}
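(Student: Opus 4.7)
\emph{The plan} is to reduce both implications to the finite-$\Pi$ case, handled by Lemmas~\ref{LDeCrit} and \ref{LEXTDeBarNabla}, via truncation to finite saturated subsets $\Si\subseteq\Pi$. The bridge is Proposition~\ref{P6214}: for objects supported on $\Si$, the $\EXT$-groups computed in $\catC$ and in the equivalent category $\catC(\Si)\simeq\mod H_{\Si}$ coincide (where $H_\Si$ is a $\B$-stratified algebra by Corollary~\ref{CHWQHFPi}).

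For the forward direction and its higher-$\EXT$ strengthening, I would fix $\pi\in\Pi$ and set $\Si:=\Pi_{\leq\pi}$, which is finite and saturated by hypothesis. Apply $\EXT^*_\catC(-, \bar\nabla(\pi))$ to the short exact sequence
$$0\longrightarrow \funO^\Si(V)\longrightarrow V\longrightarrow \funQ^\Si(V)\longrightarrow 0.$$
Both flanking objects inherit $\De$-filtrations from $V$ by Lemma~\ref{LFiltSi}, and $\funQ^\Si(V)\in\catC(\Si)$ even has a finite one by Lemma~\ref{LFinDe}(iii); combining Lemma~\ref{LEXTDeBarNabla} applied in $\catC(\Si)$ with Proposition~\ref{P6214} and the long exact sequences along this finite filtration gives $\EXT^i_\catC(\funQ^\Si(V),\bar\nabla(\pi))=0$ for every $i\geq 1$. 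Lemma~\ref{LBigRes} then supplies a projective resolution of $\funO^\Si(V)$ by finite sums of $q^mP(\tau)$ with $\tau\not\in\Si$, since saturation of $\Si$ ensures $\Pi_{\geq M}\subseteq\Pi\setminus\Si$ (in the notation of that lemma). By Theorem~\ref{T6214}(i) all composition factors of $\bar\nabla(\pi)$ lie in $\Si$, so (\ref{EMultHom}) gives $\HOM_\catC(P(\tau),\bar\nabla(\pi))=0$ for such $\tau$, the whole resolution maps to zero, and $\EXT^i_\catC(\funO^\Si(V),\bar\nabla(\pi))=0$ for all $i\geq 0$. The long exact sequence now forces $\EXT^i_\catC(V,\bar\nabla(\pi))=0$ for every $i\geq 1$.

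For the converse, using countability of $\Pi$ together with finiteness of each $\Pi_{\leq\pi}$, I would choose a nested family of finite saturated subsets $\Si_1\subseteq\Si_2\subseteq\cdots$ exhausting $\Pi$. By Lemma~\ref{LQFilt} it suffices to produce a $\De$-filtration of each $\funQ^{\Si_n}(V)\in\catC(\Si_n)$, and by Lemma~\ref{LDeCrit} applied in $\mod H_{\Si_n}$ this reduces to showing $\EXT^1_{\catC(\Si_n)}(\funQ^{\Si_n}(V),\bar\nabla(\pi))=0$ for every $\pi\in\Si_n$. Proposition~\ref{P6214} identifies this with $\EXT^1_\catC(\funQ^{\Si_n}(V),\bar\nabla(\pi))$, and the long exact sequence attached to
$$0\longrightarrow \funO^{\Si_n}(V)\longrightarrow V\longrightarrow \funQ^{\Si_n}(V)\longrightarrow 0,$$
combined with the hypothesis $\EXT^1_\catC(V,\bar\nabla(\pi))=0$, presents this group as a quotient of $\HOM_\catC(\funO^{\Si_n}(V),\bar\nabla(\pi))$. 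The crux is then to show this Hom group vanishes: $\funO^{\Si_n}(V)$ is by construction the sum of images of morphisms $f\colon P(\tau)\to V$ with $\tau\not\in\Si_n$, and for each such $\tau$ we have $\HOM_\catC(P(\tau),\bar\nabla(\pi))=0$ by the same composition-factor computation as above (saturation of $\Si_n$ together with $\pi\in\Si_n$ rules out $\tau\leq\pi$). Hence any map $\funO^{\Si_n}(V)\to\bar\nabla(\pi)$ annihilates every image of such an $f$ and therefore vanishes identically. The main obstacle is really just this last Hom vanishing, which lives or dies on saturation; everything else is formal long-exact-sequence bookkeeping together with the equivalence to $\mod H_{\Si_n}$.
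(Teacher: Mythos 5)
Your proof is correct and follows the paper's argument essentially step for step: truncating along a nested exhaustion $\Si_1\subseteq\Si_2\subseteq\cdots$ of finite saturated subsets, invoking Proposition~\ref{P6214} to pass $\EXT$-groups between $\catC$ and $\catC(\Si_n)\simeq\mod H_{\Si_n}$, using Lemma~\ref{LBigRes} for the higher-$\EXT$ vanishing on $\funO^{\Si}(V)$, and Lemmas~\ref{LDeCrit} and \ref{LQFilt} to rebuild a global $\De$-filtration from the truncated pieces. The only (welcome) variation is that you spell out, via (\ref{EMultHom}) and Theorem~\ref{T6214}(i), why $\HOM_\catC(\funO^{\Si}(V),\bar\nabla(\pi))=0$, a point the paper states without comment.
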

\begin{proof}
Let $\Si_n$ be as in (\ref{ENested family}). For each $n$, the category $\catC(\Si_n)$ is $\B$-stratified, so by Corollary~\ref{CHWQHFPi}, it is graded equivalent to $\mod{H_{\Si_n}}$ for some  $\B$-stratified algebra $H_{\Si_n}$. Hence the statement of the theorem holds in each $\catC(\Si_n)$ by Lemmas~\ref{LDeCrit} and \ref{LEXTDeBarNabla}.

Assume that $\EXT^1_\catC(V,\bar\nabla(\pi))=0$ for all $\pi$. Chose $n$ with $\pi\in\Si_n$. From the exact sequence 
$
0\to \O^{\Si_n}(V)\to V\to \funQ^{\Si_n}(V)\to 0
$
we get the exact sequence
\begin{align*}
0&\to  
\HOM_\catC(\funQ^{\Si_n}(V),\bar\nabla(\pi))
\to 
\HOM_\catC(V,\bar\nabla(\pi))
\to
\HOM_\catC(\O^{\Si_n}(V),\bar\nabla(\pi))
\\
&\to \EXT^1_\catC(\funQ^{\Si_n}(V),\bar\nabla(\pi))
\to
0.
\end{align*}
Note that $\HOM_\catC(\O^{\Si_n}(V),\bar\nabla(\pi))=0$, and so $\EXT^1_\catC(\funQ^{\Si_n}(V),\bar\nabla(\pi))=0$, whence $\EXT^1_{\catC(\Si_n)}(\funQ^{\Si_n}(V),\bar\nabla(\pi))=0$. 
By the first paragraph, $\funQ^{\Si_n}(V)$ has a finite $\De$-filtration. Now apply Lemma~\ref{LQFilt} to deduce that $V$ has a $\De$-filtration. 

Conversely, assume that $V$ has a $\Delta$-filtration. Fix $\pi$ and pick $n$ so that $\pi\in\Si_n$. By Lemma~\ref{LFiltSi} that $\O^{\Si_n}(V)$ and $\funQ^{\Si_n}(V)$ have $\De$-filtrations. By Proposition~\ref{P6214}, Lemma~\ref{LEXTDeBarNabla}, and the long exact sequence in cohomology, we have $\EXT^i_{\catC}(\funQ^{\Si_n}(V),\bar\nabla(\pi))=0$ for all $i>0$. 
On the other hand, by Lemma~\ref{LBigRes}, $\O^{\Si_n}(V)$ has a projective resolution of the form $\dots P_1\to P_0\to \O^{\Si_n}(V)$, where each $P_i$ is a finite direct sum of projectives of the form $q^mP(\tau)$ with $\tau\not\in \Si_n$. 
So $\HOM_{\catC}(P_i,\bar\nabla(\pi))=0$, and in particular, $\EXT^i_{\catC}(\O^{\Si_n}(V),\bar\nabla(\pi))=0$. From the long exact sequence in cohomology, we now deduce $\EXT^i_{\catC}(V,\bar\nabla(\pi))=0$.
\end{proof}


\begin{Lemma} \label{LNaCritInf}
A finite length object $V\in\catC$ has a  $\bar\nabla$-filtration if and only if $\EXT^1_\catC(\De(\pi),V)=0$ for all $\pi\in\Pi$. 
\end{Lemma}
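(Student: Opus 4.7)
The plan is to reduce to a finite saturated subcategory, where Lemma~\ref{LNaCrit} applies directly via the equivalence $\catC(\Si)\simeq\mod{H_\Si}$ furnished by Corollary~\ref{CHWQHFPi}. This mirrors the reduction strategy used for $\De$-filtrations in the proof of Theorem~\ref{TGoodFilCCount}, but is actually simpler because finite length of $V$ means no exhaustive-filtration gymnastics are needed: everything really does live in one finite saturated piece.

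First, the ``only if'' direction. If $V$ has a $\bar\nabla$-filtration, then (since $V$ has finite length) this filtration is finite, with subfactors $\simeq\bar\nabla(\pi)$ for various $\pi$. By Theorem~\ref{T6214}(ii), $\EXT^1_\catC(\De(\pi),\bar\nabla(\si))=0$ for all $\pi,\si$, so a straightforward induction on the length of the filtration using the long exact sequence in cohomology yields $\EXT^1_\catC(\De(\pi),V)=0$ for every $\pi$.

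For the ``if'' direction, let $\Omega\subseteq\Pi$ denote the finite set of $\pi$ such that $L(\pi)$ occurs as a composition factor of $V$, and set
$$\Si:=\bigcup_{\pi\in\Omega}\Pi_{\leq\pi}.$$
This is saturated by construction and finite by the standing hypothesis that each $\Pi_{\leq\pi}$ is finite. Clearly $V\in\catC(\Si)$. Moreover, for each $\pi\in\Omega$, all composition factors of $\bar\nabla(\pi)$ are of the form $L(\tau)$ with $\tau\leq\pi$ by Theorem~\ref{T6214}(i), so $\bar\nabla(\pi)\in\catC(\Si)$ as well. By Proposition~\ref{PTruncSat}, $\catC(\Si)$ is itself a $\B$-stratified category with standard objects $\{\De(\si)\mid\si\in\Si\}$, and by Corollary~\ref{CHWQHFPi} it is graded equivalent to $\mod{H_\Si}$ for some $\B$-stratified algebra $H_\Si$. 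Via this equivalence one sees that the proper costandard module of $H_\Si$ at $\pi$ coincides with $\bar\nabla(\pi)$ (independence of the saturated ambient set is the content of the unnamed lemma preceding Theorem~\ref{T6214}).

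Now apply Proposition~\ref{P6214}: for every $\pi\in\Si$ we have $\De(\pi),V\in\catC(\Si)$, and the hypothesis $\EXT^1_\catC(\De(\pi),V)=0$ transfers to $\EXT^1_{\catC(\Si)}(\De(\pi),V)=0$. Passing through the equivalence $\catC(\Si)\simeq\mod{H_\Si}$ and invoking Lemma~\ref{LNaCrit} for $H_\Si$ produces a finite $\bar\nabla$-filtration of $V$ inside $\catC(\Si)$, which is automatically a $\bar\nabla$-filtration in $\catC$.

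The main ``obstacle'', such as it is, is purely bookkeeping: one must verify that standard objects, proper costandard objects, and $\Ext^1$-groups all match up under the inclusion $\catC(\Si)\hookrightarrow\catC$. Each of these compatibilities is already recorded (Propositions~\ref{PTruncSat} and~\ref{P6214}, and the lemma before Theorem~\ref{T6214}), so nothing new is required.
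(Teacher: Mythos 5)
Your proof is correct and follows the same route the paper takes: reduce to a sufficiently large finite saturated subset $\Si$, identify $\catC(\Si)$ with $\mod{H_\Si}$ via Corollary~\ref{CHWQHFPi}, transfer the $\EXT^1$-vanishing across the inclusion using Proposition~\ref{P6214}, and invoke Lemma~\ref{LNaCrit}. You merely make explicit what the paper leaves terse (the choice $\Si=\bigcup_{\pi\in\Omega}\Pi_{\leq\pi}$ and the verification that $V$ and the relevant $\bar\nabla(\pi)$ live in $\catC(\Si)$), which is a fair unpacking rather than a different argument.
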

\begin{proof}
The `only-if' part is clear from Theorem~\ref{T6214}(ii). For the `if-part', work in $\catC(\Si)$ for sufficiently large finite saturated subset $\Si\subset\Pi$ and apply Lemma~\ref{LNaCrit} and Proposition~\ref{P6214}.
\end{proof}

\begin{Corollary} \label{CSESDeltaInf}
Let 
$0\to V'\to V\to V''\to 0$ 
be a short exact sequence in $\catC$. 
\begin{enumerate}
\item[{\rm (i)}] If $V$ and $V''$ have $\De$-filtrations, then so does $V'$.  
\item[{\rm (ii)}] If $V$ and $V'$ have finite $\bar\nabla$-filtrations, then so does $V''$.  
\end{enumerate} 
\end{Corollary}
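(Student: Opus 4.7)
The plan is to derive both parts from the homological criteria already established, without redoing any hard work.

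For part (i), I would apply Theorem~\ref{TGoodFilCCount} twice. Since $V$ and $V''$ have $\De$-filtrations, the second assertion of that theorem gives $\EXT^i_\catC(V,\bar\nabla(\pi))=0=\EXT^i_\catC(V'',\bar\nabla(\pi))$ for every $i>0$ and every $\pi\in\Pi$. The long exact sequence in cohomology obtained by applying $\HOM_\catC(-,\bar\nabla(\pi))$ to $0\to V'\to V\to V''\to 0$ then sandwiches $\EXT^1_\catC(V',\bar\nabla(\pi))$ between $\EXT^1_\catC(V,\bar\nabla(\pi))=0$ and $\EXT^2_\catC(V'',\bar\nabla(\pi))=0$, forcing it to vanish for every $\pi$. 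The ``if'' direction of Theorem~\ref{TGoodFilCCount} then produces a $\De$-filtration of $V'$.

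For part (ii), the strategy is to reduce to the already-proved Corollary~\ref{CSESDelta}(ii) by trapping the whole short exact sequence inside a finite saturated subset. By Theorem~\ref{T6214}(i) every $\bar\nabla(\pi)$ has finite length, so $V$ and $V'$ have finite length, and hence so does $V''$. Let $\pi_1,\dots,\pi_r$ be the (finitely many) indices that actually appear as layers of the given $\bar\nabla$-filtrations of $V$ and $V'$. Every composition factor of $V$ or $V'$ (and then, by the short exact sequence, of $V''$) is of the form $L(\kappa)$ with $\kappa\leq \pi_j$ for some $j$. Setting $\Si:=\bigcup_{j=1}^{r}\Pi_{\leq\pi_j}$ gives a \emph{finite} saturated subset of $\Pi$, using the standing hypothesis that each $\Pi_{\leq\pi}$ is finite; thus $V',V,V''\in\catC(\Si)$.

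By Proposition~\ref{PTruncSat} the category $\catC(\Si)$ is $\B$-stratified over the finite index set $\Si$, and by Corollary~\ref{CHWQHFPi} it is graded equivalent to $\mod{H_\Si}$ for some $\B$-stratified algebra $H_\Si$. As established in \S\ref{SCost}, the proper costandard object of $\catC(\Si)$ attached to $\pi\in\Si$ coincides with the restriction of $\bar\nabla(\pi)$ from $\catC$, so the given finite $\bar\nabla$-filtrations of $V$ and $V'$ are finite $\bar\nabla$-filtrations in $\catC(\Si)$. The algebraic version Corollary~\ref{CSESDelta}(ii), applied inside $\mod{H_\Si}$, furnishes a finite $\bar\nabla$-filtration of $V''$ in $\catC(\Si)$, which is in particular a finite $\bar\nabla$-filtration in $\catC$.

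There is no genuine obstacle: the only points requiring care are the identification of proper costandard objects in $\catC(\Si)$ with those inherited from $\catC$ (which is exactly the content of the lemma preceding Theorem~\ref{T6214}) and the finiteness of $\Si$, which is where the assumption on $\Pi$ is used. Everything else is formal from the homological criteria.
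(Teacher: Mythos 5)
Your proof is correct. Part (i) is exactly the paper's intended argument: Theorem~\ref{TGoodFilCCount} supplies $\EXT^i(V,\bar\nabla(\pi))=\EXT^i(V'',\bar\nabla(\pi))=0$ for $i>0$, the long exact sequence then forces $\EXT^1(V',\bar\nabla(\pi))=0$, and the if direction of Theorem~\ref{TGoodFilCCount} gives the $\De$-filtration of $V'$. For part (ii), you take a slightly different route than the paper's terse ``similar to Corollary~\ref{CSESDelta}'' likely intends. Read literally, that recipe would be to reuse the homological criterion of Lemma~\ref{LNaCritInf}: show $\EXT^1(\De(\pi),V'')=0$ by sandwiching it between $\EXT^1(\De(\pi),V)$ (vanishing by Theorem~\ref{T6214}(ii)) and $\EXT^2(\De(\pi),V')$ (vanishing via Proposition~\ref{P6214} together with Lemma~\ref{LEXTDeBarNabla}), then invoke Lemma~\ref{LNaCritInf}. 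You instead trap the entire short exact sequence inside a finite saturated subcategory $\catC(\Si)$, match the proper costandards via the construction of \S\ref{SCost}, and apply the finite-$\Pi$ result Corollary~\ref{CSESDelta}(ii) directly. Both routes pass through the same reduction-to-finite machinery (Proposition~\ref{PTruncSat}, Corollary~\ref{CHWQHFPi}, the $\bar\nabla_\Si(\pi)\cong\bar\nabla(\pi)$ identification); yours avoids having to articulate higher Ext-vanishing between standards and proper costandards in the infinite-$\Pi$ setting, at the small cost of an explicit finite-length and common-$\Si$ check. The two arguments are of essentially the same depth, and your version is perfectly acceptable.
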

\begin{proof}
Similar to the proof of Corollary~\ref{CSESDelta}.  
\end{proof}

\begin{Corollary} \label{C6214Inf}
Let 
$W$ be a direct summand of $V\in\catC$. 
\begin{enumerate}
\item[{\rm (i)}] If $V$ has a $\De$-filtration, then so does $W$.  
\item[{\rm (ii)}] If $V$ has a finite $\bar\nabla$-filtration, then so does $W$.  
\end{enumerate} 
\end{Corollary}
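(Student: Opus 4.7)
The plan is to mimic the strategy used for Corollary~\ref{C6214}, replacing the finite-$\Pi$ homological criteria (Lemmas~\ref{LDeCrit} and \ref{LNaCrit}) by their countable-$\Pi$ counterparts (Theorem~\ref{TGoodFilCCount} and Lemma~\ref{LNaCritInf}). The key observation throughout is that, for any object $X$ and any $\pi\in\Pi$, the functor $\EXT^i_\catC(-,\bar\nabla(\pi))$ and $\EXT^i_\catC(\De(\pi),-)$ send direct summands to direct summands.

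For part (i): suppose $V=W\oplus W'$ has a $\De$-filtration. By Theorem~\ref{TGoodFilCCount}, $\EXT^1_\catC(V,\bar\nabla(\pi))=0$ for every $\pi\in\Pi$. Since $\EXT^1_\catC(V,\bar\nabla(\pi))\cong \EXT^1_\catC(W,\bar\nabla(\pi))\oplus\EXT^1_\catC(W',\bar\nabla(\pi))$, both summands vanish. Applying the other direction of Theorem~\ref{TGoodFilCCount} to $W$ then yields a $\De$-filtration of $W$.

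For part (ii): suppose $V=W\oplus W'$ has a finite $\bar\nabla$-filtration. Since each $\bar\nabla(\pi)$ has finite length by Theorem~\ref{T6214}(i), $V$ has finite length, and hence so does its direct summand $W$. By induction on the length of the $\bar\nabla$-filtration of $V$, using the long exact sequence together with Theorem~\ref{T6214}(ii), we get $\EXT^1_\catC(\De(\pi),V)=0$ for every $\pi\in\Pi$. Splitting off $W$ gives $\EXT^1_\catC(\De(\pi),W)=0$ as well, and Lemma~\ref{LNaCritInf} (applicable because $W$ has finite length) delivers a $\bar\nabla$-filtration of $W$.

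There is no real obstacle here; the content has already been invested in Theorem~\ref{TGoodFilCCount} and Lemma~\ref{LNaCritInf}. The only mild point worth checking is that the finite length of $W$ in part (ii) is indeed guaranteed, so that Lemma~\ref{LNaCritInf} applies---this follows at once from Theorem~\ref{T6214}(i) and the assumed \emph{finiteness} of the $\bar\nabla$-filtration of $V$.
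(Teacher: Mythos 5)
Your proposal is correct and follows exactly the route the paper intends: the paper's proof of Corollary~\ref{C6214Inf} simply reads ``Similar to the proof of Corollary~\ref{C6214},'' which amounts to replacing Lemmas~\ref{LDeCrit} and~\ref{LNaCrit} by Theorem~\ref{TGoodFilCCount} and Lemma~\ref{LNaCritInf}, as you do. Your remark verifying that $W$ has finite length (so that Lemma~\ref{LNaCritInf} applies) is a worthwhile detail that the paper leaves implicit.
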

\begin{proof}
Similar to the proof of Corollary~\ref{C6214}. 
\end{proof}

\section{$\B$-analogue of the Dlab-Ringel Standardization Theorem}\label{SDLStand}
In this section, we  generalize results of \cite[\S3]{DRStand}. 
Throughout the section, $\catC$ is a graded 
abelian $F$-linear category, and $\B$ is a fixed class of connected algebras. 

\subsection{Standardizing families}\label{SSSF}
Let $\Theta=\{\Theta(\pi)\mid\pi\in\Pi\}$ be a family of objects of $\catC$ labeled by a  finite partially ordered set $\Pi$. The family $\Theta$ is called  {\em weakly $\B$-standardizing} if the following conditions are satisfied:
\begin{enumerate}
\item[{\tt (End)}] For each $\pi\in\Pi$, we have that $B_\pi:=\END(\Theta(\pi))^\op$ belongs to $\B$. 
\item[{\tt (FG)}] For each $\pi,\si\in\Pi$, we have that the $B_\si$-modules $\HOM(\Theta(\pi),\Theta(\si))$ and $\EXT^1(\Theta(\pi),\Theta(\si))$ are finitely generated. 
\item[{\tt (Dir)}] $\HOM(\Theta(\pi),\Theta(\si))\neq 0$ implies $\pi\leq \si$, and $\EXT^1(\Theta(\pi),\Theta(\si))\neq 0$ implies $\pi< \si$.
\end{enumerate} 

Let $\Theta$ be a weakly $\B$-standardizing family. 
The condition {\tt (End)} implies that the objects $\Theta(\pi)$ are indecomposable. Denote by $\Fil(\Theta)$ the full (graded) subcategory of $\catC$ of all objects in $\catC$ having finite $\Theta$-filtrations, i.e. a finite filtration with subquotients $\simeq \Theta(\pi)$ for $\pi\in\Pi$.

If $V\in\Fil(\Theta)$ and $\si\in\Pi$. We denote by $(V:\Theta(\si))_q$ the Laurent polynomial $m_\si(q)=\sum_{n\in\Z}m_n q^n$, where $m_n$ is the number of times $q^n\Theta(\si)$ appears as a subquotient in some $\Theta$-filtration of $V$. This number does not depend on the choice of the $\Theta$-filtration. Indeed, let $V=V_N\supset\dots\supset V_1\supset V_0=(0)$ be a $\Theta$-filtration, and $\pi$ be a maximal element such that $m_\pi(q)\neq 0$.  
The $\EXT$-condition from {\tt (Dir)} shows that we can choose another filtration $V=V'_M\supset\dots\supset V'_1\supset V'_0=(0)$ such that $V_1'\cong m_\pi(q)\Theta(\pi)$, and all other subquotient  are of the form $q^n\Theta(\si)$ for $\si\neq \pi$ appearing with the same multiplicities as in the original filtration. Now, we can recover 
$m_\pi(q)$ as the (graded) rank of the free $B_\pi$-module $\HOM(\Theta(\pi),V)$. Then we pass to $V/V_1'$ and repeat. 

The main example of a weakly $\B$-standardizing family is as follows. 
Let $\catC$ be a weakly $\B$-highest weight category with poset $\Si$, and $\Pi\subseteq\Si$ be a finite subset with partial order induced from that on $\Si$.  
Then $\De:=\{\De(\pi)\mid\pi\in\Pi\}$ is a weakly $\B$-standardizing family. Indeed, the property {\tt (End)} holds by definition, and the property {\tt (Dir)} comes from Lemmas~\ref{LDirectedHom} and \ref{LEXTNonZero}(ii). Finally, the property {\tt (FG)} is contained in the following:

\begin{Lemma} 
If $\catC$ is a weakly $\B$-highest weight category, $V\in\catC$, and $\pi\in \Pi$, then 
the $B_\pi$-modules $\EXT^i(V,\De(\pi))$ are finitely generated   for all $i\geq 0$. 
\end{Lemma}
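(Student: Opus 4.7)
The plan is to resolve $V$ by finitely generated projectives and transfer finite generation of $\EXT^i(V,\De(\pi))$ over $B_\pi$ down to finite generation of $\HOM(P(\si),\De(\pi))$, which is exactly axiom {\tt (FGen)} in the weakly $\B$-highest weight setting.

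First I would invoke Corollary~\ref{CRes}(i) to pick a projective resolution $\cdots\to P_1\to P_0\to V\to 0$ in which each $P_j$ is a finite direct sum of shifted projective covers $q^n P(\si)$ with $\si\in\Pi$. Applying $\HOM_\catC(-,\De(\pi))$ yields a cochain complex of right $B_\pi$-modules whose $i$-th cohomology is $\EXT^i_\catC(V,\De(\pi))$; here the right $B_\pi$-structure is induced by post-composition with endomorphisms of $\De(\pi)$ and is respected by the coboundary maps, which are pre-compositions.

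Next, by axiom {\tt (FGen)} applied at $\xi=\pi$, each $\HOM_\catC(P(\si),\De(\pi))$ is finitely generated as a right $B_\pi$-module; since $P_j$ is a finite direct sum of shifts of the $P(\si)$, each term $\HOM_\catC(P_j,\De(\pi))$ of the complex is finitely generated over $B_\pi$. Since $B_\pi\in\B$ is a Noetherian Laurentian algebra, subquotients of finitely generated $B_\pi$-modules remain finitely generated, so the cohomologies $\EXT^i_\catC(V,\De(\pi))$ are finitely generated, as required.

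The routine part is the construction of the resolution and the direct-sum reduction. The only point that requires care is verifying that the Noetherian hypothesis on $B_\pi$ transports to the right-module side that governs $\HOM_\catC(-,\De(\pi))$; in the setting of the paper this is ensured because the algebras in $\B$ (affine, polynomial, etc.) are bi-Noetherian, so the Noetherian descent to subquotients is automatic.
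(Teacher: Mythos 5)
Your proof is correct and follows the paper's approach exactly: the paper dispatches the lemma in one line by citing Corollary~\ref{CRes}(i) and the axiom {\tt (FGen)}, and you have correctly unfolded what that one-liner means — resolve $V$ by finite direct sums of shifted $P(\si)$'s, apply $\HOM(-,\De(\pi))$, and invoke Noetherianness of $B_\pi$. Your caveat about needing $B_\pi$ to be Noetherian on the right-module side is a fair one; the paper's definition of a connected algebra implicitly takes ``Noetherian'' as a two-sided hypothesis, and in all the examples driving the theory (affine, polynomial) $B_\pi$ is commutative, so this is indeed automatic.
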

\begin{proof}
This comes from Corollary~\ref{CRes}(i) and {\tt (FGen)}.
\end{proof}

\subsection{Standardization Theorem}
In the previous subsection we have noted that a finite family  $\De:=\{\De(\pi)\mid\pi\in\Pi\}$ of standard modules in a weakly $\B$-highest weight category is a weakly $\B$-standardizing  family. The goal of this subsection is to prove  a converse statement in some sense, see Theorem~\ref{TDLStand} below. 

\begin{Lemma} \label{LDlR}
Let $\Theta=\{\Theta(\pi)\mid\pi\in \Pi\}$ be a weakly $\B$-standardizing family in a graded abelian $F$-linear category $\catC$. Then:
\begin{enumerate}
\item[{\rm (i)}]  for each $\pi\in\Pi$, there exists an indecomposable object $P_\Theta(\pi)\in\Fil(\Theta)$ and an epimorphism\, $P_\Theta(\pi)\to\Theta(\pi)$ with kernel in\, $\Fil(\Theta)$ and such that $\EXT^1(P_\Theta(\pi),V)=0$ for all\, $V\in\Fil(\Theta)$. 
\item[{\rm (ii)}] For any object $X\in\Fil(\Theta)$, there exists an exact sequence 
$$0\to X'\to P_0(X)\to X\to 0$$ 
where $P_0(X)$ is a finite direct sum of objects  $\simeq P_\Theta(\pi)$, and $X'\in\Fil(\Theta)$.   
\end{enumerate}
\end{Lemma}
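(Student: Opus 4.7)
The plan is to prove (i) by induction on $|\Pi|$, building $P_\Theta(\pi)$ as an iterated universal extension in the spirit of Dlab-Ringel. Fix a maximal element $\pi_0 \in \Pi$ and set $\Pi' := \Pi \setminus \{\pi_0\}$ with the induced partial order. For $\pi = \pi_0$ (and for the base case $|\Pi| = 1$), set $P_\Theta(\pi_0) := \Theta(\pi_0)$: condition {\tt (Dir)} gives $\EXT^1(\Theta(\pi_0), \Theta(\si)) = 0$ for every $\si \in \Pi$, so d\'evissage along a $\Theta$-filtration extends this to $\EXT^1(\Theta(\pi_0), V) = 0$ for all $V \in \Fil(\Theta)$; the kernel condition is trivial, and $\Theta(\pi_0)$ is indecomposable because $\END(\Theta(\pi_0)) = B_{\pi_0}^\op$ is connected and hence has only trivial idempotents.

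For $\pi \in \Pi'$ I would apply the inductive hypothesis to the restricted family $\{\Theta(\tau) \mid \tau \in \Pi'\}$ (which is itself weakly $\B$-standardizing) to produce an indecomposable $P'(\pi) \in \Fil(\Theta)$ with the stated properties over $\Pi'$. Because $B_{\pi_0}$ is left Noetherian and each subquotient in a $\Theta$-filtration of $P'(\pi)$ is $\simeq \Theta(\tau)$ with $\tau \in \Pi'$, axiom {\tt (FG)} and the long exact sequences of $\HOM(-, \Theta(\pi_0))$ imply that the graded $B_{\pi_0}$-module $M := \EXT^1(P'(\pi), \Theta(\pi_0))$ is finitely generated. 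Choose a minimal homogeneous generating set $\eta_1, \dots, \eta_m$ of degrees $d_1, \dots, d_m$, and take $P_\Theta(\pi)$ to be the middle term of the universal extension
$$
0 \longrightarrow \bigoplus_{i=1}^m q^{d_i}\Theta(\pi_0) \longrightarrow P_\Theta(\pi) \longrightarrow P'(\pi) \longrightarrow 0
$$
classified by $(\eta_1, \dots, \eta_m)$. Composing with the epimorphism $P'(\pi) \to \Theta(\pi)$ yields an epimorphism $P_\Theta(\pi) \to \Theta(\pi)$ whose kernel is an extension of $\ker(P'(\pi) \to \Theta(\pi))$ by $\bigoplus_i q^{d_i}\Theta(\pi_0)$, and thus lies in $\Fil(\Theta)$.

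The central computation is the $\EXT^1$-vanishing. For $\si \in \Pi'$ the long exact sequence of $\HOM(-, \Theta(\si))$ attached to the universal extension has outer terms $\EXT^1(P'(\pi), \Theta(\si))$ and $\EXT^1(\bigoplus_i q^{d_i}\Theta(\pi_0), \Theta(\si))$; the first vanishes by the inductive hypothesis, while the second vanishes by maximality of $\pi_0$ and {\tt (Dir)}. For $\si = \pi_0$, the connecting map $\HOM(\bigoplus_i q^{d_i}\Theta(\pi_0), \Theta(\pi_0)) \to M$ is right $B_{\pi_0}$-linear and sends the canonical generators to $\eta_1, \dots, \eta_m$, hence is surjective, while $\EXT^1(\Theta(\pi_0), \Theta(\pi_0)) = 0$ by {\tt (Dir)}. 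A further d\'evissage promotes this to $\EXT^1(P_\Theta(\pi), V) = 0$ for all $V \in \Fil(\Theta)$. To secure indecomposability I would pass to an indecomposable direct summand of $P_\Theta(\pi)$ through which the surjection onto $\Theta(\pi)$ still factors (possible because $\Theta(\pi)$ is indecomposable), noting that the complementary summand must lie inside the kernel of $P_\Theta(\pi) \to \Theta(\pi)$; the kernel of the chosen summand onto $\Theta(\pi)$ is then a direct complement of that piece inside the full kernel, so it inherits membership in $\Fil(\Theta)$ from the filtration structure built into the construction.

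For (ii) I would induct on the $\Theta$-length of $X$. For $X \simeq q^n\Theta(\pi)$, take $P_0(X) := q^n P_\Theta(\pi)$ and invoke (i). For the inductive step, fix a short exact sequence $0 \to X_1 \to X \to q^n\Theta(\pi) \to 0$ with $X_1$ of strictly smaller length, apply induction to obtain $\psi \colon P_0(X_1) \to X_1$ with kernel in $\Fil(\Theta)$, and use the vanishing $\EXT^1(q^n P_\Theta(\pi), X_1) = 0$ from (i) to lift the epimorphism $q^n P_\Theta(\pi) \to q^n\Theta(\pi)$ to $\tilde\chi \colon q^n P_\Theta(\pi) \to X$. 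Setting $P_0(X) := P_0(X_1) \oplus q^n P_\Theta(\pi)$ with map $(p,r) \mapsto \psi(p) + \tilde\chi(r)$, surjectivity is immediate, and a short diagram chase exhibits the kernel as an extension of $\ker(q^n P_\Theta(\pi) \to q^n\Theta(\pi))$ by $\ker\psi$, both of which lie in $\Fil(\Theta)$ (which is closed under extensions). The principal obstacle I anticipate is the indecomposability clause in (i): the universal extension need not be indecomposable on the nose, so genuine work is required to secure this property, either by verifying that the extraction of a suitable summand preserves the kernel-in-$\Fil(\Theta)$ condition, or by analyzing the endomorphism ring of $P_\Theta(\pi)$ directly to show it is graded local.
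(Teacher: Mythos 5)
Your argument for part (ii) is essentially the one in the paper, but for part (i) you take a genuinely different route, and it has a gap that you yourself flag but do not close.

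The paper constructs $P_\Theta(\pi)$ by a direct iterative process: working up from the minimal elements of $\{\si \mid \si > \pi\}$, it kills the generators of $\EXT^1(-,\Theta(\si))_{B_\si}$ one at a time, producing a chain $E_1, E_2,\dots, E(\pi,\si), E(\pi,\si,\kappa),\dots$, and the crucial point is that at each single step the connecting map $\phi$ is nonzero, so $\chi$ is not onto, which is used to see that the intermediate object (and hence the final one) is indecomposable. You instead do a global induction on $|\Pi|$, removing a maximal element $\pi_0$, obtaining $P'(\pi)$ from the restricted family on $\Pi'=\Pi\setminus\{\pi_0\}$, and then forming the whole universal extension of $P'(\pi)$ by copies of $\Theta(\pi_0)$ in one shot. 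The $\EXT^1$-vanishing checks, the finiteness of $\EXT^1(P'(\pi),\Theta(\pi_0))_{B_{\pi_0}}$ via {\tt (FG)} and Noetherianity, and the kernel-in-$\Fil(\Theta)$ statement are all fine; what this approach loses is that indecomposability no longer comes out of the construction for free.

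This is the genuine gap. Your fallback is to pass to an indecomposable direct summand of $P_\Theta(\pi)$ through which the surjection onto $\Theta(\pi)$ factors, but the justification is not correct: the complementary summand need not lie inside $\ker(P_\Theta(\pi)\to\Theta(\pi))$, and more importantly $\Fil(\Theta)$ is not known to be closed under direct summands in this generality (that closure is proved elsewhere in the paper only under much stronger hypotheses, via proper costandard objects). So even granting that you can choose a summand $A$ with $A\twoheadrightarrow\Theta(\pi)$, you have no argument that $A$ itself has a $\Theta$-filtration, nor that $\ker(A\to\Theta(\pi))$ does. The cleanest ways to repair this are exactly the two you mention in passing: either show directly that $\END(P_\Theta(\pi))$ is local, or—what the paper in effect does—abandon the single universal extension and build it up one generator at a time so that indecomposability is preserved at every stage. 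As written, though, part (i) of your proof is not complete.
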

\begin{proof}
(i) If $\pi$ is a maximal element in $\Pi$, we can take $P_\Theta(\pi)=\Theta(\pi)$. Otherwise, let $\si\in \Pi$ be minimal with $\si>\pi$. Let $\xi_1,\dots,\xi_r$ be a minimal set of generators of the $B_\si$-module $\EXT^1(\Theta(\pi),\Theta(\si))_{B_\si}$, and 
$
0\to \Theta(\si)\to E_1\to \Theta(\pi)\to 0 
$ 
be the extension corresponding to $\xi_1$. It yields the long exact sequence
\begin{align*}
0&\to \HOM(\Theta(\pi),\Theta(\si))_{B_\si}\to\HOM(E_1,\Theta(\si))_{B_\si}
\stackrel{\chi}{\longrightarrow}
 \HOM(\Theta(\si),\Theta(\si))_{B_\si}
\\
&\stackrel{\phi}{\longrightarrow} \EXT^1(\Theta(\pi),\Theta(\si))_{B_\si}
\stackrel{\psi}{\longrightarrow}
\EXT^1(E_1,\Theta(\si))_{B_\si}\to 0.
\end{align*}
The connecting homomorphism $\phi$ maps the identity map $\id_{\Theta(\si)}$ to $\xi_1$. It follows that $\EXT^1(E_1,\Theta(\si))_{B_\si}$ is generated by $\bar\xi_2:=\psi(\xi_2),\dots,\bar\xi_r:=\psi(\xi_r)$ as a (right) $B_\si$-module. In fact, note that 
$\EXT^1(E_1,\Theta(\si))_{B_\si}\simeq \EXT^1(\Theta(\pi),\Theta(\si))_{B_\si}/(\xi_1 \cdot B_\si).$ 
Finally, $E_1$ is indecomposable 
since $\phi\neq 0$, and so $\chi$ is not onto. 

Let
$
0\to \Theta(\si)\to E_2\to E_1\to 0 
$ 
be the extension corresponding to $\bar \xi_2\in \EXT^1(E_1,\Theta(\si))$. It yields the long exact sequence
\begin{align*}
0&\to \HOM(E_1,\Theta(\si))_{B_\si}\to\HOM(E_2,\Theta(\si))_{B_\si}
\stackrel{\chi}{\longrightarrow}
 \HOM(\Theta(\si),\Theta(\si))_{B_\si}
\\
&\stackrel{\phi}{\longrightarrow} \EXT^1(E_1,\Theta(\si))_{B_\si}
\stackrel{\psi}{\longrightarrow}
\EXT^1(E_2,\Theta(\si))_{B_\si}\to 0.
\end{align*}
The connecting homomorphism $\phi$ maps the identity map $\id_{\Theta(\si)}$ to $\bar \xi_2$. It follows that $\EXT^1(E_2,\Theta(\si))_{B_\si}$ is generated by $\psi(\bar \xi_3),\dots,\psi(\bar\xi_r)$ as a $B_\si$-module. In fact, $\EXT^1(E_2,\Theta(\si))_{B_\si}\simeq \EXT^1(\Theta(\pi),\Theta(\si))_{B_\si}/(\xi_1\cdot B_\si+\xi_2\cdot B_\si).$
Finally, $E_2$ is an indecomposable object in $\Fil(\Theta)$ since $\phi$ is non-zero, and so $\chi$ is not onto. 
Continuing this way with $\xi_3,\dots,\xi_r$, we get an indecomposable object $E(\pi,\si)\in\Fil(\Theta)$ such that 
there exists an exact sequence
$$
0\to \big(\RANK \EXT^1(\Theta(\pi),\Theta(\si))_{B_\si}\big)\cdot \Theta(\si)\to E(\pi,\si)\to \Theta(\pi)\to 0
$$ 
and 
$$\EXT^1(E(\pi,\si),\Theta(\si))=\EXT^1(E(\pi,\si),\Theta(\pi))=0.$$  
If $\pi$ is a maximal element of $\Pi\setminus\{\si\}$, we can take $P_\Theta(\pi)=E(\pi,\si)$. Otherwise, let $\kappa\in \Pi\setminus\{\si\}$ be a minimal with $\kappa>\pi$. As above, we construct an extension $E(\pi,\si,\kappa)$ of $E(\pi,\si)$ by $\big(\RANK \EXT^1(E(\pi,\si),\Theta(\kappa))_{B_\si}\big)\cdot \Theta(\kappa)$ such that 
\begin{align*}
\EXT^1(E(\pi,\si,\kappa),\Theta(\pi))
=\EXT^1(E(\pi,\si,\kappa),\Theta(\si))
=\EXT^1(E(\pi,\si,\kappa),\Theta(\kappa))=0.
\end{align*}
The process will stop after finitely many steps to produce $P_\Theta(\pi)$, since $\Pi$ is finite. 

(ii) For $X\cong q^n\Theta(\pi)$, we take $P_0(X)=q^nP_\Theta(\pi)$. 
Now proceed by induction on the $\Theta$-filtration length of $X$. We may assume that there exists a short exact sequence $0\to U\stackrel{{\mathtt i}}{\longrightarrow}X\stackrel{{\mathtt p}}{\longrightarrow}Y\to 0$ with  non-trivial $U,Y\in\Fil(\Theta)$. 
By induction, there are epimorphisms $\eps_U:P_0(U)\to U$ and $\eps_Y:P_0(Y)\to Y$ with $P_0(U),P_0(Y)$ of the required form, and the kernels $U'$ of $\eps_U$ and  $Y'$ of $\eps_Y$ in $\Fil(\Theta)$. Since $\EXT^1(P_0(Y),U)=0$, there is $\al:P_0(Y)\to X$ with $\al\circ {\mathtt p}=\eps_Y$. Then $[\eps_U\circ {\mathtt i},\al]:P_0(U)\oplus P_0(Y)\to X$ is surjective, and its kernel is an extension of $U'$ by $Y'$. 
\end{proof}

A weakly $\B$-standardizing family $\Theta$ is called  {\em $\B$-standardizing} if the following additional condition holds: 
\begin{enumerate}
\item[{\tt (Fr)}] For each $\pi,\si\in\Pi$, the $B_\si$-module $\HOM(P_\Theta(\pi),\Theta(\si))$ is free finite rank. 
\end{enumerate}

A standard example is as follows. If $\catC$ is a $\B$-highest weight category with poset $\Si$, and $\Pi\subseteq\Si$ is a finite saturated subset, 
then  $\De:=\{\De(\pi)\mid\pi\in\Pi\}$  is a $\B$-standardizing family. Indeed, we have already observed in \S\ref{SSSF} that $\De$ is a weakly $\B$-standardizing family. 
Now {\tt (HWC)} in the category $\catC(\Pi)$ yields {\tt (Fr)}.  

The proof of the next theorem follows that of \cite[Theorem 2]{DRStand}. 

\begin{Theorem} \label{TDLStand} 
Let $\Theta$ be a (weakly) $\B$-standardizing  family in a graded  abelian $F$-linear category $\catC$. Then there exists a (weakly) $\B$-quasihereditary algebra $H$, unique up to a graded Morita equivalence, such that the category $\Fil(\Theta)$ and the category $\Fil(\De)$ of graded $H$-modules with finite $\De$-filtrations are graded equivalent. 
\end{Theorem}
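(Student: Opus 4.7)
The plan is to build $H$ as the (opposite) endomorphism algebra of a relative projective generator of $\Fil(\Theta)$ and then verify the axioms of a (weakly) $\B$-quasihereditary algebra together with the equivalence by transporting filtrations across $\HOM(P,-)$. Concretely, I would use Lemma~\ref{LDlR}(i) to produce, for each $\pi\in\Pi$, the indecomposable object $P_\Theta(\pi)\in\Fil(\Theta)$; set $P:=\bigoplus_{\pi\in\Pi}P_\Theta(\pi)$, $H:=\END(P)^\op$, and $\funF:=\HOM(P,-)$. The projections $e_\pi\in H$ onto the summands $P_\Theta(\pi)$ are then pairwise orthogonal primitive idempotents summing to $1$, and $He_\pi\cong \funF(P_\Theta(\pi))$. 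Laurentianness of $H$ follows because $P_\Theta(\pi)$ has a finite $\Theta$-filtration whose successive $\HOM(P_\Theta(\pi),\Theta(\tau))$-quotients are, by {\tt (FG)}, finitely generated over $B_\tau\in\B$; left Noetherianness follows because the $B_\tau$ are left Noetherian.

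Next I would identify the standard $H$-modules. The recursive construction in the proof of Lemma~\ref{LDlR}(i) yields $P_\Theta(\pi)=E(\pi,\sigma_1,\sigma_2,\dots)$, where each $\sigma_i$ is strictly greater than $\pi$; hence the kernel of the canonical map $P_\Theta(\pi)\onto \Theta(\pi)$ carries a $\Theta$-filtration with subquotients $\simeq\Theta(\sigma)$ for $\sigma>\pi$. Since $\EXT^1(P_\Theta(\pi),V)=0$ for all $V\in\Fil(\Theta)$, the functor $\funF$ is exact on $\Fil(\Theta)$, so it sends this short exact sequence to a short exact sequence in $\mod{H}$ exhibiting $\funF(\Theta(\pi))$ as the standard module $\De_H(\pi)$ (with respect to the order inherited from $\Pi$) and $He_\pi$ as admitting a $\De_H$-filtration ending in $\De_H(\pi)$ with earlier sections $\De_H(\sigma)$ for $\sigma>\pi$; this is {\tt (SC1)}. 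Axiom {\tt (SC2)} reduces to $\End_H(\De_H(\pi))^\op\cong B_\pi\in\B$ via {\tt (End)}, and axiom {\tt (HWC)} (respectively {\tt (FGen)}) is exactly {\tt (Fr)} (respectively the $\HOM$-part of {\tt (FG)}) after noting $\HOM_H(He_\sigma,\De_H(\pi))\cong\HOM(P_\Theta(\sigma),\Theta(\pi))$. Theorem~\ref{TCPS} then delivers the (weakly) $\B$-quasihereditary structure on $H$.

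For the equivalence $\Fil(\Theta)\iso\Fil(\De_H)$, I would first observe that $\funF$ does land in $\Fil(\De_H)$ by exactness on $\Fil(\Theta)$ and the identification above. Fully-faithfulness holds tautologically on finite direct sums of shifted $P_\Theta(\pi)$; the general case is the standard five-lemma/projective-presentation argument already used in the proof of Theorem~\ref{TEquiv}, powered by the resolution supplied by Lemma~\ref{LDlR}(ii) on the $\Fil(\Theta)$-side and by the corresponding projective presentation $H e_\bullet\to H e_\bullet\to M\to 0$ on the $\mod{H}$-side. Essential surjectivity amounts to lifting such a presentation of any $M\in\Fil(\De_H)$ to a two-term complex in $\Fil(\Theta)$ via the isomorphism $\HOM_H(He_\sigma,He_\pi)\cong\HOM(P_\Theta(\sigma),P_\Theta(\pi))$, and then verifying that the cokernel lies in $\Fil(\Theta)$ by inducting on the $\De_H$-filtration length and using Corollary~\ref{CSESDelta} together with $\EXT^1$-vanishing from Lemma~\ref{LDlR}(i).

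Finally, for uniqueness up to graded Morita equivalence, any other $H'$ satisfying the conclusion gives an equivalence $\Fil(\Theta)\simeq\Fil(\De_{H'})$; under such an equivalence the indecomposable projectives of $\Fil(\De_{H'})$ (namely the $P_{H'}(\pi)$) must correspond to the indecomposable projectives of $\Fil(\Theta)$ (namely the $P_\Theta(\pi)$, since both are characterized by having a $\Theta$-filtration ending in $\Theta(\pi)$ and vanishing $\EXT^1$ against $\Fil(\Theta)$). Consequently $H'\cong\END\!\bigl(\bigoplus_\pi P_{H'}(\pi)\bigr)^\op\cong\END(P)^\op=H$, up to the standard passage between an algebra and the endomorphism ring of a projective generator, which is precisely graded Morita equivalence. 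The main obstacle I anticipate is the bookkeeping in Step~2 to extract from the construction in Lemma~\ref{LDlR}(i) a filtration of $\ker(P_\Theta(\pi)\to\Theta(\pi))$ with strictly increasing indices --- together with confirming essential surjectivity, which requires carefully controlling when a presentation in $\mod{H}$ lifts to an object of $\Fil(\Theta)$ rather than merely to an object of $\catC$.
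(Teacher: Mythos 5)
Your overall strategy coincides with the paper's: take $P=\bigoplus_\pi P_\Theta(\pi)$ from Lemma~\ref{LDlR}(i), set $H=\END(P)^\op$, $\funF=\HOM(P,-)$, prove fully-faithfulness by the projective-presentation argument from Lemma~\ref{LDlR}(ii), establish essential surjectivity, and read off $\B$-quasiheredity via Theorem~\ref{TCPS}. The identification of $\funF(\Theta(\pi))$ with $\De_H(\pi)$, the verification of the axioms, and the Morita-uniqueness argument are all fine and match the paper in substance (the order in which you verify quasiheredity versus equivalence is harmlessly permuted).

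The one genuine gap is in essential surjectivity, and it is exactly the obstacle you flag at the end without resolving. You propose to lift a presentation $Q_1\to Q_0\to M\to 0$ in $\mod{H}$ to a two-term complex $P_1\to P_0$ in $\catC$ and then invoke Corollary~\ref{CSESDelta} plus the $\EXT^1$-vanishing of Lemma~\ref{LDlR}(i) to see that the cokernel lies in $\Fil(\Theta)$. This does not work for two reasons. First, Corollary~\ref{CSESDelta}(i) controls \emph{kernels}, not cokernels, of maps between $\De$-filtered modules; the cokernel of a map between objects of $\Fil(\Theta)$ is not in $\Fil(\Theta)$ in general, and there is no dual statement to appeal to. Second, Corollary~\ref{CSESDelta} lives in $\mod{H}$ for a $\B$-stratified $H$, while the cokernel you need to control lives in the ambient category $\catC$, where $\Theta$-filtrations are a priori not stable under cokernels and no such closure lemma is available before the equivalence is established. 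Invoking it here is circular.

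The paper avoids this by a direct inductive construction: given $M\in\Fil(\De_H)$, choose a submodule $U\cong q^n\De_H(\pi)$ with $M/U\in\Fil(\De_H)$ of smaller filtration length, inductively realize $M/U\cong\funF(Y)$ with $Y\in\Fil(\Theta)$, take the presentation $0\to Y'\to P_0(Y)\to Y\to 0$ supplied by Lemma~\ref{LDlR}(ii), and use the projectivity of $\funF(P_0(Y))$ to produce a surjection $q^n\De_H(\pi)\oplus\funF(P_0(Y))\onto M$ with kernel $\funF(Y')$. Fully-faithfulness then lets one lift the component $\funF(Y')\to q^n\De_H(\pi)$ to a map ${\tt h}:Y'\to q^n\Theta(\pi)$ in $\catC$, and $X:=\operatorname{coker}\,[{\tt h},{\tt u}]$ is built directly as an extension of $Y$ by $q^n\Theta(\pi)$; this is automatically in $\Fil(\Theta)$, and $\funF(X)\cong M$ because $Y'\in\Fil(\Theta)$ guarantees exactness of $\funF$ on the defining sequence. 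You should replace the appeal to Corollary~\ref{CSESDelta} with this explicit construction.
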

\begin{proof}
For each $\pi\in\Pi$, we have an object $P_\Theta(\pi)\in\Fil(\Theta)$ constructed in Lemma~\ref{LDlR}. Let $P:=\bigoplus_{\pi\in\Pi}P_\Theta(\pi)$ and $H:=\END(P)^\op$. Note by {\tt (End)} and {\tt (FG)} that $H$ is a left Noetherian 
Laurentian graded algebra. 
Consider the functor 
$$
\funF:=\HOM(P,-):\Fil(\Theta)\to\Mod{H}.
$$ 
As $\EXT^1(P,X)=0$ for all $X\in\Fil(\Theta)$, the functor is exact on exact sequences $0\to U\to X\to V$ with $U,V,X\in\Fil(\Theta)$. Set $P(\pi):=\funF(P_\Theta(\pi))$ and $\De(\pi):=\funF(\Theta(\pi))$ for all $\pi\in\Pi$. 
Note that $\funF$ maps modules in $\Fil(\Theta)$ to the modules in $\Fil(\De)$. From now on we consider $\funF$ as a functor from $\Fil(\Theta)$ to $\Fil(\De)$. 

\vspace{1mm}
\noindent
{\em Claim 1.} The functor $\funF:\Fil(\Theta)\to\Fil(\De)$ is fully faithful. 

\vspace{2mm}
To prove the claim, note first that  it is true on finite direct sums of objects of the form $q^n P_\Theta(\pi)$. Let $X$ be an arbitrary object of $\Fil(\Theta)$. By Lemma~\ref{LDlR}(ii), there is an exact sequence  
$$P_1(X)\stackrel{\de_X}{\longrightarrow} P_0(X)\stackrel{\eps_X}{\longrightarrow} X\to 0$$ in $\catC$ such that 
$X':=\im\de_X$ and $X''=\ker \de_X$ belong to $\Fil(\Theta)$. Under $\funF$, the exact sequence above goes to a projective presentation of $\funF(X)$. Now assume that $X,Y\in\Fil(\Theta)$. Let $f:X\to Y$ be a map with $\funF(f)=0$. There exist maps $f_0,f_1$ which make the following diagram commutative:
$$\begin{tikzpicture}
\node at (0,0) {$P_1(Y)\stackrel{\de_Y}{\longrightarrow} P_0(Y)\stackrel{\eps_Y}{\longrightarrow} Y\to 0$}; 

\node at (0,1.5) {$P_1(X)\stackrel{\de_X}{\longrightarrow} P_0(X)\stackrel{\eps_X}{\longrightarrow} X\to 0$}; 

\node at  (1.7,0.8) {\tiny $f$};
\node at  (0,0.8) {\tiny $f_0$};
\node at  (-1.9,0.8) {\tiny $f_1$};  

\draw [->] (1.5,1.2) -- (1.5,0.3);
\draw [->] (-0.2,1.2) -- (-0.2,0.3);
\draw [->] (-2.1,1.2) -- (-2.1,0.3);
\end{tikzpicture}
$$
Since $F(f)=0$, there is a map $g':\funF(P_0(X))\to \funF(P_1(Y))$ such that $ \funF(\de_Y)\circ g'=\funF(f_0)$. However, $g'=\funF(g)$ for some $g:P_0(X)\to P_1(Y)$ such that  $f_0=\de_Y\circ g$. Hence $f\circ \eps_X=\eps_Y\circ \de_Y\circ g=0$, and so $f=0$. We have proved that $\funF$ is faithful. In order to prove that it is full, let $f':\funF(X)\to \funF(Y)$ be a morphism. We then obtain the maps $f_0'$ and $f_1'$ which make the following diagram commutative 
$$\begin{tikzpicture}
\node at (0,0) {$\funF(P_1(Y))\stackrel{\funF\de_Y}{\longrightarrow} \funF(P_0(Y))\stackrel{\funF\eps_Y}{\longrightarrow} \funF(Y)\to 0$}; 

\node at (0,1.5) {$\funF(P_1(X))\stackrel{\funF\de_X}{\longrightarrow} \funF(P_0(X))\stackrel{\funF\eps_X}{\longrightarrow} \funF(X)\to 0$}; 

\node at  (2.1,0.8) {\tiny $f'$};
\node at  (0,0.8) {\tiny $f_0'$};
\node at  (-1.9,0.8) {\tiny $f_1'$};  

\draw [->] (1.9,1.2) -- (1.9,0.3);
\draw [->] (-0.2,1.2) -- (-0.2,0.3);
\draw [->] (-2.1,1.2) -- (-2.1,0.3);
\end{tikzpicture}
$$
We can write $f_0'=\funF(f_0)$ and $f_1'=\funF(f_1)$, and we have $\de_Y\circ f_1=f_0\circ \de_X$. Since $\eps_Y\circ f_0\circ \de_X=0$, there is $f:X\to Y$ such that $\eps_Y\circ f_0=f\circ \eps_X$. Then 
$$\funF f\circ \funF\eps_X=\funF\eps_Y\circ \funF f_0=\funF\eps_Y\circ f_0'=f'\circ \funF\eps_X.
$$
As $\eps_X$ is an epimorphism, we have  $\funF f=f'$. This completes the proof of Claim~1.

\vspace{1mm}

To prove that $\funF$ is equivalence, it remains to prove that any $M\in\Fil(\De)$ is isomorphic to  $\funF(X)$ for some $X\in\Fil(\Theta)$. We apply induction on the length of a $\De$-filtration of $M$, the result being clear for length $1$. Let $U\subseteq M$ be a submodule with $U\cong q^n\De(\pi)$ for some $n$ and $\pi$ and $M/U\in\Fil(\De)$. Let ${\tt i}:q^n\De(\pi)\to M$ be a monomorphism with the image $U$ and ${\tt p}:M\to M/U$ be the  projection. By induction $M/U\cong F(Y)$ for some $Y\in\Fil(\Theta)$. By Lemma~\ref{LDlR}(ii), there is $P_0(Y)\in\Fil(\Theta)$ which is a finite direct sum of modules of the form $q^mP_\Theta(\pi)$ and an epimorphism $\eps_Y:P_0(Y)\to Y$ with $Y':=\ker\eps_Y\in\Fil(\Theta)$. Let ${\tt u}:Y'\to P_0(Y)$ be the inclusion map. Since $\funF(P_0(Y))$ is projective, there is a map ${\tt a}:\funF(P_0(Y))\to M$ such that $\pi\circ {\tt a}=\funF(\eps_Y)$. Then
$$
[{\tt i},{\tt a}]:q^n\De(\pi)\oplus \funF(P_0(Y))\to M
$$
is surjective, and its kernel is isomorphic to $\funF(Y')$ with the kernel map of the form $[{\tt f},\funF({\tt u})]:\funF(Y')\to q^n\De(\pi)\oplus \funF(P_0(Y))$ for some map ${\tt f}:\funF(Y')\to q^n\De(\pi)$. Since $\funF(Y')$ and $q^n(\De(\pi))$ are images under $\funF$, and $\funF$ is full, there is a map ${\tt h}:Y'\to q^n\Theta(\pi)$ with $\funF({\tt h})={\tt f}$. As ${\tt u}$ is am monomorphism, we conclude that $[{\tt h},{\tt u}]:Y'\to q^n\Theta(\pi)\oplus P_0(Y)$ is also a monomorphism. Let $X:=\operatorname{coker} [{\tt h},{\tt u}]$. 
Since ${\tt u}=[{\tt h},{\tt u}]\circ 
\left[
\begin{matrix}
{\tt 0}  \\
{\tt id} 
\end{matrix}
\right]$, the cockerel $X$ maps onto the cockerel $Y$ of ${\tt u}$, say by ${\tt e}:X\to Y$, and  $\ker{\tt e}= q^n\Theta(\pi)$. We conclude that $X$ is an extension of $Y$ and $q^n\Theta(\pi)$. The exact sequence
$$
0\to Y'\stackrel{[{\tt h},{\tt u}]}{\longrightarrow} q^n\Theta(\pi)\oplus P_0(Y)\longrightarrow X\to 0
$$
goes under $\funF$ to an exact sequence because $Y'\in\Fil(\Theta)$. So $\funF(X)$ is isomorphic to the cockerel of $\funF([{\tt h},{\tt u}])=[{\tt f}, \funF({\tt u})]$, which is isomorphic to $M$. 

We have now proved that $\funF:\Fil(\Theta)\to\Fil(\De)$ is an equivalence. 
Claim~1 implies that $\mod{H}$ satisfies the axioms {\tt (SC1)}, {\tt (SC2)}, and {\tt (HWC)} (resp. {\tt (FGen)} if we start with weakly $\B$-standardizing family). By Theorem~\ref{TCPS}, we conclude that $H$ is (weakly) $\B$-quasihereditary with standard modules $\De(\pi)$. 
\end{proof}

\section{$\B$-quasihereditary algebras with involution}\label{SInv}

\subsection{Balanced involution}
Let $H$ be a left Noetherian Laurentian algebra. 
Suppose that  $H$ has a {\em homogeneous antiinvolution} $\tau:h\mapsto h'$. This allows us to consider, for any left $H$-module $V$, the right $H$-module $V^\tau$ with $vh:=h'v$ for all $v\in V,h\in H$. Moreover, given an $H$-module $V$ with finite dimensional graded components $V_n$ 
we define its graded dual 
$
V^\circledast
$
as a left $H$-module, which as a graded vector space has $
V^\circledast_n:=V_{-n}^*
$
for all $n\in \Z$, and the action $hf(v)=f(h' v)$ for $f\in V^\circledast$, $v\in V$ and $h\in H$. Note that $(q^n V)^{\circledast}\cong q^{-n}V^{\circledast}$ and $\DIM V^\circledast=\dim_{q^{-1}}V$.

Recall that we have chosen a complete irredundant set 
$\{L(\pi)\mid \pi\in \Pi\}$
of irreducible $H$-modules up to isomorphism and degree shift.   
The homogeneous antiinvolution $h\mapsto h'$ of $H$ is called {\em balanced} if for every $\pi\in \Pi$ we have that $L(\pi)^\circledast \cong q^n L(\pi)$ for some {\em even} integer $n$. In that case, we can redefine $L(\pi):=q^{n/2}L(\pi)$ to achieve that 
\begin{equation}\label{ELSelfD}
L(\pi)^\circledast \cong L(\pi)\qquad(\pi\in \Pi).
\end{equation}

Throughout the section, we assume that the algebra $H$ possesses a balanced antiinvolution $\tau$, and the irreducible modules $L(\pi)$  are chosen so that (\ref{ELSelfD}) holds. 
Then $P(\pi)^\circledast$ is the injective hull of $L(\pi)$, and for a $\B$-stratified $H$, it follows from definitions of \S\ref{SSStMod} and \S\ref{SSProperCost} that 
$\bar\De(\pi)^\circledast\cong \bar\nabla(\pi)
$
for all $\pi\in\Pi$. In particular, Theorem~\ref{TBGG} implies
\begin{equation}\label{EBGGDeBar}
(P(\pi):\De(\si))_q=[\bar\De(\si):L(\pi)]_{q}.
\end{equation}

\subsection{Affine heredity and affine cell ideals}
Koenig and Xi \cite{KX} have  studied the notion of an {\em affine cellular algebra}. We now define  its graded version and show, by analogy with a well-known classical fact, that affine quasihereditary algebras with involution are graded affine cellular. 

Let, as usual, $H$ be a left Noetherian Laurentian algebra, and $\tau$ be a homogeneous antiinvolution on $H$. In the following definition, which is a graded version of \cite[Definition 2.1]{KX}, we do not need to assume that $\tau$ is  balanced. Recall that we assume that all algebras, ideals, vector spaces, and so on are assumed graded.

\begin{Definition}
{\rm 
An ideal $J$ of $H$ is {\em (graded) affine cell} if the following data are given and the following conditions are satisfied: 
\begin{enumerate}
\item[{\tt (Inv)}]  $\tau(J)=J$.
\item[{\tt (Stand)}] There exist a finite dimensional vector space $V$, an affine algebra $B$ with a homogeneous involution $\iota$, and an $(H,B)$-bimodule structure on $\De := V \otimes B$, where the right $B$-module structure is induced by that of the  regular module $B_B$.
\item[{\tt (Sym)}] Let $\Delta' := B \otimes V$ be the $(B,H)$-bimodule with the left $B$-module structure induced by that of the  regular module ${}_BB$ and right $H$-module structure defined by 
\begin{equation}\label{ERightCell}
(b\otimes v)a = \tw(\tau(a)(v \otimes b)),
\end{equation} 
where 
$\tw:V\otimes B\to B\otimes V,\ v\otimes b\mapsto b\otimes v$; 
then there is an $(H,H)$-bimodule isomorphism $\mu: J \to \Delta \otimes_B\Delta'$, such that the following diagram commutes:

\begin{equation}\label{ECellCD}
\xymatrix{ J \ar^-{\mu}[rr]  \ar^{\tau}[d]&& \Delta \otimes_B\Delta' \ar^{v \otimes b \otimes b' \otimes w \mapsto w \otimes \iota(b') \otimes \iota(b) \otimes v }[d] \\ J \ar^-{\mu}[rr]&&\Delta \otimes_B\Delta'.}
\end{equation}
\end{enumerate}
The algebra $H$ is called \emph{(graded) affine cellular} if there is a vector space decomposition $H= J_1' \oplus J_2' \oplus \cdots \oplus J_n'$ 
with $\tau(J_l')=J_l'$ for all $1 \leq l \leq n$, such that, setting $J_m:= \bigoplus_{l=1}^m J_l'$, we obtain an ideal filtration
$$0=J_0 \subset J_1 \subset J_2 \subset \cdots \subset J_n=H,$$
and each $J_m/J_{m-1}$ is an affine cell ideal of $H/J_{m-1}$. 
}
\end{Definition}

\begin{Remark} \label{RSplit} 
{\rm 
If the characteristic $p$ of the ground field is not $2$, then the final condition in the definition above can be relaxed. 
We could just require that there is an ideal filtration
$0=J_0 \subset J_1 \subset J_2 \subset \cdots \subset J_n=H,$
such that for each $m=1,\dots,n$, we have $\tau(J_m)=J_m$ and $J_m/J_{m-1}$ is an affine cell ideal of $H/J_{m-1}$. 
Indeed, if $p\neq 2$ then $\tau$ is a semisimple  linear transformation. So, given a $\tau$-invariant subideal $I_{m-1}$ in a $\tau$-invariant ideal $I_m$, we can always 
find a $\tau$-invariant subspace $I_m'$ inside $I_m$ such that $I_{m-1}\oplus I_m'=I_m$.
}
\end{Remark}

The following lemma is inspired by \cite[(2.2)]{CPSDuality}. Recall that `affine quasihereditary' means `$\B$-quaihereditary' for the case where $\B$ is the class of affine algebras. 

\begin{Lemma} \label{L240913} 
Let $H$ be an affine quasihereditary algebra with a balanced involution $\tau:H\to H,\ h\mapsto h'$ and affine heredity chain 
$0=J_0 \subset J_1 \subset J_2 \subset \cdots \subset J_n=H.$ Then the ideals $J_m$ are $\tau$-invariant for all $m=1,\dots,n$.
\end{Lemma}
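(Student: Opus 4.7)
The plan is to proceed by induction on the length $n$ of the chain. The base cases $n\leq 1$ are immediate, since $J_0=0$ and $J_n=H$ are trivially $\tau$-invariant. For the inductive step, the key reduction is to prove $\tau(J_1)=J_1$; once this is accomplished, $\tau$ descends to a homogeneous antiinvolution $\bar\tau$ on $H/J_1$, whose simple modules are exactly $\{L(\kappa):\kappa\neq\pi\}$ (since $J_1=HeH$ annihilates $L(\kappa)$ precisely when $\kappa\neq\pi$, where $\pi$ is the weight produced below). The balanced property for $\bar\tau$ is inherited from $\tau$, because the $\circledast$-duality computed via $\bar\tau$ on an $\bar H$-module coincides with that computed via $\tau$ on the same module inflated to $H$. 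Consequently, the shortened chain $0\subset J_2/J_1\subset\cdots\subset J_n/J_1$ has length $n-1$ and the inductive hypothesis yields $\bar\tau$-invariance of each $J_m/J_1$, which lifts to $\tau(J_m)=J_m$ in $H$.

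Everything therefore reduces to establishing $\tau(J_1)=J_1$. By Lemmas~\ref{LDefHer} and~\ref{LeHe}, we may write $J_1=HeH$ for a primitive idempotent $e\in H$ with $He\cong P(\pi)$ for some $\pi\in\Pi$. Applying $\tau$, which is an antiinvolution, gives $\tau(J_1)=H\tau(e)H$. Observe that $HeH$ equals the trace of $P(\pi)\cong He$ in the left regular module ${}_HH$, namely the sum of images of all left $H$-homomorphisms $P(\pi)\to {}_HH$; since this trace depends only on the isomorphism class of $P(\pi)$, it suffices to establish the isomorphism of left $H$-modules
$$
H\tau(e)\cong P(\pi).
$$

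This last isomorphism is where I expect the main difficulty, and my plan is to attack it by passing to the Wedderburn quotient $\bar H:=H/N(H)$, a finite-dimensional semisimple algebra carrying the induced antiinvolution $\bar\tau$. Decompose $\bar H=\prod_{\kappa\in\Pi}M_\kappa$ into its simple blocks, where $M_\kappa$ denotes the unique block acting nontrivially on $L(\kappa)$. The crucial assertion is that the balanced condition $L(\kappa)^\circledast\cong L(\kappa)$ forces $\bar\tau(M_\kappa)=M_\kappa$ for every $\kappa$: for $m\in M_\kappa$, the action on $L(\kappa)^\circledast$ is via $\bar\tau(m)$, so $L(\kappa)^\circledast$ lies in the block $\bar\tau(M_\kappa)$, and identifying it with $L(\kappa)$ via the balanced isomorphism pins down $\bar\tau(M_\kappa)=M_\kappa$. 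Since $\bar e$ lies in $M_\pi$ (because $\bar H\bar e\cong L(\pi)$), this yields $\bar\tau(\bar e)\in M_\pi$, so the projective indecomposable left $H$-module $H\tau(e)$ has simple head $L(\pi)$ and is therefore isomorphic to $P(\pi)$, completing the reduction and hence the proof.
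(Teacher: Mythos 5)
Your proof is correct and rests on the same key observation as the paper's: the balanced condition $L(\kappa)^\circledast\cong L(\kappa)$ forces $e$ and $\tau(e)$ to act nontrivially on exactly the same simples, so $HeH=H\tau(e)H$. The paper phrases this directly as ``$eL(\pi)\neq 0$ iff $e'L(\pi)\neq 0$'' for an arbitrary idempotent generator of $J_1$, while you take the primitive idempotent from Lemma~\ref{LeHe}, pass to $\bar H=H/N(H)$, and show $\bar\tau$ fixes each Wedderburn block; you also spell out the induction and the descent of the balanced property to $H/J_1$, which the paper compresses into ``it suffices to prove that $J_1$ is $\tau$-invariant.'' One small imprecision: $H\tau(e)\cong P(\pi)$ should really be $H\tau(e)\simeq P(\pi)$ (up to a degree shift), but this is harmless since the trace of $P(\pi)$ in ${}_HH$ is unchanged by shifts.
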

\begin{proof}
It suffices to prove that $J_1$ is $\tau$-invariant. By Lemma~\ref{LDefHer}, we have that $J_1=HeH$ for some idempotent $e\in H$. 
For any $\pi\in \Pi$, we have $L(\pi)^\circledast\cong L(\pi)$, whence $e L(\pi)\neq 0$ if and only if $ e'L(\pi)\neq 0$. Hence  the projective module $He$ has the projective indecomposable module $P(\pi)$ as a summand if and only if $He'$ does. 
It now follows that $HeH=He'H$.
\end{proof}

In the situation of the lemma, we say that the balanced involution $\tau$ is {\em split} if there is a vector space decomposition $H= J_1' \oplus J_2' \oplus \cdots \oplus J_n'$ 
with $\tau(J_l')=J_l'$ for $1 \leq l \leq n$, and such that  $J_m:= \bigoplus_{l=1}^m J_l'$. The argument of Remark~\ref{RSplit} shows that every balanced involution is automatically split unless the characteristic of the ground field is $2$. 

\begin{Proposition}\label{PQHAffCell}
Let  $H$ be an affine quasihereditary algebra with a balanced split involution $\tau$. Then $H$ is an affine cellular algebra. 
\end{Proposition}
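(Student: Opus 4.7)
The plan is to proceed by induction on the length $n$ of an affine heredity chain $0 = J_0 \subsetneq J_1 \subsetneq \cdots \subsetneq J_n = H$. By Lemma~\ref{L240913} each ideal $J_m$ is automatically $\tau$-invariant, and the split decomposition $H = J_1' \oplus \cdots \oplus J_n'$ descends to $\bar H := H/J_1$, so $\bar H$ inherits an affine heredity chain of length $n-1$ together with a balanced split involution $\bar\tau$ (with splitting $\bar J_2' \oplus \cdots \oplus \bar J_n'$ induced from $H$). By the inductive hypothesis $\bar H$ is affine cellular, and it suffices to show that $J_1$ is an affine cell ideal of $H$ with respect to the restriction of $\tau$.

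The crucial preliminary step is to select a primitive idempotent $e \in J_1' = J_1$ with $\tau(e) = e$ and $HeH = J_1$. Starting from Lemma~\ref{LeHe} we have $J_1 = HfH$ for some primitive idempotent $f$ with $Hf \cong P(\pi)$. The balanced hypothesis $L(\pi)^\circledast \cong L(\pi)$ forces the Wedderburn block of $H/N(H)$ containing $\bar f$ to be $\bar\tau$-invariant and to admit a $\bar\tau$-fixed primitive idempotent $\bar e$; the splitness hypothesis then allows us to lift $\bar e$ to a $\tau$-invariant primitive idempotent $e$ lying inside $J_1'$. The two idempotent lifts $f$ and $\tau(f)$ of $\bar e$ are conjugate by a unit $1 + x$ with $x \in N(H)$, and averaging this unit against its $\tau$-image inside the $\tau$-invariant Peirce component produces the required $\tau$-fixed $e$. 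This is the main obstacle of the proof: neither the balanced condition alone nor the splitness alone is enough to guarantee such an idempotent.

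With $\tau(e) = e$ in place, set $B := eHe$ and $\iota := \tau|_B$. Since $H$ is affine quasihereditary, $B$ lies in the class $\B$ of affine, hence commutative, algebras, and $\iota$ is therefore an involutive automorphism of $B$. The left $H$-module $He = P(\pi)$ is free of finite rank as a right $B$-module by the affine heredity hypothesis; fixing a homogeneous $B$-basis $z_1,\dots,z_r$ determines a finite-dimensional graded $F$-space $V$ with basis $v_1,\dots,v_r$ together with an $(H,B)$-bimodule isomorphism $V \otimes B \iso He$, $v_i \otimes b \mapsto z_i b$. The sesquilinear map $\tau : He \iso eH$ (which is well defined thanks to $\tau(e) = e$ and intertwines the right $B$-action on $He$ with the left $B$-action on $eH$ via $\iota$) then produces a compatible identification of $\Delta' = B \otimes V$ with $eH$, and composing with Lemma~\ref{LeHe} gives the required $(H,H)$-bimodule isomorphism
\begin{equation*}
\mu : J_1 \iso He \otimes_{eHe} eH \iso \Delta \otimes_B \Delta'.
\end{equation*}

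Commutativity of the diagram (\ref{ECellCD}) then reduces to the identity $\tau(x e y) = \tau(y)\, e\, \tau(x)$ for $x,y \in H$, valid precisely because $\tau(e) = e$ and $\tau|_B = \iota$. Transported across $\mu$, this identity says that for the cellular basis element $z_i\, b\, y_j \in J_1$ (with $y_j := \tau(z_j)$) one has $\tau(z_i\, b\, y_j) = z_j\, \iota(b)\, y_i$, which under $\mu$ matches exactly the prescribed swap $v \otimes b \otimes b' \otimes w \mapsto w \otimes \iota(b') \otimes \iota(b) \otimes v$ on $\Delta \otimes_B \Delta'$ after contracting the tensor product over $B$. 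This exhibits $J_1$ as an affine cell ideal of $H$ with data $(V,B,\iota,\mu)$, completing the induction and the proof.
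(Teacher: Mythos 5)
Your proof hinges on a claim that is false in general: the ``crucial preliminary step'' of finding a $\tau$-fixed primitive idempotent $e$ with $HeH=J_1$. Take $H=\Mat_2(F)$ with $\Char F\neq 2$ and $\tau(X)=gX^{{\tt T}}g^{-1}$, where $g=\left(\begin{smallmatrix}0&1\\-1&0\end{smallmatrix}\right)$; this is a homogeneous antiinvolution (everything in degree $0$). Here $H$ is affine quasihereditary with the trivial $\B$-heredity chain $0\subset H$, the involution is balanced (one checks $L^\circledast\cong L$ for the $2$-dimensional simple $L$ via the pairing $g^{-1}$), and splitness is vacuous. A primitive idempotent $e=\left(\begin{smallmatrix}a&b\\c&d\end{smallmatrix}\right)$ has $a+d=1$, $ad=bc$, and $\tau(e)=\left(\begin{smallmatrix}d&-b\\-c&a\end{smallmatrix}\right)$; imposing $\tau(e)=e$ forces $b=c=0$ and $a=d=1/2$, contradicting $ad=bc$. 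So no $\tau$-fixed primitive idempotent exists. In other words, the balanced condition only guarantees that the Wedderburn block $\Mat_{n}(F)$ supporting $L(\pi)$ is $\bar\tau$-stable; it does not control whether $\bar\tau$ restricted to that block is of ``orthogonal'' or ``symplectic'' type, and in the symplectic case the block has no $\bar\tau$-fixed rank-one idempotent at all. The splitness hypothesis concerns the vector-space decomposition $H=J_1'\oplus\cdots\oplus J_n'$ and does nothing to repair this. (Your subsequent ``averaging'' of the conjugating unit also silently uses $\Char F\neq 2$, but the problem appears already before that step.)

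The paper's proof deliberately sidesteps this obstruction: it never assumes $\tau(e)=e$. Instead it keeps $e$ and $e'=\tau(e)$ as possibly distinct primitive idempotents with $He\simeq He'$, chooses elements $u_1,u_2\in H$ with $eu_1e'u_2e=e$ and $e'u_2eu_1e'=e'$, and uses right multiplication by $eu_1e'$ (and left multiplication by $e'u_2e$) to transport the bimodule structures from $eH$ to $e'H$. The $(eHe,H)$-bimodule so obtained on $e'H$ is then matched with $\Delta'=B\otimes V$ as prescribed by {\tt (Sym)} and {\tt (\ref{ERightCell})}, and commutativity of {\tt (\ref{ECellCD})} (with $\iota=\id$) is verified directly from $h_1eu_1'e'u_2'e=h_1e$. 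So the isomorphism $\mu\colon J\iso\Delta\otimes_B\Delta'$ compatible with $\tau$ exists without any $\tau$-fixed idempotent. To repair your argument you would have to either restrict the class of involutions (disallowing symplectic-type blocks), or replace the $\tau$-fixed idempotent with the $u_1,u_2$ device as in the paper.
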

\begin{proof}
Let $J$ be an affine heredity ideal in $H$. It suffices to  show that it is an affine cell ideal. By Lemma~\ref{LDefHer}, we have that $J=HeH$ for some idempotent $e\in H$. By Lemma~\ref{L240913}, we know that $J$ is $\tau$-invariant.

To check the property {\tt (Stand)}, recall from Theorem~\ref{TCPS}(ii), that there exists $m(q)\in\Z[q,q^{-1}]$ and  a maximal element $\pi\in \Pi$ such that  $J\cong m(q)P(\pi)$ and $P(\pi)=\De(\pi)$. Moreover, by Lemma~\ref{LeHe}, we may assume that $He\simeq P(\pi)$, and $B_\pi\cong eHe$. Now, $P(\pi)=\De(\pi)$ is a free right module over $B_\pi$ by the property {\tt (HI2)}. Now {\tt (Stand)} with $V=\bar\De(\pi)$ and $B=B_\pi$ follows from Proposition~\ref{PDeBarDeNew}. 

To check {\tt (Sym)}, note by Lemma~\ref{LeHe}, that the natural map $He\otimes_{eHe}eH\to J$ is an isomorphism. In the previous paragraph, we have identified $He$ with $\De=V\otimes B$ in {\tt (Stand)}, where $V\cong\bar\De(\pi)$, $B=B_\pi=eHe$. This can be restated as follows: there exist linearly independent elements   $v_1e,\dots,v_ne\in He$ such that every element $he\in He$ can be written in the from 
\begin{equation}\label{EHeHe}
he=\sum_i v_ie\be_i(h)e\qquad(\be_i(h)\in H),
\end{equation}
and, denoting $V=\spa(v_1e,\dots,v_ne)$, the map
\begin{equation*}\label{EHeHe1}
He\to  V\otimes eHe=\De,\ he\mapsto \sum_i v_ie\otimes e\be_i(h)e
\end{equation*}
is an isomorphism of vector spaces.

Now, we have that $P(\pi)\simeq He\simeq H e'$. Hence there exist $u_1,u_2\in H$ with $eu_1e'u_2e=e$ and $e'u_2eu_1e'=e'$, such that the isomorphism $He\iso He'$ is given by the right multiplication with $eu_1e'$ and the inverse isomorphism is given by right multiplication with $e'u_2e$. Now, the left multiplication with $e'u_2e$ also gives an isomorphism $eH\iso e'H$ as right $H$-modules. Moreover, considering $e'H$ as a left $eHe$-module via the action 
$$(e\be e)\cdot e'h:=e'u_2e\be eu_1 e'h\qquad(\be,h\in H),
$$ 
the isomorphism above is an isomorphism of $(eHe, H)$-bimodules. It is easy to see that the $(eHe, H)$-bimodule $e'H$ we have just defined is isomorphic to the $(eHe, H)$-bimodule $\De'=eHe\otimes V$ defined from $\De=He= V\otimes eHe$ using the rule (\ref{ERightCell}). 

Composing the bimodule isomorphisms $eH\iso eH'\iso \De'$ described in the previous paragraph, we get an isomorphism
\begin{align*}
\mu:
J=
&He\otimes_{eHe}eH\iso \De\otimes_{eHe}\De',
\\ 
&h_1e\otimes eh_2\mapsto \sum_{i,j}v_ie\otimes e\be_i(h_1)e\otimes e\be_j(h_2'e'u_2')e\otimes v_je,
\end{align*}
using the notation of (\ref{EHeHe}). 

Finally, we check the commutativity of (\ref{ECellCD}) with $\iota=\id$, i.e. that 
\begin{equation}\label{ERequired}
\mu(\tau(h_1eh_2))=\sum_{i,j} v_je\otimes e\be_j(h_2'e'u_2')e\otimes e\be_i(h_1)e\otimes v_ie
\end{equation}
for $h_1,h_2\in H$. Note that
$$
\mu(\tau(h_1eh_2))=\mu(h_2'e'h_1')=\mu(h_2'e'u_2eu_1e'h_1'),
$$
and so  (\ref{ERequired}) follows from the definition of $\mu$, since $h_1eu_1'e'u_2'e=h_1e$. 
\end{proof}

\section{Examples}\label{SEx}
Let $\Pol$ be the class of graded polynomial  algebras, cf. \S\ref{SSDefHW}.

\subsection{KLR algebras}
Khovanov-Lauda \cite{KL1,KL2} and Rouquier \cite{R} have introduced a family of algebras $R_\al(\Car)$ labeled by a generalized Cartan matrix $\Car$ and an element $\al$ of the non-negative part of the corresponding root lattice. These algebras proved to be of fundamental importance. 

{\em Assume that $\Car$ is of finite type}, i.e. $\Car$ is of types ${\tt A_l}, {\tt B_l}, \dots, {\tt E_8}$. It is proved in \cite{BKM} using homological and representation theoretic methods that $\mod{R_\al(\Car)}$ is a polynomial highest weight category, i.e. $\Pol$-highest weight category. On the other hand, it is proved in \cite{KLoub}, see also \cite{KLM} for type ${\tt A_l}$ only, that $R_\al(\Car)$ is  $\Pol$-quasihereditary by constructing an explicit $\Pol$-heredity chain in it. Of course, this illustrates Theorem~\ref{TCPS}. Moreover, in view of Corollary~\ref{CUppBoundGlobDim}, the algebras $R_\al(\Car)$ have finite global dimension, this result was  established in \cite{Kato,KatoPBW,McN}, see also \cite[Appendix]{BKM}. 

The case where $\Car$ is not of finite type is open. It seems that a more general notion of a $\B$-stratified algebra 
is needed. 

\subsection{Kato's geometric extension algebras}
Given a connected algebraic group $G$ acting on a variety $X$  over $\C$ with finitely many orbits $\{\O_\la\}_{\la\in\La}$ and assuming three further natural geometric conditions, S. Kato \cite{Kato} defines the corresponding {\em geometric extension algebra} 
$$
A=A_{(G,X)}:=\bigoplus_{\la,\mu}\Ext^\bullet_{D_G^b(X)}(L_\la\boxtimes{\tt IC}_\la[\dim\O_\la],L_\mu\boxtimes{\tt IC}_\mu[\dim\O_\la]),
$$
where $L_\la$ is a self-dual non-zero graded vector space for each $\la\in\La$. As explained in \cite{Kato}, geometric extension algebras arising from affine Hecke algebras of types $A$ and $BC$, the Khovanov-Lauda-Rouquier algebras (over $\C$) of finite {\tt ADE} types, the quiver Schur algebras, and the algebra which governs the BGG category all fit into this class.

It is proved in \cite{Kato} that the category $\mod{A}$ is a $\Pol$-highest weight category, see \cite[Theorem C, Lemma 1.3, proof of Corollary 3.3]{Kato}. So $A$ is $\Pol$-quaihereditary.

\subsection{Graded representations of current algebras}\label{SSCurrent}
Here we follow \cite{CI}, see also \cite{CP}, \cite{BCM},\cite{BBCKL}, \cite{CFK} and references therein. Let ${\mathfrak{Cg}}$ be the current algebra corresponding to the arbitrary indecomposable affine Lie algebra $\g$, see \cite[(2.3)]{CI}. As explained in \cite[Remark 2.1]{CI}, outside of the type $A_{2\ell}^{(2)}$, this is the usual (possibly twisted) current Lie algebra of a finite dimensional simple Lie algebra. This Lie algebra has a natural $\Z_{\geq 0}$-grading \cite[\S2.1]{CI}. Let $\catC$ be the category of finitely generated graded ${\mathfrak{Cg}}$-modules. It is easy to check, using the description of the projective modules in \cite{CI}, that $\catC$ is a graded Noetherian Laurentian category, with $q$ being the degree shift by $a_0$ (we have $a_0=1$, unless we are in type $A_{2\ell}^{(2)}$, in which case $a_0=2$).

Note that the category $\catC$ is smaller than  the category ${}_{\mathfrak{Cg}}{\mathfrak F}^\Z$ considered in \cite[\S2.3]{CI}, but the following important classes of modules considered in \cite{CI} all lie in this smaller category. 
The {\em irreducible} modules in $\catC$ are  $q^kL(\la)$ denoted $\pi^*\hspace{-1mm}\ocirc{V}\hspace{-1mm}(\la+k\de)$ in \cite[\S2.4]{CI}, and labeled by the dominant weights $\la\in\Pi:=\ocirc{P}$ of the corresponding finite dimensional simple Lie algebra and $k\in\Z$. We also have the {\em projective covers} $q^kP(\la)$  of $q^kL(\la)$, denoted $P(\la+k\de)$ in \cite[\S2.5]{CI} and the {\em Weyl modules} $W(\la+k\de)$. 

Using the standard dominance order on the set $\Pi$ of dominant weights, we have, in view of \cite[Proposition 2.4(i)]{CI}, that $W(\la+k\de)=q^k\De(\la)$. Moreover, it follows from \cite[Theorem 2.5]{CI} that $B_\la:=\END_{{\mathfrak{Cg}}}(\De(\la))^\op$ is a graded polynomial algebra (denoted ${\bf A}_\la$ in \cite{CI}), and $\De(\la)$ is a finitely rank free $B_\la$-module, cf. \cite[Corollary 2.10]{CI}. Finally, it follows from \cite[Theorem 4.8]{CI} that $K(\la)$ has a $\De$-filtration with factors $\simeq \De(\mu)$ for $\mu>\la$, cf. the property {\tt (SC1)} in Definition~\ref{DHWC}. It follows that 
$\catC$ is a $\Pol$-highest weight category. 

The {\em local Weyl modules} $W_{loc}(\la+k\de)$ are defined in \cite[\S2.8]{CI}. Comparing this definition with Proposition~\ref{PBarDeNew}(iii), we conclude that $W_{loc}(\la+k\de)\cong q^k\bar\De(\la)$. Moreover, using the duality `$*$' from \cite[\S2.3]{CI}, it is easy to see that $\bar\nabla(\la)$ has the same composition multiplicities as $\bar\De(\la)$. Now \cite[Theorem 4.8]{CI} is a special case of Theorem~\ref{TBGG}.

In view of Theorem~\ref{TCPS}, choosing a finite saturated subset of weights $\Si\subset \Pi$, we get the subcategory $\catC(\Si)$ is equivalent to the category of finitely generated graded modules over some $\Pol$-quasihereditary algebra $H_\Si$. It would be interesting to find a more explicit description of this algebra and its connection to the $\Pol$-quasihereditary algebras described in other examples of this section. 

It would also be interesting to try to incorporate representation theory of the more general  {\em equivariant map algebras} in place of (twisted) current algebras, cf. \cite{FMS}.

\vspace{2mm}
\noindent
{\bf Question.} Let $\g$ be a positively graded Lie algebra with reductive zero degree component~$\g_0$. When is the category of graded finitely generated $\g$-modules a (weakly) $\B$-highest weight category.

\vspace{2mm}
This seems to be especially interesting for the Lie algebras of formal vector fields on $\C^n$ vanishing at the origin and the Lie subalgebra of Hamiltonian vector fields with trivial constant and linear terms, cf. \cite[Examples 1.2.4, 1.2.5]{Kh} and \cite[Conjecture 4.86]{Kh}. It is also interesting to consider the modular Lie algebras of Cartan type. Some relevant results have already been obtained in \cite{LinNak1,LinNak2}. 

\subsection{Graded representation theory of $\C W\ltimes \C[\h^*]$}
Let $W$ be a real reflection group and $\h$ be its reflection representation. Then the algebra $A_W:=\C W\ltimes \C[\h^*]$ is graded so that $\deg(w)=0$ for all $w\in W$ and $\deg (x)=2$ for $x\in \h^*$. It is essentially shown in \cite{KatoKostka} that the category $\mod{A_W}$ of finitey generated graded $A_W$-modules is $\Pol$-highest weight. In particular, by Theorem~\ref{TCPS}, the algebra $A_W$ is $\Pol$-quasihereditary. The role of proper standard modules is played by the elements of so-called {\em Kostka system} introduced in\cite{KatoKostka}. 

\subsection{Other possible examples}
We conjecture that the (graded) affine Schur algebras are $\Pol$-quasihereditary. A partial confirmation can be found in \cite{Yang}. 

The categories of Khovanov and Sazdanovich \cite{KhSaz} used to category Hermite polynomials seem to be $\B$-highest weight for $\B$ the class of nilCoxeter algebras. 

The odd nilHecke algebras \cite{EKL} and quiver Hecke(-Clifford) superalgebras of \cite{KKT} should satisfy the (super analogue of) the axioms of $\B$-quasihereditary for a class $\B$  of algebras which are built out of polynomial and Clifford algebras. 

Finally, according to \cite{Bel}, Verma modules over Cherednik algebras exhibit some features , which make them candidates for the standard modules in an appropriate $\B$-highest weight category. 


\end{document}